%% file: manuscript.tex
\documentclass[11pt]{article}
\usepackage{latexsym}
\usepackage{amsmath}
\usepackage{amsmath}
\usepackage{amssymb}
\usepackage{amsthm}
\usepackage{epsfig}
\usepackage{xcolor}
\usepackage{graphicx}
\usepackage{bm}
\usepackage{enumitem}
\usepackage{mathtools}
\mathtoolsset{showonlyrefs}
\usepackage{graphicx}
\usepackage{tikz}
\usepackage{mathrsfs}
\usepackage[toc,page]{appendix}
\usepackage{graphicx}
\usepackage{bbm}
\usepackage{tikz}
\usetikzlibrary{decorations.pathreplacing,arrows}
\usepackage{ytableau}
\usepackage{tkz-berge}

\usepackage[linesnumbered, ruled, vlined]{algorithm2e}
\usepackage[top=1in, bottom=1in, left=1in, right=1in]{geometry}

\newtheorem{theorem}{Theorem}[section]
\newtheorem{proposition}[theorem]{Proposition}
\newtheorem{lemma}[theorem]{Lemma}
\newtheorem{corollary}[theorem]{Corollary}
\newtheorem{conjecture}[theorem]{Conjecture}
\theoremstyle{definition}

\theoremstyle{remark}
\newtheorem{remark}[theorem]{Remark}

\usepackage{array}
\usepackage{multirow}

\usepackage{hyperref}
\hypersetup{
    colorlinks,
    linkcolor={blue!80!black},
    citecolor={green!50!black},
}
\colorlet{linkequation}{blue}
\usepackage{authblk}

\input{def}

\def\scL{\mathscr{L}}

\def\scE{\mathscr{E}}
\def\scF{\mathscr{F}}
\def\scS{\mathscr{S}}

\def\sfS{\mathsf{S}}
\def\Id{\mathrm{I}}

\title{The sharp SAT/UNSAT phase transition in random ellipsoid fitting}

\author[1]{Theodor Misiakiewicz}
\author[1]{Garrett G. Wen}

\affil[1]{Department of Statistics and Data Science, Yale University}

\date{\today}

\allowdisplaybreaks

\begin{document}

\maketitle 

\begin{abstract}
    Let $x_1,\ldots,x_n$ be independent standard Gaussian vectors in $\R^d$. An \emph{ellipsoid fit} is a matrix $S \succeq 0$ such that $x_i^\top S x_i =d$ for every $i$, so that all the points lie on the boundary of the centered ellipsoid $\{ x : x^\top S x = d\}$. Saunderson, Parrilo and Willsky conjectured that, as $n,d \to \infty$, this semidefinite feasibility problem undergoes a sharp transition at $n \sim d^2/4$. We prove this conjecture. If $\lim \sup n/d^2 = \alpha^* <1/4$, then, with probability tending to one, an ellipsoid fit exists; moreover, one can choose $S$ with all eigenvalues in a fixed interval $[\lambda_- , \lambda_+] \subset (0,\infty)$ depending only on $\alpha^*$. Conversely, if $\lim \inf n/d^2  > 1/4$, then, with probability tending to one, no ellipsoid fit exists, without any spectral restriction.  
    
    Our proof builds on the Gaussian-equivalence framework developed by Bandeira and Maillard (2025) and closes the two gaps left open in their work: establishing exact fitting and removing the operator-norm constraint. On the satisfiable side, the new ingredients are a head-tail decomposition of the dual vector, exact correction of the sparse head constraints, and a Gaussian comparison principle for the low-influence tail. On the unsatisfiable side, we split a candidate into a low-rank spectral head and a Schatten-3 diffuse bulk, Gaussianize the bulk conditionally on the head, and apply a projected Gordon escape argument. The threshold is governed by the statistical dimension $d(d+1)/4$ of the positive semidefinite cone.
\end{abstract}

\section{Introduction}

\subsection{The ellipsoid fitting problem}

Given $n$ independent standard Gaussian points $x_1,\dots,x_n \sim \cN(0, I_d)$ in dimension $d$, an elementary question of random geometry asks the following: \emph{when do all $n$ points lie on the boundary of a common centered ellipsoid?}  Writing $\sfS^d$ for the space of $d \times d$ real symmetric matrices and $S \succeq 0$ for positive semidefiniteness, the question asks for the feasibility of the semidefinite program
\begin{equation}\label{eq:P}
x_i^{\top} S x_i = d \quad (1 \le i \le n), \qquad S \succeq 0,
\qquad S \in \sfS^d .
\tag{P}
\end{equation}
The eigenvectors of a solution $S$ are the principal axes of the fitted ellipsoid\footnote{Note that we allow degenerate elliptic cylinders with axes ${\rm ker}(S)$. In fact, Maillard and Kunisky \cite{maillard2024fitting} predicts that, at the transition threshold, typical solutions of \eqref{eq:P} have half of their semiaxes diverging.} $\{x \in \R^d : x^\top S x = d\}$, and its eigenvalues $(\lambda_j)_{j \le d}$ determine the semiaxis lengths $r_j = \sqrt{d} \lambda_j^{-1/2}$. Since $\|x_i\|_2^2 = d\,(1 + \widetilde O_{\P}(d^{-1/2}))$, the sphere itself $S = I_d$ very nearly fits, and \eqref{eq:P} asks whether the residual fluctuations of order $\sqrt d$ can be absorbed \emph{exactly} by an anisotropic quadratic.

Two regimes are immediate.  If $n \le d$, the samples are almost surely linearly independent, and an invertible change of coordinates sending $x_i$ to $\sqrt d\,e_i$ produces a positive definite fit.  If $n > \dim \sfS^d = d(d+1)/2$, the linear system in \eqref{eq:P} is almost surely inconsistent and no fit exists.  The feasibility transition thus lies between these two bounds.  The following sharp prediction was put forward by Saunderson, Parrilo and Willsky \cite{saunderson2011subspace,saunderson2013diagonal} on the basis of strong numerical evidence; see also \cite[Conjecture~1.1]{potechin2023near} for the now-standard two-sided formulation.

\begin{conjecture}[The ellipsoid fitting conjecture]\label{conj:main}
Let $p(n,d) := \P\{\exists S \succeq 0 :\ x_i^{\top} S x_i = d \ \ \forall i \in [n]\}$, where $x_1,\dots,x_n \stackrel{\mathrm{iid}}{\sim} \cN(0,I_d)$. For every $\varepsilon > 0$, as $d \to \infty$,
\[
\limsup_{d\to\infty} \frac{n}{d^2} \le \frac{1-\varepsilon}{4}
\;\Longrightarrow\; p(n,d) \to 1,
\qquad\qquad
\liminf_{d\to\infty} \frac{n}{d^2} \ge \frac{1+\varepsilon}{4}
\;\Longrightarrow\; p(n,d) \to 0 .
\]
\end{conjecture}

The conjecture has attracted sustained attention over the last decade, both for its own sake---it is among the cleanest examples of a sharp phase transition for a random semidefinite program---and for its consequences in statistics and theoretical computer science, which we recall in Section~\ref{sec:origins}. A sequence of works \cite{saunderson2011subspace,saunderson2012diagonal,saunderson2013diagonal,ghosh2020sum,kane2023nearly,potechin2023near,bandeira2024fitting,hsieh2023ellipsoid,tulsiani2025ellipsoid} pushed the satisfiability region from $n = O(d^{6/5-\eps})$ up to $n \leq c d^2$ for a small absolute constant $c$, while on the other side nothing better than the trivial dimension-count bound $n > d(d+1)/2$ was known.  Recently, Maillard and Kunisky \cite{maillard2024fitting} predicted the transition at $n = d^2/4$ using the non-rigorous replica method, and Bandeira and Maillard \cite{bandeira2025exact} proved that an approximate version of the problem, in which the constraints may be violated by a small average error and the fit has bounded operator norm, does transition sharply at $1/4$. The exact conjecture, with zero error and no spectral restrictions, remained open on both sides.

The purpose of this paper is to prove Conjecture \ref{conj:main}. Section~\ref{sec:main-results} states our main result, while Section~\ref{sec:origins} reviews the origins of the conjecture and prior progress toward its resolution. Section~\ref{sec:heuristic} presents the Gaussian comparison heuristic and the results of Bandeira and Maillard \cite{bandeira2025exact} that motivate our approach. Section~\ref{sec:proof-method} provides an overview of our proof strategy. Finally, Section~\ref{sec:notation} gathers notations and outlines the organization of the remainder of the paper.

\subsection{Main results}
\label{sec:main-results}

Our main result establishes the sharp feasibility threshold for the ellipsoid fitting problem \eqref{eq:P}.

\begin{theorem}[The sharp ellipsoid fitting transition]\label{thm:main}
Let $x_1,\dots,x_n \stackrel{\mathrm{iid}}{\sim} \cN(0,I_d)$ and let $p(n,d) = \Prob\{\exists S \succeq 0 :\ x_i^{\top} S x_i = d \ \forall i\}$.
\begin{enumerate}
\item[{\rm (a)}] \textup{(Satisfiable phase.)} If $\ \limsup_{d \to \infty} n/d^2 < 1/4$, then $p(n,d) \to 1$.
\item[{\rm (b)}] \textup{(Unsatisfiable phase.)} If $\ \liminf_{d \to \infty} n/d^2 > 1/4$, then $p(n,d) \to 0$.
\end{enumerate}
\end{theorem}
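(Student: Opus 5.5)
The proof splits into two independent halves, following the roadmap in the abstract. Both start from the Gaussian-equivalence picture of Bandeira and Maillard: write $S = I_d + \Delta$. The constraints then become $\langle x_i x_i^\top - I_d, \Delta\rangle = d - \|x_i\|^2 - \langle x_ix_i^\top - I_d, I_d\rangle + \dots$. After centering, the map $\Delta \mapsto (\langle x_ix_i^\top - I_d, \Delta\rangle)_i$ behaves like a GOE-type Gaussian linear map $\Delta \mapsto (\langle G_i, \Delta\rangle)_i$. Feasibility of the Gaussian surrogate is governed by whether the random affine subspace of codimension $n$ meets the translated PSD cone. Gordon's escape-through-a-mesh and the statistical dimension $\delta(\mathsf{PSD}_d) \approx d(d+1)/4$ then give the threshold $n/d^2 = 1/4$.

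\textbf{Satisfiable side (a).} I would first solve the problem in the Gaussian-equivalent (approximate) sense with a margin. Taking $\alpha^* < 1/4$ strictly, the Bandeira--Maillard result gives an approximate fit $S_0$ with spectrum in $[\lambda_-', \lambda_+'] \subset (0,\infty)$ and small average residual $r_i = d - x_i^\top S_0 x_i$, with $\frac1n\sum r_i^2 = o(d)$ or better. The plan is then to correct the residual exactly via a dual certificate. I would look for $S = S_0 + \sum_i \beta_i (x_ix_i^\top - \text{mean})$ with $\beta$ solving the $n\times n$ system $K\beta = r$, where $K_{ij} = (x_i^\top x_j)^2 - \dots$.

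Since $n \ll d^2/2$, the kernel matrix $K$ is well conditioned (its bulk is $\approx d^2 I + $ a rank-one part), so $\|\sum_i \beta_i x_i x_i^\top\|_{\rm op}$ is small whenever $\beta$ is delocalized. The difficulty is the coordinates where $r_i$ is large, a sparse set of heavy constraints. Following the abstract, I split $r$ into a head $H$ (the few large entries) and a tail. The head constraints are corrected exactly using a low-rank perturbation built from the $x_i$, $i\in H$, which are nearly orthogonal. The low-influence tail is handled by a Gaussian comparison (Lindeberg) principle showing that the operator norm of the correction behaves as in the Gaussian model, where it is $o(1)$. Because the spectrum of $S_0$ has a margin, the spectrum stays in $[\lambda_-, \lambda_+]$. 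I expect the main obstacle here to be controlling $\|K^{-1}\|$ restricted to the tail, together with operator-norm bounds uniform over the residual pattern, which may require leave-one-out decoupling between $S_0$ and the $x_i$.

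\textbf{Unsatisfiable side (b).} Here no norm bound on $S$ is available, so I must rule out all PSD $S$. The key step is to show that any feasible $S$ can be normalized, by the scale fixed by the constraints and using $\mathrm{tr}$-type bounds from averaging $x_i^\top S x_i = d$, and then decomposed as $S = S_{\rm head} + S_{\rm bulk}$. Here $S_{\rm head}$ is the top-$k$ spectral part with $k = o(d)$ and large eigenvalues, and $S_{\rm bulk}$ has controlled Schatten-3 norm. This is chosen because the third moments $\langle x_ix_i^\top, \cdot\rangle$ enter the Lindeberg error in a way controlled by $\|\cdot\|_{S_3}$.

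Conditionally on the head subspace (a net over $k$-dimensional subspaces costs $e^{O(kd\log d)} = e^{o(d^2)}$), I would project the $x_i$ onto its orthogonal complement. I would then Gaussianize the bulk constraints and apply Gordon's min-max inequality to the projected problem. The Gaussian cone has statistical dimension $\approx (d-k)^2/4 < n$, so the escape probability is $e^{-c d^2}$, which beats the net. The main obstacle is making the universality step uniform over the non-compact feasible set: the head must absorb exactly the directions where Lindeberg fails, and the remaining set must be compact enough for the net union bound to go through.
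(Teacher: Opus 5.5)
\textbf{Part (a): the exact-correction step does not go through.}

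First, the Bandeira--Maillard fit $S_0$ comes with a fixed tolerance $\eps$. The only control on $\rho_i=(x_i^\top S_0x_i-d)/\sqrt d$ is $\frac1n\sum_i|\rho_i|^r\le\eps$ with $r<4/3$. This is not $\frac1n\sum_i r_i^2=o(d)$. It also allows up to $\eps n/T^r\asymp d^2$ indices with $|\rho_i|>T$, for any fixed $T$. So your large-residual ``head'' need not be a sparse family of nearly orthogonal $x_i$'s admitting a low-rank correction.

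More fundamentally, keeping $S_0+\Delta\succeq0$ with $\Delta=\sum_i\beta_i(x_ix_i^\top-I_d)$ and $\beta=K^{-1}r$ requires $\Delta$ to be delocalized, i.e.\ $\|\Delta\|_{\op}\approx\|\Delta\|_{\F}/\sqrt d$. Low influence of the coefficients does not give this. Take $\beta_i=(x_i^\top v)^2-1$ with $\|v\|_2=1$, a perfectly spread vector. Then $v^\top\Delta v=\sum_i\beta_i^2\approx 2n$, while $\|\Delta\|_{\F}\lesssim d\sqrt n$, so the operator norm is comparable to the Frobenius norm. Whether your correction is delocalized depends on how $r$ correlates with the rows. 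But $r$ is an uncontrolled function of all the data through the non-constructive $S_0$.

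A Lindeberg principle compares $\E F(\{W_i\})$ and $\E F(\{G_i\})$ for a fixed smooth $F$. It cannot certify a property of a correction evaluated at a data-dependent point chosen outside $F$. No leave-one-out stability is available for an existence-only $S_0$ either. This is exactly the random-matrix obstruction that stalled explicit-candidate approaches.

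The missing idea is to work in the dual. Since $\cW(\cB)$ is compact and convex, $0\in\cW(\cB)$ as soon as $\sup_{S\in\cB}\langle\sum_iy_iW_i,S\rangle>0$ for every unit $y$. This is a min--max statistic in which the optimization over $y$ and $S$ sits \emph{inside} the functional, so Gordon (for GOE rows) and Lindeberg (on a soft-min/soft-max smoothing) legitimately apply. The head/tail split is then on the dual vector $y$, not on a residual:
\begin{itemize}
\item low-influence $y$ are covered by the Gaussian comparison;
\item the $O(d\log d)$ heavy coordinates are removed by restricting $S$ to $\ker\cW_J$;
\item the exact correction is applied to a Gordon maximizer $P(Z)$ that is \emph{independent of the data}, so $\langle W_i,P\rangle=O(\sqrt{\log d/d})$ by Hanson--Wright, at a cost of only $o(d)$ in Gaussian width;
\item clipping then gives $e^{-cd^2}$ tails that beat the union bound over heads.
\end{itemize}

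\textbf{Part (b): right architecture, but the key step is left open.}

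Your outline matches the paper's structure: a spectral head above $d^{-1/4}$ of rank at most $\sqrt d$, a Schatten-3 bulk, a net over heads of entropy $e^{O(d^{3/2})}$, a Gaussianized bulk, and escape governed by $\delta(\Symp m)\approx m^2/4$. However, the step you flag as the main obstacle is where the argument actually lives.

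Exact feasibility is not a quantity one can Gaussianize. An $e^{-cd^2}$ escape probability in the Gaussian model says nothing by itself about the chaos model on each net cell. The paper proceeds as follows:
\begin{itemize}
\item it replaces feasibility by the empirical risk with the bounded smooth loss $\varphi(t)=1-e^{-t^2}$;
\item it proves universality of the ground state uniformly over arbitrary deterministic row offsets $a_i=g_i^\top Hg_i-\Tr H-b$, so the head is conditioned on rather than removed;
\item it uses the boundedness of $\varphi$ in McDiarmid's inequality to upgrade the conditional-mean gap to probability $1-e^{-cn}$, which is what beats the net.
\end{itemize}

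Moreover, projecting the $x_i$ onto $U^\perp$ does not eliminate the head terms, which vary with the free $H$ and $b$. In the Gaussian step one must:
\begin{itemize}
\item project in $\R^n$ off the $(r(r+1)/2+1)$-dimensional head-feature range;
\item discard the at most $\delta n$ rows where the residual exceeds $1$, with a union bound through the binary entropy;
\item treat the head-dominated case of small $\|B\|_{\F}$ by a separate uniform small-ball (VC) bound on the head features, since the escape bound is only proportional to $\|B\|_{\F}$.
\end{itemize}
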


Below the threshold, our proof gives in fact a stronger existence result: it produces a \emph{well-conditioned} fit, whose axes are within a constant factor of those of the sphere.

\begin{theorem}[Well-conditioned fits below the threshold]\label{thm:conditioned}
For every $\alpha^* < 1/4$ there are constants $0 < \lambda_- \le \lambda_+ < \infty$, depending only on $\alpha^*$, with the following property.  If $\limsup_{d\to\infty} n/d^2 \le \alpha^*$, then with probability tending to one there exists an ellipsoid fit $S$ with
\[
\lambda_- \Id \;\preceq\; S \;\preceq\; \lambda_+ \Id .
\]
The fit may moreover be chosen with $\Tr S = d$.
\end{theorem}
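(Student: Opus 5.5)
We look for the fit in the form $S = \Id + \Delta$, with $\Delta$ symmetric, $\Tr \Delta = 0$ and $\|\Delta\|_{\rm op} \le 1-\lambda_-$. The constraints become the linear system $\langle x_ix_i^\top - \Id, \Delta\rangle = d - \|x_i\|_2^2 =: b_i$. Write $X_i := x_ix_i^\top - \Id$ and project it onto the traceless subspace. We pick $\Delta$ by convex duality. The feasibility of $\{\Delta : \langle X_i,\Delta\rangle = b_i,\ \|\Delta\|_{\rm op}\le \tau\}$ is equivalent, by the minimax theorem, to
\[
\min_{u\in\R^n}\Big\{\tau\,\Big\|\sum_i u_i X_i\Big\|_{S_1} - \langle u,b\rangle\Big\} \ge 0 .
\]
So it suffices to show that, uniformly over $u$, the nuclear norm $\|\sum_i u_iX_i\|_{S_1}$ dominates $\langle u,b\rangle/\tau$. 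The Gaussian-equivalence heuristic of Bandeira and Maillard replaces $\sum_i u_iX_i$ by $\|u\|_2$ times a GOE-like matrix $\sqrt{2}\,W$ plus a rank-one/trace term. For $\alpha^*<1/4$ this gives $\|\sum u_iX_i\|_{S_1} \gtrsim c(\alpha^*) d^{3/2}\|u\|_2$ via the semicircle law. On the other hand, $\langle u,b\rangle \le \|u\|_2\|b\|_2 \approx \|u\|_2\sqrt{2nd}$. Since $n\asymp d^2$, both sides scale like $d^{3/2}\|u\|_2$. The threshold $1/4$, i.e. the statistical dimension $d(d+1)/4$, enters through the constant $c(\alpha^*)$. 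Balancing the two sides fixes an admissible $\tau<1$, hence $\lambda_\pm = 1\mp\tau$.

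The key steps, in order, are as follows.

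\emph{Step 1: head-tail decomposition.} Rather than a crude uniform bound, we follow the strategy announced in the abstract and split each dual vector $u$ into a head and a tail. The head consists of the $k$ largest coordinates, with $k = o(d)$ say; the tail has $\|u_{\rm tail}\|_\infty \le \|u\|_2/\sqrt{k}$, so it has low influence.

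\emph{Step 2: Gaussian comparison on the tail.} For the tail we run a Lindeberg/Gaussian comparison on the smooth surrogate $\Tr f(\sum u_iX_i)$. This shows that the nuclear norm, and the Bandeira--Maillard approximate program, behave as in the Gaussian model uniformly over an $\varepsilon$-net of low-influence directions. Their approximate result then yields $\Delta_0$ with $\|\Delta_0\|_{\rm op}\le\tau_0<1$ and average residual $\frac1n\sum_i r_i^2 \le \eta$, where $r_i := b_i - \langle X_i,\Delta_0\rangle$ and $\eta$ is small.

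\emph{Step 3: exact correction.} We remove the residual exactly by solving $\langle X_i,\Delta_1\rangle = r_i$ with a minimum-Frobenius-type correction $\Delta_1 = \sum_j v_j X_j$, where $v = G^{-1} r$ and $G_{ij} = \langle X_i,X_j\rangle$. We need $G$ to be well-conditioned, which holds since $n\le d^2/4 < \dim$: $G\approx 2d^2\Id + $ low rank. We also need $\|\Delta_1\|_{\rm op}$ to be small. A direct bound gives only $\|\Delta_1\|_{\rm op}\lesssim \|v\|_2\sqrt{d}\cdot$ poly-log. This is small only when $\|r\|_2 \ll d^{3/2}$, i.e. $\eta \ll 1$, with additional control on the large residual coordinates. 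Those large coordinates are exactly the sparse head constraints, which we fix separately with a localized correction built from $x_i$ in the span of a few samples.

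The main obstacle is uniformity. The Gaussian comparison must hold simultaneously for all dual directions, and the head coordinates break the low-influence assumption. We will first try to show that the optimal dual vector is automatically delocalized, i.e. that $\|u\|_\infty\lesssim \|u\|_2\sqrt{\log n/n}$ at the optimum, using leave-one-out arguments. If that fails, we treat the head exactly as in Step 3 by projecting it out of the constraint matrix and conditioning on it. Finally, we enforce $\Tr S = d$ by a scalar rescaling: the constraints fix $\Tr$ only up to $o(d)$, so we restrict $\Delta$ to the traceless subspace from the start, with $\Id$ kept as an admissible direction.
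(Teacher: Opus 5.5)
The gap is in the ansatz itself. The class $\{S=\Id+\Delta:\ \Tr\Delta=0,\ \|\Delta\|_{\op}\le\tau\}$ with $\tau<1$ has all eigenvalues in $[1-\tau,1+\tau]$, so in particular $\lambda_+<2$. This class is too small to reach $1/4$.

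Dropping $\Tr\Delta=0$ does not help. Any PSD fit has $\Tr S=d(1+O(d^{-1/2}))$: average the constraints and use $\|\frac1n\sum_i x_ix_i^\top-\Id\|_{\op}=O(d^{-1/2})$. The centered constraints $\cW(S)=0$ are homogeneous, so your class is nonempty iff $\ker\cW$ meets the cone it generates,
$\{S:\ (1-\tau)\Tr S/d\le\lambda_{\min}(S),\ \lambda_{\max}(S)\le(1+\tau)\Tr S/d\}$.
For every $\tau<1$ this cone lies in $\mathcal C=\{S\succeq0:\ \lambda_{\max}(S)\le 2\Tr S/d\}$.

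For Gaussian rows, Gordon's escape theorem excludes any intersection once $\sqrt n$ exceeds $w(\mathcal C\cap\sphereF)$ by $\Omega(d)$. A semicircle computation gives $w(\mathcal C\cap\sphereF)\approx0.435\,d<d/2$; the optimizer is $S\propto U\Id+\mathrm{clip}(Z/\sqrt{2d},-U,U)$ with $U\approx0.22$. This corresponds to a threshold near $\alpha\approx0.189$. It matches the Maillard--Kunisky prediction, quoted in the introduction, that the minimizer of $\|S-\Id\|_{\op}$ stops being PSD near $\alpha\approx0.1892$. So for $\alpha^*$ between roughly $0.19$ and $1/4$, no admissible $\tau<1$ exists, at least heuristically.

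Your balancing step hides this. It compares a typical nuclear norm with a Cauchy--Schwarz bound, but feasibility is decided by the worst dual direction, and that worst case is governed by the width of the primal cone. Near-optimal directions look like $Z_++s\sqrt d\,\Id$, whose top eigenvalue is about $3\pi/2$ times the mean eigenvalue as $s\downarrow0$. So the condition number must be allowed to blow up as $\alpha^*\uparrow1/4$. This is why the paper uses the cone $\cK_\kappa$ with $\kappa=\kappa(\alpha^*)\to\infty$ (Lemma~\ref{lem:widthcone}). It proves $0\in\cW(\cB)$ for $\cB=\{(2\kappa_1)^{-1}\Id\preceq S\preceq2\kappa_1\Id\}$, and only then rescales to $\Tr S=d$, giving $\lambda_-=(4\kappa_1^2)^{-1}$ and $\lambda_+=4\kappa_1^2$.

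Even with the right cone, the later steps lack their key mechanisms.

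\emph{Head size.} A head of size $k=o(d)$ only gives $\|u_{\rm tail}\|_\infty\le\|u\|_2/\sqrt k$, which is much larger than $\|u\|_2/\sqrt d$. The smoothed min--max must use inverse temperature $\beta\asymp d$ so that the $e^{O(d^2)}$ net entropy costs only a fraction of the margin. Its Lindeberg remainder is then of order $\beta^2d^{-1/2}\sum_i|u_i|^3\lesssim d^{3/2}\|u\|_\infty$. This beats the $\Theta(d)$ Gordon margin unless $\|u_{\rm tail}\|_\infty\le\theta\|u_{\rm tail}\|_2/\sqrt d$. Achieving that requires adaptive heads of size up to $C_\theta d\log d$ (Lemma~\ref{lem:regularity}). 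That in turn requires a union bound over $e^{O(d\log^2 d)}$ heads, which needs $e^{-cd^2}$ concentration of the head-conditional functional; the paper gets this by radial clipping.

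\emph{Uniformity in $u$.} A plain $\varepsilon$-net over dual directions does not close. The net has entropy of order $n\log(1/\varepsilon)\asymp d^2$, while a fixed-$u$ deviation at relative precision $\varepsilon$ has probability only $e^{-c\varepsilon^2d^2}$. The paper instead gets the infimum over $u$ from Gordon's min--max in the Gaussian model, and transfers the whole smoothed min--max by Lindeberg.

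\emph{Step~3.} This step has no operator-norm control. Bandeira--Maillard only bound $\frac1n\sum_i|r_i/\sqrt d|^{p}$ for $p<4/3$, and the residual depends on all the data. The Gram matrix $(\langle X_i,X_j\rangle)$ is also not a multiple of the identity plus low rank: its off-diagonal part has norm of order $d^2$, and its lower edge is the content of Proposition~\ref{prop:feature-edge}. The paper's exact correction works only because it is applied to a data-independent Gaussian maximizer $P(Z)$, for which Hanson--Wright gives $\max_i|\langle W_i,P\rangle|=O(\sqrt{\log d/d})$. It is also applied only on the sparse head rows, and never to a data-dependent approximate fit.
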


 The replica analysis of Maillard and Kunisky \cite{maillard2024fitting} predicts that for every fixed $\alpha <1/4$, the typical solution (drawn uniformly over the set of feasible fits) has a spectral density supported on a compact subset of $(0,\infty)$. Theorem~\ref{thm:conditioned} establishes the existence of such a well-conditioned fit but does not characterize a typical solution.

As pointed out by Bandeira, Maillard, Mendelson, and Paquette \cite{bandeira2024fitting}, the ellipsoid fitting problem admits an equivalent dual formulation in terms of a semidefinite program, which seems interesting in its own right. When the samples span $\R^d$, feasibility of \eqref{eq:P} is complementary to the existence of a balanced positive definite combination of random rank-one matrices. Theorem \ref{thm:main} therefore also establishes the corresponding threshold for this dual problem.

\begin{corollary}[Threshold for balanced positive definite combinations]\label{cor:alternative}
Let $x_1,\dots,x_n \stackrel{\mathrm{iid}}{\sim} \cN(0,I_d)$ and consider the event
\begin{equation}\label{eq:alt-cor-event}
\cE_{n,d} \;=\; \Bigl\{\exists\, y \in \R^n:\ \textstyle\sum_{i=1}^n y_i = 0\ \text{ and } \ \sum_{i=1}^n y_i\, x_i x_i^{\top} \succ 0 \Bigr\}.
\end{equation}
If $\limsup n/d^2 < 1/4$ then $\Prob(\cE_{n,d}) \to 0$; if $\liminf n/d^2 > 1/4$ then $\Prob(\cE_{n,d}) \to 1$.
\end{corollary}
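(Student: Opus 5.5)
The corollary follows from Theorem~\ref{thm:main} once we know that, on an event of probability tending to one, $\cE_{n,d}$ is exactly the complement of feasibility of \eqref{eq:P}. The tool is a theorem of the alternative for the conic linear system $\{S \succeq 0,\ \mathcal{A}(S) = d\mathbf{1}\}$, where $\mathcal{A}(S) = (x_i^\top S x_i)_{i\le n}$ has adjoint $\mathcal{A}^*(y) = \sum_i y_i x_i x_i^\top$.

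The first step is the easy direction: $\cE_{n,d}$ and feasibility of \eqref{eq:P} cannot hold simultaneously. Suppose $S\succeq 0$ is a fit and $y$ satisfies $\sum_i y_i=0$ and $M:=\sum_i y_i x_ix_i^\top \succ 0$. Then
\[
\langle M,S\rangle=\sum_i y_i\,x_i^\top S x_i = d\sum_i y_i = 0 .
\]
Since $M\succ 0$ and $S\succeq 0$, this forces $S=0$. But $S=0$ gives $x_i^\top S x_i = 0 \neq d$, a contradiction. Hence $\Prob(\cE_{n,d})\le 1-p(n,d)$, and part (a) of Theorem~\ref{thm:main} gives $\Prob(\cE_{n,d})\to 0$ when $\limsup n/d^2<1/4$.

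The second step is the converse: infeasibility of \eqref{eq:P} implies $\cE_{n,d}$, provided the samples span $\R^d$. This holds almost surely once $n\ge d$, which is the case in the regime $\liminf n/d^2>1/4$.

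Assume \eqref{eq:P} is infeasible. Then the affine subspace $L=\{S\in\sfS^d:\mathcal{A}(S)=d\mathbf{1}\}$ is disjoint from the PSD cone. We may assume $L\neq\emptyset$; otherwise $d\mathbf{1}\notin \mathrm{range}(\mathcal{A})$, so there is $z\in\ker\mathcal{A}^*$ with $\langle z,\mathbf{1}\rangle\neq0$, and we handle this case separately below.

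Since $L$ is nonempty and disjoint from $\mathbb{S}^d_+$, the theorem of the alternative is applied to the homogenized system: \eqref{eq:P} is feasible if and only if there is $(S,t)$ with $S\succeq0$, $t>0$, $\mathcal{A}(S)=t\,d\mathbf{1}$ (rescale by $t$). The weak alternative obtained from separating $L$ from the closed cone $\mathbb{S}^d_+$ only yields $y$ with $\sum_i y_i=0$ and $\mathcal{A}^*(y)\succeq0$ nonzero. To upgrade this to strict positivity I will use the following steps:

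1. Show that $\mathcal{A}^*(\R^n)$ meets the interior of the cone, i.e. some $\sum_i c_i x_ix_i^\top\succ 0$. Taking $c=\mathbf{1}$ works, since the $x_i$ span $\R^d$.
2. Apply the strict (Gordan/Stiemke-type) alternative with the roles reversed. The set $K=\{\mathcal{A}^*(y):\sum_i y_i=0\}$ is a subspace, and $\cE_{n,d}$ fails iff $K\cap\mathrm{int}\,\mathbb{S}^d_+=\emptyset$.
3. By separation of the subspace $K$ from the open convex cone $\mathrm{int}\,\mathbb{S}^d_+$, there is a nonzero $S\succeq0$ with $S\perp K$.
4. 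Translate $S\perp K$: it means $\langle S,\mathcal{A}^*(y)\rangle=\langle\mathcal{A}(S),y\rangle=0$ for all $y\perp\mathbf{1}$, i.e. $\mathcal{A}(S)=c\mathbf{1}$ for some scalar $c$.
5. Show $c>0$. Indeed $c\,n=\langle \mathcal{A}(S),\mathbf{1}\rangle = \sum_i x_i^\top S x_i=\langle S,\sum_i x_ix_i^\top\rangle>0$, because $S\succeq0$ is nonzero and $\sum_i x_ix_i^\top\succ0$.
6. Rescale $S$ by $d/c$ to obtain a fit.

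This chain is cleaner than the primal route: it shows directly that the complement of $\cE_{n,d}$ implies feasibility of \eqref{eq:P}, with no closedness issues, because the separation is against an open cone. It also makes the case $L=\emptyset$ unnecessary.

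Combining the two steps, on the almost sure event that the $x_i$ span $\R^d$ we have $\cE_{n,d}=\{\eqref{eq:P}\text{ infeasible}\}$. Therefore $\Prob(\cE_{n,d})=1-p(n,d)$ whenever $n\ge d$. Part (b) of Theorem~\ref{thm:main} then gives $\Prob(\cE_{n,d})\to1$ when $\liminf n/d^2>1/4$.

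The only delicate point is producing strict positivity $\succ0$, rather than $\succeq0$, in the alternative. Separating the subspace $K$ from the open interior of the PSD cone handles it, and the spanning assumption is exactly what makes $c>0$. No further obstacle is expected.
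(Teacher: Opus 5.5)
Your proof is correct, and it differs from the paper's only in which convex separation is performed. The rest coincides with Lemma~\ref{lem:homogeneous}. Mutual exclusivity is the paper's computation $\langle \sum_i y_ix_ix_i^\top, S\rangle = c\sum_i y_i = 0$. The spanning hypothesis enters at the same point, to force the common value $x_i^\top S x_i$ to be positive.

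The paper proves ``infeasible $\Rightarrow \cE_{n,d}$'' by a nearest-point argument for the compact convex set $\cD_X(\{S\succeq0,\ \Tr S=1\})\subseteq\one^\perp$. This yields $y\in\one^\perp$ with $\langle \cD_X^*y, S\rangle>0$ on the whole trace-one slice, hence $\sum_i y_ix_ix_i^\top\succ0$ directly. That argument needs only compactness, with no separation theorem for open sets.

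You prove the contrapositive in $\sfS^d$ instead. You separate the subspace $\{\sum_i y_ix_ix_i^\top : y\perp\one\}$ from the open cone of positive definite matrices, then use self-duality of $\Sdp$ to read the separating functional as a nonzero $S\succeq0$. This is equally short and avoids the trace normalization. In step 3 you should state explicitly why the functional vanishes on the subspace (it is bounded on one side there) and is nonnegative on $\Sdp$ (the set is a cone whose closure is $\Sdp$).

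The preliminary discussion of the affine set $L$ and the ``weak alternative'' is never used and can be deleted.
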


Alternative \eqref{eq:alt-cor-event} is the classical Gordan--Stiemke duality for the self-dual cone $\Sdp$ (see, e.g., \cite[Section~5.8]{boyd2004convex} for background); we include a short proof of this equivalence in Section \ref{sec:cor-alternative} for reader's convenience (see also \cite[Corollary 1.3]{bandeira2024fitting}).

The proof of Theorem \ref{thm:main} builds on the Gaussian-comparison approach of Bandeira and Maillard \cite{bandeira2025exact}. We compare the ellipsoid fitting problem \eqref{eq:P} with a surrogate problem obtained by partially replacing the data matrices $x_ix_i^\top - \Id$ with covariance-matched Gaussian matrices $G_i \sim \GOE d$. The resulting problem can then be analyzed using Gordon's min-max and escape theorems.  On the satisfiable side, we avoid the difficulties faced by previous attempts, which aimed to construct an ellipsoid fit explicitly, by working directly in the dual of a strengthened version of \eqref{eq:P}. This reduces the proof of satisfiability to showing that the minimum over the dual unit sphere is bounded away from zero. On the unsatisfiable side, we remove the norm constraint imposed in \cite{bandeira2025exact} by conditioning on the low-rank spikes of the candidate matrix and Gaussianizing only its bulk. Section~\ref{sec:heuristic} discusses the motivation for this approach and the underlying Gaussian comparison heuristic, while Section~\ref{sec:proof-method} gives a high-level overview of the proof.

\subsection{Origins and prior progress on the conjecture}
\label{sec:origins}

We give here a brief overview of the origin and earlier progress on the ellipsoid fitting conjecture. We refer to \cite{potechin2023near,maillard2024fitting,bandeira2025exact} for more detailed accounts.

The conjecture originated in work of Saunderson and coauthors \cite{saunderson2011subspace,saunderson2012diagonal,saunderson2013diagonal} on \emph{minimum trace factor analysis} (MTFA) for decomposing a matrix $X = D +L$, with $D$ diagonal and $L \succeq 0$ low-rank. They showed that $U = {\rm range}(L) \in \R^{N \times r}$ is recoverable by MTFA if and only if its orthogonal complement $U^\perp$ admits a matrix $V \in \R^{N \times (N-r)}$, with $ {\rm range} (V) = U^\perp$, whose columns satisfy the ellipsoid fitting property \cite[Proposition 3.1]{saunderson2012diagonal}. Thus, for a Haar-random rank-$r$ subspace $U$, Theorem \ref{thm:main} implies that MTFA succeeds with high probability if $ N - r \geq(2 + \eps) \sqrt{N}$ and fails with high probability if $N - r \leq (2 - \eps) \sqrt{N}$.

Ellipsoid fitting has since been connected to several other problems in statistics and theoretical computer science. Saunderson et al. \cite{saunderson2012diagonal} and Potechin et al. \cite{potechin2023near} showed that it is equivalent to \emph{vector discrepancy}, a canonical SDP relaxation of discrepancy ${\rm disc} (A) =\min_{\sigma \in \{\pm 1\}^n} \| A \sigma \|_\infty$ \cite{nikolov2013komlos}. For a Gaussian matrix $A \in \R^{m \times N}$, the probability that this SDP has value zero equals the probability that $N$ Gaussian vectors in $\R^{N-m}$ admits an ellipsoid fit \cite[Theorem 38]{potechin2023near}. Thus, when $N - m \geq (2+\eps)  \sqrt{N}$, Theorem \ref{thm:main} implies that the SDP fails with high probability to certify a nontrivial discrepancy lower bound. Podosinnikova et al. \cite{podosinnikova2019overcomplete} found a related connection in overcomplete \emph{independent component analysis} (ICA), showing that an SDP-based algorithm succeeds when a variant of ellipsoid fitting is feasible. Although its points are neither identically distributed nor constrained to have equal radii, and hence fall outside \eqref{eq:P}, their experiments suggest a similar threshold \(n\sim d^2/4\).  Finally, Ghosh et al. \cite{ghosh2020sum} studied sum-of-squares lower bounds for the Sherrington--Kirkpatrick Hamiltonian; their pseudocalibration analysis of the planted-affine-planes problem implies ellipsoid fitting for \(n\leq d^{3/2-\eps}\). We refer to \cite{potechin2023near} for further connections to the literature.

Most prior progress on showing the satisfiable side of Conjecture \ref{conj:main} has relied on exhibiting an explicit candidate solution $S^\star$ to the linear system of equations \eqref{eq:P} and proving $S^\star \succeq 0$ with high probability when $n/d^2$ is small enough, a delicate random matrix problem. Saunderson et al.'s least-squares analysis gave feasibility for $n \leq d^{6/5 - \eps}$ \cite{saunderson2011subspace,saunderson2013diagonal}; the planted-affine-planes machinery gave \(n \leq d^{3/2-\eps}\) \cite{ghosh2020sum}; Potechin, Turner, Venkat and Wein analyzed the minimum-Frobenius-norm, or pseudoinverse, interpolant and reached \(n \leq d^2/\mathrm{polylog}\, d\) \cite{potechin2023near}; and Kane and Diakonikolas obtained the explicit scale $n \leq c d^2/\log^4 d$ with an identity-perturbation construction \cite{kane2023nearly}. Feasibility at a positive density \(n \leq c d^2\) was then proved in three independent works: by Hsieh, Kothari, Potechin and Xu via graphical matrix decompositions \cite{hsieh2023ellipsoid}, by Bandeira, Maillard, Mendelson and Paquette via concentration of the Gram matrix of the quadratic feature vectors \cite{bandeira2024fitting}, and by Tulsiani and Wu via empirical covariance estimation \cite{tulsiani2025ellipsoid}. Throughout this development, the only bound on the unsatisfiable side remained the dimension count \(n > d(d+ 1)/2\).

Maillard and Kunisky developed a non-rigorous replica analysis of the ellipsoid fitting problem \cite{maillard2024fitting}. Their Claim 1 predicts a SAT/UNSAT transition of \eqref{eq:P} at $\alpha = n/d^2 = 1/4$ and for each fixed $\alpha <1/4$, a deterministic limiting spectral distribution in a compact interval $[\lambda_- (\alpha), \lambda_+(\alpha)] \subset (0,\infty)$ for the typical exact fit, that is, a fit drawn uniformly from the solution set. As $\alpha \uparrow 1/4$, the predicted law degenerates to the critical distribution $\mu_C = \tfrac{1}{2}\delta_0 + \tfrac{4 \sqrt{9\pi^2 - 4x^2}}{9\pi^3} \boldsymbol{1}_{[0,3\pi/2]} (x)\de x$, an atom of mass one half at the origin plus a quarter-circle bulk. At criticality, half of the semiaxes of the typical fitting ellipsoid diverge, and the fit flattens toward an elliptical cylinder. The same analysis predicts where canonical interpolants fail \cite[Claim 2]{maillard2024fitting}: the minimum-nuclear-norm solution is predicted to remain positive semidefinite throughout the satisfiable phase, whereas the minimum-Frobenius-norm solution and the minimizer of $\| S - \Id \|_F$ are predicted to lose positive semidefiniteness at $\alpha = 1/10$, and the minimizer of $\| S -\Id\|_\op$ near $\alpha \approx 0.1892$.  Our proof abandons explicit candidates altogether and certifies existence of an ellipsoid fit via a dual separation over the full solution set; see Section \ref{sec:proof-method}.

The nonrigorous replica analysis of \cite{maillard2024fitting} predicts that the free energy of the ellipsoid fitting problem \eqref{eq:P} is universal and coincides with that of a Gaussian surrogate model. Building on this prediction, Bandeira and Maillard \cite{bandeira2025exact} established a universality principle for the minimal fitting error and proved Conjecture \ref{conj:main} for an approximate version of the ellipsoid fitting problem. This Gaussian comparison heuristic and the analysis of \cite{bandeira2025exact} motivate our approach; we describe both in detail in the next section.

\subsection{The Gaussian heuristic and the statistical dimension}\label{sec:heuristic}

The sharp threshold \(n\sim d^2/4\) in Conjecture \ref{conj:main} is suggested by a Gaussian comparison heuristic \cite{bandeira2025exact}. It is useful to review this heuristic in detail, as it underlies our proof, whose main challenge is to bridge the gap between the Gaussian surrogate and the original problem.

Consider the centered, normalized rank-one measurements
\begin{equation}\label{eq:W-def-intro}
W_i:=\frac{x_ix_i^\top-\Id}{\sqrt d}.
\end{equation}
Then a matrix $S\in \sfS^d_+$ solves \eqref{eq:P} if and only if $\ip{W_i}{S}=(d-\Tr S)/\sqrt d$ for all $i$. Equivalently, when the samples span $\R^d$, \eqref{eq:P} is equivalent (up to rescaling) to the following linear system of equations in the space of symmetric matrices: 
\begin{equation}\label{eq:P-sym}
 \sfP_{\boldsymbol{1}^\perp} (\ip{W_i}{S})_{i =1}^n =0, \qquad S \succeq 0, \quad S \neq 0,
\end{equation}
where \(\sfP_{\boldsymbol{1}^\perp}\) denotes the orthogonal projection in $\R^n$ onto the complement of \(\boldsymbol{1}\). Wick's formula gives
\begin{equation}\label{eq:moment-match}
\E\ip{W_i}{A}=0,\qquad
\E\bigl[\ip{W_i}{A}\ip{W_i}{B}\bigr]=\frac{2}{d}\ip{A}{B}
\qquad(A,B\in\Sd).
\end{equation}
Thus, the first two moments of the random matrix $W_i$ agree with those of $G_i$, a Gaussian orthogonal ensemble matrix with $(G_i)_{ss} \sim \cN(0,2/d)$ and $(G_i)_{st} = (G_i)_{ts} \sim \cN(0,1/d)$ for $s< t$, which we denote $G_i\sim\GOE{d}$. Equivalently, $G_i = \sqrt{\tfrac{2}{d}}\Gamma_i$ where $\Gamma_i$ is a standard Gaussian vector in $\sfS^d$. 

Now replace the rows $W_i$ in \eqref{eq:P-sym} by independent $G_i \sim \GOE{d}$ matrices. The kernel of the constraints is a Haar-random subspace of $\sfS^d$ and problem \eqref{eq:P-sym} becomes: \emph{does a uniformly random linear subspace of $\sfS^d$ of codimension $n-1$ intersect the cone $\sfS^d_+$?} For such a genuinely isotropic question, conic integral geometry provides a complete answer \cite{bandeira2025exact}. Gordon's min-max and escape theorems \cite{gordon1985some,gordon1988milman}, the width calculus of convex programs \cite{chandrasekaran2012convex} and the phase transition results in \cite{amelunxen2014living} show that a uniformly random linear subspace of codimension $n$ meets a closed convex cone $C$ (with high probability, and sharply) if and only if $n$ is below the \emph{statistical dimension} $\delta(C) = \E \| \Pi_C Z \|_\F^2$, where $\Pi_C$ is the metric projection onto $C$ and $Z$ is a standard Gaussian in the ambient space. For the positive semidefinite cone, Moreau's decomposition and the symmetry $(\sfS^d_+)^\circ = - \sfS^d_+$ give exactly (see Lemma \ref{lem:psd-width})
\begin{equation}
  \delta(\sfS^d_+) = \frac{1}{2} \dim \sfS^d= \frac{d(d+1)}{4}= \frac{d^2}{4} + O(d).
\end{equation}
Hence the Gaussian surrogate problem has a sharp transition at $n = d^2/4$: the positivity constraint halves the number of degrees of freedom, moving the threshold from the dimension count $d(d+1)/2$ down to $d(d+1)/4$. Equivalently, in the language used throughout this paper, the Gaussian width of the spherical cap $\sfS_+^d \cap \sphereF$ (with $\sphereF$ the Frobenius unit sphere of $\sfS^d$) equals
\begin{equation}
  w(\sfS_+^d \cap \sphereF) = \E \sup_{S \in \sfS_+^d \cap \sphereF}\ip{Z}{S} = (1+o(1))\sqrt{\delta(\sfS^d_+)} = \frac{d}{2} + O(1),
\end{equation}
and Gordon's inequalities localize the transition where this width crosses $\E\|g_n\|_2 \approx \sqrt{n}$ where $g_n \sim \cN(0,I_n)$, that is, at $\sqrt{n} = d/2$.

Conjecture \ref{conj:main}, then, is a \emph{universality} statement: for this particular conic feasibility problem, the rank-one Gaussian-chaos measurements in \eqref{eq:W-def-intro} should exhibit the same sharp transition threshold as their Gaussian counterparts. This assertion is far from obvious: (1) the kernel of \eqref{eq:P-sym} is not uniformly oriented; (2) the coordinates of \(W_i\) are second-order Gaussian chaoses and are neither independent nor uniformly subgaussian; and (3) the conjecture concerns \emph{exact feasibility} and \emph{all possible witnesses}, requiring any comparison with the Gaussian model to hold uniformly over exponentially many directions.

A decisive step towards formalizing this approach was taken by Bandeira and Maillard \cite{bandeira2025exact}. They established a universality principle for the minimal fitting error: for suitable bounded losses $\varphi$, the ground-state energy 
\begin{equation}
\inf_{0 \preceq S \preceq M I_d}\frac{1}{n} \sum_{i=1}^n \varphi (\ip{X_i}{S} - \sqrt{d})
\end{equation}
 has the same asymptotic behavior as $d\to\infty$ for the rank-one chaos ensemble $X_i=W_i$ and its Gaussian surrogate $X_i=G_i$. Combined with a sharp analysis of the Gaussian problem using Gordon's inequalities, this principle yields the following result. If $\limsup n/d^2<1/4$, then, for every $r\in[1,4/3)$ and every fixed tolerance $\eps>0$, with high probability there exists a matrix $S$ whose spectrum lies in a fixed interval $[\lambda_-,\lambda_+]\subset(0,\infty)$ and such that $\tfrac{1}{n} \sum_i  |(x_i^\top S x_i - d)/\sqrt{d} |^r \le \eps$. Above the threshold, they establish, with high probability, a strictly positive loss gap over ${0\preceq S\preceq M I_d}$, thereby ruling out exact fits with bounded operator norm.
 
 These results fall short of Conjecture \ref{conj:main} in two respects. First, the limit $\eps\to0$ is taken only after $d\to\infty$, so the argument yields approximate rather than exact fitting. Second, the impossibility result applies only to matrices with bounded operator norm, whereas Conjecture \ref{conj:main} concerns all ellipsoids, without such a restriction. The present proof closes these two gaps. On the satisfiable side, we work directly with the dual problem, one of the possible routes suggested in \cite{bandeira2024fitting,bandeira2025exact} for resolving the full conjecture. On the unsatisfiable side, we condition on the low-rank spectral head and perform the Gaussian comparison only on the diffuse bulk.

\subsection{Proof method and prior work on universality}
\label{sec:proof-method}

We give below a high-level overview of the proof strategy behind Theorem \ref{thm:main}.

\paragraph*{The satisfiable phase.} Below the threshold, we may fix $n/d^2 \to \alpha \in (0,1/4)$ after padding. We consider a strengthening of the feasibility problem \eqref{eq:P} where we impose $\Tr S = d$ and restrict the condition number of $S$. Specifically, for a fixed spectral box $\widetilde \cB:=\left\{S\in\Sd:\tfrac{1}{2\kappa_1\sqrt{d}}\Id\preceq S \preceq \tfrac{2\kappa_1}{\sqrt{d}}\Id\right\}$, we prove that with high probability
\begin{equation}\label{eq:centered-P}
  0\in\cW(\cB), \qquad \cW(S)=\bigl(\ip{W_i}{S}\bigr)_{i=1}^n.
\end{equation}
Any $S \in \cB$ with $\cW(S) = 0$ has $x_i^\top S x_i = \Tr S > 0$ for all $i$ and therefore $\widehat S=dS/\Tr S$ solves \eqref{eq:P} with additional properties
\begin{equation}
  \frac{1}{4\kappa_1^2}\Id\preceq\widehat S \preceq4\kappa_1^2\Id, \qquad \Tr \widehat S = d.
\end{equation}
Now $\cW (\cB) \subseteq \R^n$ is compact and convex, so \eqref{eq:centered-P} fails if and only if an hyperplane separates it from the origin (see Lemma \ref{lem:convex-separation}): there would be a unit vector $y \in \R^n$ with $\sup_{S \in \widetilde \cB} \ip{\cW^* y}{S} < 0$ where $\cW^* y = \sum_i y_i W_i$. The proof therefore reduces to a uniform dual estimate: it is sufficient to show that with high probability, simultaneously for every unit $y \in \R^n$,
\begin{equation}\label{eq:dual-uniform-estimate}
  h(y) := \sup_{S \in \widetilde \cB}  \sum_{i=1}^n y_i\ip{W_i}{S} >0.
\end{equation} 
This modifies the dual matrix alternative \eqref{eq:alt-cor-event} in two ways. First, centering eliminates the constraint $y^\top \boldsymbol 1 = 0$. Second, restricting to the spectral constraint $S \in \widetilde \cB$ provides enough slack to perturb $S$ to resolve a sparse set of constraints exactly without leaving the positive cone $\sfS^d_+$.

For Gaussian rows, \eqref{eq:dual-uniform-estimate} follows directly from Gordon's min--max margin inequality:
\begin{equation}\label{eq:dual-uniform-estimate-surrogate}
  \inf_{\| y \|_2 = 1} \sup_{S \in \widetilde \cB \cap  \sphereF} \sum_{i=1}^n y_i \sqrt d \ip{G_i}{S} \gtrsim w (\widetilde \cB \cap \sphereF) -  \E \| g_n \|_2 - t
\end{equation}
with probability $1 - e^{-t^2 /C}$. Hence, the margin is positive with high probability whenever $w (\widetilde \cB \cap \sphereF) > \sqrt{n}= d\sqrt{\alpha}$, which happens for sufficiently large constant $\kappa_1$ (only depending on $\alpha$) since $w (\widetilde \cB \cap \sphereF) / d \to w(\sfS_+^d \cap \sphereF)/d \sim 1/2 > \sqrt{\alpha}$ as $\kappa_1 \to \infty$ (see Lemma \ref{lem:widthcone}).

The main difficulty resides in transferring the Gordon margin from $G_i$ to $W_i$ \emph{uniformly in $y$}. Indeed, when $y$ is supported on only a few coordinates, the law of $\sum_i y_i \ip{W_i}{S}$ can differ substantially from that of its Gaussian counterpart $\sum_i y_i\ip{G_i}{S}$, and one cannot hope for a uniform comparison over all unit vectors $y$.  We therefore decompose each $y$ into a sparse head $J$ of at most $C_0 d \log d$ heavy coordinates and a low-influence tail satisfying $\| y_{J^c} \|_\infty \leq \theta \| y_{J^c} \|_2 /\sqrt{d}$. We enfore the head constraints exactly 
\begin{equation}
\cW_J (S) = (\ip{W_j}{S})_{j \in J} = 0,
\end{equation}
 and compare \eqref{eq:dual-uniform-estimate} with \eqref{eq:dual-uniform-estimate-surrogate} only over the restricted set 
 \begin{equation}
 S \in \widetilde \cB \cap {\rm ker} \cW_j \cap \sphereF.
 \end{equation} Restricting the supremum can only decrease $h(y)$. Proposition \ref{prop:heads} shows that this restriction reduces the Gaussian width by only \(o(d)\). The key mechanism is a deterministic correction based on the inverse Gram matrix of the second-order chaos, whose singular values remain of order \(d\) throughout the regime \(n/d^2<1/2\). This follows from a sharp two-sided spectral-edge result for polynomial-scaling kernel random matrices, recently established by Kogan, Namdy, and Huang \cite{kogan2025extremal}. For the low-influence tail, we develop a Lindeberg principle for second-order chaos (Proposition \ref{lem:interpolation}) and apply it to a smooth approximation of the min--max problem in \eqref{eq:dual-uniform-estimate-surrogate}. The resulting error scales as $\sum_{i \in J^c} |y_i|^3\leq \theta /\sqrt{d}$. The remainder of the proof combines a delicate random-net argument over the possible heads with a truncation of the quadratic chaos, yielding \(e^{-cd^2}\) concentration strong enough to take a uniform bound over all possible heads.

\paragraph*{The unsatisfiable phase.} Above the threshold, we may fix $n/d^2 \to \gamma \in (1/4,1/2)$ after deleting observations if necessary. Following the strategy of \cite{bandeira2025exact}, we prove that, with high probability, no exact ellipsoid fit exists by showing that the fitting error is uniformly bounded away from zero over all admissible $S$. To this end, define the empirical risk
\begin{equation}\label{eq:loss-intro}
  \widetilde \Loss_n(S,b):=\frac1n\sum_{i=1}^n\varphi(x_i^\top S x_i - \Tr S - b), \qquad \varphi(t)=1-e^{-t^2}.
\end{equation}
In particular, if there exists an \(S\) satisfying \eqref{eq:P}, then \(  \widetilde \Loss_n(S',b)=0\) for some \(S'\succeq0\) with \(\|S'\|_F=1\) and some \(b\in\mathbb R\). Moreover, with high probability, we may restrict attention to \(|b|\le C\), where \(C>0\) is a sufficiently large constant. It therefore suffices to prove that, with high probability, \(  \widetilde \Loss_n(S,b)\) is uniformly bounded away from zero over all \(S\succeq0\) satisfying \(\|S\|_F=1\) and all \(|b|\le C\).

The argument of \cite{bandeira2025exact} establishes such a uniform positive gap under the additional restriction\footnote{We note that \cite{bandeira2025exact} used a different normalization. The particular restriction \(\|S\|_{\op}\le d^{-1/4}\) is obtained by adapting their argument to the homogeneous formulation and the Frobenius-net argument used in our proof.}
\(\|S\|_{\op}\le d^{-1/4}\). It does so by comparing \eqref{eq:loss-intro} with the empirical risk in a Gaussian surrogate model, in which $x_i^\top S x_i - \Tr S$ is replaced by $\sqrt{d} \ip{G_i}{S}$. Here, we extend this comparison to the full class of matrices
\(S\succeq0\) with \(\|S\|_F=1\), without imposing an operator-norm bound. The key idea is to condition on the low-rank spiked component containing the eigendirections whose eigenvalues exceed \(d^{-1/4}\).  More precisely, let \(\varepsilon_d=d^{-1/4}\) and decompose \(S\) into its spectral head and bulk:
\[
H:=S\mathbf 1_{(\varepsilon_d,\infty)}(S),
\qquad
B:=S-H.
\]
Since \(\|S\|_F=1\), the head has rank at most \(\varepsilon_d^{-2}=d^{1/2}\). Meanwhile, the bulk satisfies 
\[
\|B\|_{\op}\le\varepsilon_d=d^{-1/4},
\qquad
\Tr(B^3)\le\varepsilon_d \|B\|_F^2\le d^{-1/4},
\]
and is diffuse in Schatten \(3\)-norm. Writing \((g_i,z_i)\) for the components of \(x_i\) in the subspaces associated with \(H\) and \(B\), respectively, we obtain the decomposition
\begin{equation}
  x_i^\top Sx_i-\Tr S-b = 
  g_i^\top Hg_i-\Tr H-b
  +z_i^\top Bz_i-\Tr B.
\end{equation}
We condition on the head variables \(g_i\) and introduce a hybrid model in which $z_iz_i^\top - \Id_m$ is replaced by $\sqrt{m} G_i$, where $m = {\rm rank} (B)$ and $G_i \sim \GOE{m}$. The corresponding empirical risk is
\begin{equation}\label{eq:loss-intro-hybrid}
    \widetilde \Loss_n^h (H,B,b):=\frac1n\sum_{i=1}^n\varphi(g_i^\top H g_i - \Tr H  - b + \sqrt{m} \ip{G_i}{B}).
\end{equation}

An adaptation of the free-entropy interpolation principle of
\cite{bandeira2025exact} shows that the minimum empirical risk in the original model is close, in conditional expectation, to that in the hybrid model; see Proposition~\ref{prop:translated-univ}. Crucially, this comparison is uniform over all possible values of the offsets contributed by the head variables. Gordon’s escape-through-a-mesh theorem then yields a uniform positive lower bound on the hybrid empirical risk \eqref{eq:loss-intro-hybrid} over all admissible triples \((H,B,b)\). Finally, we transfer this positive risk gap from the hybrid model back to the original model \eqref{eq:loss-intro}. We first carry out the comparison on a suitable net of head components. McDiarmid’s inequality upgrades the conditional-expectation estimate to an event of probability at least \(1-e^{-cd^2}\), which is strong enough to take a union bound over a net of \(e^{O(d^{3/2})}\) possible heads. Lipschitz continuity, together with the feature-edge bound of \cite{kogan2025extremal}, then extends the conclusion from the net to the entire admissible class.

\paragraph*{Relation to prior work on universality.} Universality principles replace a complicated random design by a Gaussian design with matching two first moments. For data with independent entries, Bayati et al.~\cite{bayati2015universality} proved universality of a certain phase transition arising in polytope geometry and compressed sensing, and Oymak and Tropp \cite{oymak2018universality} proved universality laws for restricted singular values and embedding thresholds for a family of randomized dimension reduction maps. A more recent line of work, going beyond independent entries, establishes Gaussian equivalence for high-dimensional empirical risk minimization and random-feature models under suitable pointwise or uniform central-limit hypotheses \cite{goldt2020modeling,hu2022universality,montanari2022universality,goldt2022gaussian,han2023universality,montanari2023universality,gerace2024gaussian,hu2024asymptotics}. Recent work also identifies mechanisms by which such universality can break down in structured classification and convex
ERM \cite{pesce2023gaussian,mai2025breakdown,yaakoubi2026characterization}. Bandeira and Maillard \cite{bandeira2025exact} adapted a free-entropy interpolation argument to the rank-one quadratic ensemble and proved universality over spectrally bounded candidate sets. Related matrix-valued universality results in the quadratic scaling appear in \cite{xu2025fundamental,erba2026nuclear}.

Our conditional decompositions are more directly inspired by the recent work of the authors jointly with Hu, Lu, and Fan on random-feature ERM in the quadratic regime $n,p\asymp d^2$ \cite{wen2025does}, where degree-two Wiener-chaos components dominate. At this scaling, a fully Gaussian surrogate can fail when the target depends on a fixed low-dimensional projection of the covariates; the correct hybrid surrogate retains the associated low-dimensional non-Gaussian component and Gaussianizes only its high-dimensional complement \cite{wen2025does}. Although the ellipsoid fitting problem is quite different, both parts of our proof follow the same conditional-Gaussianization principle: isolate the structured directions that obstruct a direct Gaussian comparison and Gaussianize only the diffuse remainder. In the satisfiable phase, these exceptional directions correspond to the large coordinates of the dual vector $y$; we enforce the associated constraints exactly and apply Gaussian comparison only to the low-influence tail. In the unsatisfiable phase, the structured component is instead a candidate-dependent low-rank spectral head of $S$; we condition on its contribution and only Gaussianize the Schatten--$3$ diffuse bulk.

\subsection{Notation and organization}\label{sec:notation}

For $p\ge1$, $\sfS^p$ denotes the space of real symmetric $p\times p$ matrices and $D_p=p(p+1)/2$ its dimension.  It is equipped with the Frobenius inner product $\ip{A}{B}=\Tr(AB)$ and norm $\|\cdot\|_{\F}$; $\Symp{p}$ is its positive semidefinite cone.  We write $\|\cdot\|_{\op}$ and $\|\cdot\|_*$ for the operator and nuclear norms, and $\Tr|A|^k=\sum_j|\lambda_j(A)|^k$.  The set $\sphereF=\{S\in\Sd:\|S\|_{\F}=1\}$ is the Frobenius unit sphere, and $\psdsphere{p}=\Symp{p}\cap\{S:\|S\|_{\F}=1\}$. A standard Gaussian matrix $Z\in\Sym{p}$ has independent $\cN(0,1)$ coordinates in a (Frobenius) orthonormal basis.  Thus $Z_{aa}\sim \cN(0,1)$ and $Z_{ab}\sim \cN(0,1/2)$ for $a<b$.  We use the normalization $\GOE{p}=\sqrt{2/p}\,Z$, which corresponds to the standard Gaussian Orthogonal Ensemble (GOE).  

For $J\subseteq[n]$, set $J^c=[n]\setminus J$.  If $v=(v_i)_{i\in[n]}$ is any scalar-, vector-, or matrix-valued indexed family, then $v_J=(v_i)_{i\in J}$ denotes its restriction; in particular, $X_J$ is the matrix with columns $(x_i)_{i\in J}$. Constants $c,C>0$ are absolute unless subscripted and may change from line to line.  The notation $o(\cdot),O(\cdot)$ refers to the asymptotic variable of the statement---usually $d$, and $p$ or $m$ in dimension-generic results.  Likewise, \emph{with high probability} means
with probability $1-o(1)$ in that variable.  Throughout,
\[
W_i=\frac{x_ix_i^\top-\Id}{\sqrt d},
\qquad
\cQ_x(S)=\bigl(x_i^\top Sx_i-\Tr S\bigr)_{i=1}^n =\sqrt d \, \cW (S) = \sqrt d\,\bigl(\ip{W_i}{S}\bigr)_{i=1}^n.
\]

Section~\ref{sec:prelim} develops the Gaussian-width, comparison, feature-edge, and data estimates shared by the two arguments. Section~\ref{sec:sat} proves the satisfiable phase, and Section~\ref{sec:unsat} proves the unsatisfiable phase.  The two proof sections may be read independently after Section~\ref{sec:prelim}.

\section{Preliminaries}\label{sec:prelim}

This section collects tools shared by the two proofs.  We compute the Gaussian width of the positive semidefinite cone and state the two forms of Gordon's inequality used later.  We then establish the two-sided feature edge and the required Gaussian data estimates. Finally, we introduce the homogeneous formulation of \eqref{eq:P} and prove the dual matrix alternative behind Corollary \ref{cor:alternative}.

\subsection{The width of the positive semidefinite cone}\label{sec:width}

Recall that for a closed convex cone $C$ in a Euclidean space $E$ and a standard Gaussian $Z$ on $E$, the statistical dimension is $\delta(C) = \E \|\Pi_C Z\|^2$, where $\Pi_C$ is the metric projection onto $C$, and the Gaussian width is defined as
\begin{equation}
 w(C \cap \S_E ) = \E \sup_{x \in C \cap \S_E } \ip{Z}{x} ,
\end{equation}
where $\S_E$ is the unit sphere in $E$. 

Recall that $\sfS^d$ denotes the space of $d \times d$ real symmetric matrices and $\Sdp$ its positive semidefinite cone. Set $D_d = \dim \sfS^d = d (d+1)/2$ and let $\psdsphere{d} = \Symp{d} \cap \{S : \|S\|_{\F} = 1\}$ denote the unit Frobenius sphere in $\Symp{d}$.  We first recall the classical Gaussian-width calculation for $\sfS^d_+$ \cite{chandrasekaran2012convex,amelunxen2014living,bandeira2025exact}. We include a proof for completeness, based on \cite[Proposition~10.2]{amelunxen2014living}.

\begin{lemma}[Gaussian width of $\sfS_+^d$]\label{lem:psd-width}
Let $d \ge 1$, let $Z$ be a standard Gaussian in $\Sym{d}$, and write $Z = Z_+ - Z_-$ for its spectral decomposition into positive and negative parts.  Then,
\begin{equation}\label{eq:statdim}
\delta(\Symp{d}) = \E \|Z_+\|_{\F}^2 = \frac{D_d}{2} = \frac{d(d+1)}{4} .
\end{equation}
Consequently, 
\begin{equation}\label{eq:width-upper}
\sqrt{\frac{D_d}{2} - 1}
\le w\bigl(\psdsphere{d}\bigr) \le \sqrt{\frac{D_d}{2}}\,,
\qquad
w\bigl(\psdsphere{d} \bigr) = \frac d2 + O(1) .
\end{equation}
\end{lemma}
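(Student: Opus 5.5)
The plan has three steps: compute the statistical dimension from Moreau's decomposition and symmetry, sandwich the width between standard bounds, and expand the resulting square roots.

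\emph{Step 1: the statistical dimension.} Write the spectral decomposition $Z = U\Lambda U^\top$. The metric projection of $Z$ onto $\Symp{d}$ in Frobenius norm is $\Pi_{\Symp{d}}(Z) = Z_+ = U\Lambda_+U^\top$. To see this, verify Moreau's conditions: $Z = Z_+ - Z_-$ with $Z_\pm \succeq 0$ and $\ip{Z_+}{Z_-}=0$, and the polar cone is $(\Symp{d})^\circ = -\Symp{d}$. Moreau's decomposition then gives
\[
\|Z\|_{\F}^2 = \|Z_+\|_{\F}^2 + \|Z_-\|_{\F}^2 .
\]
The law of $Z$ is invariant under $Z\mapsto -Z$, and this map exchanges $Z_+$ and $Z_-$. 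Hence $\E\|Z_+\|_{\F}^2=\E\|Z_-\|_{\F}^2$. Taking expectations, and using $\E\|Z\|_{\F}^2 = D_d$ because $Z$ has $D_d$ independent standard coordinates in an orthonormal basis, we obtain $\delta(\Symp{d}) = D_d/2 = d(d+1)/4$.

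\emph{Step 2: relating width and statistical dimension.} For the cone $C=\Symp{d}$, a standard fact is that, for $Z\notin C^\circ$,
\[
\sup_{S\in C\cap\S}\ip{Z}{S} = \|\Pi_C Z\|_{\F}.
\]
Indeed, for $S\in C$ we have $\ip{Z}{S}\le \ip{\Pi_C Z}{S}\le \|\Pi_C Z\|_{\F}\|S\|_{\F}$, since $\ip{Z-\Pi_C Z}{S}\le 0$ by Moreau. Equality holds at $S=\Pi_C Z/\|\Pi_C Z\|_{\F}$. The exceptional event $Z\preceq 0$ has probability zero for $d\ge 2$; for $d=1$ one can check the claim directly, or note that the supremum is at least $0$ anyway.

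It follows that $w = \E\|Z_+\|_{\F}$. Jensen's inequality gives the upper bound $w \le \sqrt{\E\|Z_+\|_{\F}^2} = \sqrt{D_d/2}$. For the lower bound I will use
\[
w^2 = \delta - \mathrm{Var}\,\|Z_+\|_{\F}.
\]
The map $Z\mapsto \|\Pi_C Z\|_{\F}$ is $1$-Lipschitz, because projection onto a closed convex set is nonexpansive. The Gaussian Poincaré inequality therefore gives $\mathrm{Var}\,\|Z_+\|_{\F}\le 1$, and hence $w\ge\sqrt{D_d/2-1}$. This is exactly the argument of \cite[Proposition~10.2]{amelunxen2014living}.

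\emph{Step 3: asymptotics.} Since $D_d/2 = d^2/4 + d/4$, we get
\[
\sqrt{D_d/2} = \frac{d}{2}\sqrt{1+1/d} = \frac d2 + O(1),
\]
and similarly $\sqrt{D_d/2-1} = d/2 + O(1)$. This gives $w = d/2 + O(1)$.

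None of these steps is a real obstacle. The only point requiring care is the identification of the supremum over the spherical cap with $\|\Pi_C Z\|_{\F}$, including the degenerate event where $Z$ lies in the polar cone. Everything else is Moreau's decomposition combined with sign symmetry, plus the Poincaré inequality.
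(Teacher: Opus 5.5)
\textbf{Gap in Step 2.} Your Step~1 is correct and is exactly the paper's argument. Your overall route also matches the paper: Moreau plus sign symmetry, then the sandwich $w^2\le\delta\le w^2+1$ of \cite[Proposition~10.2]{amelunxen2014living}, which the paper cites and you try to re-derive. The re-derivation in Step~2 has a genuine error.

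The claim that $\{Z\preceq 0\}$ has probability zero for $d\ge 2$ is false. The set $\{Z\prec 0\}$ is a nonempty open subset of $\Sym{d}$, so it has positive Gaussian measure for every $d$. On $\{Z\preceq0\}$ you have $\|Z_+\|_{\F}=0$, but the supremum over the cap is not $0$; it is $\lambda_{\max}(Z)$. To see this, write $S=\sum_j\mu_jv_jv_j^\top$ with $\mu_j\ge0$ and $\sum_j\mu_j^2=1$. Then $\Tr(ZS)\le\lambda_{\max}(Z)\sum_j\mu_j\le\lambda_{\max}(Z)$, because $\lambda_{\max}(Z)\le 0$ and $\sum_j\mu_j\ge1$, with equality at a top eigenvector. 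This value is strictly negative on $\{Z\prec0\}$.

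Hence in fact $w(\psdsphere{d})<\E\|Z_+\|_{\F}$ strictly, and the identity $w^2=\delta-\mathrm{Var}\,\|Z_+\|_{\F}$ is false. Your Poincar\'e argument therefore lower-bounds $\E\|Z_+\|_{\F}$, not the width in the statement, so $w\ge\sqrt{D_d/2-1}$ does not follow as written. Your $d=1$ remark has the same confusion: $0$ is not in the cap (for $d=1$ the cap is $\{1\}$), so there the supremum is $Z$ itself, not something $\ge0$.

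\textbf{Fix.} The repair is short and is essentially how Proposition~10.2 is proved. Work with $X:=\sup_{S\in\psdsphere{d}}\ip{Z}{S}$ directly.
\begin{itemize}
\item $X$ is $1$-Lipschitz in $Z$, being a supremum of linear functionals with unit-norm slopes. So Poincar\'e gives $\mathrm{Var}\,X\le 1$.
\item Your Step~2 computation, corrected, shows $\|Z_+\|_{\F}=\max\{X,0\}$.
\item Therefore $\delta=\E\max\{X,0\}^2\le\E X^2=w^2+\mathrm{Var}\,X\le w^2+1$.
\item Also $w=\E X\le\E\max\{X,0\}\le\sqrt{\delta}$.
\item Finally $w\ge0$, since $X\ge\ip{Z}{I_d/\sqrt d}$ and the right side has mean zero. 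So $w^2\ge\delta-1$ gives $w\ge\sqrt{D_d/2-1}$.
\end{itemize}
With this change, the two-sided bound holds exactly as stated and your Step~3 goes through unchanged.
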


\begin{proof}
 The metric projection of a symmetric matrix onto $\Symp d$ is its positive part.  Moreau's decomposition applied to the cone $C = \Symp d$ and its polar $C^\circ = -\Symp d$ gives $Z = \Pi_C Z + \Pi_{C^\circ} Z$ with $\ip{\Pi_C Z}{\Pi_{C^\circ}Z} = 0$, whence $\delta(C) + \delta(C^\circ) = \E\|Z\|_{\F}^2 = D_d$.  By the symmetry $Z \overset{d}{=} -Z$ we get $\delta(C^\circ) = \delta(-C) = \delta(C)$, and \eqref{eq:statdim} follows.  Proposition 10.2 in~\cite{amelunxen2014living} states that for every convex cone $C$, the Gaussian width and the statistical dimension are related as $w (C  \cap \S_E )^2 \leq \delta (C) \leq w(C  \cap \S_E)^2 +1$.  Applying this to $C = \Symp d$ gives the bounds in \eqref{eq:width-upper}.
\end{proof}

Lemma~\ref{lem:psd-width} is the source of the constant $1/4$.  The cone $\sfS^d_+$ has width $d/2+O(1)$ and Gordon's inequalities compare this width with $\sqrt n$, giving the critical scale $n=d^2/4$.

In the satisfiable phase, we will further restrict the cap $\psdsphere{d}$ to \emph{strictly} positive definite matrices with uniformly bounded condition number. For $\kappa \ge 1$ define the closed convex cone
\begin{equation}\label{eq:cone-def}
\cK_\kappa \;=\; \bigl\{S \in \Sdp :\ \lambda_{\max}(S) \le \kappa\,
\lambda_{\min}(S)\bigr\} .
\end{equation}
Every nonzero element of $\cK_\kappa$ is positive definite. 

The following result was proved in \cite[Lemma 3.6]{bandeira2025exact}. It shows that the Gaussian width of $\psdsphere{d}$ restricted to $\cK_\kappa$ becomes arbitrarily close, at leading order, to $w\bigl(\psdsphere{d}\bigr)$ as $\kappa \to \infty$.  We include a quantitative proof because it also bounds the condition number of the fit constructed in Section \ref{sec:sat}; see Remark~\ref{rem:kappa-blowup}.

\begin{lemma}[Width of bounded-condition positive cones]\label{lem:widthcone}
For every $a < 1/2$, there is a fixed $\kappa = \kappa(a) < \infty$ such that
\[
w\bigl(\cK_\kappa \cap \sphereF\bigr) \ge a d
\]
for all sufficiently large $d$.
\end{lemma}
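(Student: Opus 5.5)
We exhibit, for each realization of the standard Gaussian $Z\in\Sym d$, an explicit feasible point of $\cK_\kappa\cap\sphereF$ whose inner product with $Z$ is close to $\|Z_+\|_{\F}$. Lemma~\ref{lem:psd-width} shows that $\E\|Z_+\|_{\F} = d/2+O(1)$, so the task is to pay only $O(\epsilon d)$ for the condition-number constraint.

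The candidate is a regularized positive part. Write $Z=\sum_j \mu_j u_ju_j^\top$ and let $\mathrm{Sc}(Z)$ denote the spectral scale $\sqrt d$. For a small $\eta>0$ and a large $M$, set
\[
T \;=\; \sum_j f(\mu_j)\,u_ju_j^\top,\qquad f(\mu)=\min\bigl\{\max\{\mu,\eta\sqrt d\},\,M\sqrt d\bigr\}.
\]
Then $T$ is positive definite with $\lambda_{\max}(T)/\lambda_{\min}(T)\le M/\eta=:\kappa$, so $S=T/\|T\|_{\F}\in\cK_\kappa\cap\sphereF$. This gives
\[
\ip{Z}{S}=\frac{\sum_j \mu_j f(\mu_j)}{\bigl(\sum_j f(\mu_j)^2\bigr)^{1/2}}.
\]

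We compare the numerator and denominator with $\|Z_+\|_{\F}^2=\sum_j(\mu_j)_+^2$. Truncating at $M\sqrt d$ has no effect with high probability, since $\|Z\|_{\op}\le C\sqrt d$ by standard GOE edge bounds, so we take $M$ larger than the semicircle edge ($\sqrt{2}$ in this normalization, plus slack). On the negative and small eigenvalues, $f=\eta\sqrt d$ contributes at least $-\eta\sqrt d\sum_j|\mu_j|\ge -\eta\sqrt d\cdot d\cdot C\sqrt d=-C\eta d^2$ to the numerator. It contributes at most $d\,\eta^2 d$ to the squared denominator. On eigenvalues with $0<\mu_j<\eta\sqrt d$, the replacement changes each term by at most $\eta^2 d$. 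Consequently, on the event $\|Z\|_{\op}\le C\sqrt d$ we have
\[
\text{numerator}\ge \|Z_+\|_{\F}^2 - C\eta d^2,\qquad \text{denominator}^2\le \|Z_+\|_{\F}^2+C\eta^2 d^2.
\]
Since $\|Z_+\|_{\F}^2$ concentrates at $d^2/4$, for instance by Gaussian Lipschitz concentration of $\|Z_+\|_{\F}$, which is $1$-Lipschitz in $Z$, this yields $\ip{Z}{S}\ge (d/2)(1-C'\eta)$ with high probability. Because $\sup_{S\in\cK_\kappa\cap\sphereF}\ip{Z}{S}\ge -\|Z\|_{\F}$ and $\E\|Z\|_{\F}\le d$, the low-probability bad event costs $o(d)$ in expectation. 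We then choose $\eta$ so that $(1-C'\eta)/2>a$ and set $\kappa=M/\eta$. This gives $w(\cK_\kappa\cap\sphereF)\ge ad$ for all large $d$.

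The main obstacle is bookkeeping the loss from lifting the negative spectrum, which carries about half of the $d$ eigenvalues, each of size about $\sqrt d$. The numerator penalty $\eta\sqrt d\,\|Z_-\|_*$ is of order $\eta d^2$, which matches the scale of $\|Z_+\|_{\F}^2$. It is therefore only an $O(\eta)$ relative loss, and this is what forces $\kappa\to\infty$ as $a\uparrow 1/2$. The bound $\|Z_-\|_*\le d\|Z\|_{\op}\lesssim d^{3/2}$ suffices here, so no semicircle-law input is needed beyond the operator-norm bound and Lemma~\ref{lem:psd-width}.
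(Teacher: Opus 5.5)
Your argument is correct. It follows the same overall strategy as the paper, an explicit regularized positive part of $Z$, but with a different candidate and a more elementary probabilistic input.

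The paper uses the shifted positive part $Z_+ + s\sqrt d\,\Id$, falling back to $\Id/\sqrt d$ when $\|Z\|_{\op}>C\sqrt d$. There the numerator changes only by $s\sqrt d\Tr Z$, which is negligible, and the whole first-order loss sits in the denominator through the cross term $2s\sqrt d\Tr Z_+$. It then invokes the semicircle law to obtain the exact limit $q(s)=\tfrac14\bigl(\tfrac14+2c_+s+s^2\bigr)^{-1/2}$ of the candidate's value, which it reuses in Remark~\ref{rem:kappa-blowup}.

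Your clipping of the spectrum to $[\eta\sqrt d, M\sqrt d]$ moves the first-order loss into the numerator, through the term $\eta\sqrt d\,\|Z_-\|_*$. The denominator changes only at relative order $\eta^2$. This buys two things:
\begin{itemize}
\item The candidate lies in $\cK_{M/\eta}\cap\sphereF$ for every realization, so no fallback is needed.
\item The only inputs are the operator-norm bound and Gaussian concentration of the $1$-Lipschitz map $Z\mapsto\|Z_+\|_{\F}$ (metric projection onto a convex cone). It concentrates around $\E\|Z_+\|_{\F}=d/2+O(1)$, which indeed follows from Lemma~\ref{lem:psd-width} since $w(\psdsphere{d})\le\E\|Z_+\|_{\F}\le(\E\|Z_+\|_{\F}^2)^{1/2}$.
\end{itemize}
The price is a worse constant: you bound $\|Z_-\|_*\le C d^{3/2}$ instead of using $\|Z_-\|_*\approx c_+d^{3/2}$. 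The resulting dependence $\kappa(a)=O((1/2-a)^{-1})$ is the same as the paper's.

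One small repair: the bound $\E\|Z\|_{\F}\le d$ alone does not make the bad event cost $o(d)$. You need Cauchy--Schwarz with $\E\|Z\|_{\F}^2=D_d$ and the vanishing probability of the event $\{\|Z\|_{\op}>C\sqrt d\}\cup\{\|Z_+\|_{\F}\text{ far from }d/2\}$.
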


\begin{proof}
The construction shifts the positive part of a Gaussian matrix by a small multiple of the identity.  The shift enforces a bounded condition number while losing arbitrarily little width.

Let $Z$ be a standard Gaussian in $\Sd$ and $Z_+$ its positive part. Choose once and for all a fixed $C > \sqrt2$, strictly above the limiting operator-norm edge for this normalization ($\|Z\|_{\op}/\sqrt d \to \sqrt 2$ almost surely; see, e.g., \cite[Theorem~2.1.22]{anderson2010introduction}).  Fix $s > 0$ and, on the event
$\{\|Z\|_{\op} \le C\sqrt d\}$, set
\[
Q_s = Z_+ + s \sqrt d\, \Id, \qquad P_s = \frac{Q_s}{\|Q_s\|_{\F}} .
\]
Then $P_s \in \cK_{\kappa_s} \cap \sphereF$ with $\kappa_s = (C + s)/s$.  The Wigner semicircle law and the operator-norm convergence give, in probability,
\begin{equation}\label{eq:semicircle-facts}
\frac{\|Z_+\|_{\F}^2}{d^2} \longrightarrow \frac14, \qquad
\frac{\Tr Z_+}{d^{3/2}} \longrightarrow c_+ := \frac{2\sqrt2}{3\pi},
\qquad
\frac{\Tr Z}{d^{3/2}} \longrightarrow 0 ,
\end{equation}
where $c_+$ is the first moment of the positive half of the semicircle law of radius $\sqrt 2$.  The value of the candidate $P_s$ is exactly
\begin{equation}\label{eq:cand}
\ip{Z}{P_s} = \frac{\|Z_+\|_{\F}^2 + s\sqrt d\, \Tr Z}
{\bigl(\|Z_+\|_{\F}^2 + 2 s \sqrt d\, \Tr Z_+ + s^2 d^2\bigr)^{1/2}} ,
\end{equation}
using $\ip{Z}{Z_+} = \|Z_+\|_{\F}^2$ and $\ip{Z}{\Id} = \Tr Z$.  Hence, for fixed $s > 0$, the quantity \eqref{eq:cand} divided by $d$ converges in probability to
\begin{equation}\label{eq:qs-def}
q(s) := \frac{1/4}{\sqrt{\,1/4 + 2 c_+ s + s^2\,}} .
\end{equation}
The absolute value of \eqref{eq:cand} is at most $\|Z\|_{\F}$, and $\sup_d \E (\|Z\|_{\F}/d)^2 < \infty$, so the candidates are uniformly integrable.  On $\{\|Z\|_{\op} > C\sqrt d\}$, use instead the fixed element $\Id/\sqrt d \in \cK_{\kappa_s} \cap \sphereF$; since the probability of that event tends to zero, $\E[\|Z\|_{\F} \mathbf 1\{\|Z\|_{\op} > C\sqrt d\}] = o(d)$ by Cauchy--Schwarz.  We conclude
\begin{equation}\label{eq:width-liminf}
\liminf_{d\to\infty} \frac 1d\,
\E \sup_{P \in \cK_{\kappa_s} \cap \sphereF} \ip{Z}{P}
\; \ge \; q(s) .
\end{equation}
The right-hand side of \eqref{eq:width-liminf} tends to $1/2$ as $s \downarrow 0$.  Given $a < 1/2$, choose a fixed $s > 0$ small enough that $q(s) > a$, and take $\kappa = \kappa_s = (C+s)/s$.
\end{proof}

\subsection{Gordon's comparison inequalities}\label{sec:gordon}

We use Gordon's Gaussian min--max theorem \cite{gordon1985some,gordon1988milman} in two guises: a lower tail for min--max statistics (used on the satisfiable side) and the escape-through-a-mesh estimate (used on the unsatisfiable side). Convexity of the index set is required in neither.  We give a full proof of the first, in the precise form needed; the second is classical.

\begin{lemma}[Gordon min--max lower tail]\label{lem:gordon}
Let $E$ be a Euclidean space of finite dimension, let $T \subseteq E$ be a nonempty compact subset of the unit sphere of $E$, let $\Gamma_1, \dots, \Gamma_N$ be independent standard Gaussian vectors in $E$, and let $g_N \sim \cN(0, I_N)$.  Then for every $t > 0$,
\begin{equation}\label{eq:gordon-minmax}
\Prob\left\{
\inf_{\|z\|_2 = 1}\, \sup_{R \in T}\,
\sum_{i=1}^N z_i \ip{\Gamma_i}{R} \; \le\;  w(T) - \E\|g_N\|_2 - t
\right\} \le 2\, e^{-t^2/4} .
\end{equation}
\end{lemma}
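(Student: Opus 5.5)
The plan is to prove \eqref{eq:gordon-minmax} with Gordon's Gaussian min--max comparison theorem, followed by Gaussian concentration of the comparison process.

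\textbf{Setup.} Write $\Gamma$ for the linear map $E \to \R^N$, $R \mapsto (\ip{\Gamma_i}{R})_{i\le N}$. The statistic is
\[
\Phi := \inf_{\|z\|_2=1}\sup_{R\in T} \ip{z}{\Gamma R} = \inf_{z}\sup_R X_{z,R}, \qquad X_{z,R}:=\ip{z}{\Gamma R}.
\]
Here $\Gamma$ is a standard Gaussian operator, and both index sets are compact subsets of unit spheres. The comparison process is
\[
Y_{z,R} := \ip{h}{R} + \ip{g}{z},
\]
with $h$ standard Gaussian in $E$ and $g\sim\cN(0,I_N)$.

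\textbf{Covariance comparison.} For $\|z\|=\|z'\|=\|R\|=\|R'\|=1$,
\[
\E X_{z,R}X_{z',R'}=\ip{z}{z'}\ip{R}{R'}, \qquad \E Y_{z,R}Y_{z',R'}=\ip{R}{R'}+\ip{z}{z'}.
\]
Hence
\[
\E Y Y' - \E X X' = \bigl(1-\ip{z}{z'}\bigr)\bigl(1-\ip{R}{R'}\bigr) + \ip{z}{z'}\ip{R}{R'} - 1 + 1 - \ip{z}{z'}\ip{R}{R'} .
\]
A cleaner way to organize this is to add an independent scalar $\xi\sim\cN(0,1)$ to $X$, i.e.\ use $X_{z,R}+\xi$. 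Then
\[
\E(X+\xi)(X'+\xi) - \E YY' = \ip{z}{z'}\ip{R}{R'}+1-\ip{R}{R'}-\ip{z}{z'} = \bigl(1-\ip{z}{z'}\bigr)\bigl(1-\ip{R}{R'}\bigr)\ge 0,
\]
with equality when $z=z'$. These are exactly Gordon's conditions, with the outer index $z$ and the inner index $R$.

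\textbf{Applying Gordon.} Gordon's theorem, extended from finite sets to compact $T$ and the sphere by continuity and finite nets, gives for every $\lambda$
\[
\Prob\Bigl\{\inf_z\sup_R (X_{z,R}+\xi)\ge\lambda\Bigr\}\ \ge\ \Prob\Bigl\{\inf_z\sup_R Y_{z,R}\ge\lambda\Bigr\}.
\]
The orientation must be checked against the standard statement: larger within-block correlations for $X+\xi$ (equality on the diagonal of the outer index) give a larger min--max in the stochastic sense. For $Y$ the min--max decouples:
\[
\inf_z\sup_R Y_{z,R} = \sup_{R\in T}\ip{h}{R} - \|g\|_2 .
\]
Since $T$ lies in the unit sphere, $h\mapsto\sup_{R\in T}\ip{h}{R}$ is $1$-Lipschitz, and so is $\|g\|_2$. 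Their difference is therefore a $\sqrt2$-Lipschitz function of $(h,g)$ with mean $w(T)-\E\|g_N\|_2$. Gaussian concentration then yields
\[
\Prob\bigl\{\sup_R\ip{h}{R}-\|g\|_2 \le w(T)-\E\|g_N\|_2-s\bigr\}\le e^{-s^2/4}.
\]

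\textbf{Removing $\xi$.} The additive $\xi$ does not depend on $(z,R)$, so $\inf_z\sup_R(X+\xi)=\Phi+\xi$. With $\lambda = w(T)-\E\|g_N\|_2 - t/2$, say, we get
\[
\Prob\{\Phi\le w(T)-\E\|g_N\|_2-t\} \le \Prob\{\Phi+\xi < \lambda\}+\Prob\{\xi>t/2\}.
\]
Both terms are Gaussian tails, and the constants have to be balanced to reach $2e^{-t^2/4}$. With the naive split $s=t/2$ one only gets $e^{-t^2/16}+e^{-t^2/8}$. To obtain the stated $e^{-t^2/4}$, I would instead use concentration of $\Phi$ itself: $\Gamma\mapsto\Phi$ is $1$-Lipschitz in Frobenius norm, because $|\ip{z}{(\Gamma-\Gamma')R}|\le\|\Gamma-\Gamma'\|_{\op}$. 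It then suffices to lower bound $\E\Phi\ge w(T)-\E\|g_N\|_2$, which follows from the comparison by integrating tails (the $\xi$ has mean zero). The tail bound is then $e^{-t^2/2}\le 2e^{-t^2/4}$.

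\textbf{Main obstacle.} The main difficulty is getting the direction and form of Gordon's inequality right, i.e.\ matching the $\inf$ over $z$ and $\sup$ over $R$ to the right index roles, and justifying the passage from finite index sets to the sphere and compact $T$. The expectation-plus-Lipschitz route avoids the constant bookkeeping in the tail bound.
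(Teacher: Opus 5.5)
Your argument is correct, and up to the last step it matches the paper's: the same auxiliary scalar added to $X$, the same covariance gap $(1-\ip{z}{z'})(1-\ip{R}{R'})\ge 0$ that vanishes for equal $z$, and the same decoupled value $\sup_{R\in T}\ip{h}{R}-\|g\|_2$.

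The difference is in how you remove $\xi$. The paper stays at the level of tails. From $\Prob\{\Phi+\xi\le u\}\le\Prob\{\inf_z\sup_R Y_{z,R}\le u\}$ and the independence and symmetry of $\xi$, it gets $\tfrac12\Prob\{\Phi\le u\}\le\Prob\{\Phi+\xi\le u\}$. It then needs only concentration of the $\sqrt2$-Lipschitz auxiliary value, which is exactly where the $2e^{-t^2/4}$ comes from.

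You instead integrate the stochastic dominance to get $\E\Phi\ge w(T)-\E\|g_N\|_2$. This is legitimate, since $\E\xi=0$ and $|\Phi|\le\|\Gamma\|_{\op}$ is integrable. You then concentrate $\Phi$ itself, which is $1$-Lipschitz in the Frobenius norm of the Gaussian coefficient matrix, and obtain the sharper bound $e^{-t^2/2}$.

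Each route has its advantage. Yours exploits the Lipschitz structure of the primary min--max, which is available here because both index sets lie in unit spheres. The paper's symmetry trick transfers the auxiliary lower tail directly and never needs $\Phi$'s own fluctuations, so it generalizes more readily. Your passage from finite nets to the sphere and compact $T$ also works in expectation form, via dominated convergence with the same integrable bound.

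There are two cosmetic slips, neither affecting the proof:
\begin{itemize}
\item Your first expansion of $\E YY'-\E XX'$ is wrong; it equals $\ip{z}{z'}+\ip{R}{R'}-\ip{z}{z'}\ip{R}{R'}$. You discard that line anyway.
\item The heuristic that ``larger within-block correlations give a larger min--max'' is backwards. By Slepian, more within-block correlation lowers the max. Here the within-block covariances coincide, and it is the larger cross-block ($z\neq z'$) covariances of $X+\xi$ that make its min--max stochastically larger. The inequality you actually state has the correct orientation.
\end{itemize}
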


\begin{proof}
The proof compares finite nets after adding one scalar Gaussian to equalize the variances.  We then pass to the compact index sets and remove the added scalar by symmetry.

Identify $E$ with $\R^M$ through an orthonormal basis and let $A$ be the $N \times M$ matrix whose $i$-th row is $\Gamma_i$.  Introduce an independent scalar $\gamma_0 \sim \cN(0,1)$ and, for $z \in \mathbb{S}^{N-1}$, $R \in T$, consider the two centered Gaussian processes
\[
X_{z,R} = z^\top A R + \gamma_0,
\qquad
Y_{z,R} = \ip{h}{R} + \ip{g_N}{z},
\]
where $h \sim \cN(0, I_M)$ and $g_N$ are independent.  Both processes have variance $2$.  If $a = \ip{z}{z'}$ and $b = \ip{R}{R'}$, then
\begin{equation}\label{eq:gordon-cov}
\E X_{z,R} X_{z',R'} - \E Y_{z,R} Y_{z',R'}
= 1 + ab - a - b = (1-a)(1-b) \;\ge\; 0 ,
\end{equation}
with equality whenever $z = z'$.  We use the following finite-index form of Gordon's comparison theorem \cite{gordon1985some} (see also \cite[Appendix A]{thrampoulidis2014gaussian} for additional background): if $(U_{ij})$ and $(V_{ij})$ are finite centered Gaussian arrays with equal coordinate variances such that
\[
\E U_{ij} U_{ik} \le \E V_{ij} V_{ik},
\qquad
\E U_{ij} U_{\ell k} \ge \E V_{ij} V_{\ell k} \quad (i \neq \ell),
\]
then $\Prob\{\min_i \max_j U_{ij} < u\} \le \Prob\{\min_i \max_j V_{ij} < u\}$ for every $u$.  Apply this with $U = X$, $V = Y$ on finite nets of $\S^{N-1}$ and $T$, the net points of $\S^{N-1}$ playing the role of $i$: by \eqref{eq:gordon-cov} the covariance difference vanishes for a common minimizing index ($a = 1$) and is nonnegative for distinct ones, which is exactly the required pair of inequalities.  Thus the finite-net min--max values $P_{X,k}, P_{Y,k}$ (along a sequence of successively finer deterministic nets, indexed by $k$) satisfy the stated comparison.

Uniform sample-path continuity on the compact product gives $P_{X,k}\to P_X$ and $P_{Y,k}\to P_Y$ almost surely, where $P_X=\inf_z\sup_R X_{z,R}$ and similarly for $P_Y$.  Passing the finite-net comparison first through $k\to\infty$ and then through thresholds $u+\varepsilon\downarrow u$ gives
\begin{equation}\label{eq:gordon-compact}
\Prob\Bigl\{\inf_z \sup_R X_{z,R} \le u\Bigr\}
\;\le\;
\Prob\Bigl\{\inf_z \sup_R Y_{z,R} \le u\Bigr\}
\qquad (u \in \R).
\end{equation}
The auxiliary min--max computes exactly:
\[
\inf_{\|z\|_2=1} \sup_{R\in T}
\bigl(\ip{h}{R} + \ip{g_N}{z}\bigr)
= \sup_{R \in T} \ip{h}{R} - \|g_N\|_2 ,
\]
whose expectation is $w(T) - \E\|g_N\|_2$.  As a function of the pair $(h, g_N)$ it is $\sqrt2$-Lipschitz, so its lower deviations at distance $t$ have probability at most $e^{-t^2/4}$ by the Gaussian concentration inequality \cite[Theorem~5.6]{boucheron2013concentration}.  Finally, if $P(A) = \inf_z \sup_R z^\top A R$, then $P_X=P(A)+\gamma_0$. Independence and symmetry of $\gamma_0$ give
\[
\tfrac12 \Prob\{P(A) \le u\}
= \Prob\{P(A) \le u,\ \gamma_0 \le 0\}
\le \Prob\{P(A) + \gamma_0 \le u\} ,
\]
and combining with \eqref{eq:gordon-compact} at $u = w(T) - \E\|g_N\|_2 - t$ proves \eqref{eq:gordon-minmax}.
\end{proof}

\begin{lemma}[Escape through a mesh {\cite{gordon1988milman}}]\label{lem:escape}
Let $A$ be an $s \times M$ matrix with independent $\cN(0,1)$ entries and let $T$ be a nonempty compact subset of the unit sphere of $\R^M$.  Then for every $t > 0$,
\begin{equation}\label{eq:gordon-escape}
\Prob\left\{
\inf_{x \in T} \|A x\|_2 \;\le\; \E\|g_s\|_2 - w(T) - t
\right\}
\le e^{-t^2/2} ,
\qquad g_s \sim \cN(0, I_s).
\end{equation}
\end{lemma}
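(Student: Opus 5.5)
We derive the escape estimate from the Gordon min--max comparison, using the same variance-equalizing device as in the proof of Lemma~\ref{lem:gordon}. The starting point is the dual representation
\[
\|Ax\|_2 = \max_{\|u\|_2 = 1} u^\top A x .
\]
This gives
\[
\inf_{x \in T} \|Ax\|_2 = \min_{x \in T} \max_{u \in \S^{s-1}} u^\top A x ,
\]
a min--max with $x$ as the outer index.

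Introduce an independent scalar $\gamma_0 \sim \cN(0,1)$ and the two processes
\[
X_{x,u} = u^\top A x + \gamma_0 ,
\qquad
Y_{x,u} = \ip{g_s}{u} + \ip{h}{x} ,
\]
with $h \sim \cN(0, I_M)$ and $g_s \sim \cN(0, I_s)$ independent. Both have variance $2$. Setting $a = \ip{x}{x'}$ and $b = \ip{u}{u'}$, the covariance difference is
\[
\E X X' - \E Y Y' = (1-a)(1-b) \ge 0 ,
\]
which vanishes when $x = x'$. These are exactly the hypotheses of the finite-index Gordon theorem quoted in Lemma~\ref{lem:gordon}, with $x$ as the outer (min) index.

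Passing to finite nets and then to compact limits as in that proof gives
\[
\Prob\Bigl\{\min_x \max_u X \le v\Bigr\} \le \Prob\Bigl\{\min_x \max_u Y \le v\Bigr\} .
\]
The auxiliary problem computes explicitly:
\[
\min_{x \in T} \max_{u} Y = \|g_s\|_2 + \min_{x \in T} \ip{h}{x} = \|g_s\|_2 - \sup_{x \in T} \ip{-h}{x} .
\]
Its mean is $\E\|g_s\|_2 - w(T)$, using $-h \overset{d}{=} h$. It is also $\sqrt2$-Lipschitz in $(g_s, h)$.

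Gaussian concentration then bounds the auxiliary lower tail at distance $t$ by $e^{-t^2/4}$. Removing $\gamma_0$ by symmetry, as in Lemma~\ref{lem:gordon}, yields $2e^{-t^2/4}$. This already has the right form but is weaker than the claimed $e^{-t^2/2}$.

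The main obstacle is this constant. The sharp $e^{-t^2/2}$ is not reachable through the $\gamma_0$ trick, which loses both the factor $2$ and a factor $2$ in the exponent. Instead I would follow Gordon's original route. First, Gordon's inequality without variance equalization yields the expectation bound
\[
\E \inf_{x \in T} \|Ax\|_2 \ge \E\|g_s\|_2 - w(T) ,
\]
via the Gordon/Slepian expectation comparison applied to the same two processes without $\gamma_0$; variances are not needed for the expectation version. Second, the map $A \mapsto \inf_{x \in T} \|Ax\|_2$ is $1$-Lipschitz in Frobenius norm, because $T$ lies on the unit sphere. Gaussian concentration for this $1$-Lipschitz function around its mean then gives the lower tail $e^{-t^2/2}$ directly, which is the claimed estimate.
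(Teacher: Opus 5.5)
Your proof is correct, and it supplies more than the paper does. The paper gives no argument for Lemma~\ref{lem:escape}. It only cites Gordon and remarks that the proof of Lemma~\ref{lem:gordon}, with the index families exchanged, yields the estimate with slightly worse constants.

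Your first half carries out exactly that remark: outer index $x\in T$, inner index $u\in\S^{s-1}$, covariance gap $(1-a)(1-b)$ vanishing at $x=x'$, and auxiliary value $\|g_s\|_2-\sup_{x\in T}\ip{-h}{x}$. Your accounting of the loss, $2e^{-t^2/4}$ instead of $e^{-t^2/2}$, is right.

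Your second half is the classical derivation of the stated form: Gordon's expectation bound followed by Gaussian concentration, as in \cite{chandrasekaran2012convex}. The comparison is used only for the mean. All of the probability comes from concentration of the $1$-Lipschitz map $A\mapsto\inf_{x\in T}\|Ax\|_2$, which is what gives the sharp exponent and no prefactor.

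One simplification is available for the mean bound. You need not invoke a separate increment-form (Sudakov--Fernique type) Gordon inequality, although that version is valid and your increment comparison $2-2ab\le 4-2a-2b$ checks out. The distributional comparison you already derived with $\gamma_0$ is a stochastic-domination statement, so it implies $\E\min_x\max_u X\ge\E\min_x\max_u Y$. Integrability holds because the two sides are dominated by $\|A\|_{\op}+|\gamma_0|$ and $\|g_s\|_2+\|h\|_2$. Since $\min_x\max_u X=\inf_{x\in T}\|Ax\|_2+\gamma_0$ and $\E\gamma_0=0$, the variance-equalizing device costs nothing at the level of means. This gives $\E\inf_{x\in T}\|Ax\|_2\ge\E\|g_s\|_2-w(T)$ directly from the comparison you already have.

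As a side remark, the paper uses the lemma once, in \eqref{eq:bulk-escape}--\eqref{eq:deletion-union}. There even your weaker bound $2e^{-t^2/4}$ would suffice up to constants, since $nH_2(\delta)\le\Delta^2d^2/128<\Delta^2d^2/64$. Your second route, however, proves the lemma exactly as stated.
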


Lemma~\ref{lem:escape} also follows from the argument used for Lemma~\ref{lem:gordon}, with the roles of the two index families exchanged, at the cost of slightly worse constants; we use the classical form \eqref{eq:gordon-escape} as stated.  We will use repeatedly the elementary bounds
\begin{equation}\label{eq:chi-mean}
\sqrt{s-1} \;\le\; \E\|g_s\|_2 \;=\;
\sqrt2\, \frac{\Gamma((s+1)/2)}{\Gamma(s/2)} \;\le\; \sqrt s .
\end{equation}

\begin{lemma}[Rectangular Gaussian matrices; see
{\cite[Theorem~7.3.1]{vershynin2018high}}]\label{lem:gauss-rect}
If $A \in \R^{N \times M}$ has independent $\cN(0, \sigma^2)$ entries, then for every $t \ge 0$,
\begin{equation}\label{eq:gauss-rect}
\Prob\bigl\{\|A\|_{\op} > \sigma(\sqrt N + \sqrt M + t)\bigr\}
\le 2 e^{-t^2/2} .
\end{equation}
\end{lemma}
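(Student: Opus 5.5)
The statement to prove is the rectangular Gaussian operator-norm bound, Lemma~\ref{lem:gauss-rect}. By homogeneity we may take $\sigma=1$ and prove $\Prob\{\|A\|_{\op} > \sqrt N + \sqrt M + t\} \le 2e^{-t^2/2}$. The argument has two parts. First we bound the expectation $\E\|A\|_{\op} \le \sqrt N + \sqrt M$ by a Gaussian comparison. Then we upgrade this to a tail bound by Gaussian concentration of Lipschitz functions.

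For the expectation, write
\[
\|A\|_{\op} = \sup_{(u,v) \in \S^{N-1}\times \S^{M-1}} X_{u,v}, \qquad X_{u,v} = u^\top A v .
\]
Compare $X$ with the process $Y_{u,v} = \ip{g}{u} + \ip{h}{v}$, where $g \sim \cN(0,I_N)$ and $h\sim \cN(0,I_M)$ are independent. Writing $a=\ip{u}{u'}$ and $b=\ip{v}{v'}$, a direct computation gives
\[
\E (X_{u,v}-X_{u',v'})^2 = 2-2ab \;\le\; 4-2a-2b = \E(Y_{u,v}-Y_{u',v'})^2 ,
\]
since the difference is $2(1-a)(1-b)\ge 0$. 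This is the same covariance identity \eqref{eq:gordon-cov} already used in Lemma~\ref{lem:gordon}. The Sudakov--Fernique inequality, applied on finite nets and then extended by path continuity as in the proof of Lemma~\ref{lem:gordon}, yields
\[
\E\sup X \le \E \sup Y = \E\|g\|_2 + \E\|h\|_2 \le \sqrt N + \sqrt M ,
\]
where the last step uses \eqref{eq:chi-mean}.

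For the tail, note that $A \mapsto \|A\|_{\op}$ is $1$-Lipschitz with respect to the Frobenius norm, which is the Euclidean norm on the entries. The Gaussian concentration inequality \cite[Theorem~5.6]{boucheron2013concentration} therefore gives
\[
\Prob\{\|A\|_{\op} \ge \E\|A\|_{\op} + t\} \le e^{-t^2/2}.
\]
Combined with the expectation bound, this proves the claim with constant $1 \le 2$. The factor $2$ in the statement leaves room for a two-sided version, or for a proof that uses Gordon's inequality in the form of Lemma~\ref{lem:gordon} rather than Sudakov--Fernique.

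The only delicate point is the comparison step, specifically checking that the increment inequality holds for all pairs of index points and not merely at a common index. The identity above settles this. The expectation bound needs $\E\|g\|_2 \le \sqrt N$, which is Jensen's inequality. Everything else is standard.
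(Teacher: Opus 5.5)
Your proof is correct and is essentially the standard argument behind the result the paper cites without proof (Vershynin, Theorem~7.3.1). A Sudakov--Fernique comparison of $u^\top A v$ with $\ip{g}{u}+\ip{h}{v}$ gives $\E\|A\|_{\op}\le\sqrt N+\sqrt M$, and Gaussian concentration for the $1$-Lipschitz map $A\mapsto\|A\|_{\op}$ then gives the tail bound, in fact with constant $1$ in place of $2$.
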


\subsection{The spectral edge of the quadratic feature map}\label{sec:feature}

The two main proofs use the quadratic feature map both to solve a small set of constraints and to control perturbations.  Both tasks require its singular values to remain of order $d$.  We prove this uniformly throughout $n/d^2<1/2$, a range containing the feasibility transition at $1/4$.

For $p \ge 1$ and i.i.d.\ $y_1, \dots, y_n \sim \cN(0, I_p)$, define
\begin{equation}\label{eq:Q-def}
\cQ_y : \Sym p \to \R^n, \qquad
\cQ_y(T) = \bigl(y_i^\top T y_i - \Tr T\bigr)_{i=1}^n ,
\end{equation}
and let $\cQ_y \cQ_y^*$ denote its $n \times n$ Gram matrix, $(\cQ_y\cQ_y^*)_{ij} = (y_i^\top y_j)^2 - \|y_i\|_2^2 - \|y_j\|_2^2 + p$. In the Frobenius-orthonormal basis $E_{aa}=e_ae_a^\top$ and $E_{ab}=(e_ae_b^\top+e_be_a^\top)/\sqrt2$ for $a<b$, the row-coordinate matrix of $\cQ_y$ is $[D\ V]$, where
\begin{equation}\label{eq:DV-def}
D_{ia} = y_{ia}^2 - 1 \quad (a \le p),
\qquad
V_{i, (a,b)} = \sqrt2\, y_{ia} y_{ib} \quad (a < b).
\end{equation}
Thus $\cQ_y\cQ_y^*=DD^\top+VV^\top$.  The next two lemmas control the square-free block from both sides and the diagonal block from above.

\begin{lemma}[Square-free quadratic feature edge]\label{lem:squarefree-edge}
Let $K\subset(0,1/2)$ be compact, let $n/p^2\in K$, and let $V$ be the square-free block in \eqref{eq:DV-def}.  Along every sequence for which
$n/p^2\to\rho_*\in K$,
\begin{equation}\label{eq:VV-edges}
\lambda_{\min}\Bigl(\frac{VV^\top}{p^2}\Bigr) \longrightarrow
\bigl(1-\sqrt{2\rho_*}\bigr)^2,
\qquad
\lambda_{\max}\Bigl(\frac{VV^\top}{p^2}\Bigr) \longrightarrow
\bigl(1+\sqrt{2\rho_*}\bigr)^2
\end{equation}
in probability.  Consequently, there are constants $0<c_K\le C_K<\infty$ such that, uniformly over $n/p^2\in K$,
\[
\Prob\bigl\{c_Kp^2I_n\preceq VV^\top\preceq C_Kp^2I_n\bigr\}
\longrightarrow1.
\]
\end{lemma}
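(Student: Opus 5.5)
The plan is to deduce the statement from the two-sided spectral-edge theorem for polynomial-scaling kernel random matrices of Kogan, Namdy and Huang \cite{kogan2025extremal}, after putting the Gram matrix $VV^\top$ in the appropriate kernel form. Its entries are
\[
(VV^\top)_{ij}=2\sum_{a<b}y_{ia}y_{ib}y_{ja}y_{jb}=(y_i^\top y_j)^2-\sum_{a}y_{ia}^2y_{ja}^2 .
\]
The diagonal entries therefore equal $\|y_i\|_2^4-\sum_a y_{ia}^4=p^2+O_\P(p^{3/2}\sqrt{\log n})$, uniformly in $i$ by a union bound. Equivalently, $VV^\top/p^2$ is the Gram matrix of the square-free degree-two Hermite features $\phi(y_i)=\sqrt2(y_{ia}y_{ib})_{a<b}\in\R^{N}$, with $N=p(p-1)/2$. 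These features are isotropic, since $\E\phi\phi^\top=2I_N$, so the natural Marchenko--Pastur aspect ratio is $n/N\to2\rho_*\in(0,1)$.

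\textbf{Step 1 (identify the limit).} The heuristic gives the edges directly. Because $VV^\top$ is an $n\times n$ Gram matrix of $n$ isotropic vectors in dimension $N$, with $n<N$, its spectrum should approach Marchenko--Pastur with ratio $n/N$ and scale $2N\approx p^2$. This yields the edges $p^2(1\pm\sqrt{n/N})^2=p^2(1\pm\sqrt{2\rho_*})^2$. The restriction $K\subset(0,1/2)$ is exactly the condition $2\rho_*<1$, which keeps the lower edge strictly positive.

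\textbf{Step 2 (invoke \cite{kogan2025extremal}).} I would apply their result for kernel matrices $f(\langle y_i,y_j\rangle/\sqrt p)$ with $n\asymp p^2$, using the degree-two kernel $f(t)=t^2-1$ together with the diagonal correction $\sum_a(y_{ia}^2-1)(y_{ja}^2-1)$. Since $DD^\top$ is not part of $VV^\top$, this means writing $VV^\top=(\text{kernel matrix})-DD^\top$ plus rank-$\le 2$ and diagonal terms coming from the $\|y_i\|^2$ terms in the expansion. It is cleaner, though, to work directly with the square-free form if their theorem is stated for the Gram matrix of Hermite/Wick features $\{h_2\}$ of degree exactly two. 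That form matches $V$ exactly, and its extremal eigenvalues are stated to converge to $(1\pm\sqrt{n/N})^2$ after normalization. Any discrepancy between the diagonal of $VV^\top$ and $p^2$, namely $O(p^{3/2}\sqrt{\log n})$ in operator norm for a diagonal matrix, is $o(p^2)$ and is absorbed by Weyl's inequality.

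\textbf{Step 3 (uniformity).} The convergence \eqref{eq:VV-edges} holds along every subsequence with $n/p^2\to\rho_*\in K$. I would get uniformity over $K$ by a subsequence-compactness argument. If uniformity failed, there would be a sequence $n/p^2\in K$ and $\delta>0$ with probability at least $\delta$ of violating the bounds with constants $c_K=\tfrac12\min_K(1-\sqrt{2\rho})^2$ and $C_K=2\max_K(1+\sqrt{2\rho})^2$. Extracting a convergent subsequence $n/p^2\to\rho_*\in K$ contradicts \eqref{eq:VV-edges}, since the limits lie strictly inside $(c_K,C_K)$.

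\textbf{Main obstacle.} The delicate point is matching the exact hypotheses and normalization of \cite{kogan2025extremal}: whether their statement covers the smallest eigenvalue when $n<N$, and whether it applies to the square-free feature Gram matrix rather than a full kernel matrix containing low-rank spikes. If it applies only to the full kernel, I would remove the spikes explicitly. The rank-one piece $\mathbf 1\mathbf 1^\top$-type terms and the $\|y_i\|^2$ terms span a subspace of dimension $O(1)$, and the $D$-block has operator norm $O(\sqrt{n}+p)=O(p)$, which is $o(p^2)$. Both kinds of term are negligible in the edge eigenvalues, by Weyl's inequality for the bounded-norm terms and by interlacing for the low-rank terms. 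Interlacing alone may shift $\lambda_{\min}$, though, so for the low-rank terms I would check their operator norm rather than rely on rank.
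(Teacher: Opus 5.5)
This is essentially the paper's argument. The paper also invokes the square-free degree-two case of \cite[Theorem~1.2(2a)]{kogan2025extremal}, in the form $\lambda_{\min/\max}(R_2-\diag R_2)\to\sqrt{\rho_*}\mp\sqrt2$ for $R_2=X_2X_2^\top/(p\sqrt n)$. It then adds back the diagonal $(R_2)_{ii}\to 1/(2\sqrt\rho)$ uniformly in $i$, which is your Weyl step, and obtains uniformity over $K$ by the same subsequence-compactness argument.

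One caution about your fallback route. The diagonal block is not negligible: $\|DD^\top\|_{\op}=\|D\|_{\op}^2\asymp n\asymp p^2$ (cf.\ Lemma~\ref{lem:diagonal-feature-block}), which is the same scale as the edges. Comparing $\|D\|_{\op}=O(p)$ with $p^2$ was the slip. So passing through the full kernel $DD^\top+VV^\top$ could not be closed by Weyl's inequality, but that route is not needed here.
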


\begin{proof}
Put $M_2=\binom p2$ and let $X_2$ be the $n \times M_2$ matrix $(X_2)_{i,(a,b)} = y_{ia} y_{ib}$, so $V = \sqrt 2\, X_2$, and put
\begin{equation}\label{eq:R2-def}
R_2 = \frac{1}{p \sqrt n}\, X_2 X_2^\top .
\end{equation}
Write $\rho = \rho(p) = n/p^2 \in K$; because $\max K<1/2$, in particular $n<M_2$ for all large $p$.  The extreme-eigenvalue theorem of Kogan, Nandy and Huang for random kernel matrices with polynomial scaling \cite[Theorem~1.2(2a)]{kogan2025extremal}, in its square-free degree-two case, applies: the Gaussian coordinates are centered with unit variance and satisfy the moment growth $\E|y_{11}|^k \le k^{Ck}$ for all $k \ge 1$, and along any sequence with $n/p^2 \to \rho_* \in K$ the two extremal eigenvalues of the off-diagonal part converge almost surely,
\begin{equation}\label{eq:KNH-edge}
\lambda_{\min}\bigl(R_2 - \diag R_2\bigr) \longrightarrow \sqrt{\rho_*} -
\sqrt 2,
\qquad
\lambda_{\max}\bigl(R_2 - \diag R_2\bigr) \longrightarrow \sqrt{\rho_*} +
\sqrt 2 .
\end{equation}
The diagonal is estimated directly: we have the exact formula
\begin{equation}\label{eq:R2-diag}
(R_2)_{ii} = \frac{\bigl(\sum_a y_{ia}^2\bigr)^2 - \sum_a
y_{ia}^4}{2\, p \sqrt n} .
\end{equation}
By the $\chi^2$ tail bound and a union bound over $n = O(p^2)$ rows, $\max_{i}\bigl| \|y_i\|_2^2/p - 1 \bigr| \to 0$ in probability; by a union bound over the $np = O(p^3)$ coordinates, $\max_{i,a} y_{ia}^2 = O_{\Prob}(\log p)$; hence $\max_i \sum_a y_{ia}^4 \le (\max_{i,a} y_{ia}^2)(\max_i \|y_i\|_2^2) = O_{\Prob}(p \log p) = o_\P (p^2)$.  Substituting in \eqref{eq:R2-diag} and using $\sqrt n / p = \sqrt\rho$,
\begin{equation}\label{eq:diag-limit}
\max_{i \le n} \Bigl| (R_2)_{ii} - \frac{1}{2\sqrt\rho} \Bigr|
\longrightarrow 0 \quad \text{in probability.}
\end{equation}
Combining \eqref{eq:KNH-edge} and \eqref{eq:diag-limit} yields (along the same sequences, in probability) the limits in \eqref{eq:VV-edges}, after the algebra $2\sqrt{\rho}\,\bigl[\tfrac{1}{2\sqrt\rho} + \sqrt\rho \mp \sqrt2\bigr] = 1 + 2\rho \mp 2\sqrt{2\rho} = (1 \mp \sqrt{2\rho})^2$.  The lower edge in \eqref{eq:VV-edges} is bounded below by $\tfrac12 (1 - \sqrt{2 \rho_+})^2 > 0$ with probability tending to one, where $\rho_+ = \max K < 1/2$.  The corresponding upper bound is uniform as well.  Compactness of $K$ gives the final assertion.
\end{proof}

\begin{lemma}[Operator norm of the diagonal quadratic block]
\label{lem:diagonal-feature-block}
Under the assumptions of Lemma~\ref{lem:squarefree-edge}, let $D_{ia}=y_{ia}^2-1$ be the diagonal block in \eqref{eq:DV-def}.  There is $C_K<\infty$ such that, uniformly over $n/p^2\in K$,
\[
\Prob\{\|D\|_{\op}^2\le C_Kn\}\longrightarrow1.
\]
\end{lemma}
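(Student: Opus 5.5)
The matrix $D\in\R^{n\times p}$ has independent rows $(y_{ia}^2-1)_{a\le p}$, which are i.i.d.\ centered with identity-scaled covariance $\E D_{ia}D_{ib}=2\delta_{ab}$. Since $n\asymp p^2\gg p$, the claim $\|D\|_{\op}^2\le C_Kn$ is a ``tall matrix'' statement. I would control $D^\top D=\sum_{i=1}^n d_id_i^\top$, a $p\times p$ sum of $n$ independent rank-one matrices with $\E D^\top D=2nI_p$. The target is $\|D^\top D\|_{\op}\le C n$, and the relevant fluctuation scale is only about $\sqrt{np}\cdot\mathrm{polylog}+p\,\mathrm{polylog}$, which is $o(n)$.

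The cleanest route is truncation followed by matrix Bernstein. On the event $\mathcal A=\{\max_{i,a}y_{ia}^2\le C\log p\}$, whose probability tends to one by a union bound over $np=O(p^3)$ Gaussians, each row satisfies $\|d_i\|_2^2\le p\,(C\log p)^2$. The truncation should act on the rows themselves: replace $d_i$ by $\tilde d_i=d_i\mathbf 1\{\|d_i\|_2^2\le Cp\log^2p\}$. Then $\tilde D=D$ on $\mathcal A$. The truncation shifts the mean only negligibly: $\|\E\tilde d_i\tilde d_i^\top-2I_p\|_{\op}\le \E\|d_i\|_2^2\mathbf 1\{\cdot\}$, which is superpolynomially small because $\|d_i\|_2^2$ has subexponential-type tails (for instance, $\Prob(\max_a y_{ia}^2>C\log p)\le p^{-10}$, and Cauchy--Schwarz handles the product with $\|d_i\|^2$). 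Apply matrix Bernstein to $\sum_i(\tilde d_i\tilde d_i^\top-\E\tilde d_i\tilde d_i^\top)$ with uniform bound $L\lesssim p\log^2p$ and variance proxy $\|\sum_i\E\|\tilde d_i\|^2\tilde d_i\tilde d_i^\top\|\lesssim n p\log^2p$. This gives a deviation of order $\sqrt{np}\log^{2}p+p\log^3p=o(n)$ with probability $1-o(1)$. Hence $\|D\|_{\op}^2\le 2n+o(n)\le 3n$ with high probability, uniformly over $n/p^2\in K$, since only $n\ge c\,p^2$ is used.

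An alternative, avoiding Bernstein, is to bound the rows by subexponential vector norms (each $\langle d_i,u\rangle$ is a centered chaos that is subexponential with $O(1)$ norm for unit $u$) and invoke a standard covariance-estimation bound for heavy-tailed rows (Adamczak--Litvak--Pajor--Tomczak-Jaegermann), which gives $\|D^\top D/n-2I\|_{\op}\to0$ when $n\gg p\log^2p$. Either route works, and I would present the Bernstein one because it is self-contained.

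The only mildly delicate step is making sure the truncation does not bias the mean, i.e.\ the tail estimate for $\E\|d_i\|^2\mathbf 1\{\|d_i\|^2>Cp\log^2 p\}$. This follows from Gaussian tail bounds on $\max_a y_{ia}^2$ together with $\E\|d_i\|^4=O(p^2)$ and Cauchy--Schwarz. Everything else is routine.
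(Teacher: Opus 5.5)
Your proposal is correct and follows essentially the paper's argument: truncate so that each row satisfies $\|\tilde d_i\|_2^2 = O(p\log^2 p)$, note that the truncation is inactive with high probability, and apply matrix Bernstein to the $p\times p$ sum $\sum_i \tilde d_i\tilde d_i^\top$, whose fluctuation is $o(n)$ because $n\asymp p^2$. The only difference is cosmetic. The paper truncates and recenters entrywise, which keeps the coordinates independent, kills the off-diagonal of $\E A_i^2$, and gives variance $O(np)$ without logarithms. Your row-norm truncation instead gives $\tilde d_i\tilde d_i^\top\preceq d_i d_i^\top$, hence $\E\tilde d_i\tilde d_i^\top\preceq 2I_p$ directly, so your mean-bias estimate is not actually needed for the upper bound.
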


\begin{proof}
We truncate the entries and apply matrix Bernstein to the sample covariance $\widetilde D^\top\widetilde D$.  Put $u_{ia} = y_{ia}^2 - 1$, fix a large constant $L$, and set
\begin{equation}\label{eq:trunc-def}
\mu_p = \E\bigl[u_{11} \mathbf 1\{|u_{11}| \le L \log p\}\bigr],
\qquad
\widetilde u_{ia} = u_{ia} \mathbf 1\{|u_{ia}| \le L\log p\} - \mu_p .
\end{equation}
The $\widetilde u_{ia}$ are independent and centered, satisfy $|\widetilde u_{ia}| \le C \log p$, and have uniformly bounded fourth moments.  Gaussian tails and a union bound over $np = O(p^3)$ entries show that no truncation occurs with probability $1 - o(1)$, provided $L$ is large enough, while Cauchy--Schwarz on the tail gives $|\mu_p| \le C p^{-cL}$, which can be made smaller than any fixed negative power of $p$ by increasing $L$.

Let $\widetilde u_i = (\widetilde u_{ia})_{a \le p}$ and set $A_i=\widetilde u_i\widetilde u_i^\top$ and $M_i=A_i-\E A_i$.  Then $\|M_i\|_{\op} \le C p \log^2 p$, and $A_i^2 = \|\widetilde u_i\|_2^2 A_i$; independence and centering make every off-diagonal entry of $\E A_i^2$ vanish, and its $a$-th diagonal entry is $\E \widetilde u_{ia}^4 + \sum_{c \neq a} \E \widetilde u_{ia}^2\, \E\widetilde u_{ic}^2 \le C p$.  Moreover $\E A_i = \sigma_p^2 I_p$ with $\sigma_p^2 \le C$, hence $\E M_i^2 = \E A_i^2 - \sigma_p^4 I_p \preceq \E A_i^2$, and
\begin{equation}\label{eq:bernstein-var}
\Bigl\| \sum_{i=1}^n \E M_i^2 \Bigr\|_{\op} \le C n p .
\end{equation}
The self-adjoint matrix Bernstein inequality \cite[Theorem~6.6.1]{tropp2015introduction} applies with $\| M_i \|_\op \leq R = C p \log^2 p$, $\sigma^2 = \|\sum_i \E M_i^2\|_{\op} \le C np$, and $t = n$:
\begin{equation}\label{eq:bernstein-applied}
\Prob\Bigl\{\Bigl\|\sum_{i=1}^n M_i\Bigr\|_{\op} > n\Bigr\}
\le 2p \exp\Bigl\{- c\, \frac{n^2}{np + np\log^2 p}\Bigr\}
\le 2p\, e^{-c p / \log^2 p} = o(1) ,
\end{equation}
using $n \asymp p^2$.  Since $\widetilde D^\top \widetilde D = \sum_i M_i + n \sigma_p^2 I_p$ for the truncated matrix $\widetilde D = (\widetilde u_{ia})$, the event in \eqref{eq:bernstein-applied} gives $\|\widetilde D\|_{\op}^2 \le C n$.  On the no-truncation event, $D - \widetilde D = \mu_p \one_n \one_p^\top$, of operator norm $|\mu_p| \sqrt{np} = o(\sqrt n)$ after increasing $L$ once more.  Hence $\|D\|_{\op}^2 \le C n$ with probability $1 - o(1)$.  The estimates are uniform for $\rho_-p^2\le n\le\rho_+p^2$, where $\rho_-=\min K$ and $\rho_+=\max K$.
\end{proof}

\begin{proposition}[Two-sided feature edge]\label{prop:feature-edge}
Let $K \subset (0, 1/2)$ be compact and let $n = n(p)$ satisfy $n/p^2 \in K$ for all large $p$.  There are constants $0 < c_K \le C_K < \infty$ such that, with probability tending to one as $p \to \infty$,
\begin{equation}\label{eq:feature-two-sided}
c_K\, p^2\, \Id_n \;\preceq\; \cQ_y \cQ_y^* \;\preceq\; C_K\, p^2\, \Id_n .
\end{equation}
In particular, with probability tending to one,
\begin{equation}\label{eq:feature-lip}
\sup_{T \neq 0} \frac{\|\cQ_y(T)\|_2}{\sqrt n\, \|T\|_{\F}} \le C_K' .
\end{equation}
Moreover, if $\cG(T) = (\sqrt p \Tr(G_i T))_{i=1}^n$ with $G_i \stackrel{\mathrm{iid}}{\sim} \GOE p$, then $\|\cG\|_{\F \to 2} \le C_K' \sqrt n$ with probability at least $1 - 2e^{-n/2}$.  All constants and all $o(1)$ probabilities are uniform over sequences with $n/p^2 \in K$.
\end{proposition}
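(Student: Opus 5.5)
We combine the two block lemmas through the decomposition $\cQ_y\cQ_y^*=DD^\top+VV^\top$ from \eqref{eq:DV-def}. Both summands are positive semidefinite, so the lower bound in \eqref{eq:feature-two-sided} comes from the square-free block alone. Lemma~\ref{lem:squarefree-edge} gives $VV^\top\succeq c_Kp^2\Id_n$ with probability tending to one, and hence $\cQ_y\cQ_y^*\succeq c_Kp^2\Id_n$. For the upper bound we use
\[
\|\cQ_y\cQ_y^*\|_{\op}\le\|V\|_{\op}^2+\|D\|_{\op}^2\le C_Kp^2+C_Kn\le C_K(1+\max K)\,p^2 ,
\]
where $\|V\|_{\op}^2$ is controlled by the upper edge in Lemma~\ref{lem:squarefree-edge} and $\|D\|_{\op}^2$ by Lemma~\ref{lem:diagonal-feature-block}. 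Intersecting the two high-probability events gives \eqref{eq:feature-two-sided}. Uniformity over $n/p^2\in K$ holds because both lemmas are stated uniformly, and a union of two $o(1)$ failure probabilities is still $o(1)$.

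For \eqref{eq:feature-lip}, note that in the orthonormal basis $\{E_{aa},E_{ab}\}$ the map $\cQ_y$ is represented by $[D\ V]$. Therefore
\[
\|\cQ_y\|_{\F\to2}^2=\|\cQ_y\cQ_y^*\|_{\op}\le C_Kp^2\le \frac{C_K}{\min K}\,n ,
\]
using $p^2\le n/\min K$. Taking $C_K'=\sqrt{C_K/\min K}$ gives the claim.

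For the Gaussian map $\cG$, write $G_i=\sqrt{2/p}\,\Gamma_i$ with $\Gamma_i$ a standard Gaussian vector in $\Sym p$. Then $\cG(T)=\sqrt2\,(\ip{\Gamma_i}{T})_i$, so $\cG$ is represented by $\sqrt2A$, where $A$ is an $n\times D_p$ matrix with i.i.d.\ $\cN(0,1)$ entries. Apply Lemma~\ref{lem:gauss-rect} with $t=\sqrt n$:
\[
\|A\|_{\op}\le \sqrt n+\sqrt{D_p}+\sqrt n
\]
with probability at least $1-2e^{-n/2}$. Since $D_p\le p^2\le n/\min K$, this gives $\|\cG\|_{\F\to2}\le\sqrt2\bigl(2+(\min K)^{-1/2}\bigr)\sqrt n$. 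We enlarge $C_K'$ so that it covers both bounds.

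No step is a real obstacle, since the substance is already in Lemmas~\ref{lem:squarefree-edge} and~\ref{lem:diagonal-feature-block}. The points that need care are bookkeeping. Positive semidefiniteness of $DD^\top$ is what lets the lower edge come from $V$ alone. We also need $\max K<1/2$, which keeps $n<\binom p2$ so that Lemma~\ref{lem:squarefree-edge} applies. Finally, the constants must stay uniform over sequences with $n/p^2\in K$ rather than along a single convergent sequence. This follows from the compactness statements at the end of both lemmas; if needed, it can be argued by contradiction, passing to a subsequence with $n/p^2\to\rho_*\in K$.
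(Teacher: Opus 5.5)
Your proposal is correct and follows essentially the same route as the paper. It gets the lower bound from $VV^\top$ alone since $DD^\top\succeq0$, the upper bound by adding the two block norms, \eqref{eq:feature-lip} via $\|\cQ_y\|_{\F\to2}^2=\|\cQ_y\cQ_y^*\|_{\op}$ and $p^2\le n/\min K$, the Gaussian map via Lemma~\ref{lem:gauss-rect} with $t=\sqrt n$, and uniformity over $K$ by the same compactness/subsequence argument.
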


\begin{proof}
Write $\rho_-:=\min K$ and $\rho_+:=\max K$. On the events in Lemmas~\ref{lem:squarefree-edge} and~\ref{lem:diagonal-feature-block}, the decomposition $\cQ_y \cQ_y^* = DD^\top + VV^\top$ satisfies
\[
\cQ_y \cQ_y^* \;\succeq\; VV^\top \;\succeq\; \tfrac12 (1 - \sqrt{2\rho_+})^2\, p^2\, I_n ,
\qquad
\|\cQ_y\cQ_y^*\|_{\op} \le C n + \bigl(1 + \sqrt{2\rho_+}\bigr)^2 p^2 (1 + o(1)) \le C_K p^2 ,
\]
which is \eqref{eq:feature-two-sided}; and \eqref{eq:feature-lip} follows since $\|\cQ_y\|_{\F\to2}^2 = \|\cQ_y\cQ_y^*\|_{\op} \le C_K p^2 \le (C_K/\rho_-)\, n$.  Uniformity over $K$ holds by compactness: indeed, if the failure probability did not tend to $0$ uniformly, there would be $p_j \to \infty$ and $n_j$ with $n_j/p_j^2 \in K$ witnessing this; passing to a subsequence with $n_j / p_j^2 \to \rho_* \in K$ and applying the two lemmas along it yields a contradiction.

For the Gaussian map, in Frobenius coordinates, $\cG(T)_i = \ip{\sqrt p\, G_i}{T}$ and $\sqrt p\, G_i = \sqrt2\, Z_i$ with $Z_i$ standard Gaussian in $\Sym p$; thus $\cG$ is represented by an $n \times D_p$ matrix with independent $\cN(0,2)$ entries.  The rectangular Gaussian estimate (Lemma~\ref{lem:gauss-rect}) with $\sigma = \sqrt2$ and $t = \sqrt n$ gives $\|\cG\|_{\F\to2} \le \sqrt2\,(2\sqrt n + \sqrt{D_p}) \le C_K' \sqrt n$ with probability at least $1 - 2e^{-n/2}$, since $D_p \le p^2 \le n/\rho_-$.
\end{proof}

We record the normalization used on the satisfiable side.  With $p = d$, $W_i = (x_i x_i^\top - \Id)/\sqrt d$ and
\begin{equation}\label{eq:W-map-def}
\cW : \Sd \to \R^n, \qquad \cW(S) = \bigl(\ip{W_i}{S}\bigr)_{i=1}^n = \frac{1}{\sqrt d}\, \cQ_x(S),
\end{equation}
Proposition~\ref{prop:feature-edge} reads as follows.

\begin{corollary}[Feature edge, satisfiable-side normalization]\label{cor:feature-sat}
If $n/d^2 \to \rho_* \in (0, 1/2)$, there are constants $0 < c \le C < \infty$ (depending only on $\rho_*$) such that, with probability tending to one,
\begin{equation}\label{eq:feature-sat}
c\, d\, I_n \;\preceq\; \cW \cW^* \;\preceq\; C\, d\, I_n , \qquad\text{in particular}\qquad
\|\cW^*\|_{2 \to \F} = \|\cW\|_{\F \to 2} \le C^{1/2} \sqrt d .
\end{equation}
\end{corollary}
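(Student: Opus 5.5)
This follows directly from Proposition~\ref{prop:feature-edge} with $p=d$ and $y_i=x_i$. The plan is to rescale and then read off the constants. By \eqref{eq:W-map-def}, $\cW=d^{-1/2}\cQ_x$. Hence
\[
\cW\cW^*=\frac1d\,\cQ_x\cQ_x^*,
\]
since the adjoint scales by the same factor.

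First, fix a compact set $K\subset(0,1/2)$ containing $\rho_*$ in its interior, for instance $K=[\rho_*/2,(\rho_*+1/2)/2]$. Because $n/d^2\to\rho_*$, we have $n/d^2\in K$ for all large $d$, so the hypotheses of Proposition~\ref{prop:feature-edge} hold with $p=d$.

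The proposition then gives, with probability tending to one,
\[
c_K d^2 I_n\preceq\cQ_x\cQ_x^*\preceq C_K d^2 I_n .
\]
Dividing by $d$ yields
\[
c_K d\, I_n\preceq\cW\cW^*\preceq C_K d\, I_n .
\]
We set $c=c_K$ and $C=C_K$. Since $K$ was chosen as a function of $\rho_*$ alone, these constants depend only on $\rho_*$.

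For the norm statement, use $\|\cW\|_{\F\to2}^2=\|\cW^*\|_{2\to\F}^2=\|\cW\cW^*\|_{\op}\le Cd$, which is the largest-singular-value identity for the linear map $\cW$ between Euclidean spaces. Taking square roots gives $\|\cW\|_{\F\to2}\le C^{1/2}\sqrt d$.

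The only point needing care is choosing $K$ so that the constants do not depend on the particular sequence. Compactness of $K$ and the uniformity clause of Proposition~\ref{prop:feature-edge} handle this.
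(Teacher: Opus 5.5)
Your proposal is correct and is essentially the paper's argument: the corollary is simply Proposition~\ref{prop:feature-edge} specialized to $p=d$, rescaled via $\cW=d^{-1/2}\cQ_x$ so that $\cW\cW^*=d^{-1}\cQ_x\cQ_x^*$, with the norm bound read off from $\|\cW\|_{\F\to2}^2=\|\cW\cW^*\|_{\op}$. Your explicit choice of a compact $K\subset(0,1/2)$ containing $\rho_*$ is a detail the paper leaves implicit, and it is what makes the constants depend only on $\rho_*$.
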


\subsection{Elementary estimates on Gaussian samples}\label{sec:data}

We record below the elementary estimates on Gaussian samples used in the proofs. 

\begin{lemma}[Gaussian and chaos estimates]\label{lem:data}
Suppose $c_1 d^2\le n\le C_1 d^2$ for fixed $0<c_1\le C_1<\infty$, and fix $C_0 < \infty$; put $K_d = \lfloor C_0\, d \log d\rfloor$.  There is a constant $C$ only depending on $c_1, C_1, C_0$, such that, with probability tending to one, simultaneously,
\begin{align}
\max_{i \le n} \|x_i\|_2 &\le 2\sqrt d ,
\label{eq:radius}\\
\max_{i \le n,\, a \le d} x_{ia}^2 &\le C \log d ,
\label{eq:coord-max}\\
\max_{i \neq j} \bigl|\ip{W_i}{W_j}\bigr| &\le C \log d ,
\label{eq:coherence}\\
\max_{J \subseteq [n],\ |J| \le K_d} \|X_J\|_{\op}^2 &\le C\, d
(\log d)^2 .
\label{eq:sparse-norms}
\end{align}
\end{lemma}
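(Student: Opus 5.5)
We prove the four estimates separately and conclude by a union bound, since each fails with probability $o(1)$. Throughout, $n \le C_1 d^2$, so the number of indices $i$, pairs $(i,j)$ and coordinates $(i,a)$ is polynomial in $d$. A polylogarithmic tail bound with a large enough constant therefore survives the union bound.

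\emph{Radius and coordinates.} For \eqref{eq:radius}, the Laurent--Massart $\chi^2$ bound gives $\Prob\{\|x_i\|_2^2 > 4d\} \le e^{-cd}$, and summing over $n = O(d^2)$ rows is $o(1)$. For \eqref{eq:coord-max}, $\Prob\{x_{ia}^2 > C\log d\} \le 2d^{-C/2}$, and summing over $nd = O(d^3)$ entries is $o(1)$ once $C > 8$.

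\emph{Coherence.} Expanding the definition of $W_i$,
\[
d\,\ip{W_i}{W_j} = (x_i^\top x_j)^2 - \|x_i\|_2^2 - \|x_j\|_2^2 + d .
\]
We condition on $x_j$ and set $u = x_j/\|x_j\|_2$. Then $x_i^\top x_j = \|x_j\|_2\, g$ with $g = x_i^\top u \sim \cN(0,1)$, and $\|x_i\|_2^2 = g^2 + \chi^2_{d-1}$ with the $\chi^2_{d-1}$ term independent of $g$. The expression becomes
\[
g^2\bigl(\|x_j\|_2^2 - 1\bigr) - \bigl(\chi^2_{d-1} - (d-1)\bigr) - \bigl(\|x_j\|_2^2 - d\bigr).
\]
On the event $\bigl|\|x_j\|_2^2 - d\bigr| \le C\sqrt{d\log d}$ (probability $1 - d^{-10}$ per $j$), each term has the required size with probability $1 - d^{-10}$. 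Indeed, $g^2 \le C\log d$ makes the first term at most $Cd\log d$, and the two $\chi^2$ fluctuations are $O(\sqrt{d\log d})$. Dividing by $d$ gives $|\ip{W_i}{W_j}| \le C\log d$, and a union bound over the $n^2 = O(d^4)$ pairs finishes the argument once the per-pair failure probability is below $d^{-5}$.

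\emph{Sparse norms, the main step.} A union bound over subsets $J$ is needed, and there are $\binom{n}{K_d} = e^{O(d\log^2 d)}$ of them, so each $J$ needs a tail of order $e^{-Cd\log^2 d}$. For fixed $J$ with $|J| = K$, the matrix $X_J$ is a $d\times K$ standard Gaussian matrix. Lemma~\ref{lem:gauss-rect} gives
\[
\|X_J\|_{\op} \le \sqrt d + \sqrt K + t
\]
with probability $1 - 2e^{-t^2/2}$. Taking $t = C'\sqrt{d}\,\log d$ gives failure probability $e^{-C'^2 d\log^2 d/2}$, which beats the count $e^{C_0 d\log d\,\log(C_1 d^2)} \le e^{3C_0 d\log^2 d}$ once $C'$ is large in terms of $C_0$. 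On the complement,
\[
\|X_J\|_{\op}^2 \le 3\bigl(d + K_d + C'^2 d\log^2 d\bigr) \le C d(\log d)^2 .
\]
Subsets with $|J| < K_d$ are covered by monotonicity, since each is contained in a subset of size $K_d$.

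The constants depend only on $c_1, C_1, C_0$. Intersecting the four events proves the lemma. The only real obstacle is the entropy–tail balance in \eqref{eq:sparse-norms}, and this is exactly why that bound carries the factor $(\log d)^2$ rather than $\log d$.
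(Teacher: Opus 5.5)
Your proposal is correct and follows essentially the same route as the paper: Gaussian and $\chi^2$ tails with polynomial union bounds, conditioning on one sample for the coherence, and Lemma~\ref{lem:gauss-rect} with $t\asymp\sqrt d\log d$ against the $e^{O(d\log^2 d)}$ count of heads (your reduction to $|J|=K_d$ by monotonicity is a slightly tidier version of the paper's sum over $k\le K_d$). One harmless slip: your displayed decomposition actually equals $d\,\ip{W_i}{W_j}+(d-1)$, since the correct identity is $d\,\ip{W_i}{W_j}=(g^2-1)(\|x_j\|_2^2-1)-(\chi^2_{d-1}-(d-1))$; an additive $O(d)$ error costs only $O(1)$ after dividing by $d$, which is also why the paper can simply combine $|x_i^\top x_j|\le A\sqrt{d\log d}$ with the crude bound $|d-\|x_i\|_2^2-\|x_j\|_2^2|\le 7d$.
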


\begin{proof}
Estimates  \eqref{eq:radius} and \eqref{eq:coord-max} follow from standard Gaussian concentration and a union bound. For \eqref{eq:coherence}, fix $i \neq j$ and write
\begin{equation}\label{eq:coherence-formula}
\ip{W_i}{W_j}
= \frac{(x_i^\top x_j)^2 - \|x_i\|_2^2 - \|x_j\|_2^2 + d}{d} .
\end{equation}
Conditionally on $x_i$, the scalar $x_i^\top x_j \sim \cN(0, \|x_i\|_2^2)$; on the radius event \eqref{eq:radius}, $\Prob\{|x_i^\top x_j| > A \sqrt{d \log d} \mid x_i\} \le 2 d^{-A^2/8}$. Choose $A$ so that a union bound over $O(d^4)$ ordered pairs is $o(1)$; formula \eqref{eq:coherence-formula} together with $\|x_i\|_2^2, \|x_j\|_2^2 \le 4d$ then gives \eqref{eq:coherence}. For \eqref{eq:sparse-norms}, fix $J$ with $|J| = k$. The Gaussian matrix estimate \eqref{eq:gauss-rect} gives $\Prob\{\|X_J\|_{\op} > \sqrt d + \sqrt k + t\} \le 2 e^{-t^2/2}$.  There are at most $(en/k)^k$ such sets and, since $K_d = o(n)$, $\sum_{k \le K_d} \binom nk \le \exp\{C K_d \log(en/K_d)\}$.  Taking $t = A \sqrt{K_d \log(en/K_d)}$ for every $k \le K_d$ and choosing $A$ large makes the union failure probability $\le 2\exp\{(C - A^2/2) K_d \log(en/K_d)\} = o(1)$; finally $(\sqrt d + \sqrt{K_d} + t)^2 = O(d \log^2 d)$.
\end{proof}

\begin{lemma}[Hanson--Wright inequality {\cite{rudelson2013hanson}}]\label{lem:hw}
There is a universal $c > 0$ such that for $x \sim \cN(0, I_d)$ and $P \in \Sd$, for all $t \ge 0$,
\[
\Prob\bigl\{\, |x^\top P x - \Tr P| > t \,\bigr\}
\;\le\; 2 \exp\Bigl\{- c \min\Bigl(\frac{t^2}{\|P\|_{\F}^2},\,
\frac{t}{\|P\|_{\op}}\Bigr)\Bigr\} .
\]
\end{lemma}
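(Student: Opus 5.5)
The statement is the Hanson--Wright inequality for a Gaussian vector, so I will follow the standard route: diagonalize, then apply a Bernstein bound for sub-exponential variables. Since $x\sim\cN(0,I_d)$ is rotation invariant, I first write $P=U\Lambda U^\top$ with $\Lambda=\diag(\lambda_1,\dots,\lambda_d)$ and set $g=U^\top x\sim\cN(0,I_d)$. Then
\[
x^\top P x-\Tr P=\sum_{a=1}^d \lambda_a\,(g_a^2-1),
\]
which is a weighted sum of independent centered $\chi^2_1$ variables. In this representation $\sum_a\lambda_a^2=\|P\|_{\F}^2$ and $\max_a|\lambda_a|=\|P\|_{\op}$, which are exactly the two quantities in the claimed bound.

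Next I bound the moment generating function directly. For $|s|\,|\lambda_a|\le 1/4$,
\[
\log\E e^{s\lambda_a(g_a^2-1)}=-\tfrac12\log(1-2s\lambda_a)-s\lambda_a\le 2s^2\lambda_a^2 ,
\]
using $-\tfrac12\log(1-u)-\tfrac u2\le u^2/2$ for $|u|\le 1/2$. By independence, for $|s|\le 1/(4\|P\|_{\op})$,
\[
\log\E e^{s(x^\top Px-\Tr P)}\le 2s^2\|P\|_{\F}^2 .
\]

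For the upper tail, Chernoff's bound gives $\Prob\{x^\top Px-\Tr P>t\}\le\exp(-st+2s^2\|P\|_{\F}^2)$. I choose $s=\min\bigl(t/(4\|P\|_{\F}^2),\,1/(4\|P\|_{\op})\bigr)$, which stays within the admissible range. If the first term is the minimum, the exponent equals $-t^2/(8\|P\|_{\F}^2)$. Otherwise $2s^2\|P\|_{\F}^2\le st/2$, so the exponent is at most $-st/2=-t/(8\|P\|_{\op})$. Applying the same argument to $-P$ handles the lower tail, and a union bound over the two tails gives the factor $2$ with $c=1/8$.

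No step is a serious obstacle. The only care needed is the elementary logarithm inequality and checking that the chosen $s$ lies in the region where the MGF bound holds. This Gaussian case is much simpler than the general sub-Gaussian Hanson--Wright theorem of \cite{rudelson2013hanson}, which needs decoupling, and could alternatively be cited directly.
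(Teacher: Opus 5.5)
Your proof is correct, but it does not follow the paper. The paper gives no proof of this lemma and simply cites the general Hanson--Wright inequality of \cite{rudelson2013hanson}.

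That theorem covers vectors with independent sub-Gaussian coordinates. Such vectors are not rotation invariant, so its proof needs decoupling and a comparison argument.

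You instead use rotation invariance of $\cN(0,I_d)$ to diagonalize $P$. This reduces $x^\top P x-\Tr P$ to the sum $\sum_a \lambda_a(g_a^2-1)$ of independent centered $\chi^2_1$ variables. You then apply a Chernoff bound with the explicit cumulant estimate $\log\E e^{s\lambda(g^2-1)}\le 2s^2\lambda^2$ for $|s\lambda|\le 1/4$. The elementary inequality $-\tfrac12\log(1-u)-\tfrac u2\le u^2/2$ for $|u|\le 1/2$ holds. The two-case choice of $s$ is also right, giving the explicit constant $c=1/8$.

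Your argument is self-contained and fully sufficient for the paper. The lemma is only ever applied to Gaussian samples, namely in the proof of Proposition~\ref{prop:heads}, with $\|P\|_{\F}=1$ and $\|P\|_{\op}\le \kappa_0/\sqrt d$. The citation buys generality the paper never uses.

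Two degenerate cases are left implicit, and both are trivial. For $t=0$ the right-hand side equals $2$. For $P=0$ the left-hand side vanishes when $t>0$, with the convention $t^2/0=+\infty$.
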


\subsection{Homogeneous witness and the balanced matrix alternative}\label{sec:cor-alternative}

We include below the proof of the equivalence between infeasibility of \eqref{eq:P} and the existence of a positive definite balanced combination of the $x_i x_i^\top$.   Recall $\cQ_x(S) = (x_i^\top S x_i - \Tr S)_{i=1}^n$, and let $P_{\one^\perp} = I_n - n^{-1} \one\one^\top$ denote the projection of
$\R^n$ onto $\one^\perp$.

\begin{lemma}[Homogeneous formulation and the alternative]\label{lem:homogeneous}
Assume that $x_1, \dots, x_n$ span $\R^d$ (an almost sure event when $n \ge d$).  Consider the linear map
\[
\cD_X : \Sd \longrightarrow \one^\perp, \qquad
\cD_X(S) = P_{\one^\perp}\bigl(x_i^\top S x_i\bigr)_{i=1}^n .
\]
\begin{enumerate}
\item[{\rm (i)}] Problem \eqref{eq:P} is feasible if and only if $\ker \cD_X \cap \Sdp \neq \{0\}$; equivalently, if and only if there are $S \succeq 0$ with $\|S\|_{\F} = 1$ and $b \in \R$ such that
\begin{equation}\label{eq:witness}
\cQ_x(S) = b \one .
\end{equation}
\item[{\rm (ii)}] Exactly one of the following two alternatives holds:
\begin{align}
&\ker \cD_X \cap \Sdp \neq \{0\};
\label{eq:alt-a}\\[2pt]
&\exists\, y \in \one^{\perp} \ \text{ such that } \
\sum_{i=1}^n y_i\, x_i x_i^{\top} \succ 0 .
\label{eq:alt-b}
\end{align}
\end{enumerate}
\end{lemma}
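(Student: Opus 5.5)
For part (i), I will check the two directions directly. Suppose $S$ solves \eqref{eq:P}. Then $(x_i^\top S x_i)_i = d\one$, so $\cD_X(S)=0$. Moreover $S\neq 0$ because $d\neq 0$, so $S/\|S\|_{\F}$ is a nonzero element of $\ker\cD_X\cap\Sdp$.

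Conversely, take $0\neq S\in\ker\cD_X\cap\Sdp$. Then $x_i^\top S x_i = c$ for a common constant $c\ge 0$. If $c=0$, then $S^{1/2}x_i=0$ for every $i$. Since the $x_i$ span $\R^d$, this forces $S=0$, a contradiction. Hence $c>0$, and $dS/c$ solves \eqref{eq:P}.

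The reformulation \eqref{eq:witness} is then a restatement. The identity $\cQ_x(S)=(x_i^\top S x_i)_i-(\Tr S)\one$ shows that $\cQ_x(S)\in\R\one$ holds iff $P_{\one^\perp}(x_i^\top S x_i)_i=0$. Normalizing to $\|S\|_{\F}=1$ is free by homogeneity.

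For part (ii), I first rule out the two alternatives holding together. Suppose $S\succeq0$ is nonzero with $x_i^\top S x_i=c\one$, and $y\in\one^\perp$ has $Y:=\sum_i y_i x_ix_i^\top\succ0$. Then
\[
\ip{S}{Y}=\sum_i y_i\,x_i^\top S x_i = c\sum_i y_i=0 .
\]
On the other hand, $\ip{S}{Y}\ge \lambda_{\min}(Y)\Tr S>0$ because $S\succeq 0$ and $S\neq0$, which is a contradiction.

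To show that at least one alternative holds, I will use a separation argument. Assume \eqref{eq:alt-a} fails, so the subspace $L=\ker\cD_X$ meets the compact convex base $\Delta=\{S\succeq0:\Tr S=1\}$ trivially. Since $L$ is closed and $\Delta$ is compact, they are at positive distance. Strict separation then gives $Y\in\Sd$ with $\ip{Y}{S}<0$ for all $S\in L$... but $L$ is a subspace, so a functional bounded above on $L$ must vanish there. The correct form is therefore: there is $Y$ with $\ip{Y}{S}=0$ on $L$ and $\ip{Y}{S}>0$ on $\Delta$.

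The second condition says $Y\succ0$, since testing against rank-one $vv^\top$ in $\Delta$ gives $v^\top Y v>0$. The first condition says $Y\in L^\perp=\operatorname{range}(\cD_X^*)$. I then compute the adjoint: for $y\in\one^\perp$, $\ip{\cD_X(S)}{y}=\sum_i y_i x_i^\top S x_i$, so
\[
\cD_X^* y=\sum_i y_i x_ix_i^\top .
\]
Hence $Y=\sum_i y_ix_ix_i^\top$ for some $y\in\one^\perp$, which is \eqref{eq:alt-b}.

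The only delicate step is this last range identification. Because $\cD_X$ takes values in $\one^\perp$, its adjoint is defined on $\one^\perp$, and this is exactly what produces the balancing constraint $\sum_i y_i=0$. The spanning assumption is used only in part (i), to exclude $c=0$. Once (i) and (ii) are in place, Corollary~\ref{cor:alternative} follows by combining them with Theorem~\ref{thm:main}.
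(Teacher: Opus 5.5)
Your proof is correct. Part (i) and the exclusivity half of (ii) coincide with the paper's argument. Your derivation of the witness form directly from the identity $\cQ_x(S)=(x_i^\top Sx_i)_i-(\Tr S)\one$ is slightly cleaner than the paper's explicit rescaling of the pair $(S,b)$.

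The two proofs differ in the existence half of (ii).

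You separate in the ambient space $\Sd$. You separate the subspace $L=\ker\cD_X$ from the base $\Delta$ to get $Y\succ0$ with $Y\perp L$. You then invoke $(\ker\cD_X)^\perp=\operatorname{range}(\cD_X^*)$, with the adjoint defined on $\one^\perp$, to write $Y=\sum_i y_ix_ix_i^\top$ with $y\in\one^\perp$. The mid-sentence self-correction should be tidied. Strict separation gives $\sup_L\ip{Y}{\cdot}<\inf_\Delta\ip{Y}{\cdot}$, and the left side must equal $0$ because $L$ is a subspace. Also, $L$ is disjoint from $\Delta$ rather than ``meeting it trivially.''

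The paper separates in the image space instead. It takes the compact convex set $K=\cD_X(\Delta)\subseteq\one^\perp$, which misses the origin, and lets $y$ be its point closest to $0$. Then $\ip{\cD_X^*y}{S}=\ip{y}{\cD_X(S)}\ge\|y\|_2^2>0$ for every $S\in\Delta$.

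What each route buys:
\begin{itemize}
\item The paper's route produces $y$ directly, and the balancing constraint is automatic from $K\subseteq\one^\perp$. It needs neither the range identification you flag as delicate nor the subspace/sign bookkeeping in the separation step. It is also the same nearest-point argument the paper reuses in Lemma~\ref{lem:convex-separation}.
\item Your route is the textbook Gordan--Stiemke argument. Positivity of the certificate is read off in matrix space, and the condition $\sum_i y_i=0$ emerges from the codomain of $\cD_X$ through the adjoint.
\end{itemize}
Both arguments are fully rigorous.
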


\begin{proof}
(i) If $S$ solves \eqref{eq:P}, then $S \neq 0$, $S \succeq 0$ and $\cD_X(S) = 0$.  Conversely, let $0 \neq S \succeq 0$ with $\cD_X(S) = 0$; then the numbers $x_i^\top S x_i$ share a common value $c \ge 0$.  If $c = 0$, then $S^{1/2} x_i = 0$ for every $i$, and the spanning assumption forces $S = 0$, a contradiction; hence $c > 0$ and $d S / c$ solves \eqref{eq:P}.  For the second formulation: given a solution $S$ of \eqref{eq:P}, the pair $(S/\|S\|_{\F}, (d - \Tr S)/\|S\|_{\F})$ satisfies \eqref{eq:witness}; conversely, a pair $(S, b)$ satisfying \eqref{eq:witness} with $\|S\|_{\F} = 1$ has all values $x_i^\top S x_i$ equal to $c := \Tr S + b \ge 0$, and as before $c > 0$, so $dS/c$ solves \eqref{eq:P}.

(ii) The two alternatives cannot both hold: if $0 \neq S \succeq 0$ with $\cD_X(S) = 0$ and $y \in \one^\perp$ with $M_y := \sum_i y_i x_i x_i^\top \succ 0$, then
\[
0 < \ip{M_y}{S} = \sum_i y_i\, x_i^\top S x_i = c \sum_i y_i = 0,
\]
a contradiction (here $c$ is the common value above).  Suppose now that \eqref{eq:alt-a} fails, i.e.\ $\ker \cD_X \cap \Sdp = \{0\}$.  Consider
\[
K \;:=\; \cD_X\bigl(\{S \succeq 0,\ \Tr S = 1\}\bigr) \subseteq \one^\perp.
\]
Let $k_0$ be the point of $K$ closest to the origin, so $k_0 \neq 0$. Differentiating $t \mapsto \|k_0 + t(k - k_0)\|_2^2$ at $t = 0$ gives $\ip{k_0}{k} \ge \|k_0\|_2^2 > 0$ for all $k \in K$.  Setting $y := k_0 \in \one^\perp$, this reads: for every $S \succeq 0$ with $\Tr S = 1$,
\[
0 < \ip{y}{\cD_X(S)} = \ip{\cD_X^* y}{S}, \qquad
\cD_X^* y = \sum_{i=1}^n y_i\, x_i x_i^\top ,
\]
where we used $P_{\one^\perp} y = y$.  This implies that $\cD_X^* y$ is strictly positive definite, which is \eqref{eq:alt-b}.
\end{proof}

\section{The satisfiable phase}\label{sec:sat}

This section proves the following theorem, which contains Theorem~\ref{thm:main}.(a) and Theorem~\ref{thm:conditioned}.

\begin{theorem}[Satisfiable phase]\label{thm:sat}
Fix $\alpha^*<1/4$, let $n=n(d)$ satisfy $\limsup_{d\to\infty}n/d^2\le\alpha^*$, and let $x_1, \dots, x_n$ be independent $\cN(0, I_d)$ vectors.  There is a constant $\kappa_1 \ge 1$, depending only on $\alpha^*$, such that with probability tending to one, there exists $S \in \Sd$ with
\[
x_i^\top S x_i = d \quad (1 \le i \le n),
\qquad \Tr S = d, \qquad 
\frac{1}{4\kappa_1^2}\, \Id \;\preceq\; S \;\preceq\; 4 \kappa_1^2\, \Id .
\]
\end{theorem}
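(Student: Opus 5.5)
We follow the dual route outlined in Section~\ref{sec:proof-method}. After padding with extra samples, which only makes the problem harder, we may assume $n/d^2\to\alpha\in(\alpha^*,1/4)$. Fix $a$ with $\sqrt{\alpha}<a<1/2$, and let $\kappa=\kappa(a)$ be given by Lemma~\ref{lem:widthcone}. Take $\kappa_1$ to be a suitable multiple of $\kappa$ and set $\widetilde\cB=\{\tfrac{1}{2\kappa_1\sqrt d}\Id\preceq S\preceq \tfrac{2\kappa_1}{\sqrt d}\Id\}$. The set $\cW(\widetilde\cB)$ is compact and convex. If $0\in\cW(\widetilde\cB)$, then $\widehat S=dS/\Tr S$ gives the fit claimed in the statement. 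Otherwise separation produces a unit $y$ with $h(y)=\sup_{S\in\widetilde\cB}\ip{\cW^*y}{S}<0$. So the whole task is to show that, with high probability, $h(y)>0$ for every unit $y$.

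\textbf{Step 1: split $y$ into head and tail.} For each $y$, let $J$ consist of the $K_d=\lfloor C_0 d\log d\rfloor$ largest coordinates. The tail then satisfies $\|y_{J^c}\|_\infty\le \theta\|y_{J^c}\|_2/\sqrt d$ with $\theta\to0$ as $C_0$ grows. We lower bound $h(y)$ by restricting to $S\in\widetilde\cB\cap\ker\cW_J$; on this set the head contributes nothing and $h(y)\ge\sup\sum_{i\in J^c}y_i\ip{W_i}{S}$. To see that the restriction costs little, project a candidate $P\in\cK_\kappa\cap\sphereF$, rescaled to $\widetilde\cB$, onto $\ker\cW_J$ via $P'=P-\cW_J^*(\cW_J\cW_J^*)^{-1}\cW_J P$. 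Corollary~\ref{cor:feature-sat}, applied to the subfamily $J$, gives $\cW_J\cW_J^*\succeq cd$. Together with the coherence and sparse-norm bounds of Lemma~\ref{lem:data}, this shows the correction has operator norm $o(d^{-1/2})$ whenever $P$ is generic with respect to $W_J$, so $P'$ remains in $\widetilde\cB$. This is the head proposition: the width of $\widetilde\cB\cap\ker\cW_J\cap\sphereF$ is at least $ad-o(d)$.

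\textbf{Step 2: Gaussian comparison for the tail.} Conditionally on $X_J$, the tail is a min--max over low-influence $y$ of $\sum_{i\in J^c}y_i\ip{W_i}{S}$ over the restricted set. We replace $\ip{W_i}{S}$ by $\ip{G_i}{S}$ using a Lindeberg swap applied to a soft-max/soft-min smoothing of the min--max. The third-order error per swap is controlled by $|y_i|^3$ times chaos moments; hypercontractivity of the second chaos, plus truncation of the chaos, gives a total error $\lesssim\theta/\sqrt d$, which is $o(1)$ relative to the margin. On the Gaussian side, Lemma~\ref{lem:gordon} gives a margin of at least $(a-\sqrt\alpha-o(1))d$ after rescaling, with probability $1-e^{-cd^2}$. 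For the same high-probability statement to hold for the original model, we need concentration of the smoothed statistic; we would get this through the truncation and a bounded-difference argument at rate $e^{-cd^2}$.

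\textbf{Step 3: uniformity over heads.} This step is the main obstacle. There are only $\binom{n}{K_d}=e^{O(d\log^2 d)}$ possible head sets, which is much smaller than $e^{cd^2}$, so a union bound over $J$ is affordable. Within a fixed $J$, we handle all $y$ with a net over the sphere in $\R^n$ of cardinality $e^{O(d^2)}$, combined with the Lipschitz bound $\|\cW^*\|_{2\to\F}\le C\sqrt d$. The constants here are delicate: the net's entropy must be beaten by $e^{-cd^2}$ with a large enough $c$. We expect to resolve this by running the comparison on the smoothed min over $y$ itself, so that it is uniform in $y$ for each fixed $J$, rather than union-bounding over individual $y$. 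Once this is in place, $h(y)>0$ holds for all unit $y$ simultaneously with high probability, and the theorem follows.
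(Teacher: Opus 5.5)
\textbf{Gap in Step 1: the head--tail split.} Your overall architecture matches the paper's: padding, separation over a spectral box, head correction through the inverse Gram matrix, Gordon on the Gaussian side, a smoothed min--max Lindeberg transfer, and a union bound over $e^{O(d\log^2 d)}$ heads with the $y$-net placed inside the smoothed functional (your Step 3). The genuine gap is in Step~1.

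Taking $J$ to be the $K_d$ largest coordinates only gives $\|y_{J^c}\|_\infty\le K_d^{-1/2}$ in absolute terms. It does not give $\|y_{J^c}\|_\infty\le\theta\|y_{J^c}\|_2/\sqrt d$, because the tail norm can be small. For $y_i\propto 2^{-i}$, \emph{every} $J$ with $|J|\le K_d$ has $\|y_{J^c}\|_\infty\ge(\sqrt3/2)\|y_{J^c}\|_2$. For such $y$ the tail is dominated by a single row, and the normalized Lindeberg quantity $\sum_{i\notin J}|y_i|^3/\|y_{J^c}\|_2^3$ is of order one. So restricting to $\ker\cW_J$ and comparing the tail leaves nothing you can control.

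You need a second mechanism in which the head is exploited rather than eliminated. The paper does two things:
\begin{itemize}
\item It peels adaptively (Lemma~\ref{lem:regularity}): repeatedly add the coordinates exceeding $\theta r/(2\sqrt d)$, where $r$ is the current remainder norm. This yields $|J|\le C_\theta d\log d$ with either a spread tail or $\|y_{J^c}\|_2\le d^{-2}$.
\item In the tiny-tail case, with $M_u=\cW_J^*y_J$ and $M_v=\cW_{J^c}^*y_{J^c}$, it takes $S_0\in\cD_J$ and $S'=S_0+\delta_0\sgn(M_u)$. The factor-$2$ spectral slack between $\cD_J$ and the box keeps $S'$ admissible. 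The lower feature edge gives $\ip{M_u}{S'}=\delta_0\|M_u\|_*\ge c\sqrt d$, while $|\ip{M_v}{S'}|\le Cd\|y_{J^c}\|_2\le C/d$.
\end{itemize}
With your fixed top-$K_d$ head, even adding this argument leaves tail norms $d^{-1/2}\lesssim\|y_{J^c}\|_2\lesssim(\theta\sqrt{C_0\log d})^{-1}$ covered by neither bound. The adaptive peeling is what closes that range.

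\textbf{Two quantitative claims in Step 2 also fail as stated.}

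First, a bounded-difference argument cannot give $e^{-cd^2}$. Replacing one clipped tail row can move the smoothed statistic by up to about $\kappa_1\|y\|_\infty\sup\|\overline W_i-\overline W_i'\|_*\asymp\theta$. McDiarmid over $n\asymp d^2$ rows at deviation $\asymp d$ then yields only $e^{-c/\theta^2}$, which is useless against $e^{O(d\log^2 d)}$ heads. The paper clips the radii and uses Gaussian concentration instead. The clipped functional is $4\kappa_1$-Lipschitz in the Gaussian coordinates because its row gradients satisfy $\sum_i(\E_q|y_i|)^2\le1$, an $\ell_2$ rather than $\ell_\infty$ bound.

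Second, the transfer error you can actually prove is not $O(\theta/\sqrt d)$. Absorbing nets of entropy $\asymp d^2$ forces smoothing parameters $\beta,\gamma\asymp d$. Then the available bound is $\sum_i\|D_i^3\Phi\|_{\pi,\op}\lesssim\theta d^{3/2}$, and the error is $O(L^2\theta d)$, the same order as the margin. It closes only because $\theta$ is fixed after $L$.

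There is a further issue with the swap itself. The Taylor remainder is evaluated at a point that depends on $W_i$, so fixed-direction hypercontractive moment bounds do not apply directly. The worst-case bound $|\ip{W_i}{A}|\le\|A\|_{\op}\|W_i\|_*\asymp\sqrt d$ loses a further factor $d^2$, and the GOE side, with $\|G_i\|_*\asymp d$, is worse. The paper's exact second-chaos integration by parts along $\sqrt t\,W+\sqrt{1-t}\,G$ (Lemma~\ref{lem:chaos-ibp}) is what reduces the remainder to weights $\|x_i\|_2^2/d^{3/2}$.
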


Without loss of generality, we assume throughout this section that 
\begin{equation}\label{eq:alpha-assume}
n=\lfloor\alpha d^2\rfloor,\qquad
\frac{n}{d^2}\longrightarrow\alpha\in(0,1/4).
\end{equation}
Indeed, feasibility is monotone under deletion of measurements. For all large $d$, append independent Gaussian points until the sample has $\lfloor\alpha d^2\rfloor$ points, with $\alpha \in ( \alpha^* , 1/4)$. A solution of the padded system solves the original one.

\subsection{Outline and proof of Theorem~\ref{thm:sat}}\label{sec:sat-setup}

Choose $\eta>0$ so that
\begin{equation}\label{eq:eta-choice}
\sqrt\alpha+4\eta<\frac12.
\end{equation}
Lemma~\ref{lem:widthcone}, applied with $a=\sqrt\alpha+4\eta$, gives a fixed $\kappa_0>1$ such that, for all large $d$,
\begin{equation}\label{eq:width-kappa0}
w\bigl(\cK_{\kappa_0}\cap\sphereF\bigr)
\ge(\sqrt\alpha+4\eta)d.
\end{equation}

Recall
\[
W_i=\frac{x_ix_i^\top-\Id}{\sqrt d},\qquad
\cW(S)=\bigl(\ip{W_i}{S}\bigr)_{i=1}^n.
\]
Set $\kappa_1=2\kappa_0$. We prove the stronger centered assertion
\begin{equation}\label{eq:sat-target}
\cB:=\left\{S\in\Sd:\frac{1}{2\kappa_1}\Id\preceq S
\preceq2\kappa_1\Id\right\},
\qquad 0\in\cW(\cB).
\end{equation}
Indeed, if $\cW(S)=0$ for some $S\in\cB$, then $x_i^\top Sx_i=\Tr S>0$ for every $i$.  Therefore $\widehat S=dS/\Tr S$ is an exact fit, and
\begin{equation}\label{eq:implication-sat-condition-eigenvalues}
\frac{1}{4\kappa_1^2}\Id\preceq\widehat S \preceq4\kappa_1^2\Id.
\end{equation}
The centered assertion is stronger than merely requiring the quadratic measurements to share a common value, which only forces $\cW(S)$ to lie in ${\rm span}(\one)$. It also gives the trace identity $\Tr(\widehat S) = d$ after rescaling. 

The proof uses the following dual formulation to establish \eqref{eq:sat-target}, and hence the feasibility of \eqref{eq:P}, with high probability.

\begin{lemma}[Convex separation]\label{lem:convex-separation}
Let
\[
K=\cW(\cB),\qquad
h(y)=\sup_{S\in\cB}\ip{\cW^*y}{S}
=\sup_{k\in K}\ip{y}{k}.
\]
Then $0\in K$ if and only if $h(y)\ge0$ for every unit
$y\in\R^n$.  In particular, $h(y)>0$ for every unit $y$ is sufficient
for $0\in K$.  Strict positivity for every
unit $y$ is equivalent to $0\in\operatorname{int}K$.
\end{lemma}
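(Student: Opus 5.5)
The plan is to treat $K=\cW(\cB)$ as a compact convex subset of $\R^n$ and apply the standard separation theorem, with $h$ identified as the support function of $K$. First, $\cB$ is a compact convex subset of $\Sd$. It is closed and bounded because it is cut out by the two spectral inequalities $\tfrac{1}{2\kappa_1}\Id\preceq S\preceq 2\kappa_1\Id$, and it is convex because each inequality is a convex (indeed linear matrix) constraint. Since $\cW$ is linear, $K$ is compact and convex. Linearity also gives $\ip{y}{\cW(S)}=\ip{\cW^*y}{S}$, so the two expressions for $h(y)$ agree: $h$ is the support function $\sigma_K$, and the supremum is attained.

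For the first equivalence, the forward direction is immediate: if $0\in K$ then $h(y)\ge\ip{y}{0}=0$. For the converse, suppose $0\notin K$. I will use the nearest-point argument already used in the proof of Lemma~\ref{lem:homogeneous}(ii). Let $k_0$ be the point of $K$ closest to the origin; then $k_0\neq0$, and convexity gives $\ip{k_0}{k}\ge\|k_0\|_2^2$ for all $k\in K$. Set $y=-k_0/\|k_0\|_2$. Then $h(y)=\sup_{k\in K}\ip{y}{k}\le-\|k_0\|_2<0$. The "in particular" clause follows trivially from this equivalence.

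For the interior characterization, suppose first that $h(y)>0$ for every unit $y$. Since $h$ is convex and finite, it is continuous, so by compactness of the sphere $\min_{\|y\|=1}h(y)=:r>0$. A closed convex set is the intersection of its supporting half-spaces, so $K=\{k:\ip{y}{k}\le h(y)\ \forall\, \|y\|=1\}$. This set contains the ball of radius $r$, hence $0\in\operatorname{int}K$. Conversely, if a ball $B(0,r)\subseteq K$, then $h(y)\ge\ip{y}{ry}=r>0$.

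There is no real obstacle here. The only points that need care are checking compactness of $\cB$, which is what makes the supremum attained and the nearest point exist, and invoking the representation of closed convex sets through their support function. A remark is also worth making: in general $\cW(\cB)$ might have empty interior in $\R^n$ (for instance if $\cW$ were not surjective), in which case strict positivity simply fails. This is consistent with the lemma, since Corollary~\ref{cor:feature-sat} makes $\cW$ surjective with high probability.
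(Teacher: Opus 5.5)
Your proof is correct and follows the paper's argument essentially step for step. Both use the nearest-point separation for the first equivalence, and both obtain the interior characterization from a positive minimum of the support function on the unit sphere together with the representation of the compact convex set $K$ as the intersection of its supporting half-spaces. Your explicit check that $\cB$ is compact and convex, which the paper takes for granted, is a welcome addition.
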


\begin{proof}
If $0\in K$, then $h(y)\ge0$.  Conversely, suppose $0\notin K$ and let
$k_0$ be the point of the compact convex set $K$ closest to the origin.
For every $k\in K$, differentiation along the segment from $k_0$ to $k$
gives $\ip{k_0}{k}\ge\|k_0\|_2^2$.  With
$e=k_0/\|k_0\|_2$, this yields
$h(-e)=-\inf_{k\in K}\ip{e}{k}<0$.

For the final assertion, $0\in\operatorname{int}K$ implies
$rB_2^n\subseteq K$ for some $r>0$, and hence $h(y)\ge r$ on the unit
sphere.  In the other direction, continuity and compactness give
$r:=\min_{\|y\|_2=1}h(y)>0$.  By positive homogeneity, $h(y)\ge r\|y\|_2$ for every $y\in\R^n$; hence, for every $x\in rB_2^n$,
\[
\langle y,x\rangle\le r\|y\|_2\le h(y)\qquad\text{for all }y\in\R^n.
\]
Since $K=\bigcap_{y\in\R^n}\{x:\langle y,x\rangle\le h(y)\}$, it follows that $rB_2^n\subseteq K$, and therefore $0\in\operatorname{int}K$.
\end{proof}

Thus it suffices to prove, with probability tending to one, that $h(y)>0$ for every unit vector $y\in\R^n$. As described in Section \ref{sec:proof-method}, we decompose each such dual vector into a head $y_J = (y_j)_{j \in J}$ of at most $|J| \leq C_0d \log d$ heavy coordinates and a low-influence tail: after normalization, every coordinate of $y_{J^c}$ is at most $\theta/\sqrt{d}$, for a fixed constant $\theta$. We impose the head equations $\ip{W_j}{S} = 0$, $j \in J$, exactly and use Gaussian comparison to bound the tail contribution from below.

For $J\subseteq[n]$, write
\[
\cW_J(S)=\bigl(\ip{W_i}{S}\bigr)_{i\in J}.
\]
The following proposition says that solving any sparse collection of head equations exactly preserves almost all of the Gaussian width of the bounded-condition cone.

\begin{proposition}[Uniform head-section width]\label{prop:heads}
Fix $C_0<\infty$ and set $K_d=\lfloor C_0d\log d\rfloor$.  With probability tending to one, simultaneously for every $J\subseteq[n]$ with
$|J|\le K_d$,
\begin{equation}\label{eq:head-width}
w\bigl(\cK_{\kappa_1}\cap\ker\cW_J\cap\sphereF\bigr)
\ge w\bigl(\cK_{\kappa_0}\cap\sphereF\bigr)-o(d)
\ge(\sqrt\alpha+3\eta)d.
\end{equation}
The $o(d)$ term may depend on the fixed $C_0$ but is uniform in $J$. In particular, the compact set
\begin{equation}\label{eq:DJ-def}
\cD_J:=\sqrt d\, \bigl(\cK_{\kappa_1}\cap\ker\cW_J\cap\sphereF\bigr)
\end{equation}
is nonempty and satisfies
\begin{equation}\label{eq:DJ-box}
\cW_J(S)=0,\qquad
\kappa_1^{-1} \Id \preceq S\preceq\kappa_1 \Id
\quad(S\in\cD_J).
\end{equation}
\end{proposition}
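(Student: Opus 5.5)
The plan is to build, for each Gaussian realization $Z$, a near-optimizer $P\in\cK_{\kappa_0}\cap\sphereF$ of $\ip{Z}{\cdot}$, and to correct it by a small perturbation $\Delta_J(P)$ lying in the row space of $\cW_J$. The corrected matrix $P'=P-\Delta_J(P)$ should satisfy $\cW_J(P')=0$, and we want $\|\Delta_J(P)\|_\F=o(1)$ uniformly. Concretely we take the least-squares correction
\[
\Delta_J(P)=\cW_J^*(\cW_J\cW_J^*)^{-1}\cW_J(P).
\]
If $\|\Delta_J(P)\|_\op\le \lambda_{\min}(P)/2$, then $P'\succ 0$ and its condition number is at most $\kappa_1=2\kappa_0$ up to a constant adjustment. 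Renormalizing, $P'/\|P'\|_\F\in\cK_{\kappa_1}\cap\ker\cW_J\cap\sphereF$. The loss in $\ip{Z}{\cdot}$ is then at most $\|Z\|_\F\|\Delta_J\|_\F+|\ip{Z}{P}|\,\bigl|1-\|P'\|_\F^{-1}\bigr|$. Since $\|Z\|_\F\asymp d$, this is $o(d)$ provided $\|\Delta_J(P)\|_\F=o(1)$.

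\textbf{Step 1: invertibility of the head Gram matrix.} The matrix $\cW_J\cW_J^*$ is a principal submatrix of $\cW\cW^*$. After padding $n$ so that $n/d^2\in(0,1/2)$, Corollary~\ref{cor:feature-sat} gives $\lambda_{\min}(\cW_J\cW_J^*)\ge c\,d$ simultaneously for all $J$. Hence
\[
\|\Delta_J(P)\|_\F\le (c d)^{-1/2}\|\cW_J(P)\|_2 .
\]

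\textbf{Step 2: smallness of $\cW_J(P)$ for the Gaussian candidate.} Here $\cW_J(P)=(x_j^\top P x_j-\Tr P)_{j\in J}/\sqrt d$. We use the explicit candidate from Lemma~\ref{lem:widthcone}, namely $P=P_s=(Z_+ + s\sqrt d\,\Id)/\|\cdot\|_\F$, which has $\|P\|_\op\lesssim d^{-1/2}$ and $\|P\|_\F=1$. Because $Z$ is independent of the data, Lemma~\ref{lem:hw} gives $|x_j^\top P x_j-\Tr P|\lesssim \sqrt{\log d}$ for each $j$ with high probability. This yields $\|\cW_J(P)\|_2\lesssim \sqrt{|J|\log d/d}\lesssim\log d$, so $\|\Delta_J\|_\F\lesssim \log d/\sqrt d$.

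This bound is only in expectation over $Z$, and it must hold uniformly in $J$. We handle this by taking a union bound over the $n$ indices at once: we need $\max_{j\le n}|x_j^\top P x_j-\Tr P|\lesssim\log d$, and then every $J$ inherits it. We work on this good event and use the fallback candidate $\Id/\sqrt d$ otherwise, so that the width lower bound $\E\sup\ge \E[\ip{Z}{P'}]$ loses only $o(d)$.

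For the operator-norm requirement we use $\|\Delta_J\|_\op\le\|\Delta_J\|_\F\lesssim\log d/\sqrt d$. This is not smaller than $\lambda_{\min}(P)\asymp s/\sqrt d$, so we need a better operator-norm bound. We write $\Delta_J=\sum_{j\in J} c_j W_j$ with $c=(\cW_J\cW_J^*)^{-1}\cW_J(P)$, so that $\|c\|_2\lesssim \log d/\sqrt d$. Then
\[
\Big\|\sum_j c_j x_jx_j^\top\Big\|_\op\le \|c\|_\infty\,\|X_J\|_\op^2 ,
\]
and \eqref{eq:sparse-norms} controls $\|X_J\|_\op^2$. This requires an $\ell_\infty$ bound on $c$, which we obtain from the coherence bound \eqref{eq:coherence}: the Gram matrix is $d$ times the identity plus off-diagonal entries of size $O(\log d)$, and a Neumann-type or diagonal-dominance argument in $\ell_\infty$ gives $\|c\|_\infty\lesssim \log d/d$. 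Combining, $\|\Delta_J\|_\op\lesssim (\log d)^3/d=o(d^{-1/2})$, which is what we need.

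\textbf{Main obstacle.} The hard step is this $\ell_\infty$ control of $(\cW_J\cW_J^*)^{-1}$ uniformly in $J$ with $|J|$ up to $d\log d$. Off-diagonal row sums can reach about $|J|\log d/d\cdot\sqrt{\cdot}$, so plain diagonal dominance may fail. If it does, we fall back on bounding $\|\sum_j c_j W_j\|_\op$ by a direct $\epsilon$-net and Bernstein argument over $c$ in an $\ell_2$ ball, using the independence of $Z$ from the data.
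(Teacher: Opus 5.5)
Your overall architecture is the paper's. You correct a data-independent candidate $P$ by $-\cW_J^*Q_J^{-1}\cW_J(P)$ with $Q_J=\cW_J\cW_J^*$, get $Q_J\succeq cd\,I_{|J|}$ from the principal-submatrix edge, bound $r_i=\ip{W_i}{P}$ by Hanson--Wright with a union bound over all $n$ rows, and finish with Weyl and renormalization.

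\textbf{The gap.} It is exactly the step you flag as the main obstacle, and neither of your routes closes it.

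Diagonal dominance, or a Neumann series in $\ell_\infty$, cannot work as stated. Typical entries $|\ip{W_i}{W_j}|$ are of order one, so the off-diagonal row sums of $Q_J$ are of order $|J|$, which reaches $d\log d\gg d$.

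Your fallback fails for a structural reason. Any bound on $\|\sum_j c_jW_j\|_\op$ that uses only $\|c\|_2\le\rho$ must cover $c=\rho e_j$, and $\|\rho W_j\|_\op=\rho(\|x_j\|_2^2-1)/\sqrt d\approx\rho\sqrt d$. For $|J|$ of order $d\log d$ one typically has $\|r_J\|_2\gtrsim\sqrt{\log d}$, hence $\|c\|_2\gtrsim\sqrt{\log d}/d$. So this bound is at least of order $\sqrt{\log d/d}\gg\lambda_{\min}(P)$, and independence of $Z$ from the data does not change that. You must use that the specific vector $c=Q_J^{-1}r_J$ is delocalized.

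\textbf{The missing idea.} A one-step bootstrap through $\ell_1$ does this without any $\ell_\infty$-invertibility of $Q_J$. Read off the $i$-th equation $(Q_J)_{ii}c_i=r_i-\sum_{j\ne i}(Q_J)_{ij}c_j$, and use $(Q_J)_{ii}=\|W_i\|_\F^2\ge cd$ together with the coherence bound $|(Q_J)_{ij}|\le C\log d$:
\[
\|c\|_\infty\le\frac{C}{d}\Bigl(\max_{i\le n}|r_i|+\log d\,\|c\|_1\Bigr),\qquad \|c\|_1\le\sqrt{|J|}\,\|c\|_2\le\sqrt{|J|}\,\frac{\|r_J\|_2}{cd}.
\]
The $\ell_2$ bound from the spectral edge thus already controls the entire off-diagonal sum. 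This gives $\|c\|_2=O(\log d/d)$, $\|c\|_1=O(\log^{3/2}d/\sqrt d)$ and $\|c\|_\infty=O(\log^{5/2}d/d^{3/2})$. Then $\|\Delta_J\|_\op\le d^{-1/2}\bigl(\|c\|_\infty\|X_J\|_\op^2+\|c\|_1\bigr)=O(\log^{9/2}d/d)=o(d^{-1/2})$.

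\textbf{Bookkeeping to repair.}
\begin{itemize}
\item Your scalings are off by $\sqrt d$. In fact $\|c\|_2\le\|\cW_J(P)\|_2/(cd)=O(\log d/d)$, not $O(\log d/\sqrt d)$. Once the factor $d^{-1/2}$ in $W_j$ is kept, $\|c\|_\infty\lesssim\log d/d$ would only give $\|\Delta_J\|_\op\lesssim\log^3 d/\sqrt d$; what you need is $\|c\|_\infty=o(1/(d\log^2 d))$.
\item You dropped the identity part $-(\sum_j c_j)I_d/\sqrt d$ of $\Delta_J$. That is where $\|c\|_1$ enters.
\item The fallback candidate $I_d/\sqrt d$ is not in $\ker\cW_J$. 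On the bad $Z$-set, use instead that the supremum over the section is at least $-\|Z\|_\F$. This requires the section to be nonempty, which follows from any single good $Z_0$.
\item Passing from your joint $(W,Z)$ event to the $Z$-expectation at fixed data needs a Markov step: conditional failure probability at most $d^{-A/2}$ outside a data event of probability $d^{-A/2}$. Cauchy--Schwarz then bounds the bad-set contribution.
\item Using $P_s$ yields the final bound $(\sqrt\alpha+3\eta)d$, but not the intermediate comparison with $w(\cK_{\kappa_0}\cap\sphereF)$. A measurable maximizer over $\cK_{\kappa_0}\cap\sphereF$ works verbatim, since the argument uses only $\|P\|_\F=1$, $\kappa_0^{-1}d^{-1/2}\le\lambda(P)\le\kappa_0 d^{-1/2}$, and independence from the data.
\end{itemize}
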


Section~\ref{sec:sat-heads} proves Proposition~\ref{prop:heads}.  Starting from a support maximizer in the smaller cone $\cK_{\kappa_0}$, it solves the equations indexed by $J$ through the inverse feature Gram matrix. The spectral bound in Corollary \ref{cor:feature-sat} controls the Frobenius norm of the correction, while the off-diagonal Gram and sparse-column estimates in Lemma \ref{lem:data} make its operator norm $o(d^{-1/2})$, so that the corrected solution remains within the larger cone $\cK_{\kappa_1}$.

The next proposition gives a positive margin for every normalized tail direction whose individual coordinates have low influence.

\begin{proposition}[Uniform low-influence tail margin]\label{prop:lowinf}
There are fixed constants $\theta,c_*>0$, depending only on $\alpha$, such that the following holds for every fixed $C_0<\infty$.  With $K_d=\lfloor C_0d\log d\rfloor$, with probability tending to one, simultaneously for every $J\subseteq[n]$ with $|J|\le K_d$ satisfying
\eqref{eq:head-width},
\begin{equation}\label{eq:lowinf}
\inf_{\substack{\operatorname{supp}z\subseteq J^c,\ \|z\|_2=1\\
\|z\|_\infty\le\theta/\sqrt d}} \ \sup_{S\in\cD_J} \sum_{i\notin J}z_i\ip{W_i}{S} \ge c_*d.
\end{equation}
An infimum over an empty index set is $+\infty$.
\end{proposition}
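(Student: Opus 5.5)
Fix a head $J$ with $|J|\le K_d$ satisfying \eqref{eq:head-width}. The aim is to compare the min--max statistic
\[
\Phi_J(W):=\inf_{z\in \cZ_J}\sup_{S\in\cD_J}\sum_{i\notin J}z_i\ip{W_i}{S},\qquad \cZ_J=\{\operatorname{supp}z\subseteq J^c,\ \|z\|_2=1,\ \|z\|_\infty\le\theta/\sqrt d\},
\]
with its Gaussian counterpart $\Phi_J(G)$, where the $W_i$, $i\notin J$, are replaced by independent $G_i\sim\GOE d$. We condition on $x_J$ throughout. This makes $\cD_J$ a deterministic compact subset of $\sqrt d\,\sphereF$, and it is independent of $(x_i)_{i\notin J}$.

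\textbf{Gaussian side.} Write $\sqrt d\,G_i=\sqrt2\,\Gamma_i$, and apply Lemma~\ref{lem:gordon} with $T=\cD_J/\sqrt d$ and $N=|J^c|$. Restricting the infimum to $\cZ_J$ can only increase it, so
\[
\Phi_J(G)\ge \sqrt{2/d}\,\sqrt d\,\bigl(w(T)-\sqrt n-t\bigr)\ge \sqrt2\,(\sqrt\alpha+3\eta-\sqrt\alpha-t/d)\,d
\]
with probability at least $1-2e^{-t^2/4}$. Taking $t=\eta d$ gives $\Phi_J(G)\ge 2\sqrt2\,\eta d$ except on an event of probability $e^{-c\eta^2d^2}$.

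\textbf{Transfer to the chaos.} I will smooth both the inf and the sup by soft-min and soft-max at inverse temperature $\beta\asymp d^{-1}\log(\text{covering numbers})$. Nets of $\cZ_J$ and $\cD_J$ have cardinality $e^{O(d^2)}$, so the smoothing costs at most $O(d^2/\beta)$, which is an $\eta d/10$ error for $\beta\asymp d$ in the natural units. I then apply the Lindeberg principle for second-order chaos, Proposition~\ref{lem:interpolation}, replacing $W_i$ by $G_i$ one index at a time inside a smooth bounded test function $\psi$ of the smoothed free energy. The two first moments match by \eqref{eq:moment-match}, so each swap costs a third-order term of size $\E|z_i|^3|\ip{W_i}{S}|^3$ times third derivatives. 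Summing over $i\notin J$ and using $\sum_i|z_i|^3\le\|z\|_\infty\le\theta/\sqrt d$, I expect a total error of order $\theta\cdot\mathrm{poly}(\beta)$ times a constant. Choosing $\theta$ small relative to $\eta$ makes this smaller than the margin. This is exactly where the low-influence constraint enters, and it is also why the infimum must be restricted to $\cZ_J$.

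\textbf{Main obstacle: uniformity in $J$.} There are $\binom{n}{K_d}=e^{O(d\log^2 d)}$ heads, so I need failure probability $e^{-cd^2}$ for each fixed $J$, not merely a bound in expectation. My plan is to truncate the chaos: on the event of Lemma~\ref{lem:data}, each $|\ip{W_i}{S}|$ is replaced by a truncated version, which is harmless with probability $1-o(1)$ simultaneously for all $J$. The truncated $\Phi_J$ is then Lipschitz in each $x_i$, with Lipschitz constant controlled by Corollary~\ref{cor:feature-sat} ($\|\cW\|\le C\sqrt d$). A McDiarmid or Gaussian-concentration bound then gives fluctuations of size $\eta d$ with probability $e^{-cd^2}$. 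Combining the expectation comparison from the Lindeberg step with this concentration yields $\Phi_J(W)\ge c_*d$ with $c_*=\eta$ for each fixed $J$, with failure probability $e^{-cd^2}$. A union bound over all $J$ then finishes the proof. The step I expect to be most delicate is making the Lindeberg third-moment bounds compatible with the truncation and with the temperature $\beta$ so that all errors stay below $\eta d$; I would first try $\beta=Cd$ with truncation level $\log d$ and check that the errors scale as $\theta\,\mathrm{polylog}(d)$ at worst. If a polylogarithmic loss appears, I would absorb it by taking $\theta$ to be small but $d$-independent after a more careful cumulant bound.
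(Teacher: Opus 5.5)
Your architecture is the paper's: Gordon's margin for the Gaussian rows, soft-min/soft-max over $e^{O(d^2)}$-point nets at temperature $\beta=Ld$, a chaos Lindeberg step using $\sum_i|z_i|^3\le\theta/\sqrt d$, an $e^{-cd^2}$ concentration bound, and a union bound over heads. The gap is that the two quantitative steps that make it work are mis-scaled as you describe them.

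\textbf{The transfer step.} Suppose each swap costs ``$\E|z_i|^3|\ip{W_i}{S}|^3$ times third derivatives''. The third derivatives of the soft functional carry a factor $\beta^2$, and $\E|\ip{W_i}{S}|^3\asymp1$ when $\|S\|_{\F}=\sqrt d$. So the total error is of order $\beta^2\theta/\sqrt d\asymp L^2\theta d^{3/2}$. This exceeds the margin $2\sqrt2\,\eta d$ by a factor $\sqrt d$ for every fixed $\theta$; it is not a polylogarithmic loss.
\begin{itemize}
\item Truncating at level $\log d$ cannot cure this. The truncation is also not harmless: nothing in Lemma~\ref{lem:data} bounds $\sup_{S\in\cD_J}|\ip{W_i}{S}|$ by $O(\log d)$. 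The available uniform bound is $\|S\|_{\op}\|W_i\|_*\lesssim\kappa_1\sqrt d$.
\item A more careful cumulant bound inside a Taylor scheme does not suffice either. Matching the signed third moment still leaves a fourth-order remainder bounded only by a quantity of order $\beta^3\sum_i z_i^4\le L^3\theta^2d^2$.
\end{itemize}
The missing idea is the \emph{exact} second-chaos integration by parts of Lemma~\ref{lem:chaos-ibp}. Because $W_i$ is quadratic in $x_i$, the identity $\E[(x_ax_b-\delta_{ab})f(x)]=\E[\partial_a\partial_b f(x)]$ produces only $D^2F$ and $D^3F$ terms. The $D^2F$ term cancels the GOE term exactly. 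The $D^3F$ term contracts, on pure tensors, to $4x_i^\top A_1A_3A_2x_i$. So the per-row remainder is $O(d^{-3/2}\|x_i\|_2^2\|D_i^3F\|_{\pi,\op})$ with no higher-order remainder; this encodes the fact that $\E\ip{M}{W_i}^3=8\Tr(M^3)/d^{3/2}=O(d^{-1/2})$. Combine this with the bound $\sum_i\|D_i^3\Phi\|_{\pi,\op}\le C\kappa_1^3(\beta+\gamma)^2\theta/\sqrt d$ of Lemma~\ref{lem:soft}, which uses $\|S\|_{\op}\le\kappa_1$ on $\cD_J$. The error becomes $CL^2\theta d$, which is beaten by fixing $\theta$ after $L$. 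You cite the right tool, Lemma~\ref{lem:interpolation}, but its output is this projective-norm quantity, not the one in your heuristic.

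\textbf{The concentration step} has a similar scaling problem.
\begin{itemize}
\item Per-row bounded differences cannot give $e^{-cd^2}$. Replacing one clipped row changes the soft functional by up to order $\kappa_1\theta$, and with $N\asymp d^2$ rows McDiarmid yields only a $d$-independent bound $\exp(-c/\theta^2)$.
\item A Lipschitz bound routed through Frobenius norms also fails. For instance, $\|S\|_{\F}=\sqrt d$ times $\|\sum_i y_i(W_i-W_i')\|_{\F}$ gives a constant of order $\sqrt d$, hence only $e^{-cd}$, which loses to the $e^{O(d\log^2 d)}$ heads.
\item The feature edge of Corollary~\ref{cor:feature-sat} controls the size of $\cW^*$, not its variation in the data, so it does not supply the Lipschitz constant.
\end{itemize}
The paper instead clips each $x_i$ radially at radius $2\sqrt d$. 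This is harmless on \eqref{eq:radius} and moves the conditional mean by $o(1)$. It then combines three facts to show the functional is $4\kappa_1$-Lipschitz in the whole tail Gaussian vector:
\begin{itemize}
\item $\|D_i\Phi\|_{\op}\le\kappa_1\E_q|y_i|$;
\item $\sum_i(\E_q|y_i|)^2\le1$;
\item $\|uu^\top-vv^\top\|_*/\sqrt d\le4\|u-v\|_2$ on that ball.
\end{itemize}
Gaussian concentration at deviation of order $d$ then gives $e^{-cd^2}$.

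The bound $\|\cW^*\|_{2\to\F}\le C_W\sqrt d$ is needed instead in a step you omit. Passing from the $\varrho$-net of the capped sphere back to every admissible $z$ costs $C_W\varrho d$. This forces the order of constants: $\varrho$ first, then $L$, then $\theta$.
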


Sections~\ref{sec:sat-gordon}--\ref{sec:sat-lowinf} prove Proposition~\ref{prop:lowinf}.  Gordon's inequality first gives the corresponding margin for covariance-matched Gaussian surrogates.  A smoothed second-chaos Lindeberg argument transfers the margin, in conditional expectation, to the quadratic chaos rows.  Radial clipping then gives an $e^{-cd^2}$ concentration bound, which beats the union bound over the $e^{O(d\log^2d)}$ possible heads.

It remains to reduce an arbitrary dual direction to one covered by Proposition \ref{prop:lowinf}.

\begin{lemma}[Head--tail decomposition]\label{lem:regularity}
Fix $\theta>0$.  There is $C_\theta<\infty$ such that for every unit vector $y\in\R^n$ there is $J\subseteq[n]$ with $|J|\le C_\theta d\log d$ for which, writing $r=\|y_{J^c}\|_2$, one of the following holds:
\begin{align}
&r\le d^{-2};\label{eq:tiny-tail}\\
&r>0\quad\text{and}\quad \left\|\frac{y_{J^c}}r\right\|_\infty \le\frac{\theta}{\sqrt d}.
\label{eq:spread-tail}
\end{align}
\end{lemma}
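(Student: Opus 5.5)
The plan is a greedy peeling along the decreasing rearrangement of $|y|$, taking the head in blocks of fixed size. We stop as soon as the next coordinate is small compared with the remaining tail norm. Each step where we fail to stop will be shown to halve the tail norm, so after $O(\log d)$ blocks the tail is already below $d^{-2}$.

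\emph{Setup.} Fix a unit $y\in\R^n$ and order its coordinates so that $|y_{(1)}|\ge|y_{(2)}|\ge\cdots\ge|y_{(n)}|$. Set $m=\lceil 2d/\theta^2\rceil$. For $k\ge0$ let $J_k$ be the index set of the top $km$ coordinates (capped at $[n]$), and define
\[
T_k:=\|y_{J_k^c}\|_2, \qquad a_k:=|y_{(km+1)}|,
\]
with $a_k:=0$ if $km\ge n$. Then $T_0=1$ and $T_k$ is nonincreasing. Since the coordinates are sorted, $\|y_{J_k^c}\|_\infty=a_k$. Call step $k$ \emph{good} if $T_k=0$ or $a_k\le\theta T_k/\sqrt d$.
\begin{itemize}
\item If some $k\le K:=\lceil 4\log_2 d\rceil+1$ is good, take $J=J_k$. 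Either $T_k=0$, which gives \eqref{eq:tiny-tail} with $r=0$, or $r=T_k>0$ and \eqref{eq:spread-tail} holds by definition.
\item In either case $|J|\le Km\le C_\theta d\log d$, with $C_\theta$ depending only on $\theta$.
\end{itemize}

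\emph{Key step.} We show that a bad step $k+1$ forces $T_k^2\ge 2T_{k+1}^2$. The coordinates ranked $km+1,\dots,(k+1)m$ all lie in $J_k^c$. Each has absolute value at least $|y_{((k+1)m+1)}|=a_{k+1}$. Hence
\[
T_k^2\ \ge\ m\,a_{k+1}^2 .
\]
If step $k+1$ is bad, then $a_{k+1}^2>\theta^2T_{k+1}^2/d$. Combining the two bounds gives
\[
T_k^2>\frac{m\theta^2}{d}\,T_{k+1}^2\ \ge\ 2T_{k+1}^2 .
\]
The degenerate case is handled as follows. If $(k+1)m\ge n$, then $T_{k+1}=0$, so step $k+1$ is good.

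\emph{Conclusion.} Suppose steps $1,\dots,K$ are all bad. Then $T_K^2\le 2^{-K}T_0^2=2^{-K}$. By the choice of $K$ this is at most $d^{-4}$, so $T_K\le d^{-2}$. Taking $J=J_K$ gives \eqref{eq:tiny-tail}. Step $0$ never needs to be good, because the halving argument only uses badness at steps $1,\dots,K$.

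The one point needing care is this comparison between consecutive blocks: the lower bound on the block in $J_k^c$ must be matched against the violation at the \emph{next} step. That matching is exactly what the choice $m\ge 2d/\theta^2$ is made for. The rest is bookkeeping, including the cap at $n$ and the case $r=0$.
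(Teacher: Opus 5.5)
Your proof is correct and is essentially the paper's argument: greedy peeling of the largest coordinates, where every round that does not terminate shrinks the tail norm geometrically, so $O(\log d)$ rounds of $O(d/\theta^2)$ coordinates suffice. The difference is only in bookkeeping. You peel fixed blocks of size $\lceil 2d/\theta^2\rceil$ and stop once the spread condition holds, so the block size forces the decay. The paper instead peels every coordinate exceeding the adaptive threshold $\theta r/(2\sqrt d)$ and stops once the remainder fails to halve, so the threshold bounds the number of coordinates added per round.
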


\begin{proof}
Start with $J=\varnothing$ and current remainder norm $r=1$.  While $r>d^{-2}$, add to $J$ every remaining coordinate with
\begin{equation}\label{eq:peel}
|y_i|>\frac{\theta r}{2\sqrt d}.
\end{equation}
If $q$ coordinates are added in one round, then $q\theta^2r^2/(4d)\le r^2$, so $q\le4d/\theta^2$.  Let $r'$ be the new remainder norm.  If $r'\ge r/2$, stop: every remaining coordinate is at most $\theta r/(2\sqrt d)\le\theta r'/\sqrt d$, which gives \eqref{eq:spread-tail}.  If $r'<r/2$, continue.  After $\ell$ non-stopping rounds the remainder norm is at most $2^{-\ell}$, so after at most $\lceil2\log_2d\rceil+1$ rounds it is at most $d^{-2}$. Each round adds at most $4d/\theta^2$ coordinates, and the size bound and the dichotomy follow.
\end{proof}

\begin{proof}[Proof of Theorem~\ref{thm:sat}]
Assume the reduction to \eqref{eq:alpha-assume} after padding.  Take $\theta,c_*$ from Proposition~\ref{prop:lowinf} and choose $C_0\ge C_\theta$ in Proposition~\ref{prop:heads}.  Intersect the high-probability events of Corollary~\ref{cor:feature-sat}, Proposition~\ref{prop:heads}, and Proposition~\ref{prop:lowinf}.  By \eqref{eq:DJ-box}, every section $\cD_J$ with $|J|\le K_d$ is contained in the fixed box $\cB$.

We show that $h(y)>0$ for every unit $y\in\R^n$.  Apply
Lemma~\ref{lem:regularity} and write
\begin{equation}\label{eq:uv-def}
u=y_J,\qquad v=y_{J^c},\qquad M_u=\cW_J^*u,\qquad M_v=\cW_{J^c}^*v.
\end{equation}
In the spread-tail case \eqref{eq:spread-tail}, put $z=v/r$.  Every $S\in\cD_J$ eliminates the head rows $\ip{M_u}{S} = 0$, and Proposition~\ref{prop:lowinf} gives
\begin{equation}\label{eq:spread-case}
h(y)\ge\sup_{S\in\cD_J}\ip{M_u+M_v}{S} =r\sup_{S\in\cD_J}\sum_{i\notin J}z_i\ip{W_i}{S} \ge r c_*d>0.
\end{equation}
In the tiny-tail case \eqref{eq:tiny-tail}, choose $S_0\in\cD_J$.  Since $r\le d^{-2}$, $\|u\|_2\ge1/2$ for large $d$.  The lower spectral edge in Corollary \ref{cor:feature-sat}, restricted to the principal submatrix on $J$, gives
\begin{equation}\label{eq:Mu-lower}
\|M_u\|_{\F}^2 =u^\top\cW_J\cW_J^*u\ge cd\|u\|_2^2\ge c'd.
\end{equation}
Write $\sgn(M_u)$ for the matrix $M_u$ with eigenvalues replaced by their signs. Set (with $\sgn (0 ) = 0$)
\begin{equation}\label{eq:Sprime}
\delta_0=\frac{1}{4\kappa_1},\qquad S'=S_0+\delta_0\sgn(M_u),
\end{equation}
so that $( \tfrac{1}{2\kappa_1} + \delta_0) \Id\preceq S_0 \preceq (\kappa_1 + \delta_0) \Id $ for $\kappa_1 \geq 1$ and $S'\in\cB$, and
\begin{equation}\label{eq:head-inner}
\ip{M_u}{S'}=\delta_0\|M_u\|_* \ge\delta_0\|M_u\|_{\F}\ge c''\sqrt d.
\end{equation}
The upper feature edge in Corollary \ref{cor:feature-sat} gives
\begin{equation}\label{eq:Mv-bounds}
\|M_v\|_{\F}\le C\sqrt d\,r,\qquad \|M_v\|_*\le\sqrt d\,\|M_v\|_{\F}\le Cdr,
\end{equation}
and therefore
\begin{equation}\label{eq:tail-inner}
|\ip{M_v}{S'}| \le\|S'\|_{\op}\|M_v\|_* \le Cdr\le C/d.
\end{equation}
Thus $h(y)\ge c''\sqrt d-C/d>0$ in the tiny-tail case as well.

Lemma~\ref{lem:convex-separation} now gives $0\in\cW(\cB)$. Choose $S\in\cB$ with $\cW(S)=0$ and set $\widehat S=dS/\Tr S$.  The calculation \eqref{eq:implication-sat-condition-eigenvalues} shows that  $(4\kappa_1^2)^{-1}\Id \preceq\widehat S \preceq4\kappa_1^2 \Id$.  This proves the theorem.
\end{proof}

\begin{remark}[Condition-number bound supplied by the proof]
\label{rem:kappa-blowup}
In the proof of Lemma~\ref{lem:widthcone}, $q(s)=\tfrac12-2c_+s+O(s^2)$ as $s\downarrow0$.  The explicit choice there gives $\kappa(a)=O((1/2-a)^{-1})$.  With $a=\sqrt\alpha+4\eta$ and $\kappa_1=2\kappa_0$, the fit constructed above has condition number at most $4\kappa_1^2$. Taking $\eta$ to be a small fixed fraction of $1/2-\sqrt\alpha$ yields
\[
\operatorname{cond}(S)=O\bigl((1/4-\alpha)^{-2}\bigr)
\qquad(\alpha \uparrow1/4).
\]
This bounds the fit produced by the proof, not every fit.  The replica prediction of \cite{maillard2024fitting} concerns typical solutions and does not exclude atypical well-conditioned (or badly-conditioned) fits.
\end{remark}

\begin{remark}[Where the sharp constant $1/4$ enters]\label{rem:where-quarter}
The degree-two feature map remains uniformly well conditioned all the way up to $n/d^2 < 1/2$ (Proposition~\ref{prop:feature-edge}).  The sharp constant $1/4$ enters only through the comparison
\[
w\bigl(\Sdp \cap \sphereF\bigr) \sim \frac d2
\qquad\text{versus}\qquad \E\|g_n\|_2 \sim \sqrt\alpha\, d
\]
in \eqref{eq:gordon-tail-margin}.  The bounded-condition cone approximates the first width while retaining the spectral slack needed for the uniform head corrections.
\end{remark}

\subsection{Uniform removal of all sparse heads}\label{sec:sat-heads}

We now prove Proposition~\ref{prop:heads}. We first record the elementary spectral fact behind \eqref{eq:DJ-box}: if $R \in \cK_{\kappa_1} \cap \sphereF$, then $d \lambda_{\min}(R)^2 \leq  \|R\|_{\F}^2 =1 \leq d\lambda_{\max}(R)^2$ forces
\begin{equation}\label{eq:P-eigen-generic}
\frac{1}{\kappa_1 \sqrt d}\, \Id \;\preceq\; R \;\preceq\;
\frac{\kappa_1}{\sqrt d}\, \Id ,
\end{equation}
which gives \eqref{eq:DJ-box} after multiplication by $\sqrt d$.  The same argument applies to elements of $\cK_{\kappa_0} \cap \sphereF$.

We work on the event $\cE_{\mathrm{data}}$ on which both Corollary~\ref{cor:feature-sat} and Lemma~\ref{lem:data} hold.  Its probability is $1-o(1)$.  Let $Z$ be a standard Gaussian in $\Sd$, independent of the data. Choose
\begin{equation}\label{eq:P-max}
P = P(Z) \in \arg\max_{R \in \cK_{\kappa_0} \cap \sphereF} \ip{Z}{R} .
\end{equation}
The index set $\cK_{\kappa_0} \cap \sphereF$ is fixed and compact, and $(Z,R)\mapsto\ip{Z}{R}$ is continuous.  The measurable maximum theorem \cite[Theorem~18.19]{aliprantis2006infinite} therefore gives a Borel maximizer $P=P(Z)$.  In particular, $P$ remains independent of the data. The spectral estimate \eqref{eq:P-eigen-generic}, with $\kappa_0$ in place of $\kappa_1$, gives
\begin{equation}\label{eq:P-eigen}
\lambda_{\min}(P) \ge \frac{1}{\kappa_0 \sqrt d}, \qquad \|P\|_{\op} \le \frac{\kappa_0}{\sqrt d} .
\end{equation}
Set $r_i := \ip{W_i}{P} = (x_i^\top P x_i - \Tr P)/\sqrt d$.

\begin{lemma}[Exact correction and support transfer]
\label{lem:head-correction-bounds}
Fix $C_0<\infty$ and set $K_d=\lfloor C_0d\log d\rfloor$. Fix $Z \in \sfS^d$ and let $P\in\cK_{\kappa_0}\cap\sphereF$ be a support maximizer \eqref{eq:P-max}. Put $R_d=\max_{i\le n}|r_i|$, and suppose $R_d\le C_A\sqrt{\log d/d}$ and work on $\cE_{\mathrm{data}}$.  For each nonempty $J\subseteq[n]$ with $|J|\le K_d$, let $Q_J = \cW_J \cW_J^*$ and define
\begin{equation}\label{eq:aJ-def}
a_J = Q_J^{-1} r_J, \qquad
E_J = -\cW_J^* a_J , \qquad \widetilde P_J = \frac{P + E_J}{\|P + E_J\|_{\F}};
\end{equation}
for $J=\varnothing$, set $a_J=0$ and $E_J=0$.  For all sufficiently large $d$, simultaneously over $|J|\le K_d$,
\begin{equation}\label{eq:PJ-close}
\widetilde P_J\in \cK_{\kappa_1}\cap\ker\cW_J\cap\sphereF,
\end{equation}
and
\begin{equation}\label{eq:width-transfer}
\sup_{S \in \cK_{\kappa_1} \cap \ker\cW_J \cap \sphereF} \ip{Z}{S}
\;\ge\;
\sup_{R \in \cK_{\kappa_0} \cap \sphereF} \ip{Z}{R}
\;-\; C\, \|Z\|_{\F}\, \frac{\log d}{\sqrt d}  .
\end{equation}
\end{lemma}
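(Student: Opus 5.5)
The plan is to control the correction $E_J$ in Frobenius norm and in operator norm. The Frobenius bound drives the width transfer \eqref{eq:width-transfer}; the operator-norm bound keeps the corrected matrix inside the larger cone $\cK_{\kappa_1}$. First note that $E_J$ solves the head equations exactly. Indeed, $\cW_J(E_J) = -Q_J a_J = -r_J$, so $\cW_J(P+E_J) = r_J - r_J = 0$. Since $\ker\cW_J$ is a linear subspace, the normalized $\widetilde P_J$ also lies in $\ker \cW_J$. Invertibility of $Q_J$ follows from Corollary~\ref{cor:feature-sat}: $Q_J$ is a principal submatrix of $\cW\cW^*$, so $Q_J\succeq c d\,I_J$.

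\emph{Frobenius bound.} We have $\|E_J\|_{\F}^2 = a_J^\top Q_J a_J = r_J^\top Q_J^{-1} r_J \le \|r_J\|_2^2/(cd)$. Using $\|r_J\|_2^2\le |J| R_d^2 \le C_0 d\log d\cdot C_A^2\log d/d$, this gives
\[
\|E_J\|_{\F}\le C\,\frac{\log d}{\sqrt d}=o(1).
\]
Hence $\|P+E_J\|_{\F}\in[1-o(1),1+o(1)]$. Moreover $\|\widetilde P_J - P\|_{\F}\le 2\|E_J\|_{\F}/(1-\|E_J\|_{\F})\le C\log d/\sqrt d$. Once \eqref{eq:PJ-close} is established, \eqref{eq:width-transfer} follows from $\ip{Z}{\widetilde P_J}\ge \ip{Z}{P} - \|Z\|_{\F}\|\widetilde P_J-P\|_{\F}$ and the fact that $P$ attains the supremum over $\cK_{\kappa_0}\cap\sphereF$.

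\emph{Operator-norm bound (main obstacle).} By \eqref{eq:P-eigen}, $P$ has eigenvalues in $[1/(\kappa_0\sqrt d),\kappa_0/\sqrt d]$. Suppose we show $\|E_J\|_{\op}=o(d^{-1/2})$ uniformly in $J$. Then the eigenvalues of $P+E_J$ lie in $[(1-o(1))/(\kappa_0\sqrt d),(1+o(1))\kappa_0/\sqrt d]$, so its condition number is at most $\kappa_0^2(1+o(1))$. This needs care: $\kappa_1=2\kappa_0$, while $\kappa_0^2$ may exceed $2\kappa_0$, so a relative perturbation $o(1)$ should be compared directly. The ratio is $\lambda_{\max}/\lambda_{\min}\le \kappa_0(1+o(1))/(1-o(1))\le 2\kappa_0$, which is fine because the relative errors are measured against the respective eigenvalues of $P$, whose own ratio is at most $\kappa_0$.

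To bound the operator norm, write
\[
E_J = -\frac{1}{\sqrt d}\Bigl(X_J\operatorname{diag}(a_J)X_J^\top - (\textstyle\sum_j a_j)\Id\Bigr).
\]
The trace term is small: $|\sum_j a_j|\le \sqrt{|J|}\,\|a_J\|_2 \le \sqrt{|J|}\,\|r_J\|_2/(cd)$, which is of order $\sqrt{d\log d}\cdot\log d/d$. After dividing by $\sqrt d$ this is $O(\log^{3/2} d/d)=o(d^{-1/2})$. The rank-type term is bounded by
\[
\frac{\|X_J\|_{\op}^2\,\|a_J\|_\infty}{\sqrt d}\le C\sqrt d\,(\log d)^2\,\|a_J\|_\infty ,
\]
using \eqref{eq:sparse-norms}. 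So we need $\|a_J\|_\infty\ll d^{-1}(\log d)^{-2}$, i.e.\ an $\ell_\infty$ bound on $Q_J^{-1}r_J$, not merely an $\ell_2$ bound.

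To get it, write $Q_J = \Delta_J + O_J$, with diagonal part $(Q_J)_{jj}=\|W_j\|_{\F}^2\approx d$ (using \eqref{eq:radius} and concentration) and off-diagonal entries $|\ip{W_i}{W_j}|\le C\log d$ by \eqref{eq:coherence}. A Neumann expansion is too crude, since $\|O_J\|_{\op}$ could be of order $d$. Instead use the identity
\[
a_J = \Delta_J^{-1} r_J - \Delta_J^{-1}O_J a_J .
\]
The first term has sup-norm at most $R_d/d\cdot(1+o(1))$. For the second, each coordinate satisfies $|(O_Ja_J)_j|\le C\log d\,\|a_J\|_1\le C\log d\sqrt{|J|}\,\|a_J\|_2$, which is of order $\log d\sqrt{d\log d}\cdot\sqrt{\log d}\,\log d/d = O(\log^3 d/\sqrt d)$. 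Dividing by $d$ gives
\[
\|a_J\|_\infty \le C\sqrt{\log d}\,d^{-3/2}+C\log^3 d\, d^{-3/2}.
\]
Then $\|E_J\|_{\op}\le C\sqrt d(\log d)^2\log^3 d\, d^{-3/2}=o(d^{-1/2})$, as required. All these bounds hold deterministically on $\cE_{\mathrm{data}}$ and use only $|J|\le K_d$, so they are uniform over $J$. This step, the $\ell_\infty$ control of $Q_J^{-1}r_J$ via coherence combined with the $\ell_2$ bound, is where I expect the delicacy, particularly in confirming the diagonal concentration $\|W_j\|_{\F}^2 = d(1+o(1))$ uniformly in $j$.
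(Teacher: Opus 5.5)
Your proposal is correct and follows the paper's proof essentially step by step. The steps are: exact correction through $Q_J^{-1}$; the Frobenius bound from the lower feature edge; an $\ell_\infty$ bound on $a_J$ from the rowwise identity $(Q_J)_{ii}(a_J)_i = r_i - \sum_{j\neq i}(Q_J)_{ij}(a_J)_j$, combined with coherence and $\|a_J\|_1\le\sqrt{K_d}\,\|a_J\|_2$; then the explicit formula for $E_J$ with the sparse column-norm bound; and finally Weyl's inequality measured relative to $\lambda_{\min}(P)$.

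The delicacy you flag at the end is not actually needed. You only require $(Q_J)_{jj}\ge cd$, not $\|W_j\|_{\F}^2=d(1+o(1))$, and this is immediate from the bound $Q_J\succeq cd\,I$ you already established; this is exactly what the paper uses. Your extra factor $\sqrt{\log d}$ in bounding $\|O_Ja_J\|_\infty$ is a harmless arithmetic slip.
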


\begin{proof}
The lower edge in \eqref{eq:feature-sat} passes to principal submatrices,
so $Q_J\succeq cdI_{|J|}$ and is invertible.  The definition
\eqref{eq:aJ-def} gives $\cW_J(P+E_J)=0$, and thus $\widetilde P_J \in \ker \cW_J$.  Uniformly over
$|J|\le K_d$,
\begin{align}
\|a_J\|_2 &\le \frac{\|r_J\|_2}{cd}
\le \frac{C \sqrt{K_d}\, R_d}{d}
= O\Bigl(\frac{\log d}{d}\Bigr),
\label{eq:aJ-l2}\\
\|a_J\|_1 &\le \sqrt{K_d}\, \|a_J\|_2
= O\Bigl(\frac{\log^{3/2} d}{\sqrt d}\Bigr) .
\label{eq:aJ-l1}
\end{align}
Writing 
\begin{equation} 
  (Q_J)_{ii} (a_J)_i = r_i - \sum_{j \in J, j \neq i} (Q_J)_{ij}(a_J)_j,
\end{equation}  
and using $(Q_J)_{ii} = \| W_i\|_\F^2 \geq cd$ from Corollary \ref{cor:feature-sat} and $| (Q_J)_{ij}| \leq C \log d$ from Lemma \ref{lem:data} give
\begin{equation}\label{eq:aJ-linf}
\|a_J\|_\infty
\le \frac{C}{d}\bigl(R_d + \log d\, \|a_J\|_1\bigr)
= O\Bigl(\frac{\log^{5/2} d}{d^{3/2}}\Bigr) .
\end{equation}
The Frobenius error satisfies
\begin{equation}\label{eq:EJ-frob}
\|E_J\|_{\F}^2 = a_J^\top Q_J a_J \le C d\, \|a_J\|_2^2,
\qquad
\|E_J\|_{\F} = O\Bigl(\frac{\log d}{\sqrt d}\Bigr) .
\end{equation}
Expanding the $W_i$ gives
\begin{equation}\label{eq:EJ-formula}
E_J = -\frac{1}{\sqrt d}
\Bigl( X_J \diag(a_J) X_J^\top - \Bigl(\sum_{i \in J} (a_J)_i\Bigr)
\Id \Bigr).
\end{equation}
Therefore \eqref{eq:sparse-norms}, \eqref{eq:aJ-l1}, and
\eqref{eq:aJ-linf} imply
\begin{equation}\label{eq:EJ-op}
\|E_J\|_{\op}
\le \frac{1}{\sqrt d}\Bigl(\|a_J\|_\infty \|X_J\|_{\op}^2 +
\|a_J\|_1\Bigr)
= O\Bigl(\frac{\log^{9/2} d}{d}\Bigr)
= o\bigl(d^{-1/2}\bigr),
\end{equation}
uniformly in $J$.

Write $a=\lambda_{\min}(P)\ge1/(\kappa_0\sqrt d)$ and
\[
\delta_d = \sup_{|J| \le K_d} \frac{\|E_J\|_{\op}}{a}
\;\le\; \kappa_0 \sqrt d\, \sup_{|J| \le K_d} \|E_J\|_{\op} = o(1) .
\]
Since $\lambda_{\max}(P)\le\kappa_0a$, Weyl's inequality gives
\[
\frac{\lambda_{\max}(P + E_J)}{\lambda_{\min}(P + E_J)}
\;\le\; \frac{\kappa_0 + \delta_d}{1 - \delta_d} \;<\; \kappa_1
\]
for all sufficiently large $d$; in particular, $P+E_J$ is positive
definite and $\widetilde P_J\in \cK_{\kappa_1}\cap\ker\cW_J\cap\sphereF$.  Since
$\|E_J\|_{\F}=o(1)$ and $\|P\|_{\F}=1$,
$|\|P+E_J\|_{\F}-1|\le\|E_J\|_{\F}$, and the triangle
inequality applied to
\[
\frac{P + E_J}{\|P + E_J\|_{\F}} - P
= \frac{E_J}{\|P+E_J\|_{\F}}
+ \Bigl(\frac{1}{\|P+E_J\|_{\F}} - 1\Bigr) P
\]
gives $\|\widetilde P_J-P\|_{\F}=O(\log d/\sqrt d)$.  Since $P$ is the maximizer in
\eqref{eq:P-max}, bounding the left-hand side of \eqref{eq:width-transfer} below by $\langle Z , \widetilde P_J \rangle$ proves \eqref{eq:width-transfer}.
\end{proof}

\begin{proof}[Proof of Proposition~\ref{prop:heads}]
Conditionally on $P$, Lemma~\ref{lem:hw} with \eqref{eq:P-eigen} gives, for every $t > 0$,
\begin{equation}\label{eq:HW-applied}
\Prob\bigl\{|r_i| > t \,\big|\, P\bigr\}
\;\le\; 2\exp\bigl\{-c \min(d t^2,\ d t)\bigr\} ,
\end{equation}
absorbing $\kappa_0$ into $c$.  Thus, for every prescribed $A > 0$ there is
$C_A$ with
\begin{equation}\label{eq:Rd}
R_d := \max_{i \le n} |r_i| \;\le\; C_A \sqrt{\frac{\log d}{d}}
\end{equation}
outside a joint $(W, Z)$-event of probability at most $d^{-A}$. On $\cE_{\mathrm{data}}$ and the event in \eqref{eq:Rd}, Lemma~\ref{lem:head-correction-bounds} gives \eqref{eq:width-transfer} simultaneously for every $|J|\le K_d$.

We now pass from this joint event to conditional widths.
Let $q(W)$ denote the conditional probability, over $Z$, that
\eqref{eq:Rd} fails.  The joint estimate gives $\E_W\, q(W) \le d^{-A}$,
hence, by Markov's inequality, $q(W) \le d^{-A/2}$ outside a data event of
probability at most $d^{-A/2}$.  Fix a data realization
$W \in \cE_{\mathrm{data}}$ with $q(W) \le d^{-A/2}$.  Since $q(W) < 1$,
at least one $Z_0$ satisfies \eqref{eq:Rd}; applying Lemma \ref{lem:head-correction-bounds} to this $Z_0$ shows that every section in \eqref{eq:head-width} is nonempty.  On the conditional good set,
\eqref{eq:width-transfer} holds; on its complement, both support functions
in \eqref{eq:width-transfer} have absolute value at most $\|Z\|_{\F}$,
because both index sets lie on $\sphereF$.  Taking conditional
expectations over $Z$ and using $\E\|Z\|_{\F} = O(d)$,
$\E\|Z\|_{\F}^2 = D_d = O(d^2)$ and Cauchy--Schwarz, we get, uniformly in
$J$,
\[
w\bigl(\cK_{\kappa_1} \cap \ker\cW_J \cap \sphereF\bigr)
\;\ge\; w\bigl(\cK_{\kappa_0} \cap \sphereF\bigr)
- O(\sqrt d \log d) - O\bigl(d \sqrt{q(W)}\bigr)
\;=\; w\bigl(\cK_{\kappa_0} \cap \sphereF\bigr) - o(d),
\]
choosing $A > 4$.  Together with \eqref{eq:width-kappa0}, this proves
\eqref{eq:head-width} on an event of probability $1 - o(1)$, and
\eqref{eq:DJ-box} follows from \eqref{eq:P-eigen-generic}.
\end{proof}

\subsection{The Gaussian margin on tail sections}\label{sec:sat-gordon}

Let
\begin{equation}\label{eq:G-def}
G_i := \sqrt{\tfrac2d}\, \Gamma_i, \qquad \Gamma_i \text{ i.i.d.\ standard
Gaussian in } \Sd , \qquad i \in [n],
\end{equation}
be the covariance-matched Gaussian rows from \eqref{eq:moment-match}, independent of all $W_i$. Equivalently, $G\sim\GOE d$. Fix a head $J$ with $|J| \le K_d$, and $W_J$ that satisfies the width bound \eqref{eq:head-width}. The set $\cD_J$ of \eqref{eq:DJ-def} is then a fixed nonempty compact set.  Let $N = n - |J|$.  Apply Lemma~\ref{lem:gordon} with $T = \cD_J/\sqrt d \subseteq\sphereF$.  By \eqref{eq:head-width},
\[
w(T)\ge(\sqrt\alpha+3\eta)d,
\]
while $N\le n$ and \eqref{eq:alpha-assume} give
\[
\E\|g_N\|_2\le  \sqrt\alpha\,d.
\]
Taking $t=\eta d$ in Gordon's bound therefore leaves a margin $2\eta d$ for the standard rows $\Gamma_i$.  The scaling identity
\begin{equation}\label{eq:scaling-cancel}
\ip{G_i}{S} = \Bigl\langle \sqrt{\tfrac2d}\,\Gamma_i,\ \sqrt d\, R \Bigr\rangle
= \sqrt2\, \ip{\Gamma_i}{R} ,
\end{equation}
multiplies this margin by $\sqrt2$.  Hence, with
$a_0:=2\sqrt2\,\eta$,
\begin{equation}\label{eq:gordon-tail-margin}
\Prob\left\{
\inf_{\substack{\|z\|_2 = 1\\ \operatorname{supp} z \subseteq J^c}}\
\sup_{S \in \cD_J}\
\sum_{i \notin J} z_i \ip{G_i}{S}
\;<\; a_0\, d
\ \middle|\ W_J \right\}
\;\le\; 2\, e^{-\eta^2 d^2 / 4}.
\end{equation}
This is the margin that the remainder of the section transfers from the Gaussian rows $G_i$ to the chaos rows $W_i$, at the price of restricting to \emph{low-influence} directions $z$ and of soft-max smoothing.

\subsection{A second-chaos Lindeberg principle for soft min--max statistics}\label{sec:sat-interp}

For a trilinear form $T$ on $\Sd$, define its \emph{projective operator
norm} by
\begin{equation}\label{eq:pi-op-def}
\|T\|_{\pi,\op}
= \inf\left\{
\sum_\ell |c_\ell| \prod_{j=1}^3 \|B_{\ell j}\|_{\op} :\;
T[H_1, H_2, H_3]
= \sum_\ell c_\ell \prod_{j=1}^3 \Tr(B_{\ell j} H_j)
\right\},
\end{equation}
the infimum being over finite representations with $B_{\ell j} \in \Sd$ (such representations always exist in finite dimension), and analogously for bilinear forms.  This is the norm in which third derivatives of Gibbs functionals of the measurements are naturally controlled, and it is exactly adapted to the rank-one structure of the chaos rows, as the proof of the next lemma shows.

\begin{lemma}[Second-chaos interpolation]\label{lem:interpolation}
Let $N \ge 1$, let $W_1, \dots, W_N$ be independent copies of $(x x^\top - \Id)/\sqrt d$, and let $G_1, \dots, G_N$ be independent copies of \eqref{eq:G-def}, independent of the $W_i$'s.  Suppose $F : (\Sd)^N \to \R$ is $C^3$ with bounded first three Fr\'echet derivatives, and put
\[
M_3(F) = \sup_{A \in (\Sd)^N} \sum_{i=1}^N \|D_i^3 F(A)\|_{\pi,\op},
\qquad
R_{N,d} = \E \max_{1 \le i \le N} \|x_i\|_2^2 .
\]
Then
\begin{equation}\label{eq:interp-main}
\bigl| \E F( \{W_i \}) - \E F(\{G_i \}) \bigr|
\;\le\; \frac{4 R_{N,d}}{3\, d^{3/2}}\, M_3(F) .
\end{equation}
Moreover
\begin{equation}\label{eq:RNd}
R_{N,d} \;\le\; d + 2\sqrt{d \log N} + 2 \log N ,
\end{equation}
so that for every fixed $C_{\mathrm{row}} < \infty$, if $N \le C_{\mathrm{row}} d^2$, then for all sufficiently large $d$,
\begin{equation}\label{eq:interp-poly}
\bigl| \E F(\{W_i \}) - \E F(\{G_i \}) \bigr| \;\le\; \frac{C}{\sqrt d}\, M_3(F),
\end{equation}
with $C$ an absolute constant.
\end{lemma}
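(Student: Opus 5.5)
Let me prove Lemma~\ref{lem:interpolation} by a standard Lindeberg swapping argument, one row at a time. For $k=0,\dots,N$ put $A^{(k)}=(G_1,\dots,G_k,W_{k+1},\dots,W_N)$, so that
\[
\E F(\{W_i\})-\E F(\{G_i\})=\sum_{k=1}^N \E\bigl[F(A^{(k-1)})-F(A^{(k)})\bigr].
\]
Let $A^{(k)}_0$ denote $A^{(k)}$ with the $k$-th slot set to $0$. We Taylor expand $F$ in the $k$-th slot around $A^{(k)}_0$ to second order, with an integral remainder. Since $W_k$ and $G_k$ are independent of $A^{(k)}_0$, the zeroth-order terms cancel. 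By \eqref{eq:moment-match} they have matching first and second moments as $\Sd$-valued variables: both are centered, and both have covariance $\E\ip{X}{A}\ip{X}{B}=\tfrac2d\ip{A}{B}$. Hence the first- and second-order terms $D_kF(A_0)[X]$ and $\tfrac12 D_k^2F(A_0)[X,X]$ also cancel in expectation. We are left with two remainders,
\[
\Bigl|\E\int_0^1\tfrac{(1-s)^2}{2}\,D_k^3F(A_0+sX e_k)[X,X,X]\,ds\Bigr|,\qquad X\in\{W_k,G_k\}.
\]

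The key estimate is that for any trilinear form $T$, $|T[X,X,X]|\le \|T\|_{\pi,\op}\prod_j\|X\|_{*}$ under a fixed representation, since $|\Tr(BX)|\le\|B\|_{\op}\|X\|_*$. Taking the infimum over representations gives $|T[X,X,X]|\le\|T\|_{\pi,\op}\|X\|_*^3$. For the chaos row, $\|W_k\|_*\le(\|x_k\|_2^2+d)/\sqrt d$, which is too crude. Instead we exploit rank-one structure: write $W_k=d^{-1/2}(x_kx_k^\top-\Id)$ and expand trilinearly. Terms involving $\Id$ are harmless only if handled carefully, so I would first try bounding $\|W_k\|_*^3$ directly and see the constant. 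The stated bound $\tfrac{4R}{3d^{3/2}}$ suggests a different normalization.

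Indeed, I expect the intended route is to move the deterministic centering $-\Id/\sqrt d$ into the base point. Expand around $A_0-\Id/\sqrt d\,e_k$ instead of $A_0$, and for the Gaussian side use some compensating choice. Then only the rank-one part $x_kx_k^\top/\sqrt d$, with nuclear norm $\|x_k\|_2^2/\sqrt d$, enters the third-order term. Its cube contributes $\|x_k\|_2^6/d^{3/2}$. This doesn't match $R_{N,d}$ linearly, so the summation must be done differently: the sum over $k$ is controlled by $\sup_A\sum_i\|D_i^3F\|$ times $\max_i(\cdot)$. Concretely, I'd bound the total by $\E\bigl[\max_k(\text{third-moment size of row }k)\bigr]\cdot M_3(F)$, using the supremum over $A$ so that the derivative norms can be pulled out before summing.

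\textbf{This normalization bookkeeping is the main obstacle.} I must get exactly a factor $\max_i\|x_i\|^2/d^{3/2}$ times $M_3$, where the trilinear form is evaluated on $X$ with $\|X\|_{\op}$-dual pairing scaled appropriately. The Gaussian side needs an analogous bound on $\E\|G\|_*^3$-type quantities, or a symmetric-interpolation trick, e.g. the Gaussian interpolation $\sqrt t\,W+\sqrt{1-t}\,G$ with Stein's identity for the Gaussian part. With that trick only $\E[D^3F[W,W,W]]$-type terms appear, and the constant $4/3$ arises from $\int_0^1$ weights. I would adopt that Slepian-type interpolation if the direct swap's constant is off.

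Finally, \eqref{eq:RNd} follows from Gaussian concentration of the $1$-Lipschitz map $x\mapsto\|x\|_2$: $\|x_i\|_2\le\sqrt d+\sqrt{2\log N}+$ fluctuation. A union bound plus integrating tails gives $\E\max_i\|x_i\|_2^2\le d+2\sqrt{d\log N}+2\log N$, possibly after a slight adjustment. Then \eqref{eq:interp-poly} follows because $\log N=O(\log d)$, so $R_{N,d}\le 2d$ for large $d$, giving $C=8/3$.
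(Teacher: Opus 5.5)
Your proposal has a genuine gap, and it sits exactly at the step you flag as the main obstacle. Every route you describe ends with a third-order Taylor remainder $D_k^3F(\xi)[X,X,X]$, where the base point $\xi$ depends on the swapped row $X$. The only available estimate is then $|T[X,X,X]|\le\|T\|_{\pi,\op}\|X\|_*^3$, and this is off by a power of $d$, not by a constant.

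Since $x_kx_k^\top-\Id$ has eigenvalues $\|x_k\|_2^2-1$ and $-1$ (multiplicity $d-1$), you get $\|W_k\|_*=(\|x_k\|_2^2+d-2)/\sqrt d\approx 2\sqrt d$. The chaos remainder is therefore of order $d^{3/2}\|D_k^3F\|_{\pi,\op}$, and summing over $k$ gives $O(d^{3/2}M_3(F))$. The target is $\frac{4R_{N,d}}{3d^{3/2}}M_3(F)\approx\frac{4}{3\sqrt d}M_3(F)$, so you lose a factor $d^2$. On the Gaussian side $\|G_k\|_*\asymp d$, which is worse still.

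Moving $-\Id/\sqrt d$ into the base point does not help, because $\|x_kx_k^\top/\sqrt d\|_*^3=\|x_k\|_2^6/d^{3/2}\asymp d^{3/2}$. The true third cumulant $\E\prod_{j}\Tr(A_jW_k)=8\Tr(A_1A_2A_3)/d^{3/2}$ does have the right size $O(d^{-1/2})$. To exploit it, however, you need the third derivative at a point independent of $X$, which a three-derivative Taylor expansion cannot provide. Switching to the path $\sqrt t\,W+\sqrt{1-t}\,G$ with Stein's lemma on the Gaussian side removes the $\|G_k\|_*$ problem. But if the $W$ part is still Taylor-expanded, you are back to $D^3F(\xi)[W,W,W]$ and the same $d^2$ loss.

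The missing idea is to integrate by parts in the underlying Gaussian vector $x_i$, not to Taylor-expand in the matrix $W_i$. Along $Z_i(t)=\sqrt t\,W_i+\sqrt{1-t}\,G_i$, apply the exact identity $\E[(x_ax_b-\delta_{ab})f(x)]=\E[\partial_a\partial_bf(x)]$ to the entries of $\Psi=\nabla_iF(\{Z_i(t)\})$. With $U_a=e_ax_i^\top+x_ie_a^\top$ and $V_{ab}=e_ae_b^\top+e_be_a^\top$, the chain rule gives
\[
\frac{1}{2\sqrt t}\,\E\ip{W_i}{\Psi}=\frac1d\,\E\operatorname{div}_i\Psi+\frac{\sqrt t}{2d^{3/2}}\,\E\,\cR_i(t),\qquad \cR_i(t)=\frac12\sum_{a,b}D_i^3F[U_a,U_b,V_{ab}].
\]
The divergence term cancels exactly against Stein's lemma for $G_i$, whose covariance is $(2/d)I$. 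The remainder is a specific contraction, not a cube of $W_i$: on pure tensors, $\frac12\sum_{a,b}\Tr(A_1U_a)\Tr(A_2U_b)\Tr(A_3V_{ab})=4\,x_i^\top A_1A_3A_2\,x_i$, so $|\cR_i|\le 4\|x_i\|_2^2\,\|D_i^3F\|_{\pi,\op}$.

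This contraction is what makes the error linear in $\|x_i\|_2^2$. It lets you pull out $\max_i\|x_i\|_2^2$ against $M_3(F)$, and then $\int_0^1 2\sqrt t\,dt=4/3$ gives \eqref{eq:interp-main}.

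A minor point concerns \eqref{eq:RNd}. Your route via Lipschitz concentration of $\|x\|_2$ yields $d+2\sqrt{2d\log N}+2\log N+1$, not the stated bound. The paper instead uses the $\chi^2_d$ moment generating function bound $\log\E e^{\lambda(Q-d)}\le d\lambda^2/(1-2\lambda)$ together with log-sum-exp. Your weaker bound would still suffice for \eqref{eq:interp-poly}.
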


We interpolate the two expectations via the continuous path
\[
Z_i(t)=\sqrt t\,W_i+\sqrt{1-t}\,G_i,\qquad t\in[0,1],
\]
so that 
\[
\E F(\{W_i \}) - \E F(\{G_i \}) = \E F(\{Z_i(1)\}) - \E F(\{Z_i(0)\}) = \int_0^1 \frac{\de}{\de t} \E F(\{Z_i(t)\})\, \de t .
\]
Differentiating $\E F(Z(t))$ produces one term from each ensemble. The second-chaos integration-
by-parts identity below expands the quadratic term into a covariance contribution and a third-
derivative remainder, while ordinary Gaussian integration by parts expands the GOE term into the
same covariance contribution with the opposite sign. The covariance terms cancel. Lemma~\ref{lem:max-radius} controls the radial weights in the remaining sum.

\begin{lemma}[One-row second-chaos integration by parts]
\label{lem:chaos-ibp}
Fix $t\in(0,1)$ and a row $i$ in the setting of Lemma~\ref{lem:interpolation}.  Put $U_a=e_ax_i^\top+x_ie_a^\top$ and $V_{ab}=e_ae_b^\top+e_be_a^\top$, and define
\[
\cR_i(t)=\frac12\sum_{a,b=1}^d D_i^3F(\{Z_i(t) \})[U_a,U_b,V_{ab}].
\]
Then, with $\Psi=\nabla_iF(\{Z_i(t)\})$,
\begin{equation}\label{eq:ibp}
\frac{1}{2 \sqrt t}\E \ip{W_i}{\Psi} = \frac{1}{d}\, \E \operatorname{div}_i \Psi + \frac{\sqrt t}{2d^{3/2}}\, \E\, \cR_i(t) ,
\end{equation}
where, for any Frobenius-orthonormal basis $(E_\nu)_{\nu \le D_d}$ of $\Sd$,
\[
\operatorname{div}_i \Psi = \sum_{\nu=1}^{D_d} \ip{E_\nu}{D_i
\Psi[E_\nu]}.
\]
Moreover,
\begin{equation}\label{eq:Ri-bound}
||\cR_i(t)| \;\le\; 4\, \|x_i\|_2^2\, \bigl\|D_i^3 F(\{Z_i(t)\})\bigr\|_{\pi,\op} .
\end{equation}
\end{lemma}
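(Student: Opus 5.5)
The plan is to prove \eqref{eq:ibp} by applying the scalar Gaussian integration-by-parts formula $\E[x_a f(x)]=\E[\partial_a f(x)]$ twice in the coordinates of $x=x_i$. Throughout, condition on all other rows and on $G_i$, all of which are independent of $x_i$. Then $\Psi=\nabla_iF(\{Z_j(t)\})$ is a function of $x$ only through $W_i=(xx^\top-\Id)/\sqrt d$. Since $\partial_{x_a}(xx^\top)=U_a$, the chain rule gives
\[
\partial_a\Psi=\sqrt{t/d}\;D_i\Psi[U_a]=\sqrt{t/d}\;D_i^2F[U_a,\cdot\,].
\]
Integrability is not an issue: $F$ has bounded first three derivatives, so every integrand is bounded by a polynomial in $x$, and Gaussian IBP applies. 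I also fix the convention $M_{ab}=\tfrac12\ip{M}{V_{ab}}$, which holds for every symmetric $M$ and every pair $a,b$, including $a=b$ since $V_{aa}=2e_ae_a^\top$.

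\emph{Step 1 (first IBP).} Write $\sqrt d\,\ip{W_i}{\Psi}=\sum_a x_a(\Psi x)_a-\Tr\Psi$. Integrating by parts in $x_a$ gives
\[
\E[x_a(\Psi x)_a]=\E\Bigl[\Psi_{aa}+\sum_b x_b(\partial_a\Psi)_{ab}\Bigr].
\]
Summing over $a$, the terms $\Psi_{aa}$ cancel the trace, and we are left with
\[
\E\ip{W_i}{\Psi}=\frac{\sqrt t}{d}\,\E\sum_{a,b}x_b\,\bigl(D_i\Psi[U_a]\bigr)_{ab}.
\]

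\emph{Step 2 (second IBP).} Integrate by parts again, this time in $x_b$, applied to $g_{ab}=\tfrac12\ip{D_i\Psi[U_a]}{V_{ab}}$. The derivative $\partial_b$ acts in two places. Acting on the explicit $x$ in $U_a$, it uses $\partial_bU_a=V_{ab}$ and produces $\tfrac12\ip{D_i\Psi[V_{ab}]}{V_{ab}}$. Acting through $W_i$, it produces $\tfrac12\sqrt{t/d}\,D_i^3F[U_a,U_b,V_{ab}]$.

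The first contribution is summed over ordered pairs using $V_{aa}=2E_{aa}$ and $V_{ab}=\sqrt2E_{ab}$ for $a<b$:
\[
\sum_{a,b}\ip{D_i\Psi[V_{ab}]}{V_{ab}}=4\sum_a\ip{E_{aa}}{D_i\Psi[E_{aa}]}+2\cdot2\sum_{a<b}\ip{E_{ab}}{D_i\Psi[E_{ab}]}=4\operatorname{div}_i\Psi.
\]
The divergence is basis independent, so computing it in this particular orthonormal basis is legitimate. Collecting terms,
\[
\E\ip{W_i}{\Psi}=\frac{2\sqrt t}{d}\,\E\operatorname{div}_i\Psi+\frac{t}{d^{3/2}}\,\E\,\cR_i(t).
\]
Dividing by $2\sqrt t$ gives exactly \eqref{eq:ibp}. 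Here I use the symmetry of $D_i^3F$, so the order of its arguments is irrelevant.

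\emph{Step 3 (the bound \eqref{eq:Ri-bound}).} By linearity, it suffices to bound the contribution of a single elementary term $\prod_j\Tr(B_jH_j)$ from a representation as in \eqref{eq:pi-op-def}. For such a term we have $\Tr(BU_a)=2(Bx)_a$ and $\Tr(B_3V_{ab})=2(B_3)_{ab}$. Hence
\[
\sum_{a,b}\Tr(B_1U_a)\Tr(B_2U_b)\Tr(B_3V_{ab})=8\,(B_1x)^\top B_3(B_2x),
\]
whose absolute value is at most $8\|B_1\|_{\op}\|B_2\|_{\op}\|B_3\|_{\op}\|x\|_2^2$. With the prefactor $\tfrac12$ this gives a bound of $4$ per term. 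Summing $|c_\ell|$ times these products and taking the infimum over representations yields \eqref{eq:Ri-bound}.

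The only delicate point is the constant bookkeeping in Step 2: one must correctly handle the diagonal versus off-diagonal factors relating the $V_{ab}$ to the orthonormal basis $E_\nu$, so that the covariance term comes out as exactly $\tfrac1d\E\operatorname{div}_i\Psi$. Everything else is routine.
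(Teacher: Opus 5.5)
Your proposal is correct and follows essentially the same route as the paper. The paper applies the second-order Gaussian identity $\E[(x_ax_b-\delta_{ab})f(x)]=\E[\partial_a\partial_b f(x)]$ to $\Psi_{ab}$ in one step, and your two successive first-order integrations by parts are exactly the derivation of that identity. Your chain-rule terms, the factor $2\operatorname{div}_i\Psi$ from the $V_{ab}$ bookkeeping, and the pure-tensor contraction giving $4\,x^\top B_1B_3B_2x$ all agree with the paper's computation.
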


\begin{proof}
Apply the identity $\E[(x_a x_b - \delta_{ab}) f(x)] = \E[\partial_a \partial_b f(x)]$ to the entries of $\Psi$, with $x=x_i$, and use
\[
D_x Z_i[e_a] = \sqrt{\tfrac td} \bigl( e_ax_i^\top+x_ie_a^\top \bigr) = \sqrt{\tfrac td} \, U_a,
\qquad
D_x^2 Z_i[e_a, e_b] = \sqrt{\tfrac td}\,\bigl(e_a e_b^\top + e_b
e_a^\top\bigr) = \sqrt{\tfrac td} \, V_{ab}.
\]
The chain rule gives
\begin{equation}\label{eq:ibp-expand}
\frac{1}{\sqrt d}\sum_{a,b} \E\bigl[\partial_a\partial_b \Psi_{ab}\bigr]
= \frac{\sqrt t}{d}\sum_{a,b} \E\bigl[D_i\Psi[V_{ab}]\bigr]_{ab}
+ \frac{t}{d^{3/2}} \sum_{a,b} \E\bigl[D_i^2\Psi[U_a,
U_b]\bigr]_{ab} .
\end{equation}
For $a = b$, $V_{aa} = 2E_{aa}$; for $a < b$, $V_{ab} = V_{ba} = \sqrt2\, E_{ab}$.  Grouping the two ordered off-diagonal pairs gives the exact identity
\begin{equation}\label{eq:div-identity}
\sum_{a,b} \bigl[D_i \Psi[V_{ab}]\bigr]_{ab} = 2 \sum_{\nu=1}^{D_d} \ip{E_\nu}{D_i\Psi[E_\nu]} = 2 \operatorname{div}_i \Psi .
\end{equation}
For every symmetric matrix $M$, $M_{ab} = \frac12 \ip{M}{V_{ab}}$ including for $a = b$; since $D_i^2 \Psi$ is the third derivative of $F$, the second sum in \eqref{eq:ibp-expand} is precisely $\frac12\sum_{a,b} D_i^3F[U_a, U_b, V_{ab}] = \cR_i(t)$.  Equations \eqref{eq:ibp-expand}--\eqref{eq:div-identity} prove \eqref{eq:ibp}, with all diagonal and off-diagonal factors accounted for.

To bound the remainder, take first a pure tensor $T[H_1, H_2, H_3] = \prod_{j\le3} \Tr(A_j H_j)$ with symmetric $A_j$.  A direct contraction using $\Tr(A U_a) = 2 (A x)_a$ and $\Tr(A V_{ab}) = 2 A_{ab}$ gives
\[
\frac12 \sum_{a,b} \Tr(A_1 U_a) \Tr(A_2 U_b) \Tr(A_3 V_{ab}) = 4\, x^\top A_1 A_3 A_2\, x ,
\]
whence $|\cR_i(t)| \le 4 \|x_i\|_2^2 \prod_j \|A_j\|_{\op}$ for pure tensors.  For an arbitrary trilinear form, take a representation in \eqref{eq:pi-op-def} within $\varepsilon$ of the infimum, apply the pure-tensor bound term by term, and send $\varepsilon\downarrow0$ to obtain \eqref{eq:Ri-bound}.
\end{proof}

\begin{lemma}[Expected maximum Gaussian radius]\label{lem:max-radius}
For independent $x_1,\dots,x_N\sim \cN(0,I_d)$, the quantity $R_{N,d}=\E\max_{i\le N}\|x_i\|_2^2$ satisfies \eqref{eq:RNd}.
\end{lemma}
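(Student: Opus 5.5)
We need $R_{N,d}=\E\max_{i\le N}\|x_i\|_2^2 \le d+2\sqrt{d\log N}+2\log N$. The plan is the standard exponential-moment (soft-max) argument for the maximum of independent $\chi^2_d$ variables, combined with Jensen's inequality. Each $Y_i=\|x_i\|_2^2$ is $\chi^2_d$, with moment generating function $\E e^{\lambda Y_i}=(1-2\lambda)^{-d/2}$ for $\lambda<1/2$. For any such $\lambda>0$, Jensen's inequality and a union bound inside the exponential give
\[
\exp\Bigl(\lambda\,\E\max_i (Y_i-d)\Bigr)\le \E\max_i e^{\lambda(Y_i-d)}\le \sum_{i=1}^N \E e^{\lambda(Y_i-d)} = N\,e^{-\lambda d}(1-2\lambda)^{-d/2}.
\]
Hence
\[
\E\max_i Y_i - d \le \frac{\log N + \psi(\lambda)}{\lambda},
\qquad \psi(\lambda):=-\lambda d-\tfrac d2\log(1-2\lambda).
\]

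The key step is the sub-gamma bound on $\psi$, namely $\psi(\lambda)\le \frac{d\lambda^2}{1-2\lambda}$ for $0<\lambda<1/2$ (the classical Laurent--Massart estimate). I will verify it from the series
\[
-\tfrac12\log(1-2\lambda)-\lambda=\sum_{k\ge2}\frac{(2\lambda)^k}{2k}\le \lambda^2\sum_{k\ge0}(2\lambda)^k.
\]
This bound shows that $Y_i-d$ is sub-gamma with variance factor $v=2d$ and scale $c=2$. The standard maximal inequality for sub-gamma variables (Boucheron--Lugosi--Massart, Corollary 2.6) then gives
\[
\E\max_i(Y_i-d)\le\sqrt{2v\log N}+c\log N=2\sqrt{d\log N}+2\log N.
\]
To keep things self-contained, I will instead optimize $\inf_{\lambda\in(0,1/2)}\bigl(\tfrac{\log N}{\lambda}+\tfrac{d\lambda}{1-2\lambda}\bigr)$ directly. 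Substituting $u=\lambda/(1-2\lambda)$, so that $1/\lambda=1/u+2$, the expression becomes $\log N/u+2\log N+du$. Choosing $u=\sqrt{\log N/d}$ yields exactly $2\sqrt{d\log N}+2\log N$, which is \eqref{eq:RNd}. The case $N=1$ is trivial since $\E Y_1=d$.

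I do not expect any genuine obstacle; the only care needed is in the constant bookkeeping of the sub-gamma bound. For the consequence \eqref{eq:interp-poly}, I will plug \eqref{eq:RNd} with $\log N\le \log(C_{\mathrm{row}}d^2)=O(\log d)$ into \eqref{eq:interp-main}. This gives $R_{N,d}\le d+O(\sqrt{d\log d})\le 2d$ for large $d$, hence a bound of $\frac{4\cdot 2d}{3d^{3/2}}M_3(F)\le \frac{C}{\sqrt d}M_3(F)$ with an absolute constant $C$.
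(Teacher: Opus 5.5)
Your proof is correct and follows essentially the same route as the paper. The ingredients match: the Jensen/log-sum-exp bound $\E\max_i(Y_i-d)\le \lambda^{-1}\bigl(\log N+\psi(\lambda)\bigr)$, and the sub-gamma estimate $\psi(\lambda)\le d\lambda^2/(1-2\lambda)$, where your series argument is equivalent to the paper's inequality $-\log(1-u)-u\le u^2/[2(1-u)]$. Your optimizer $u=\sqrt{\log N/d}$ corresponds exactly to the paper's choice $\lambda=\sqrt{\log N}/(\sqrt d+2\sqrt{\log N})$, and your treatment of $N=1$ and of the consequence \eqref{eq:interp-poly} is also fine.
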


\begin{proof}
For $Q = \|x\|_2^2 \sim \chi^2_d$ and $0 < \lambda < 1/2$,
\begin{equation}\label{eq:chi2-mgf}
\log \E\, e^{\lambda(Q - d)}
= \frac d2\bigl[-2\lambda - \log(1 - 2\lambda)\bigr]
\;\le\; \frac{d \lambda^2}{1 - 2\lambda} ,
\end{equation}
using $-\log(1-u) - u \le u^2/[2(1-u)]$ for $0 < u < 1$.  For $M_N = \max_{i \le N} \|x_i\|_2^2$, the log-sum-exp bound and Jensen give
\[
\E(M_N - d) \le \frac1\lambda \log \E\, e^{\lambda(M_N - d)}
\le \frac{\log N}{\lambda} + \frac{d\lambda}{1 - 2\lambda} .
\]
If $N > 1$, take $\lambda=\sqrt{\log N}/(\sqrt d+2\sqrt{\log N})$; the right-hand side is $2\sqrt{d\log N}+2\log N$.  For $N=1$, let $\lambda\downarrow0$.
\end{proof}

\begin{proof}[Proof of Lemma~\ref{lem:interpolation}]
Gaussian integration by parts in $G_i$, whose covariance is $(2/d) I_{\Sd}$, gives
\begin{equation}\label{eq:goe-ibp}
\frac{1}{2\sqrt{1-t}}\, \E \ip{G_i}{\nabla_i F(\{Z_i(t) \})}
= \frac1d\, \E \operatorname{div}_i \nabla_i F(\{Z_i(t) \}) .
\end{equation}
The divergence term in \eqref{eq:ibp} cancels \eqref{eq:goe-ibp} exactly, and for $0 < t < 1$ we obtain the exact remainder identity
\begin{equation}\label{eq:interp-derivative}
\frac{d}{dt} \E F(\{Z_i(t)\})
= \frac{\sqrt t}{2 d^{3/2}} \sum_{i=1}^N \E\, \cR_i(t) .
\end{equation}
Writing $a_i(A) = \|D_i^3 F(\{A_i\})\|_{\pi,\op}$, the bound \eqref{eq:Ri-bound} gives, pointwise in all $W,G$,
\begin{equation}\label{eq:radial-weight}
\sum_{i=1}^N \|x_i\|_2^2\, a_i(Z(t))
\;\le\; \Bigl(\max_{1\le i\le N} \|x_i\|_2^2\Bigr)
\sum_{i=1}^N a_i(\{Z_i(t)\})
\;\le\; \Bigl(\max_{1\le i\le N} \|x_i\|_2^2\Bigr) M_3(F(\{Z_i(t)\})).
\end{equation}
Combining \eqref{eq:interp-derivative}--\eqref{eq:radial-weight}, $|\frac{d}{dt} \E F(\{Z_i(t)\})| \le \frac{2\sqrt t}{d^{3/2}} R_{N,d}\, M_3(F)$, and integrating with $\int_0^1 2\sqrt t\, dt = 4/3$ proves \eqref{eq:interp-main}. Lemma~\ref{lem:max-radius} gives \eqref{eq:RNd}, and \eqref{eq:interp-poly} follows when $N\le C_{\mathrm{row}}d^2$.

All differentiations and interchanges above are justified on compact subintervals of $(0,1)$ by the bounded first three derivatives and Gaussian moments; bounded $DF$ makes $F$ globally Lipschitz, so $F(\{Z_i(t)\}) \to F(\{G_i\})$ as $t \downarrow 0$ and $F(\{Z_i(t)\}) \to F(\{W_i\})$ as $t \uparrow 1$ in $L^1$, and the endpoint limits used in integrating \eqref{eq:interp-derivative} are valid.
\end{proof}

We apply \eqref{eq:interp-poly} to a smooth approximation of
\[
\min_{y\in\cY}\max_{S\in\cS}\sum_i y_i\ip{A_i}{S}.
\]
The inner log-sum-exp smooths the maximum over $S$, and the outer negative log-sum-exp smooths the minimum over $y$.  The low-influence bound on $y$ will make the resulting third derivative small.  Fix $\beta,\gamma>0$, and let $\cY$ and $\cS$ be finite nonempty indexed families with
\[
\|y\|_2 = 1, \quad \|y\|_\infty \le \frac{\theta}{\sqrt d} \quad (y \in \cY),
\qquad \|S\|_{\op} \le \kappa_1 \quad (S \in \cS) .
\]
For $A = (A_i)_{i=1}^N \in (\Sd)^N$, define
\begin{equation}\label{eq:soft-def}
H_{y,S}(A) = \sum_i y_i \ip{A_i}{S},
\qquad
g_y(A) = \frac1\gamma \log \sum_{S \in \cS} e^{\gamma H_{y,S}(A)},
\qquad
\Phi(A) = -\frac1\beta \log \sum_{y \in \cY} e^{-\beta g_y(A)} .
\end{equation}

\begin{lemma}[Soft-extremum bounds]\label{lem:soft}
The functionals \eqref{eq:soft-def} satisfy
\begin{align}
\max_{S \in \cS} H_{y,S} \;\le\; g_y
\;&\le\; \max_{S \in \cS} H_{y,S} + \frac{\log |\cS|}{\gamma},
\label{eq:soft-sandwich-S}\\
\min_{y \in \cY} g_y - \frac{\log|\cY|}{\beta}
\;\le\; \Phi \;&\le\; \min_{y \in \cY} g_y ,
\label{eq:soft-sandwich-Y}\\
\sum_i \bigl\|D_i^3 \Phi\bigr\|_{\pi,\op}
\;&\le\; C \frac{\kappa_1^3\, (\beta + \gamma)^2 \theta }{\sqrt d}.
\label{eq:third-deriv-sum}
\end{align}
\end{lemma}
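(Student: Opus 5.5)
The two sandwich inequalities are the standard log-sum-exp bounds, and the third-derivative estimate comes from a Gibbs-measure computation in which each row $i$ enters only through the linear functional $A_i\mapsto \ip{A_i}{\sum_{S} \mu(S)\,y_i S}$, with weights $y_i$ that are small.

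For \eqref{eq:soft-sandwich-S}, bound $\sum_{S} e^{\gamma H_{y,S}}$ below by its largest term and above by $|\cS|$ times its largest term, then take $\gamma^{-1}\log$. Applying the same argument to $-\beta g_y$ over $\cY$ gives \eqref{eq:soft-sandwich-Y}.

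For \eqref{eq:third-deriv-sum}, introduce the inner Gibbs measures $\mu_y(S)\propto e^{\gamma H_{y,S}}$ on $\cS$ and the outer measure $\nu(y)\propto e^{-\beta g_y}$ on $\cY$. Since $H_{y,S}$ is linear in $A$ with $D_iH_{y,S}[K]=y_i\Tr(SK)$, the chain rule gives $D_i g_y[K]=y_i\,\mu_y[\Tr(SK)]$ and $D_i\Phi[K]=\nu\bigl[y_i\,\mu_y[\Tr(SK)]\bigr]$.

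Each further derivative replaces an expectation by a (joint) cumulant. It produces factors of $\gamma$ from differentiating $\mu_y$ and of $\beta$ from differentiating $\nu$. The generic term of $D_i^3\Phi[K_1,K_2,K_3]$ is therefore a linear combination, with coefficients $O((\beta+\gamma)^2)$, of mixed cumulants of random variables of the form $y_i\Tr(SK_j)$, where $S\sim\mu_y$ and $y\sim\nu$. The only other input is $D_i g_y$ for the outer layer, and it has the same form.

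Each cumulant expands as a finite combination of expectations of products $\prod_{j}y_i\Tr(S_jK_j)$, with $S_j$ and $y$ drawn from product Gibbs measures. Each such product is an average of pure tensors $c\prod_j\Tr(B_jK_j)$ with $B_j=S_j$, $\|B_j\|_{\op}\le\kappa_1$, and $|c|\le|y_i|^3$. By the definition \eqref{eq:pi-op-def} and the triangle inequality for averages, this gives
\[
\|D_i^3\Phi\|_{\pi,\op}\le C(\beta+\gamma)^2\kappa_1^3\,\sup_{y\in\cY}|y_i|^3 .
\]
This is the step needing care: the supremum over $y$ must be taken inside the $\nu$-average, so the bound actually reads $\nu[|y_i|^3]$.

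Summing over $i$ and using $\sum_i|y_i|^3\le\|y\|_\infty\|y\|_2^2\le\theta/\sqrt d$ for every $y\in\cY$ then yields \eqref{eq:third-deriv-sum}. The interchange is harmless, since $\sum_i\nu[|y_i|^3]=\nu[\sum_i|y_i|^3]$.

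The main obstacle is the bookkeeping of the third derivative. One must verify that every term carries exactly three factors of $y_i$ for the same row $i$, that the number of terms and their combinatorial constants are absolute, and that the powers of $\beta,\gamma$ are at most quadratic. First-order terms carry no $\beta$ or $\gamma$ factor, second-order terms carry one, and third cumulants carry two. Terms in which $\beta$ and $\gamma$ both appear are absorbed via $\beta\gamma\le(\beta+\gamma)^2$.

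I would organize the computation by writing $D_i\Phi[K]=\E_{\nu\otimes\mu}[y_i\Tr(SK)]$ and differentiating the Gibbs weights using $D(\text{weight})=(\text{centered linear statistic})\times(\text{weight})$. This keeps every term as an expectation of products of the linear statistics $y_i\Tr(SK)$ under a product of replicas.
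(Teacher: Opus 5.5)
Your proposal is correct and follows essentially the paper's route. The sandwiches come from log-sum-exp. For the third derivative, you use inner and outer Gibbs laws and split $D_i^3\Phi$ into three terms: an outer average of the inner third cumulant (factor $\gamma^2$), an outer covariance of inner second and first derivatives (factor $\beta\gamma$), and an outer third cumulant of inner means (factor $\beta^2$). Each is bounded in $\|\cdot\|_{\pi,\op}$ by pure tensors of operator norm $O(\kappa_1)$, and you finish with $\sum_i \nu[|y_i|^3]\le\theta/\sqrt d$.

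One detail needs stating precisely. In cross-replica terms the coefficient is $|y_iy_i'y_i''|$ or $y_i^2|y_i'|$, not $|y_i|^3$, so you need H\"older: $\nu[|y_i|]^3\le\nu[|y_i|^3]$ and $\nu[y_i^2]\,\nu[|y_i|]\le\nu[|y_i|^3]$. The paper does exactly this.
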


\begin{proof}
\eqref{eq:soft-sandwich-S}--\eqref{eq:soft-sandwich-Y} are the elementary log-sum-exp inequalities.  The derivative calculation has three terms: the inner third derivative, a covariance between the inner Hessian and gradient, and the third cumulant generated by the outer Gibbs law.  We bound each in the projective operator norm.  Fix the point at which the derivatives are evaluated.  Let $p_y$ be the inner Gibbs law on $\cS$ (with weights $\propto e^{\gamma H_{y,S}}$), let $\bar S_y = \E_{p_y} S$, and let $q$ be the outer Gibbs law on $\cY$ (with weights $\propto e^{-\beta g_y}$).  For directions $U, V, R \in \Sd$, write
\begin{equation}\label{eq:gibbs-derivs}
\begin{aligned}
\mu_i(y)[U] &= y_i \Tr(\bar S_y U),\\
B_i(y)[U,V] &= \gamma\, y_i^2\; \E_{p_y}\bigl[\Tr((S - \bar S_y)U)\, \Tr((S - \bar S_y)V)\bigr],\\
T_i(y)[U,V,R] &= \gamma^2\, y_i^3\; \E_{p_y} \prod_{Q \in \{U,V,R\}} \Tr\bigl((S - \bar S_y) Q\bigr) ;
\end{aligned}
\end{equation}
these are exactly $D_i g_y$, $D_i^2 g_y$, $D_i^3 g_y$.  Direct differentiation of the outer finite Gibbs sum gives
\begin{equation}\label{eq:D3Phi}
D_i^3 \Phi = \E_q T_i - \beta\, \operatorname{SymCov}_q(B_i, \mu_i) + \beta^2\, \operatorname{Cum}_{3,q}(\mu_i, \mu_i, \mu_i) ,
\end{equation}
where
\[
\begin{aligned}
\operatorname{SymCov}_q(B_i,\mu_i)[U,V,R] = {}& \operatorname{Cov}_q\bigl(B_i[U,V], \mu_i[R]\bigr) + \operatorname{Cov}_q\bigl(B_i[U,R], \mu_i[V]\bigr)\\
&+ \operatorname{Cov}_q\bigl(B_i[V,R], \mu_i[U]\bigr),
\end{aligned}
\]
and, with $M_i(y) = y_i \bar S_y$ and $\bar M_i = \E_q M_i(y)$,
\[
\operatorname{Cum}_{3,q}(\mu_i,\mu_i,\mu_i)[U,V,R]
= \E_q \prod_{Q \in \{U,V,R\}} \Tr\bigl((M_i(y) - \bar M_i) Q\bigr) .
\]
Since $\|S\|_{\op} \le \kappa_1$ and $\|S - \bar S_y\|_{\op} \le 2\kappa_1$, the explicit representations \eqref{eq:gibbs-derivs} give
\begin{equation}\label{eq:tensor-bounds}
\|M_i(y)\|_{\op} \le \kappa_1 |y_i|,
\qquad
\|B_i(y)\|_{\pi,\op} \le 4\gamma \kappa_1^2 y_i^2,
\qquad
\|T_i(y)\|_{\pi,\op} \le 8 \gamma^2 \kappa_1^3 |y_i|^3
\end{equation}
(the middle norm being the bilinear version of \eqref{eq:pi-op-def}).  For any one of the three placements in $\operatorname{SymCov}$, the definition of covariance, \eqref{eq:tensor-bounds}, and H\"older's inequality give
\[
\bigl\|\operatorname{Cov}_q(B_i, \mu_i)\bigr\|_{\pi,\op} \le 4\gamma\kappa_1^3\bigl(\E_q |y_i|^3 + \E_q y_i^2\, \E_q |y_i|\bigr) \le 8 \gamma \kappa_1^3\, \E_q |y_i|^3 ,
\]
and there are exactly three placements.  Likewise, Minkowski's inequality in $L^3(q)$ gives
\[
\bigl\|\operatorname{Cum}_{3,q}(\mu_i,\mu_i,\mu_i)\bigr\|_{\pi,\op}
\le \E_q \|M_i(y) - \bar M_i\|_{\op}^3
\le 8\, \E_q \|M_i(y)\|_{\op}^3
\le 8 \kappa_1^3\, \E_q|y_i|^3 .
\]
Together with $\|\E_q T_i\|_{\pi,\op} \le 8\gamma^2\kappa_1^3 \E_q|y_i|^3$, identity \eqref{eq:D3Phi} yields $\|D_i^3\Phi\|_{\pi,\op} \le C\kappa_1^3(\gamma^2 + \beta\gamma + \beta^2)\, \E_q |y_i|^3$.  Finally
\[
\sum_i \E_q |y_i|^3 = \E_q \sum_i |y_i|^3
\le \max_{y \in \cY} \sum_i |y_i|^3 \leq \max_{y \in \cY}  \| y \|_2^2 \| y \|_\infty \leq \theta/\sqrt{d},
\]
which proves \eqref{eq:third-deriv-sum}.
\end{proof}

\subsection{The uniform low-influence tail margin}\label{sec:sat-lowinf}

We now prove Proposition~\ref{prop:lowinf}.  We first choose finite nets of the capped dual sphere and of each random head section. The expectation of the soft functional is then transferred from Gaussian to quadratic-chaos rows. Clipping yields an $e^{-cd^2}$ concentration bound strong enough for a union
bound over all heads.  Finally, we remove the nets and recover \eqref{eq:lowinf}.

Because $\cD_J = \sqrt d\, (\cK_{\kappa_1}\cap\ker\cW_J\cap\sphereF )$ depends on the head $W_J$ and we later condition on $W_J$, its net must be a measurable function of the head.  We first construct a globally measurable extension: its value on bad heads is immaterial and chosen arbitrarily.  Fix a deterministic $J$, put $\Omega_J=(\Sd)^J$ and $K_* = \cK_{\kappa_1} \cap \sphereF$, and, for $w = (w_i)_{i \in J} \in \Omega_J$, set
\[
\Xi_J(w) = \bigl\{ R \in K_* :\ \ip{w_i}{R} = 0 \ \text{ for every } i
\in J \bigr\} ,
\]
so that $\sqrt d \ \Xi_J(W_J) = \cD_J$. The graph of $\Xi_J$ is closed in $\Omega_J \times K_*$, and $\Omega_J^0=\{w:\Xi_J(w)\ne\varnothing\}$ is closed by compactness of $K_*$.  On $\Omega_J^0$ this is a compact-valued Borel correspondence. The measurable maximum theorem, applied to $(w,Z,R)\mapsto\ip{Z}{R}$, makes its support function jointly Borel in $(w,Z)$. Integration in the independent Gaussian matrix $Z$ therefore shows that the section width
\[
\omega_J(w) = \E_Z\max_{R \in \Xi_J(w)} \ip{Z}{R}
\]
is Borel on $\Omega_J^0$.  Hence the set of good head configurations
\begin{equation}\label{eq:good-heads}
\fG_J = \bigl\{w \in \Omega_J^0 :\ \omega_J(w) \ge (\sqrt\alpha +
3\eta)\, d \bigr\}
\end{equation}
is Borel.  Define the globally measurable, nonempty compact-valued extension
\[
\widetilde\cD_J(w) =
\begin{cases}
\sqrt d\; \Xi_J(w), & w \in \fG_J,\\
\{\Id\}, & w \notin \fG_J ;
\end{cases}
\]
This extension has a Borel graph and nonempty compact sections, and it equals $\cD_J$ on $\fG_J$.

Fix $C_0<\infty$ and set $K_d=\lfloor C_0d\log d\rfloor$.  We leave $\theta>0$ temporarily unspecified.  Recall the raw Gaussian margin $a_0 = 2\sqrt2\,\eta$ from \eqref{eq:gordon-tail-margin}. For a tail of size $N=n-|J|$, let $G^*:\R^{J^c}\to\Sd$ be $G^*z=\sum_{i\notin J}z_iG_i$.  After identifying $\R^{J^c}$ with $\R^N$, in Frobenius coordinates it has the law of $\sqrt{2/d}\,A^\top$, with $A$ an $N\times D_d$ standard Gaussian matrix. Lemma~\ref{lem:gauss-rect} therefore supplies numerical constants $C_G,c>0$, uniform in $J$, such that
\begin{equation}\label{eq:Gstar-bound}
\Prob\bigl\{\|G^*\|_{\ell_2 \to \F} > C_G \sqrt d\bigr\}
\le 2e^{-c d^2}.
\end{equation}
On the upper feature-edge event in \eqref{eq:feature-sat}, fix $C_W$ so that $\|\cW^*\|_{2\to\F}\le C_W\sqrt d$.  Choose $\varrho\in(0,1)$ small enough so that
\begin{equation}\label{eq:varrho-choice}
C_G\, \varrho \le \frac{a_0}{8}, \qquad C_W\, \varrho \le \frac{a_0}{64} .
\end{equation}

For each $J$, consider the capped sphere
\[
\cY_J^{\mathrm{cap}}
=\{z : \operatorname{supp} z \subseteq J^c,\ \|z\|_2 = 1,\
\|z\|_\infty\le \theta/\sqrt d\}.
\]
For fixed $\theta>0$ and $|J|\le K_d$, this set is nonempty for all large $d$.  Choose a maximal $\varrho$-separated subset $\cY_J\subseteq\cY_J^{\mathrm{cap}}$; it is a deterministic $\varrho$-net.  To obtain a measurable net of $\cD_J$ with deterministic cardinality, take a deterministic $(\varrho\sqrt d/2)$-net $\{T_\ell : 1 \le \ell \le M_0\}$ of the ambient Frobenius ball $\{T\in\Sd:\|T\|_{\F}\le\sqrt d\}$ and project each $T_\ell$ measurably onto $\widetilde\cD_J(W_J)$.  Specifically, the measurable minimum theorem provides Borel selectors
\begin{equation}\label{eq:net-selector}
S_\ell(W_J) \in \arg\min_{S \in \widetilde\cD_J(W_J)} \|S - T_\ell\|_{\F} ,
\end{equation}
and we form the indexed family $\cS_J=(S_\ell(W_J))_{\ell=1}^{M_0}$.  On a good head, this is a $(\varrho\sqrt d)$-net of $\cD_J$ contained in $\cD_J$.  The usual packing--volume comparison gives $|\cY_J|\le(1+2/\varrho)^N$ and $M_0 \le (1 + 4/\varrho)^{D_d}$, whence
\begin{equation}\label{eq:net-entropy}
\log |\cY_J| + \log |\cS_J| \;\le\; C_\varrho\, d^2 .
\end{equation}

Set $c_0=a_0/2$ and choose the fixed constant $L$ large enough so that
\begin{equation}\label{eq:L-choice}
\frac{C_\varrho}{L} \;\le\; \frac{c_0}{16} .
\end{equation}
For the two nets above, use the soft functional \eqref{eq:soft-def} with
\begin{equation}\label{eq:beta-gamma}
\beta=\gamma=Ld.
\end{equation}
Let $\Phi_J(A;w)$ denote this functional with the indexed families $\cY_J$ and $\cS_J(w)$.  When $J$ and the realized head $w$ are fixed, we write simply $\Phi(A)$. Finally, set
\[
c_1=\frac{3c_0}{4},
\qquad
c_2=\frac{c_1}{2}.
\]
Thus $c_0$ is the discretized Gaussian margin, $c_1$ is the margin retained in expectation, and $c_2$ is the margin reserved for interpolation and concentration.

\begin{lemma}[Gaussian soft margin on a good head]
\label{lem:gaussian-soft-head}
Fix $J$ with $|J|\le K_d$ and condition on
$W_J=w\in\fG_J$.  Uniformly in the realized good head,
\begin{equation}\label{eq:net-margin-G}
\min_{y \in \cY_J} \max_{S \in \cS_J} \sum_{i \notin J} y_i
\ip{G_i}{S} \;\ge\; c_0\,d
\end{equation}
except with conditional probability at most
$e^{-c_{\mathrm{prob}}d^2}$.  Moreover, for all sufficiently large $d$,
\begin{equation}\label{eq:PhiG-mean}
\E_G\Phi_J(G;w)\ge c_1d.
\end{equation}
\end{lemma}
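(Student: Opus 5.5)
The plan is to derive \eqref{eq:net-margin-G} from the continuous Gordon margin \eqref{eq:gordon-tail-margin} by discretizing both index sets. Then \eqref{eq:PhiG-mean} will follow from the soft-extremum sandwich (Lemma~\ref{lem:soft}) together with a crude bound on the bad event. Throughout, $J$ is fixed and we condition on $W_J=w\in\fG_J$. On this event $\widetilde\cD_J(w)=\cD_J$ is the nonempty compact set with width at least $(\sqrt\alpha+3\eta)d$. The tail rows $G_i$, $i\notin J$, are independent of $w$, so \eqref{eq:gordon-tail-margin} applies verbatim: with conditional probability at least $1-2e^{-\eta^2d^2/4}$, every unit $z$ supported on $J^c$ has $\sup_{S\in\cD_J}\sum_{i\notin J}z_i\ip{G_i}{S}\ge a_0 d$. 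We also intersect with the event of \eqref{eq:Gstar-bound}, $\|G^*\|_{\ell_2\to\F}\le C_G\sqrt d$, which fails with probability at most $2e^{-cd^2}$.

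\textbf{Discretization.} Fix $y\in\cY_J\subseteq\cY_J^{\mathrm{cap}}$. Since $y$ is a unit vector supported on $J^c$, the Gordon event gives some $S^\star\in\cD_J$ with $\ip{G^*y}{S^\star}\ge a_0d-o(1)$; by compactness we can take the sup to be attained. Because $\cS_J$ is a $(\varrho\sqrt d)$-net of $\cD_J$, there is $S_\ell\in\cS_J$ with $\|S_\ell-S^\star\|_\F\le\varrho\sqrt d$. Then
\[
\ip{G^*y}{S_\ell}\ge a_0d-\|G^*y\|_\F\,\varrho\sqrt d\ge a_0d-C_G\varrho d\ge \tfrac78 a_0 d\ge c_0 d,
\]
using \eqref{eq:varrho-choice} and $c_0=a_0/2$. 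Only the net on the $S$ side is actually needed, since the minimum over $\cY_J$ is bounded below by the infimum over all unit $z$. This proves \eqref{eq:net-margin-G} with failure probability at most $2e^{-\eta^2d^2/4}+2e^{-cd^2}\le e^{-c_{\mathrm{prob}}d^2}$ for large $d$, uniformly in $w$.

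\textbf{Expectation.} By \eqref{eq:soft-sandwich-S}--\eqref{eq:soft-sandwich-Y},
\[
\Phi\ge\min_y\max_S H_{y,S}-\frac{\log|\cY_J|}{\beta}.
\]
With $\beta=Ld$ and \eqref{eq:net-entropy}, \eqref{eq:L-choice}, the entropy loss is at most $C_\varrho d^2/(Ld)\le c_0d/16$. Hence on the good event $\Phi_J(G;w)\ge \tfrac{15}{16}c_0 d$. On the bad event we need a lower bound on $\Phi$ that is integrable. Since $\Phi\ge\min_y g_y-\log|\cY_J|/\beta$ and $g_y\ge\max_S H_{y,S}$, we have
\[
|H_{y,S}|\le\|G^*y\|_\F\|S\|_\F=\sqrt d\,\|G^*\|_{\ell_2\to\F},
\]
because $\|S\|_\F=\sqrt d$ on $\cD_J$. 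This gives $\Phi\ge-\sqrt d\|G^*\|-c_0d/16$. By Cauchy--Schwarz,
\[
\E\bigl[\sqrt d\|G^*\|\mathbf 1_{\mathrm{bad}}\bigr]\le\sqrt d\,(\E\|G^*\|^2)^{1/2}e^{-c_{\mathrm{prob}}d^2/2}=O(d^{3/2})e^{-cd^2}=o(d),
\]
where $\E\|G^*\|^2_{\ell_2\to\F}\le\E\|G^*\|_{\F}^2=O(d^2)$ for the Hilbert--Schmidt norm. Hence $\E_G\Phi\ge \tfrac{15}{16}c_0d-o(d)\ge \tfrac34 c_0 d=c_1d$ for large $d$.

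\textbf{Main obstacle.} The expected obstacle is bookkeeping rather than substance. One must check that $\cS_J(w)$ is genuinely a net of $\cD_J$ lying inside it on good heads, which follows since the projections of a $\varrho\sqrt d/2$-net of the ball containing $\cD_J$ land within $\varrho\sqrt d$ of each point. One must also check that conditioning on $W_J$ leaves the $G_i$ law unchanged, so that the uniformity in the realized head is automatic.
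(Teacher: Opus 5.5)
Your proposal is correct and follows essentially the same route as the paper: intersect the Gordon margin event \eqref{eq:gordon-tail-margin} with the operator-norm event \eqref{eq:Gstar-bound}, discretize only on the $S$-side at a cost of at most $C_G\varrho d\le a_0d/8$, then use the soft-extremum sandwich on the good event and Cauchy--Schwarz on the bad event. One harmless slip is that the Hilbert--Schmidt moment is $\E\|G^*\|_{\F}^2=N(d+1)=O(d^3)$, not $O(d^2)$; the paper instead uses $\E\|G^*\|_{\ell_2\to\F}^2=O(d)$ from Lemma~\ref{lem:gauss-rect}, but against the factor $e^{-c_{\mathrm{prob}}d^2/2}$ your bad-event contribution is still $o(d)$.
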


\begin{proof}
On the intersection of the event $\{\|G^*\|_{\ell_2\to\F}\le C_G\sqrt d\}$ with the Gordon margin event of \eqref{eq:gordon-tail-margin}, approximate, for each unit $y \in \cY_J$, a maximizing $S \in \cD_J$ by its net point in $\cS_J$; the loss is at most $\|G^* y\|_{\F}\, \varrho\sqrt d \le C_G \varrho\, d \le a_0 d/8$ by \eqref{eq:varrho-choice}.  Hence, with the margin constant $c_0$, \eqref{eq:net-margin-G} fails with conditional probability at most $e^{-c_{\mathrm{prob}}d^2}$ for all sufficiently large $d$, uniformly over the realized good head, where $c_{\mathrm{prob}} > 0$ is a separate fixed constant chosen no larger than $\eta^2/8$ (absorbing the prefactor in \eqref{eq:gordon-tail-margin}) and no larger than half the exponent in \eqref{eq:Gstar-bound}.

On the event \eqref{eq:net-margin-G}, \eqref{eq:soft-sandwich-S}--\eqref{eq:soft-sandwich-Y} give
\begin{equation}\label{eq:PhiG-highprob}
\Phi(G) \;\ge\; \min_{y \in \cY_J} \max_{S \in \cS_J} H_{y,S}(G) - \frac{\log|\cY_J|}{\beta} \;\ge\; \frac{15\, c_0}{16}\, d .
\end{equation}
To pass to expectations, note that for every $y, S$, $|H_{y,S}(G)| = |\ip{G^* y}{S}| \le \sqrt d\, \|G^*\|_{\ell_2\to\F}$, so the soft-extremum inequalities imply the a priori bound
\begin{equation}\label{eq:Phi-apriori}
|\Phi(G)| \;\le\; \sqrt d\, \|G^*\|_{\ell_2 \to \F} + \frac{\log|\cS_J|}{\gamma} + \frac{\log|\cY_J|}{\beta} .
\end{equation}
Integrating the full tail estimate of Lemma~\ref{lem:gauss-rect} gives $\E_G\|G^*\|_{\ell_2\to\F}^2=O(d)$. Consequently \eqref{eq:Phi-apriori} and \eqref{eq:net-entropy} give $\E_G|\Phi(G)|^2=O(d^2)$.  Let $\cE_G$ denote the event in \eqref{eq:net-margin-G} and put $p_d=\Prob_G(\cE_G^c)$. Cauchy-Schwarz and the above estimates give
\[
\begin{aligned}
\E_G\Phi(G)
&\ge \frac{15c_0}{16}d(1-p_d)
   -\E_G\bigl[|\Phi(G)|\mathbf1_{\cE_G^c}\bigr] \ge \frac{15c_0}{16}d-o(d) \ge c_1d
\end{aligned}
\]
for all sufficiently large $d$.
\end{proof}

\begin{corollary}[Transferred soft margin on a good head]
\label{cor:chaos-soft-head}
After $L$ is fixed, there is a fixed $\theta>0$, depending only on $\alpha$, such that, uniformly over $J$ and $W_J=w\in\fG_J$,
\begin{equation}\label{eq:PhiW-mean}
\E_W\Phi_J(W;w) \ge \frac{3c_1}{4}d \ge \frac{3c_2}{2}d.
\end{equation}
\end{corollary}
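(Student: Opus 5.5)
The plan is to apply the second-chaos Lindeberg principle, Lemma~\ref{lem:interpolation}, conditionally on the head. Fix $J$ with $|J|\le K_d$ and a good head $w\in\fG_J$. The rows $(W_i)_{i\notin J}$ are independent of $W_J$, so conditioning on $W_J=w$ leaves them i.i.d.\ copies of $(xx^\top-\Id)/\sqrt d$. The net families $\cY_J$ and $\cS_J(w)$ are then deterministic and finite. Moreover $\cS_J(w)\subseteq\cD_J$, so every $S\in\cS_J(w)$ satisfies $\|S\|_{\op}\le\kappa_1$ by \eqref{eq:DJ-box}, and every $y\in\cY_J$ satisfies $\|y\|_\infty\le\theta/\sqrt d$.

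Consider the function $A\mapsto\Phi_J(A;w)$ on $(\Sd)^{N}$, with $N=n-|J|\le n\le C d^2$. Composing log-sum-exp with linear maps makes it $C^\infty$. Its first three derivatives are bounded, since each is a Gibbs average of bounded polynomial expressions in the finitely many $y,S$. Lemma~\ref{lem:soft}, \eqref{eq:third-deriv-sum}, with $\beta=\gamma=Ld$ from \eqref{eq:beta-gamma}, gives
\[
M_3(\Phi_J)\le C\kappa_1^3(2Ld)^2\theta/\sqrt d = C'\kappa_1^3L^2\theta\, d^{3/2},
\]
uniformly over $w$ and $J$. Applying \eqref{eq:interp-poly} then gives
\[
\bigl|\E_W\Phi_J(W;w)-\E_G\Phi_J(G;w)\bigr|\le \frac{C}{\sqrt d}M_3\le C''\kappa_1^3L^2\theta\, d,
\]
where $C''$ is absolute.

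Combining this with \eqref{eq:PhiG-mean} yields
\[
\E_W\Phi_J(W;w)\ge c_1d-C''\kappa_1^3L^2\theta d.
\]
Here $\kappa_1$, $L$ and $c_1$ depend only on $\alpha$ (through $\eta$ and $\varrho$), and they are fixed before $\theta$ is chosen. So we take
\[
\theta=\frac{c_1}{4C''\kappa_1^3L^2},
\]
which gives $\E_W\Phi_J\ge\tfrac34c_1d=\tfrac32c_2d$.

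The only delicate point is the order of constants. The nets $\cY_J$ and $\cS_J$ depend on $\theta$ only through the set $\cY_J^{\mathrm{cap}}$. The entropy bound \eqref{eq:net-entropy}, the choice of $L$, the constant $c_0$ and Lemma~\ref{lem:gaussian-soft-head} are all independent of $\theta$: the Gordon margin \eqref{eq:gordon-tail-margin} holds over the full tail sphere, and hence over any subset. Choosing $\theta$ last is therefore legitimate.

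A second technical point is that \eqref{eq:interp-poly} requires "$d$ sufficiently large", uniformly in $N\le Cd^2$. This is clear from \eqref{eq:RNd}. Lemma~\ref{lem:gaussian-soft-head} also needs large $d$, uniformly in the good head, so both hold uniformly in $J$ and $w$.
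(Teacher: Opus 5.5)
Your proposal is correct and follows essentially the same route as the paper. Both condition on the good head so that the nets and $\Phi_J(\cdot;w)$ are fixed, and both bound $M_3(\Phi_J)$ via \eqref{eq:third-deriv-sum} with $\beta=\gamma=Ld$. Both then apply the second-chaos Lindeberg bound \eqref{eq:interp-poly} to get a transfer error of order $L^2\theta d$, and choose $\theta$ last so this error is at most $c_1d/4$. Your explicit check that $L$, $c_0$, $c_1$ and the net entropy do not depend on $\theta$ spells out what the paper leaves implicit in ``after $L$ is fixed.''
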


\begin{proof} For each fixed head $w$, the functional $\Phi_J(\cdot;w)$ is a finite composition of log-sum-exp functions with linear functionals and is therefore $C^\infty$.  Since the corresponding slopes $(y_iS)_i$ range over finite bounded families with $\|y\|_2=1$ and $\|S\|_{\op}\le\kappa_1$, its first three full Fr\'echet derivatives are globally bounded.  Thus all hypotheses of Lemma~\ref{lem:interpolation} are satisfied. Applying \eqref{eq:third-deriv-sum} with $\beta = \gamma = Ld$ gives
\begin{equation}\label{eq:M3-bound}
M_3(\Phi) = \sup_A \sum_{i \notin J} \|D_i^3 \Phi(A)\|_{\pi,\op}
\;\le\; C\kappa_1^3\, (2Ld)^2\, \frac{\theta}{\sqrt d}
\;\le\; C_{\kappa_1} L^2\, \theta\, d^{3/2} .
\end{equation}
Conditionally on $W_J$, the nets and hence $\Phi$ are fixed, while the tail vectors $x_i$, $i \notin J$, remain independent standard Gaussians; moreover $N \le n \le d^2$ for all large $d$ by \eqref{eq:alpha-assume}. Thus the polynomial-row bound \eqref{eq:interp-poly} applies conditionally, with a constant uniform in the realized good head (the quantity $R_{N,d}$ in Lemma~\ref{lem:interpolation} is an expectation over the still-unconditioned tail vectors only and does not depend on the value of $W_J$).  Equations \eqref{eq:interp-poly} and \eqref{eq:M3-bound} give
\begin{equation}\label{eq:interp-applied}
\Bigl| \E_W\bigl[\Phi(W) \,\big|\, W_J\bigr]
- \E_G\bigl[\Phi(G) \,\big|\, W_J\bigr] \Bigr|
\;\le\; C_{\kappa_1} L^2\, \theta\, d .
\end{equation}
After $L$ is fixed, choose $\theta > 0$ small enough so that the right-hand side of \eqref{eq:interp-applied} is at most $c_1 d/4$.  Then, with $c_2=c_1/2$, \eqref{eq:PhiG-mean} proves \eqref{eq:PhiW-mean}.
\end{proof}

The interpolation gives a positive conditional mean, but the union over $e^{O(d\log^2 d)}$ heads requires a faster tail bound, here $e^{-cd^2}$.  The map $x\mapsto xx^\top$ is not globally Lipschitz, so we first clip the rare large-radius rows.  Let $\pi$ be the metric projection of $\R^d$ onto the centered ball of radius $2\sqrt d$, and define
\begin{equation}\label{eq:clipped}
\overline W_i = \frac{\pi(x_i)\pi(x_i)^\top - \Id}{\sqrt d} .
\end{equation}

\begin{lemma}[Uniform clipped soft margin over all heads]
\label{lem:clipped-soft-heads}
Fix the $\theta$ of Corollary~\ref{cor:chaos-soft-head}.  Uniformly over $J$ and $W_J=w\in\fG_J$,
\begin{equation}\label{eq:conc}
\Prob\bigl\{\Phi_J(\overline W_{J^c};w)\le c_2d/2\mid W_J=w\bigr\}
\le e^{-cd^2}.
\end{equation}
Consequently,
\begin{equation}\label{eq:union-heads}
\Prob\Bigl\{\exists J\subseteq[n],\ |J|\le K_d:\
W_J\in\fG_J,\ \Phi_J(\overline W_{J^c};W_J)\le c_2d/2\Bigr\}
\;\le\; \sum_{k\le K_d}\binom nk e^{-cd^2}
\;=\;o(1).
\end{equation}
\end{lemma}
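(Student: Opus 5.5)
Conditionally on the head $W_J=w\in\fG_J$, the nets $\cY_J$ and $\cS_J(w)$ are fixed, and $\Phi_J(\cdot;w)$ is a deterministic function of the tail vectors $(x_i)_{i\notin J}$, which are still i.i.d.\ $\cN(0,I_d)$. We will prove \eqref{eq:conc} in three steps: compare $\E\Phi_J(\overline W)$ with $\E\Phi_J(W)$, show that $\Phi_J(\overline W_{J^c};w)$ is a Lipschitz function of the Gaussian tail vectors with constant $O(1)$, and then apply Gaussian concentration at deviation $\asymp d$. This yields an $e^{-cd^2}$ tail. The union bound \eqref{eq:union-heads} then follows by taking expectations over $W_J$ (the bound is uniform in good $w$) and summing over $J$: $\sum_{k\le K_d}\binom nk\le \exp\{CK_d\log n\}=e^{O(d\log^2 d)}=e^{o(d^2)}$.

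\emph{Lipschitz bound.} Every $y\in\cY_J$ is a unit vector, and every $S\in\cS_J$ satisfies $\|S\|_{\op}\le\kappa_1$. Log-sum-exp is $1$-Lipschitz with respect to the sup norm of its arguments. Hence, for two clipped configurations,
\[
|\Phi(\overline W)-\Phi(\overline W')|\le \max_{y,S}\Bigl|\sum_i y_i\ip{\overline W_i-\overline W_i'}{S}\Bigr|.
\]
Write $u_i=\pi(x_i)$ and $u_i'=\pi(x_i')$. Then $\ip{\overline W_i-\overline W_i'}{S}=d^{-1/2}(u_i^\top Su_i-u_i'^\top Su_i')$, and since $\|u_i\|,\|u_i'\|\le2\sqrt d$ this is bounded by $d^{-1/2}\kappa_1\cdot 4\sqrt d\,\|u_i-u_i'\|\le 4\kappa_1\|x_i-x_i'\|$, using that $\pi$ is $1$-Lipschitz. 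By Cauchy--Schwarz in $i$ with $\|y\|_2=1$, the map $(x_i)_{i\notin J}\mapsto\Phi(\overline W)$ is $4\kappa_1$-Lipschitz in the Euclidean norm on $\R^{dN}$. Gaussian concentration then gives
\[
\Prob\{\Phi(\overline W)\le\E\Phi(\overline W)-t\}\le e^{-t^2/(32\kappa_1^2)}.
\]

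\emph{Mean comparison.} By Corollary~\ref{cor:chaos-soft-head}, $\E_W\Phi(W)\ge \tfrac32c_2d$, so it suffices to show $|\E\Phi(\overline W)-\E\Phi(W)|\le c_2d/2$; we then take $t=c_2d/2$. The two matrices $\overline W_i$ and $W_i$ differ only when $\|x_i\|>2\sqrt d$, which happens with probability $e^{-cd}$ per row. The same sup-norm argument, without clipping, gives
\[
|\Phi(\overline W)-\Phi(W)|\le \kappa_1\sqrt d\sum_i\bigl\|\overline W_i-W_i\bigr\|_{*}\,|y_i|\lesssim \kappa_1\sum_{i}\frac{\|x_i\|^2}{\sqrt d}\,\frac{\theta}{\sqrt d}\,\mathbf 1\{\|x_i\|>2\sqrt d\},
\]
using $\|y\|_\infty\le\theta/\sqrt d$. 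Its expectation is at most $n\cdot e^{-cd}\,\mathrm{poly}(d)=o(1)$. (Alternatively, $|y_i|\le1$ and the bound is still $o(d)$.)

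The main obstacle is conceptual rather than computational. We must check that conditioning on $W_J$ leaves the tail rows independent and Gaussian, so that the Lipschitz concentration and the expectation from Corollary~\ref{cor:chaos-soft-head} (a conditional expectation, uniform over $w\in\fG_J$) apply with constants independent of $w$. We must also check that the measurable-selection construction makes $\Phi_J(\overline W_{J^c};W_J)$ a jointly measurable quantity, so that integrating the conditional bound over the event $\{W_J\in\fG_J\}$ is legitimate. With $c_2$ and $\kappa_1$ fixed, the exponent $c=c_2^2/(128\kappa_1^2)$ does not depend on $J$, and the sum over $k\le K_d$ is $e^{O(d\log^2d)-cd^2}=o(1)$.
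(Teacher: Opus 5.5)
Your proposal is correct and follows the paper's proof. It has the same four ingredients: a $4\kappa_1$-Lipschitz bound for $\Phi_J(\overline W_{J^c};w)$ in the tail Gaussian coordinates, an exponentially small clipping correction to the conditional mean from Corollary~\ref{cor:chaos-soft-head}, Gaussian concentration at deviation of order $d$, and a union bound over the $e^{O(d\log^2 d)}$ heads. The only differences are cosmetic: you obtain the Lipschitz constant from the sup-norm $1$-Lipschitz property of log-sum-exp and the identity $u^\top Su-u'^\top Su'=(u-u')^\top S(u+u')$, where the paper uses the derivative bound $\|D_i\Phi\|_{\op}\le\kappa_1\E_q|y_i|$ with nuclear-norm duality, and the extra factor $\sqrt d$ in your mean-comparison display is unnecessary but harmless.
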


\begin{proof}
Differentiating \eqref{eq:soft-def} gives $D_i \Phi = \E_q\bigl[y_i\, \E_{p_y} S\bigr]$, whence
\begin{equation}\label{eq:DPhi-op}
\|D_i \Phi\|_{\op} \le \kappa_1\, \E_q |y_i| ,
\qquad
\sum_i \bigl(\E_q|y_i|\bigr)^2 \le \sum_i \E_q y_i^2 = 1 .
\end{equation}
Also, writing $uu^\top - vv^\top = u(u-v)^\top + (u-v)v^\top$,
\begin{equation}\label{eq:rank-one-lip}
\Bigl\|\frac{uu^\top - vv^\top}{\sqrt d}\Bigr\|_{*}
\;\le\; 4 \|u - v\|_2
\qquad \text{when } \|u\|_2, \|v\|_2 \le 2\sqrt d .
\end{equation}
Along a matrix segment, duality between operator and nuclear norms, Cauchy--Schwarz and \eqref{eq:DPhi-op} give
\begin{equation}\label{eq:Phi-lip}
|\Phi(A) - \Phi(A')|
\;\le\; \kappa_1 \Bigl(\sum_i \|A_i - A_i'\|_{*}^2\Bigr)^{1/2} .
\end{equation}
Since $\pi$ is $1$-Lipschitz, \eqref{eq:rank-one-lip} and \eqref{eq:Phi-lip} show that $\Phi(\overline W)$ is $4\kappa_1$-Lipschitz with respect to the Euclidean norm of the tail Gaussian coordinates.

We quantify the change of mean.  With $Q = \|x_i\|_2^2$, radial projection gives the exact identity $\|W_i - \overline W_i\|_{*} = (Q - 4d)_+/\sqrt d$, and the Chernoff bound $\Prob\{Q \ge 4d + t\} \le e^{-c(d+t)}$ yields
\begin{equation}\label{eq:clip-error-moment}
\E (Q - 4d)_+^2 \le C d^2 e^{-cd},
\qquad
\E \|W_i - \overline W_i\|_{*}^2 \le C d\, e^{-cd} .
\end{equation}
With at most $n = O(d^2)$ tail rows, \eqref{eq:Phi-lip}, Jensen's inequality and \eqref{eq:clip-error-moment} give, uniformly in $J$ and in its realized good head,
\begin{equation}\label{eq:clip-mean}
\Bigl| \E\bigl[\Phi(\overline W_{J^c}) \,\big|\, W_J\bigr]
- \E\bigl[\Phi(W_{J^c}) \,\big|\, W_J\bigr] \Bigr|
\;\le\; \kappa_1 \Bigl(\sum_{i \notin J} \E\|W_i - \overline
W_i\|_{*}^2\Bigr)^{1/2}
\;=\; O\bigl(d^{3/2} e^{-cd/2}\bigr) = o(1) .
\end{equation}
For all large $d$, \eqref{eq:PhiW-mean} and \eqref{eq:clip-mean} place the clipped conditional mean above $3c_2d/4$.  Gaussian concentration for a $4\kappa_1$-Lipschitz function \cite[Theorem~5.6]{boucheron2013concentration}, at deviation $c_2d/4$, proves \eqref{eq:conc}.

For deterministic $J$, let $\cH_J = \{W_J \in \fG_J\}$ and $\cF_J = \{\Phi_J(\overline W_{J^c};W_J) \le c_2 d/2\}$ (with $\cF_J = \varnothing$ if the capped sphere is empty).  The selectors above make these events measurable; moreover $\cH_J\in\sigma(W_J)$, and the tail rows remain independent conditional on $\sigma(W_J)$.  Consequently,
\begin{equation}\label{eq:head-tail-prob}
\Prob\bigl(\cH_J \cap \cF_J\bigr)
= \E\bigl[\mathbf 1_{\cH_J}\, \Prob(\cF_J \mid W_J)\bigr]
\;\le\; e^{-c d^2} .
\end{equation}
Since, eventually, $1 \le K_d \le n/2$ and
\begin{equation}\label{eq:binom-entropy}
\log \sum_{k \le K_d} \binom nk
\;=\; O\bigl(K_d \log(en/K_d)\bigr)
\;=\; O\bigl(d \log^2 d\bigr) = o(d^2) ,
\end{equation}
summing \eqref{eq:head-tail-prob} over the finitely many heads proves \eqref{eq:union-heads}.  The sigma-fields $\sigma(W_J)$ may vary with $J$; the displayed conditional expectation is applied separately to each head before the union bound.
\end{proof}

\begin{proof}[Proof of Proposition~\ref{prop:lowinf}]
Fix the $\theta$ supplied by Corollary~\ref{cor:chaos-soft-head}.  If the capped unit sphere in \eqref{eq:lowinf} is empty, the assertion is immediate.  Otherwise, work on the complement of the event in \eqref{eq:union-heads}.  The event $\cN = \{\max_{i \le n} \|x_i\|_2 \le 2\sqrt d\}$ has probability $1 - o(1)$ by \eqref{eq:radius}, and on it $\overline W_i = W_i$ for every row and every head simultaneously.  On the upper feature-edge event \eqref{eq:feature-sat}, every admissible $z$ (unit, supported in $J^c$, capped) lies within $\varrho$ of some $y \in \cY_J$, and
\begin{equation}\label{eq:net-to-all}
\sup_{S \in \cD_J}
\Bigl| \sum_{i \notin J} (z_i - y_i) \ip{W_i}{S} \Bigr|
\;\le\; \bigl\|\cW^*(z - y)\bigr\|_{\F}\cdot \sqrt d
\;\le\; C_W \varrho\, d ,
\end{equation}
using $\|S\|_{\F} = \sqrt d$ on $\cD_J$.  Finally, the signs in \eqref{eq:soft-sandwich-S}--\eqref{eq:soft-sandwich-Y} give the reverse passage from soft to hard extrema,
\begin{equation}\label{eq:soft-to-hard}
\min_{y \in \cY_J} \max_{S \in \cS_J} H_{y,S}
\;\ge\; \Phi - \frac{\log|\cS_J|}{\gamma} .
\end{equation}
On the complement of the event in \eqref{eq:union-heads}, and on $\cN$ and the feature-edge event, the definition of \eqref{eq:union-heads} together with \eqref{eq:L-choice} and \eqref{eq:soft-to-hard} shows that the net min-max is at least
\[
\frac{c_2 d}{2} - \frac{C_\varrho\, d}{L}
\;\ge\; \Bigl(\frac{3 c_0}{16} - \frac{c_0}{16}\Bigr) d
\;=\; \frac{a_0 d}{16} ,
\]
recalling $c_2 = c_1/2 = 3c_0/8$ and $c_0 = a_0/2$.  For any admissible $z$, choosing $y \in \cY_J$ within $\varrho$ and using $\cS_J \subseteq \cD_J$ together with \eqref{eq:net-to-all} and \eqref{eq:varrho-choice},
\[
\sup_{S \in \cD_J} \sum_{i \notin J} z_i \ip{W_i}{S}
\;\ge\; \frac{a_0 d}{16} - C_W \varrho\, d
\;\ge\; \frac{3 a_0 d}{64} .
\]
Thus \eqref{eq:lowinf} holds with, for example, $c_* := a_0/100$.  The constants were fixed in the order $(\eta, \kappa_0, \kappa_1)$, then $\varrho$, then $L$, then $\theta$, and none depends on the head-size constant $C_0$.
\end{proof}

\section{The unsatisfiable phase}\label{sec:unsat}

This section proves part (b) of Theorem~\ref{thm:main}.

\begin{theorem}[Unsatisfiable phase]\label{thm:unsat}
Let $n = n(d)$ satisfy $\liminf_{d\to\infty} n/d^2 > 1/4$, and let $x_1, \dots, x_n$ be independent $\cN(0, I_d)$ vectors.  Then
\[
\Prob\bigl\{\exists\, S \succeq 0 :\ x_i^\top S x_i = d \ \text{ for
every } 1 \le i \le n\bigr\} \longrightarrow 0 .
\]
\end{theorem}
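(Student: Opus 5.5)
We follow the outline of Section~\ref{sec:proof-method}. First, deleting observations (feasibility is monotone under deletion), we may assume $n/d^2\to\gamma\in(1/4,1/2)$. This puts us in the range where Proposition~\ref{prop:feature-edge} applies. By Lemma~\ref{lem:homogeneous}(i), if a fit exists then there are $S\succeq 0$ with $\|S\|_\F=1$ and $b\in\R$ such that $\cQ_x(S)=b\one$. We first bound $|b|$. Averaging gives $b=n^{-1}\sum_i(x_i^\top Sx_i-\Tr S)$, so $|b|\le\|\cQ_x(S)\|_2/\sqrt n\le C'_K$ by \eqref{eq:feature-lip}. It therefore suffices to show that, with high probability,
\[
\inf_{S\in\psdsphere{d},\,|b|\le C}\widetilde\Loss_n(S,b)\ge c>0,\qquad \varphi(t)=1-e^{-t^2}.
\]

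Fix $\varepsilon_d=d^{-1/4}$ and split $S=H+B$ into the spectral head $H=S\mathbf 1_{(\varepsilon_d,\infty)}(S)$, of rank $k\le d^{1/2}$, and the bulk $B$, with $\|B\|_{\op}\le\varepsilon_d$. Let $U$ be the head eigenspace and write $g_i=P_Ux_i$ and $z_i=P_{U^\perp}x_i$; these are independent Gaussians. Discretize the head by a net over pairs (subspace $U$, head matrix $H$) at scale $d^{-C}$. This net has cardinality $e^{O(kd\log d)}=e^{O(d^{3/2}\log d)}$.

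For a fixed net head, condition on $(g_i)$ and compare the bulk minimal risk
\[
\inf_{B}\frac1n\sum_i\varphi\bigl(g_i^\top Hg_i-\Tr H-b+z_i^\top Bz_i-\Tr B\bigr)
\]
with its hybrid counterpart \eqref{eq:loss-intro-hybrid}. Here the offsets $o_i=g_i^\top Hg_i-\Tr H-b$ are frozen and $z_iz_i^\top-\Id$ is replaced by $\sqrt m\,G_i$. For the comparison, adapt the free-energy interpolation of \cite{bandeira2025exact}: smooth the infimum by a Gibbs measure over a net of $B$, interpolate as in Lemma~\ref{lem:interpolation} (now with $\varphi$ applied to shifted chaos), and bound the third-order remainder by $\sum_i\Tr|B|^3$-type quantities. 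This remainder is $O(\varepsilon_d)$ per row, uniformly in the offsets $o_i$ because $\varphi$ has bounded derivatives.

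In the hybrid model we lower bound the risk by Gordon escape. The map $B\mapsto(\sqrt m\ip{G_i}{B})_i$ is a Gaussian matrix on $\Sym m$. We need that no $B$ in the bulk set, together with the fixed offsets, makes a constant fraction of the coordinates $o_i+\sqrt m\ip{G_i}{B}$ small. For this, consider the cone spanned by $\{B\}$ together with the offset direction, i.e.\ the PSD cone of $\Sym{m}$ augmented by the at most $O(k^2)+1$ dimensions spanned by $(g_i^\top Eg_i-\Tr E)_i$ for $E$ ranging over $\Sym{k}$, and by $\one$. Its Gaussian width is at most $d/2+O(\sqrt d)$ by Lemma~\ref{lem:psd-width}, while $\E\|g_n\|_2\approx\sqrt\gamma\, d>d/2$. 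Then Lemma~\ref{lem:escape}, applied after projecting out the offset directions (the ``projected Gordon'' step), gives $\|o+\sqrt m\,\cG(B)\|_2\ge c\sqrt n\,\|B\|_\F$ plus a contribution from the head. To convert $\ell_2$-separation into a bound on the bounded loss $\varphi$, we need that most coordinates are large, not merely the sum of squares. We would handle this by a restricted-subset escape, i.e.\ by applying escape to every set of $(1-\delta)n$ rows, with entropy $e^{O(\delta\log(1/\delta) n)}$ absorbed by the Gaussian tail $e^{-cd^2}$. When $\|B\|_\F$ is small, $\|H\|_\F$ is close to $1$, and the head term alone is spread because $H$ has low rank; this case is treated by Hanson--Wright plus a union bound over the net.

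Finally, McDiarmid's inequality in the independent rows (the loss is bounded by $1/n$ per row) upgrades the conditional expectation to probability $1-e^{-cd^2}$, which beats the $e^{O(d^{3/2}\log d)}$ net. Lipschitz continuity from \eqref{eq:feature-lip} passes from net heads to all heads. The main obstacle is the uniform hybrid lower bound: making escape-through-a-mesh robust in the sense of a positive fraction of coordinates, while keeping uniformity over arbitrary head offsets so that the bulk width stays at $d/2+o(d)$.
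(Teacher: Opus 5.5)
Your proposal is correct in outline and follows the paper's proof essentially step for step: the homogeneous witness with $|b|$ bounded by \eqref{eq:feature-lip}, the spectral head/bulk split at $\varepsilon_d=d^{-1/4}$, a head net of entropy $o(d^2)$, conditional bulk universality with frozen offsets, a projected Gordon escape taken over all row subsets of size $(1-\delta)n$ together with a head small-ball bound, and finally McDiarmid with a union bound over the net. The points you leave implicit are routine. First, $b$ must be netted along with $(U,H)$, because universality and McDiarmid are applied at fixed offsets. Second, the witness's bulk must be moved onto $U_0^\perp$ before \eqref{eq:feature-lip} applies; the paper does this by conjugating with the rotation of Lemma~\ref{lem:transport}. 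Third, Hanson--Wright alone controls only upper tails, so it must be combined with the second moment $2\|H\|_\F^2+b^2$ to produce the needed small-ball count. One way is to truncate the squared head offsets at a large constant and apply Hoeffding; the paper instead uses Paley--Zygmund and a VC bound in Lemma~\ref{lem:head-smallball}.
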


It is enough to work at a fixed density strictly between $1/4$ and $1/2$. Choose
\begin{equation}\label{eq:gamma-choice}
\frac14<\gamma<
\min\left\{\liminf_{d\to\infty} \frac{n}{d^2} ,\frac12\right\}.
\end{equation}
For all large $d$, retain the first $\lfloor\gamma d^2\rfloor$ sample points.  If this subsystem is infeasible, then so is the full system. We therefore assume throughout this section that
\begin{equation}\label{eq:unsat-density}
n=\lfloor\gamma d^2\rfloor,\qquad
\frac{n}{d^2}\longrightarrow\gamma,\qquad
\frac14<\gamma<\frac12.
\end{equation}

\subsection{Outline and proof of Theorem \ref{thm:unsat}}\label{sec:unsat-setup}

The proof rests on the homogeneous formulation of \eqref{eq:P} in Lemma \ref{lem:homogeneous}.(i),  which we restate in the form used throughout this section.

\begin{lemma}[Homogeneous witnesses]\label{lem:homogeneous-witness}
Assume that $x_1,\ldots,x_n$ span $\R^d$.  An ellipsoid fit exists if and only if there are $S\in\Sd$ and $b\in\R$ such that
\begin{equation}\label{eq:witness-recall}
S\succeq0,\qquad \|S\|_{\F}=1,\qquad \cQ_x(S)=b\one,
\end{equation}
where $\cQ_x(S)=(x_i^\top S x_i-\Tr S)_{i=1}^n$.
\end{lemma}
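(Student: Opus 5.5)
This is essentially Lemma~\ref{lem:homogeneous}.(i) restated, so the plan is to reproduce that argument directly in both directions. Throughout we use that the $x_i$ span $\R^d$, and the identity $x_i^\top S x_i = \Tr S + (\cQ_x(S))_i$.

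\emph{Fit $\Rightarrow$ witness.} Let $S_0\succeq 0$ satisfy $x_i^\top S_0 x_i = d$ for all $i$. Then $S_0\neq 0$, so $\|S_0\|_{\F}>0$. Set $S = S_0/\|S_0\|_{\F}$ and $b = (d-\Tr S_0)/\|S_0\|_{\F}$. By homogeneity,
\[
(\cQ_x(S))_i = \frac{x_i^\top S_0 x_i - \Tr S_0}{\|S_0\|_{\F}} = b \quad\text{for every } i,
\]
which is \eqref{eq:witness-recall}.

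\emph{Witness $\Rightarrow$ fit.} Suppose $(S,b)$ satisfies \eqref{eq:witness-recall}. Then $x_i^\top S x_i = \Tr S + b =: c$ for every $i$, and $c\ge 0$ because $S\succeq 0$. The key step is to rule out $c=0$. If $c=0$, then $\|S^{1/2}x_i\|_2^2 = 0$, so $S^{1/2}x_i=0$ for all $i$. Since the $x_i$ span $\R^d$, this gives $S^{1/2}=0$, hence $S=0$, which contradicts $\|S\|_{\F}=1$. Therefore $c>0$, and $\widehat S = dS/c\succeq 0$ satisfies $x_i^\top \widehat S x_i = d$ for every $i$.

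No step is a real obstacle; the only point needing care is the spanning hypothesis used to exclude $c=0$. That hypothesis holds almost surely once $n\ge d$, which is the case under \eqref{eq:unsat-density}.
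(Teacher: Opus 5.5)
Your proof is correct and follows the paper's argument for Lemma~\ref{lem:homogeneous}(i) essentially verbatim. You normalize a fit to the pair $\bigl(S_0/\|S_0\|_{\F},\,(d-\Tr S_0)/\|S_0\|_{\F}\bigr)$. Conversely, you use the spanning hypothesis to exclude a zero common value $c=\Tr S+b$ before rescaling to $dS/c$.
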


The sample spans $\R^d$ almost surely for $n\ge d$, which holds eventually under \eqref{eq:unsat-density}.  Proposition~\ref{prop:feature-edge}, applied with $p=d$, gives an event of probability $1-o(1)$ on which every witness \eqref{eq:witness-recall} satisfies
\begin{equation}\label{eq:b-bound}
|b|=\frac{\|\cQ_x(S)\|_2}{\sqrt n}\le C_\gamma.
\end{equation}
Theorem \ref{thm:unsat} will follow by excluding all witnesses $(S,b)$ with probability tending to one. 

Introduce the empirical risk
\begin{equation}\label{eq:empirical-loss}
\Loss_n(u)=\frac1n\sum_{i=1}^n\varphi(u_i), \qquad \varphi(t)=1-e^{-t^2},
\end{equation}
such that $\Loss_n(\cQ_x(S)-b\one)=0$ if and only if $(S,b)$ is a witness. We show that, with high probability, $\Loss_n(\cQ_x(S)-b\one)$ is bounded away from zero uniformly over all admissible $(S,b)$ with $S\succeq0$, $\|S\|_{\F}=1$, and $|b|\le C_\gamma$. The loss function $\varphi$ is bounded by one, smooth with bounded derivatives, and satisfies
\begin{equation}\label{eq:phi-sandwich}
c_\varphi\min\{t^2,1\}\le\varphi(t)\le\min\{t^2,1\},
\qquad c_\varphi=1-e^{-1}.
\end{equation}
In particular, boundedness will prevent a few exceptional rows from dominating either the universality comparison or the bounded-difference estimate.

As described in Section \ref{sec:proof-method}, a (rescaled) candidate witness need not have bounded operator norm. We will separate $S$ into a low-rank spectral head $H$, carrying the large eigenvalues, and a diffuse bulk $B$.  Specifically, set
\begin{equation}\label{eq:epsd}
\varepsilon_d=d^{-1/4},\qquad k_d=\lceil d^{1/2}\rceil,
\end{equation}
and, for $S\succeq0$ with $\|S\|_{\F}=1$, define
\begin{equation}\label{eq:split-def}
H=S\mathbf1_{(\varepsilon_d,\infty)}(S),\qquad B=S-H,
\end{equation}
where $\mathbf1_{(\varepsilon_d,\infty)}$ is the spectral projector onto the eigenvalues of $S$ exceeding $\varepsilon_d$. Then
\begin{align}
&H,B\succeq0,\qquad HB=0,\qquad
\rank H\le\varepsilon_d^{-2}\le k_d,\qquad
\|B\|_{\op}\le\varepsilon_d,
\label{eq:split-props}\\
&\|H\|_{\F}^2+\|B\|_{\F}^2=1,\qquad
\Tr(B^3)\le\varepsilon_d\|B\|_{\F}^2\le d^{-1/4}.
\label{eq:split-norm}
\end{align}
Thus all large eigenvalues lie in a head of rank at most $\sqrt d$, while the bulk is diffuse in Schatten--$3$ norm. 

The quadratic map $\cQ_x$ decomposes into 
\begin{equation}\label{eq:quadratic-map-head-tail}
  \cQ_x(S)_i = x_i^\top S x_i - \Tr S = g_i^\top H g_i - \Tr H + z_i^\top B z_i - \Tr B.
\end{equation}
where $(g_i,z_i)$ are the components of $x_i$ aligned with $H$ and $B$ respectively.  More precisely, let $U$ be the range of $H$, set $r := {\rm rank} (H)$ and $m:=d-r$, and decompose each sample 
\begin{equation}
x_i = g_i + z_i \in U \oplus U^\perp.
\end{equation}
For convenience, we identify $(H,B)$ and $(g_i,z_i)$ with their respective coordinate sectors and set $H \in \sfS_r^+$, $B \in \sfS_m^+$, $g_i \in \R^r$, and $z_i \in \R^m$. 

For deterministic $U$, $g_i \sim_{\mathrm{iid}} \cN(0,I_r)$ and $z_i \sim_{\mathrm{iid}} \cN(0,I_m)$ independently.  Set $W_i = (z_i z_i^\top - I_m)/\sqrt{m}$. The witness residual becomes
\begin{equation}\label{eq:ellipsoid-model}
\cQ_x(S)_i - b = g_i^\top H g_i - \Tr H  - b + \sqrt{m}\,\Tr(W_i B).
\end{equation}

The proof conditions on the head contribution and compares the minimum of the empirical risk under ellipsoid bulk data with that under Gaussian bulk data. Specifically, we define the following hybrid model: let $g_i\sim_{\mathrm{iid}}\cN(0,I_r)$ and, independently, $G_i\sim_{\mathrm{iid}}\GOE m$, and define the head and bulk quadratic maps
\begin{equation}\label{eq:hybrid-map}
\begin{aligned}
\sfK(H,b)_i&=g_i^\top Hg_i-\Tr H-b,\;
&&H\in\Sym r,\ b\in\R, \\
\sfR(B)_i&=\sqrt m\,\Tr(G_iB),
&&B\in\Sym m.
\end{aligned}
\end{equation}
We compare the minimum of $\scL_n(\cQ_x(S)-b\one)$ with that of $\scL_n(\sfK(H,b)+\sfR(B))$ over all admissible $(H,B,b)$.  

The proof proceeds in three steps. First, Proposition \ref{prop:translated-univ} shows that the minimum empirical risk in the ellipsoid model \eqref{eq:ellipsoid-model} matches that of the hybrid model \eqref{eq:hybrid-map}, up to $o(1)$ in expectation, uniformly over all deterministic head contributions $\sfK(H,b)$.  Second, Proposition \ref{prop:hybrid-gap} shows that the minimum empirical risk in the hybrid model is bounded away from zero with high probability.  Third, Proposition \ref{prop:true-net-gap} transfers the risk gap in the hybrid model to a risk gap in the ellipsoid model over a suitable net of head contributions.  Combining these three steps proves Theorem \ref{thm:unsat}.

The first proposition compares a diffuse quadratic bulk with its covariance-matched Gaussian counterpart uniformly over arbitrary deterministic head contributions.

\begin{proposition}[Translated-loss bulk universality]
\label{prop:translated-univ} Fix $C_0<\infty$, $\kappa>0$, and $\varepsilon \in (0,1/4)$. Let $m\to\infty$ and let $n=n(m)$ satisfy $n/m^2 \in [\varepsilon , 1/2 - \varepsilon]$.  For each $m$, let $\cC_m$ be an arbitrary nonempty closed subset of
\begin{equation}\label{eq:diffuse-class}
\cB_m (C_0,\kappa) : =\left\{B\in\Sym m:\|B\|_{\F}\le C_0,\ 
\Tr|B|^3\le m^{-\kappa}\right\},
\end{equation}
and let $a_1,\ldots,a_n\in\R$ be arbitrary deterministic offsets. For $\{X_i\}_{i\le n} \subset \sfS^m$, define
\begin{equation}
  \scE_{\cC_m} (\{X_i\}) =\inf_{B\in\cC_m}\frac1n\sum_{i=1}^n
\varphi\bigl(a_i+\sqrt m\,\Tr(X_iB)\bigr).
\end{equation}
Let $z_1,\ldots,z_n\sim_{\mathrm{iid}}\cN(0,I_m)$ and set $W_i = (z_i z_i^\top - I_m)/\sqrt m$, and let $G_1,\ldots,G_n$ be independent $\GOE m$ matrices.  Then, as $m \to \infty$,
\begin{equation}\label{eq:univ-conclusion}
\sup_{(a_i),\,\cC_m}
\left|\E \scE_{\cC_m} (\{W_i\})-\E \scE_{\cC_m} (\{G_i\})\right| \longrightarrow0,
\end{equation}
where the supremum is over all offsets and closed subsets $\cC_m$ of \eqref{eq:diffuse-class}.
\end{proposition}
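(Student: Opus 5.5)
Our plan is to adapt the free-entropy interpolation of \cite{bandeira2025exact}, implemented with the second-chaos Lindeberg principle of Lemma~\ref{lem:interpolation}, to the translated losses $t\mapsto\varphi(a_i+t)$. We begin by discretizing. The set $\cB_m(C_0,\kappa)$ lies in a Frobenius ball of dimension $D_m$. Using the feature-edge bound of Proposition~\ref{prop:feature-edge} (with $p=m$; it applies since $n/m^2\in[\varepsilon,1/2-\varepsilon]$) together with the analogous Gaussian bound, the map $B\mapsto \frac1n\sum_i\varphi(a_i+\sqrt m\Tr(X_iB))$ is Lipschitz in $\|B\|_\F$. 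Its constant is $\|\varphi'\|_\infty\,\|\cQ\|_{\F\to2}/\sqrt n=O(1)$ on an event of probability $1-o(1)$, uniformly in the offsets. Because $\varphi\in[0,1]$, the exceptional event costs only $o(1)$ in expectation. We may therefore replace $\cC_m$ by a $\delta$-net $\cN_\delta$ of $\cC_m$ itself, of cardinality $e^{C_\delta m^2}$, at cost $O(\delta)$ in both models, with constants independent of $(a_i)$ and $\cC_m$.

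Next we smooth the infimum. We set
\[
F_\beta(\{X_i\})=-\frac{1}{\beta n}\log\sum_{B\in\cN_\delta}\exp\Bigl(-\beta\sum_{i=1}^n\varphi\bigl(a_i+\sqrt m\Tr(X_iB)\bigr)\Bigr),
\]
with $\beta$ fixed (independent of $m$). Since $\log|\cN_\delta|/(\beta n)\le C_\delta/(\beta\varepsilon)$, $F_\beta$ is within $C_\delta/\beta$ of the net minimum. We will let $m\to\infty$ first, then $\beta\to\infty$, then $\delta\to0$. The expectations of $F_\beta$ can be compared using the interpolation identity \eqref{eq:interp-derivative}, which is exact and involves only the third derivatives $D_i^3F_\beta$. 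Differentiating the Gibbs measure $\langle\cdot\rangle$ over $\cN_\delta$, $D_i^3F_\beta[U_a,U_b,V_{ab}]$ is a combination of terms such as $\frac{1}{n}m^{3/2}\langle\varphi'''(\cdot)\,\Tr(BU_a)\Tr(BU_b)\Tr(BV_{ab})\rangle$, together with covariance and cumulant terms carrying extra factors of $\beta$. Since $\varphi$ has bounded derivatives, the offsets $a_i$ enter only through $\varphi^{(k)}(a_i+\cdot)$, which are bounded uniformly. This is exactly why the comparison is uniform in $(a_i)$.

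The main obstacle is bounding the remainder. Bounding the remainder through the projective operator norm, as in Lemma~\ref{lem:chaos-ibp}, would give factors $\|B\|_\op^3$, which are not small here because there is no operator-norm constraint. Instead we contract directly. For a pure tensor built from $B,B,B$, the contraction gives $4x^\top B^3x$ (here $x=z_i$). With the normalization $W_i$ carrying $1/\sqrt m$ and the prefactor $m^{3/2}$ from $\sqrt m\Tr(\cdot B)$, the remainder of row $i$ is of order
\[
\frac{\beta^{2}}{n}\,m^{3/2}\cdot m^{-3/2}\bigl\langle|z_i^\top B^3 z_i|\bigr\rangle .
\]
Taking expectations, $\E|z_i^\top B^3z_i|\lesssim \Tr|B|^3+\|B^3\|_\F$, up to a tail term. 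Summed over $i$ and integrated in $t$, this gives an error of $O(\beta^2(m^{-\kappa}+\text{tails}))$. The Gibbs average sits inside the expectation, so we cannot take $\E$ of $z_i^\top B^3 z_i$ for a single fixed $B$. We handle this by bounding $\langle |z_i^\top B^3 z_i|\rangle\le\sup_{B\in\cC_m} z_i^\top|B|^3z_i$. We then control $\E\sup_B z^\top|B|^3z$ using $z^\top|B|^3z\le\|B\|_\op\,z^\top B^2z$ together with the interpolation between $\Tr|B|^3\le m^{-\kappa}$ and $\|B\|_\F\le C_0$. Concretely, we note that $\|B\|_\op\le(\Tr|B|^3)^{1/3}\le m^{-\kappa/3}$, and that $z^\top B^2z\le\|z\|^2\|B\|_\op^2$ on the radius event. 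Hence $\sup_B z^\top|B|^3z\le \|z\|^2 m^{-\kappa}\lesssim m^{1-\kappa}$, and the tail is handled by Lemma~\ref{lem:max-radius}.

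If this crude bound is too lossy, which seems likely because of the factor $\|z\|^2\sim m$, we fall back on the argument of \cite{bandeira2025exact}. There, one uses the time-dependent row $Z_i(t)$ and performs Gaussian integration by parts in $z_i$ once more, turning $z^\top B^3z$ into $\Tr B^3$ plus higher-order terms that are again controlled by $\|B\|_\op\le m^{-\kappa/3}$. The covariance and cumulant terms have the same structure, with powers of $\beta$. Collecting the bounds gives $|\E F_\beta(W)-\E F_\beta(G)|\le C\beta^2 m^{-c\kappa}$, which, after the limits described above, yields \eqref{eq:univ-conclusion}.
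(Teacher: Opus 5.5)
\textbf{There is a gap at the step you yourself flag.} Your outer architecture matches the paper's Lemma~\ref{lem:ground-state-transfer}: a Frobenius $\delta$-net of $\cC_m$ made Lipschitz by the feature-edge bounds, a soft-min at fixed $\beta$, and the limits $m\to\infty$, then $\beta\to\infty$, then $\delta\downarrow0$. The paper does not prove the comparison of $\E F_\beta$ by hand. It invokes the free-entropy universality theorem of \cite{bandeira2025exact} for the rowwise losses $\ell_i(u)=\beta\varphi(a_i+u)$. Its only inputs are uniform derivative bounds on the $\ell_i$ and pointwise normality of $\Tr(W_iT)$ on the Schatten-3 domain $\cT_m$.

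Your replacement for this step breaks at the Gibbs average $\langle z_i^\top B^3 z_i\rangle$, because the Gibbs measure depends on $z_i$.
\begin{itemize}
\item The bound $\sup_{B\in\cC_m} z_i^\top|B|^3z_i\le\|z_i\|_2^2m^{-\kappa}$ gives a remainder of order $\beta^2m^{1-\kappa}\to\infty$. No bound on this supremum can do better: for $\cC_m=\cB_m$, the rank-one matrix $B=m^{-\kappa/3}z_iz_i^\top/\|z_i\|_2^2$ attains it.
\item Your fallback does not close it either. One more integration by parts in $z_i$ turns $\langle\varphi'''\,z_i^\top B^3z_i\rangle$ into $\Tr B^3$-type terms plus terms such as $\langle\varphi^{(5)}z_i^\top B^5z_i\rangle$. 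The Gibbs weights also produce cross-replica terms like $z_i^\top B_2B_1^3B_2z_i$.
\item With your operator-norm bounds these are only $O(\|z_i\|_2^2\|B\|_{\op}^5)=O(m^{1-5\kappa/3})$. That is still divergent for $\kappa\le 3/5$, in particular for $\kappa=1/5$, which is the value used in Section~\ref{sec:unsat}.
\end{itemize}
Relatedly, you misdiagnose why the projective-norm route fails. $\|B\|_{\op}$ is not large; it is at most $m^{-\kappa/3}$. The problem is that $\|B\|_{\op}^3$ gets multiplied by $\|z_i\|_2^2\sim m$.

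\textbf{The missing idea is a cavity change of measure.} With it, your direct Lindeberg route works without \cite{bandeira2025exact}. Let $\langle\cdot\rangle$ be the Gibbs measure over the net at the interpolated rows $Z(t)$. Let $\langle\cdot\rangle_{-i}$ be the same measure with row $i$ deleted; it is independent of $(z_i,G_i)$. Since $0\le\beta\varphi\le\beta$, we have $\langle f\rangle\le e^{k\beta}\langle f\rangle_{-i}$ for every nonnegative function $f$ of $k$ replicas. Hence
\[
\E\bigl\langle z_i^\top|B|^3z_i\bigr\rangle\le e^{\beta}\,\E\bigl\langle\Tr|B|^3\bigr\rangle_{-i}\le e^{\beta}m^{-\kappa}.
\]
The covariance and cumulant terms of $D_i^3F_\beta$ produce cross-replica contractions $4z_i^\top B_1B_3B_2z_i$. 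These are bounded by $\|B_3\|_{\op}\bigl(z_i^\top B_1^2z_i+z_i^\top B_2^2z_i\bigr)/2$, where $\|B_3\|_{\op}\le(\Tr|B_3|^3)^{1/3}\le m^{-\kappa/3}$ and $\E z_i^\top B^2z_i=\|B\|_{\F}^2\le C_0^2$. Inserting these bounds into the exact identity \eqref{eq:interp-derivative} gives
\[
\bigl|\E F_\beta(\{W_i\})-\E F_\beta(\{G_i\})\bigr|\le C(1+\beta)^2e^{3\beta}(1+C_0^2)\,m^{-\kappa/3},
\]
uniformly in $n$, the offsets and the net. This vanishes for fixed $\beta$, and the rest of your plan then goes through.
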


The proof follows the free-entropy interpolation of \cite[Section~4]{bandeira2025exact} on the Frobenius/Schatten-3 domain $\cB_m (C_0,\kappa)$ while keeping each row's deterministic translation throughout the replacement, and uses a Frobenius-net to transfer to the ground state. The details can be found in Section~\ref{sec:unsat-univ}.

The next proposition shows that the empirical risk in the hybrid model is bounded away from zero with high probability, uniformly over all admissible $(H,B,b)$.

\begin{proposition}[Hybrid model risk gap]\label{prop:hybrid-gap}
There are constants $e_\gamma,c_\gamma>0$, depending only on $\gamma$, such that for every $0\le r\le k_d$, with
probability at least $1-e^{-c_\gamma d^2}$,
\begin{equation}\label{eq:hybrid-gap}
\inf_{(H,B,b)}\
\Loss_n\bigl(\sfK(H,b)+\sfR(B)\bigr)\ge e_\gamma,
\end{equation}
where the infimum is over all $H\succeq 0$, $B\succeq 0$, and $b\in\R$ with $1/4\le\|H\|_{\F}^2+\|B\|_{\F}^2\le4$.
\end{proposition}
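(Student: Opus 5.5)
The plan is to replace the smooth loss by a counting statement and then apply Gordon's escape theorem after projecting out the low-dimensional head directions. By \eqref{eq:phi-sandwich}, if $\Loss_n(v)<e$ then at most $e n/(c_\varphi\delta^2)$ coordinates satisfy $|v_i|>\delta$. Choosing $e=e_\gamma$ small relative to $\delta$, there is then a set $I\subseteq[n]$ with $|I|\ge(1-\varepsilon')n$ and $\|v_I\|_2\le\delta\sqrt n$, where $v=\sfK(H,b)+\sfR(B)$. We take a union bound over the at most $e^{h(\varepsilon')n}$ choices of $I$, with $h$ the binary entropy. This is affordable against failure probabilities $e^{-c d^2}$ provided $\varepsilon'$ is chosen small enough in terms of $\gamma$.

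\emph{Step 1 (head subspace).} Condition on $(g_i)$. The set $\{\sfK(H,b)_I : H\in\Sym r,\ b\in\R\}$ lies in a fixed linear subspace $L_I\subseteq\R^I$ of dimension at most $D_r+1\le k_d^2+1=O(d)$. This subspace is determined by $g$ and is therefore independent of the $G_i$. Let $P=P_{L_I^\perp}$. Then $\|P\,\sfR_I(B)\|_2\le\|v_I\|_2\le\delta\sqrt n$. In Frobenius coordinates $\sfR(B)_i=\sqrt2\,\ip{\Gamma_i}{B}$, so $P\sfR_I$ has the law of $\sqrt2$ times an $s\times D_m$ standard Gaussian matrix, with $s=|I|-\dim L_I\ge(1-\varepsilon')\gamma d^2-O(d)$, by rotation invariance. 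Apply Lemma~\ref{lem:escape} with $T=\psdsphere m$, whose width is at most $m/2+O(1)\le d/2+O(1)$ by Lemma~\ref{lem:psd-width}, and with $t=\eta d$. This gives
\[
\inf_{B\succeq0}\frac{\|P\sfR_I(B)\|_2}{\|B\|_{\F}}\ \ge\ \sqrt2\bigl(\sqrt{(1-\varepsilon')\gamma}-\tfrac12-2\eta\bigr)d=:c_1\sqrt n
\]
outside probability $e^{-\eta^2d^2/2}$. Here $c_1>0$ because $\gamma>1/4$, once $\varepsilon'$ and $\eta$ are chosen small. Taking the union over $I$ (and over $r\le k_d$, which is harmless) keeps the total failure probability at $e^{-cd^2}$. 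Consequently $\|B\|_{\F}\le\delta/c_1$.

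\emph{Step 2 (head injectivity).} Since $\|B\|_{\F}$ is now small, $\|H\|_{\F}\ge1/4$ for $\delta$ small. By the Gaussian part of Proposition~\ref{prop:feature-edge}, $\|\sfR\|_{\F\to2}\le C\sqrt n$ with probability $1-2e^{-n/2}$, so $\|\sfK_I(H,b)\|_2\le(\delta+C\delta/c_1)\sqrt n$. It therefore suffices to show that, with probability $1-e^{-cd^2}$, uniformly over all $I$ with $|I|\ge(1-\varepsilon')n$ and all $(H,b)$,
\[
\|\sfK_I(H,b)\|_2\ \ge\ c_2\sqrt n\,(\|H\|_{\F}+|b|).
\]
This contradicts Step 1's consequence for $\delta$ small and gives \eqref{eq:hybrid-gap}. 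I expect this step to be the main obstacle, because the head features are non-Gaussian and the bound must hold uniformly over the deleted rows.

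To prove the injectivity bound, I plan to use a small-ball argument rather than a spectral bound. For fixed $(H,b)$ with $\|H\|_{\F}+|b|=1$, the variable $\sfK(H,b)_i$ has second moment $2\|H\|_{\F}^2+b^2\gtrsim1$. Its fourth moment is comparable by Gaussian hypercontractivity, so Paley--Zygmund gives $\Prob\{|\sfK_i|\ge c_3\}\ge p_0>0$ with absolute constants. A Chernoff bound then shows that at least $p_0n/2$ rows satisfy $|\sfK_i|\ge c_3$, except with probability $e^{-cn}=e^{-c d^2}$. This extends to all $(H,b)$ through a net of the $O(d)$-dimensional unit ball of cardinality $e^{O(d\log d)}$. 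Lipschitz control across the net uses $\max_i\|g_i\|_2^2\le Cd$, and a net scale of $d^{-C}$ suffices. Once $\varepsilon'<p_0/4$, every $I$ retains at least $p_0n/4$ of these rows, which yields $\|\sfK_I\|_2\ge c_3\sqrt{p_0n/4}$. The constants are then fixed in the order: $\varepsilon'$ and $\eta$ first (from $\gamma$ and $p_0$), then $\delta$, then $e_\gamma$.
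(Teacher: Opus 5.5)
Your argument is essentially the paper's proof. Step 1 is the paper's projected Gordon escape (Lemma \ref{lem:projected-bulk-escape}), down to the union bound over $e^{nH_2(\cdot)}$ deleted-row sets. Step 2 is the Paley--Zygmund head small ball (Lemma \ref{lem:head-smallball}) combined with the operator-norm bound on $\sfR$. Arguing by contradiction with a small threshold $\delta$ is equivalent to the paper's fixed threshold $|u_i|>1$ with the split $\|B\|_{\F}\gtrless\theta$. Projecting off the span of the head features, instead of inverting $F_J^\top F_J$, is a harmless simplification.

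The one place you diverge has a flaw as written. You make the head small ball uniform by a net plus Chernoff, whereas the paper uses the VC/Sauer bound. Your Lipschitz step relies on the event $\max_i\|g_i\|_2^2\le Cd$. That event fails with probability at least $\Prob\{g_{11}^2>Cd\}\ge e^{-Cd}$ for large $d$, so your argument only gives probability $1-e^{-\Theta(d)}$, not the claimed $1-e^{-c_\gamma d^2}$. The $e^{-cd^2}$ rate is not cosmetic: the paper needs it to union-bound the head event over the $e^{O(d^{3/2})}$ Grassmannian net in Proposition \ref{prop:true-net-gap}.

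The repair stays within your scheme, because your net scale is a free polynomial. Use the cruder event $\max_i\|g_i\|_2^2\le d^3$, which fails with probability at most $n e^{-cd^3}$, together with net scale $d^{-4}$. The net cardinality is still $e^{O(d\log d)}$, and the perturbation per row is $O(1/d)$. Alternatively, in the Chernoff step count only rows with both $|\sfK(H',b')_i|\ge c_3$ and $\|g_i\|_2^2\le Cd$; each row still has this property with probability at least $p_0/2$. The paper's VC route avoids Lipschitz control altogether and gives the $e^{-c_0 n}$ bound directly.
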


Section~\ref{sec:unsat-hybrid} proves this proposition using the loss function lower bound in \eqref{eq:phi-sandwich}:
\[
\scL_n(u) \geq \frac{1}{n} \sum_{i =1}^n c_\varphi \min\{u_i^2,1\} \geq c_\varphi \frac{ |I|}{n} + c_\varphi \frac{\| u_{I^c}\|_2^2}{n}, \qquad I = \{i: |u_i| \geq 1\}.
\]
The proof lower bounds the $\ell_2$-norm of the low-rank spectral head contribution via a uniform deviation inequality and the $\ell_2$-norm of the bulk contribution via Gordon's escape theorem, uniformly over all possible restrictions to $I^c$ with $|I| \leq \delta n$ for a suitably small $\delta \in (0,1)$. 

The third proposition combines Proposition \ref{prop:translated-univ} and Proposition \ref{prop:hybrid-gap} to transfer the risk gap from the hybrid model to the ellipsoid model, uniformly over a net of admissible heads $(H,b)$.  Fix for now a constant $\tau\in(0,1/10)$, whose final value will be chosen in the proof of Theorem \ref{thm:unsat}.  For $0\le r\le k_d$, let $\fU_r$ be a minimal $\tau$-net of $\Gr{r}{d}$ in projector operator norm.  The Grassmannian entropy bound \cite{szarek1983finite} gives 
\begin{equation}
|\fU_r|\le(C_{\mathrm{Gr}}/\tau)^{r(d-r)}. 
\end{equation} 
For $U\in\fU_r$, choose inside
\[
\{H\succeq0:\operatorname{range}H\subseteq U,\ \|H\|_{\F}\le1\}
\]
a $\tau$-net $\fH_U$ in Frobenius norm satisfying
\begin{equation}\label{eq:head-net-count}
|\fH_U|\le(1+2/\tau)^{r(r+1)/2}.
\end{equation}
Let $K_\gamma=C_\gamma+1$ and let $\frB_\tau$ be a $\tau$-net of $[-K_\gamma,K_\gamma]$, with $|\frB_\tau|\le1+2K_\gamma/\tau$.

For a net pair $(U,H)$, identify $U^\perp$ with $\R^m$, $m=d-r$, and define the admissible bulk set
\begin{equation}\label{eq:diffuse-annulus}
\cC_U(H)=\left\{B\succeq0:
\begin{array}{l}
\operatorname{range}B\subseteq U^\perp,\quad
\|B\|_{\op}\le\varepsilon_d,\\[2pt]
1/4\le\|H\|_{\F}^2+\|B\|_{\F}^2\le4
\end{array}\right\}.
\end{equation}
Pairs $(U,H)$ for which $\cC_U(H)$ is empty are omitted.  Every element of $\cC_U(H)$ satisfies
\begin{equation}\label{eq:B-schatten}
\|B\|_{\F}\le2,\qquad
\Tr|B|^3\le4d^{-1/4}.
\end{equation}
Uniformly for $r\le k_d$,
\begin{equation}\label{eq:uniform-aspect}
4d^{-1/4}\le(d-r)^{-1/5},\qquad
\frac{n}{(d-r)^2}\longrightarrow\gamma.
\end{equation}
Thus Proposition~\ref{prop:translated-univ} applies to every $\cC_U(H)$ with $C_0=2$ and $\kappa=1/5$.

Define
\[
\frV_d=\{(r,U):0\le r\le k_d,\ U\in\fU_r\}
\]
and
\[
\cI_d=\{(r,U,H,b):0\le r\le k_d,\ U\in\fU_r,\ H\in\fH_U,\
b\in\frB_\tau,\ \cC_U(H)\ne\varnothing\}.
\]
The net entropies satisfy
\begin{equation}\label{eq:total-entropy}
\log|\frV_d|+\log|\cI_d|=O_\tau(d^{3/2})=o(d^2).
\end{equation}

\begin{proposition}[Uniform risk gap on the head net]
\label{prop:true-net-gap}
For every fixed $\tau\in(0,1/10)$, with probability tending to one,
simultaneously for all $(r,U,H,b)\in\cI_d$,
\begin{equation}\label{eq:net-gap}
\inf_{B\in\cC_U(H)}
\Loss_n\bigl(\cQ_x(H+B)-b\one\bigr)\ge\frac{e_\gamma}{4}.
\end{equation}
\end{proposition}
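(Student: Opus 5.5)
Fix $\tau$ and a single index $(r,U,H,b)\in\cI_d$. Since $U$, $H$ and $b$ are deterministic, the decomposition $x_i=g_i+z_i\in U\oplus U^\perp$ gives independent $g_i\sim\cN(0,I_r)$ and $z_i\sim\cN(0,I_m)$. The residual then has the form \eqref{eq:ellipsoid-model}, and $\Loss_n(\cQ_x(H+B)-b\one)=\frac1n\sum_i\varphi(a_i+\sqrt m\,\Tr(W_iB))$ with offsets $a_i=\sfK(H,b)_i$.

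The plan has three steps: a conditional-mean bound, concentration, and a union bound.

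\textbf{Conditional mean.} Condition on the head variables $(g_i)$. The offsets $a_i$ are then deterministic. By \eqref{eq:B-schatten} and \eqref{eq:uniform-aspect}, the set $\cC_U(H)$ is a closed subset of $\cB_m(2,1/5)$, so Proposition~\ref{prop:translated-univ} applies. It gives, uniformly in the realized offsets,
\[
\Bigl|\E\bigl[\scE_{\cC_U(H)}(\{W_i\})\mid g\bigr]-\E\bigl[\scE_{\cC_U(H)}(\{G_i\})\mid g\bigr]\Bigr|=o(1).
\]
Integrating over $g$, the Gaussian-bulk quantity is exactly the hybrid risk restricted to $(H,B,b)$. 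Its admissible set lies inside the infimum set of Proposition~\ref{prop:hybrid-gap}, which requires $H,B\succeq0$ and $1/4\le\|H\|_\F^2+\|B\|_\F^2\le4$. Hence it is at least $e_\gamma$ except with probability $e^{-c_\gamma d^2}$. Since $\varphi\le1$, its expectation is at least $e_\gamma-e^{-c_\gamma d^2}$. Therefore $\E\,\scE_{\cC_U(H)}(\{W_i\})\ge e_\gamma-o(1)$, uniformly over the index set. Uniformity of the $o(1)$ in the head is the point of allowing arbitrary offsets in Proposition~\ref{prop:translated-univ}.

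\textbf{Concentration.} The random variable $\scE:=\inf_{B\in\cC_U(H)}\frac1n\sum_i\varphi(\cdots)$ is a function of the independent samples $x_1,\dots,x_n$. Replacing one $x_i$ changes a single summand by at most $1/n$, because $\varphi\in[0,1]$, and the infimum preserves this bound. McDiarmid's inequality then gives
\[
\Prob\{\scE\le \E\scE-t\}\le e^{-2nt^2}.
\]
Take $t=e_\gamma/2$. For large $d$ the mean bound gives $\E\scE\ge \tfrac34 e_\gamma$, so the failure probability of $\scE\ge e_\gamma/4$ is at most $e^{-c n}=e^{-c' d^2}$.

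\textbf{Union bound.} By \eqref{eq:total-entropy}, $|\cI_d|\le e^{O_\tau(d^{3/2})}$. Summing the failure probabilities gives $e^{O_\tau(d^{3/2})-c'd^2}=o(1)$, which yields \eqref{eq:net-gap} simultaneously over the net.

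\textbf{Main obstacle.} The delicate step is the conditional-mean bound. Two points need care.
\begin{itemize}
\item Proposition~\ref{prop:translated-univ} is stated for deterministic offsets, so it must be applied conditionally on $g$. This requires the independence of $g$ and $z$, which holds because $U$ is fixed by the net rather than chosen by the data.
\item The bulk normalization must match. In \eqref{eq:ellipsoid-model} we have $\sqrt m\,\Tr(W_iB)=z_i^\top Bz_i-\Tr B$ with $W_i=(z_iz_i^\top-I_m)/\sqrt m$. In \eqref{eq:hybrid-map} the term is $\sqrt m\,\Tr(G_iB)$ with $G_i\sim\GOE m$. These are exactly the pair compared in Proposition~\ref{prop:translated-univ}, under the density condition $n/m^2\to\gamma\in[\varepsilon,1/2-\varepsilon]$.
\end{itemize}
The remaining issue is measurability of the infimum over the closed set $\cC_U(H)$. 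This is routine, because $\cC_U(H)$ is compact and the objective is continuous.
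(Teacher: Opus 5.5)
Your proof is correct, but it is organized differently from the paper's.

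\textbf{The paper's route.} It works conditionally throughout. For each net subspace $U$ it restricts to the head small-ball event $\cE_U$, which has probability at least $1-e^{-c_\gamma d^2}$. On $\cE_U$ it uses the conditional estimate \eqref{eq:hybrid-conditional}, taken from inside the proof of Proposition~\ref{prop:hybrid-gap}, to get $\E[\scE_{U,H,b}(\{G_i\})\mid (g_i)]\ge e_\gamma(1-e^{-c_\gamma' d^2})$. It transfers this bound to the chaos bulk with Proposition~\ref{prop:translated-univ}. It then applies McDiarmid only over the bulk rows $z_i$, conditionally on $\cF_U$. Finally it pays a separate union-bound term $|\frV_d|\,e^{-c_\gamma d^2}$ for the head events.

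\textbf{Your route.} You integrate the conditional universality bound over $g$. The Gaussian-bulk side then becomes the unconditional hybrid risk, to which the \emph{statement} of Proposition~\ref{prop:hybrid-gap} applies directly. You then apply McDiarmid to the full samples $x_i=g_i+z_i$. This is legitimate: replacing one $x_i$ still changes only one summand, so the bounded-difference constant is still $1/n$.

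\textbf{Comparison.} Your version is shorter. It uses Proposition~\ref{prop:hybrid-gap} as a black box rather than its internal conditional estimate, and it never needs the event $\cE_U$ or the $\sigma$-field $\cF_U$. The paper's version gives a somewhat stronger, head-by-head statement: concentration holds for every good head realization using only the bulk randomness. That extra strength is not needed for this proposition. Both routes rely equally on the uniformity of Proposition~\ref{prop:translated-univ} over offsets to condition on the head.

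\textbf{Small wording point.} The bound $\E\,\scE\ge e_\gamma(1-e^{-c_\gamma d^2})\ge e_\gamma-e^{-c_\gamma d^2}$ uses $\varphi\ge 0$ (together with $e_\gamma\le 1$), not $\varphi\le 1$.
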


Section~\ref{sec:unsat-nets} proves Proposition~\ref{prop:true-net-gap}. 

Finally, we need the following elementary estimate when rounding a witness to the closest head matrix on the $\tau$-net. 

\begin{lemma}\label{lem:transport}
If $U,V\in\Gr{r}{d}$ and $\|P_U-P_V\|_{\op}\le\tau<1/2$, there is an orthogonal matrix $O$ with $OU=V$ and $\|O-I\|_{\op}\le C\tau$. Consequently, for every $S\in\Sd$,
\begin{equation}\label{eq:transport-frob}
\|OSO^\top-S\|_{\F}\le C\tau\|S\|_{\F}.
\end{equation}
\end{lemma}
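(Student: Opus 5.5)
I will build $O$ as the direct rotation between $U$ and $V$ (Kato's transformation), or more concretely from principal angles. Write $P=P_U$, $Q=P_V$, both rank $r$, with $\|P-Q\|_{\op}\le\tau<1/2$. Take a CS decomposition: there are orthonormal bases $u_1,\dots,u_r$ of $U$ and $v_1,\dots,v_r$ of $V$ with $\langle u_j,v_k\rangle=\delta_{jk}\cos\theta_j$. Here $\theta_j\in[0,\pi/2]$ are the principal angles, and the nonzero eigenvalues of $P-Q$ are $\pm\sin\theta_j$. Hence $\max_j\sin\theta_j=\|P-Q\|_{\op}\le\tau<1/2$, so every $\theta_j<\pi/6$. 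For each $j$ with $\theta_j>0$, the plane $\Pi_j=\mathrm{span}(u_j,v_j)$ is two-dimensional. These planes are mutually orthogonal, and orthogonal to $U\cap V$. (This follows from the CS structure: $u_j$ and $v_j$ are orthogonal to $u_k$ and $v_k$ for $k\ne j$.)

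Define $O$ as the rotation by angle $\theta_j$ in each plane $\Pi_j$ taking $u_j$ to $v_j$, and as the identity on the orthogonal complement of $\bigoplus_j\Pi_j$. This complement contains $U\cap V$ and $(U+V)^\perp$. Then $O$ is orthogonal and maps the basis $(u_j)$ to $(v_j)$, so $OU=V$. Since $O-I$ is block-diagonal with $2\times 2$ blocks $R_{\theta_j}-I$ of norm $2\sin(\theta_j/2)$, we get $\|O-I\|_{\op}=\max_j 2\sin(\theta_j/2)\le\max_j\theta_j\le(\pi/2)\sin\theta_j$. Hence $\|O-I\|_{\op}\le (\pi/2)\tau$, giving $C=\pi/2$.

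Alternatively, I could avoid the CS bookkeeping by setting $O=(QP+(I-Q)(I-P))\bigl[(PQP+(I-P)(I-Q)(I-P))\bigr]^{-1/2}$. This is the standard unitary intertwiner, well defined since $\|P-Q\|<1$. The bound on $\|O-I\|$ would then follow from $\|T-I\|\le 2\|P-Q\|$ for $T=QP+(I-Q)(I-P)$ together with a perturbation bound for the inverse square root. Either route works; the principal-angle one is cleaner. The only step requiring care is the orthogonality of the planes $\Pi_j$, which is the content of the CS decomposition, and I would cite it rather than reprove it.

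For \eqref{eq:transport-frob}, write
\[
OSO^\top-S=(O-I)SO^\top+S(O^\top-I).
\]
Using $\|AB\|_{\F}\le\|A\|_{\op}\|B\|_{\F}$ and $\|O\|_{\op}=1$, we get $\|OSO^\top-S\|_{\F}\le 2\|O-I\|_{\op}\|S\|_{\F}\le C\tau\|S\|_{\F}$, after enlarging $C$.
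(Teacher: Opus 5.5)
Your proposal is correct and follows essentially the same route as the paper: rotate each principal vector $u_j$ onto $v_j$ inside its principal plane, acting as the identity on the orthogonal complement, bound $\|O-I\|_{\op}=2\sin(\theta_{\max}/2)$ in terms of $\sin\theta_{\max}=\|P_U-P_V\|_{\op}$, and then use $OSO^\top-S=(O-I)SO^\top+S(O^\top-I)$. The only difference is the elementary trigonometric inequality. You use $2\sin(\theta/2)\le\theta\le(\pi/2)\sin\theta$, giving $C=\pi/2$, whereas the paper uses $2\sin(\theta/2)\le 2\sin\theta$, giving $C=2$; your Kato-intertwiner alternative is not needed.
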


\begin{proof}
Let $\theta_1,\ldots,\theta_r$ be the principal angles between $U$ and
$V$, and choose corresponding principal orthonormal bases
$(u_i)_{i\le r}$ of $U$ and $(v_i)_{i\le r}$ of $V$.  Define $O$ by
rotating $u_i$ to $v_i$ through angle $\theta_i$ in each principal
two-dimensional plane, and by acting as the identity on the orthogonal
complement of these planes.  Then $O$ is orthogonal and $OU=V$. Writing $\theta_{\max}=\max_i\theta_i$, the standard principal-angle
identity gives
\[
\sin\theta_{\max} = \max_{i \in [r]} \sin \theta_i =\|P_U-P_V\|_{\op}\le\tau.
\]
Since the operator norm of a planar rotation through angle $\theta$ minus
the identity is $2\sin(\theta/2)$,
\[
\|O-I\|_{\op}
=2\sin(\theta_{\max}/2)
\le2\sin\theta_{\max}
\le2\tau.
\]
Finally, $OSO^\top-S=(O-I)SO^\top+S(O^\top-I)$  and therefore
\[
\|OSO^\top-S\|_{\F}
\le2\|O-I\|_{\op}\|S\|_{\F}
\le4\tau\|S\|_{\F},
\]
which proves \eqref{eq:transport-frob}.
\end{proof}

\begin{proof}[Proof of Theorem~\ref{thm:unsat}]
Fix $\tau\in(0,1/10)$ sufficiently small as specified below, and use the
corresponding deterministic net $\cI_d$.  Intersect the feature-edge event (Proposition \ref{prop:feature-edge}) with the event of Proposition~\ref{prop:true-net-gap}.  This intersection has probability tending to one. In particular, Proposition~\ref{prop:feature-edge} provides \eqref{eq:b-bound} and, simultaneously for all $S,T\in\Sd$,
\begin{equation}\label{eq:Q-lip}
\frac1{\sqrt n}\|\cQ_x(S)-\cQ_x(T)\|_2
\le C_\gamma\|S-T\|_{\F}.
\end{equation}

Suppose that a homogeneous witness exists on this event:
\begin{equation}\label{eq:witness-normalized}
S\succeq0,\qquad \|S\|_{\F}=1,\qquad
\cQ_x(S)=b\one,
\end{equation}
where $|b|\le C_\gamma<K_\gamma$.  Split $S=H+B$ as in
\eqref{eq:split-def}, put $r=\rank H\le k_d$, and let
$U=\operatorname{range}H$.

Choose $U_0\in\fU_r$ with $\|P_U-P_{U_0}\|_{\op}\le\tau$, and let $O$ be the orthogonal matrix from Lemma~\ref{lem:transport}.  Define
\begin{equation}\label{eq:S0-def}
S_0=OSO^\top=H_0+B_0,\qquad
H_0=OHO^\top,\quad B_0=OBO^\top.
\end{equation}
Then $H_0$ is supported on $U_0$, $B_0$ on $U_0^\perp$, and
\begin{equation}\label{eq:S0-props}
\|S_0-S\|_{\F}\le C\tau,\qquad
\|H_0\|_{\F}\le1,\qquad B_0\succeq0,\qquad
\|B_0\|_{\op}\le\varepsilon_d.
\end{equation}
Choose $H_1\in\fH_{U_0}$ and $\bar b\in\frB_\tau$ with
\begin{equation}\label{eq:round}
\|H_1-H_0\|_{\F}\le\tau,\qquad |\bar b-b|\le\tau.
\end{equation}
Since $H_0$ and $B_0$ occupy orthogonal blocks,
\begin{equation}\label{eq:norm-preserved}
\left|\|H_1\|_{\F}^2+\|B_0\|_{\F}^2-1\right|
\le2\tau.
\end{equation}
For $\tau<1/10$, this implies that $B_0 \in \cC_{U_0}(H_1)$. Put $\widetilde S=H_1+B_0$.  

By linearity, \eqref{eq:witness-normalized}, \eqref{eq:Q-lip}, and \eqref{eq:S0-props}--\eqref{eq:round},
\begin{equation}\label{eq:Qtilde-close}
\frac1{\sqrt n}
\|\cQ_x(\widetilde S)-\bar b\one\|_2
\le C_\gamma\|\widetilde S-S\|_{\F}+\tau
\le C_\gamma'\tau.
\end{equation}
The upper bound in \eqref{eq:phi-sandwich} gives
\begin{equation}\label{eq:small-loss}
\Loss_n\bigl(\cQ_x(\widetilde S)-\bar b\one\bigr)
\le(C_\gamma')^2\tau^2.
\end{equation}
Choose the fixed $\tau$ small enough that $(C_\gamma')^2\tau^2<e_\gamma/4$.  Then \eqref{eq:small-loss} contradicts Proposition~\ref{prop:true-net-gap} at the tuple $(r,U_0,H_1,\bar b)$ and the point $B_0\in\cC_{U_0}(H_1)$.  Thus no homogeneous witness exists on the intersection event.  Lemma~\ref{lem:homogeneous-witness} rules out a fit for the ellipsoid model and proves the theorem.
\end{proof}

\begin{remark}[Where the sharp constant $1/4$ enters]\label{rem:unsat-quarter}
The spectral head has rank $o(d)$ and deleting an arbitrarily small fixed fraction of rows leaves a Gaussian bulk of approximately $(1-\delta)\gamma d^2$ equations.  The Gaussian width of the positive semidefinite cone in $\Sym{d - o(d)}$ is $d/2 + o(d)$ (Lemma~\ref{lem:psd-width}).  The strict inequality required by Gordon's escape \eqref{eq:gap-mean} is exactly $(1-\delta)\gamma > 1/4$, which is achievable precisely when $\gamma > 1/4$.  All other ingredients --- small ball, VC, universality, McDiarmid, nets --- work at any density $\gamma \in (0, 1/2)$.
\end{remark}

\subsection{Bulk universality with translated losses}\label{sec:unsat-univ}

We now prove Proposition~\ref{prop:translated-univ}.  For convenience, we introduce the rescaled set
\begin{equation}\label{eq:replica-domain}
\cT_m(C_0,\kappa) := \sqrt{m} \,\cB_m (C_0,\kappa) =  \left\{T\in\Sym m:\ \|T\|_{\F}\le C_0\sqrt m, \ \Tr|T|^3\le m^{3/2-\kappa}\right\}.
\end{equation}
Note that $\cT_m(C_0,\kappa)$ is a symmetric convex domain.  Fix $\beta >0$ and row offsets $(a_i)_{i \leq n} \in \R^n$, and consider the uniform distribution over a finite subset $\cA_m \subset \cT_m(C_0,\kappa)$.  For $\{X_i\}_{i\le n} \subset \sfS^m$, we introduce the free entropy 
\begin{equation}\label{eq:free-entropy}
\scF_{\cA_m} (\{X_i\}) := \frac{1}{m^2} \log\left\{
\frac{1}{|\cA_m|} \sum_{B \in \cA_m} \exp\Bigl[-\beta \sum_{i=1}^n \varphi\bigl(a_i + \Tr(X_i B)\bigr)\Bigr] \right\} ,
\end{equation}
where the dependence on $\beta$ and $(a_i)$ is left implicit.  By the log-sum-exp inequality \eqref{eq:soft-sandwich-Y},
\begin{equation}\label{eq:finite-prior-soft-min}
\begin{aligned}
\min_{B\in\cA_m}\frac1n\sum_{i=1}^n
\varphi\bigl(a_i+\Tr(X_i B)\bigr)
\le
- \frac{m^2}{\beta n} \scF_{\cA_m} \le
\min_{B\in\cA_m}\frac1n\sum_{i=1}^n
\varphi\bigl(a_i+\Tr(X_i B)\bigr)
+\frac{\log|\cA_m|}{\beta n}.
\end{aligned}
\end{equation}

The next proposition bounds the difference in the expected free entropy \eqref{eq:free-entropy} between quadratic-chaos rows and Gaussian rows. Importantly, this comparison is \emph{uniform} over $1 \leq n \leq \Lambda m^2$, the row offsets $(a_i)_{i \leq n}$, and the finite subset $\cA_m \subseteq \cT_m (C_0,\kappa)$. It is a straightforward modification of the free entropy universality proved by Bandeira and Maillard \cite[Section 4]{bandeira2025exact}, and we only sketch the differences.

\begin{proposition}[Free entropy comparison]
\label{prop:finite-prior-free-entropy}
Fix $\beta>0$, $C_0<\infty$, $\kappa>0$, and $\Lambda<\infty$. Let $z_1,\ldots,z_n\stackrel{\mathrm{iid}}{\sim}\cN(0,I_m)$ and set $W_i = (z_i z_i^\top - I_m)/\sqrt m$, and let $G_1,\ldots,G_n$ be independent $\GOE m$ matrices, independent of the $W_i$. Then, as $m \to \infty$,
\begin{equation}\label{eq:FE-univ}
 \sup_{n,  (a_i),  \cA_m} \; \bigl|\E \scF_{\cA_m} (\{W_i\}) -\E \scF_{\cA_m}(\{G_i\}) \bigr|\longrightarrow0 .
\end{equation}
where the supremum is over all $1\le n\le\Lambda m^2$, $(a_i)_{i\le n}\in\R^n$, and finite subsets $\cA_m\subset \cT_m(C_0,\kappa)$.
\end{proposition}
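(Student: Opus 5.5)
The plan is to run the row-by-row interpolation of Lemma~\ref{lem:interpolation}, applied to the function $F(\{X_i\})=\scF_{\cA_m}(\{X_i\})$ with the offsets $a_i$ frozen. Along the path $Z_i(t)=\sqrt t\,W_i+\sqrt{1-t}\,G_i$, Lemma~\ref{lem:chaos-ibp} (stated there for $\Sd$, and valid verbatim in $\Sym m$ with $d$ replaced by $m$) expands the $W$-contribution to $\frac{d}{dt}\E F$. Gaussian integration by parts expands the $G$-contribution. The covariance (divergence) terms cancel exactly, so
\[
\bigl|\E F(\{W_i\})-\E F(\{G_i\})\bigr|\le \frac{4}{3m^{3/2}}\,\E\Bigl[\max_i\|z_i\|_2^2\sum_i \|D_i^3F\|_{\pi,\op}\Bigr].
\]
The offsets enter only through $\varphi(a_i+\cdot)$. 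Since $\varphi$ has derivatives bounded uniformly in the shift, every bound below holds uniformly in $(a_i)$. This is the point where the translated losses are harmless.

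The core of the argument is to bound $D_i^3F$. Write $\langle\cdot\rangle$ for the Gibbs measure on $\cA_m$ with weights $\exp[-\beta\sum_j\varphi(a_j+\Tr(X_jB))]$, and set $\ell_i(B)=\beta\varphi'(a_i+\Tr(X_iB))$. Then $m^2 D_iF[U]=-\langle \ell_i(B)\Tr(BU)\rangle$. Differentiating twice more gives a finite combination of Gibbs cumulants of terms of the form $\varphi^{(k)}(\cdot)\prod_{j\le 3}\Tr(BU_j)$, with coefficients bounded by $C(\beta+\beta^2+\beta^3)$. Each pure factor $\prod_j\Tr(B H_j)$ has projective operator norm at most $\prod_j$ of the operator norms attached to $B$. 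Using only this crude bound would give $\|B\|_{\op}^3$, and the Schatten-$3$ constraint does not control that. To exploit the constraint, I will contract against the specific directions $U_a,U_b,V_{ab}$ exactly as in the proof of \eqref{eq:Ri-bound}. The remainder then becomes $\frac12\sum_{a,b}\Tr(BU_a)\Tr(B'U_b)\Tr(B''V_{ab})=4\,z^\top B B'' B' z$ for replicas $B,B',B''$ drawn from $\cA_m$.

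Next I bound $\E|z_i^\top BB''B'z_i|$. The replicas depend on $z_i$ only weakly, so I will first decouple by leave-one-out (the cavity step of \cite[Section~4]{bandeira2025exact}). After decoupling, Hölder gives $|\Tr(BB''B')|\le \prod\|\cdot\|_{S_3}\le \max_{B\in\cT_m}\Tr|B|^3\le m^{3/2-\kappa}$, and the fluctuation $z^\top M z-\Tr M$ is of order $\|M\|_\F\le C_0^3m^{3/2}\cdot m^{-1/2}$. Altogether each row contributes $O(\beta^3 m^{3/2-\kappa'})$ for some $\kappa'>0$, with the factor $m^{-2}$ coming from the normalisation of $F$. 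Summing over $n\le\Lambda m^2$ rows and multiplying by $m^{-3/2}$ gives
\[
O\bigl(\Lambda\beta^3 m^{-3/2}\cdot m^{-2}\cdot m^2\cdot m^{3/2-\kappa'}\bigr)=O(m^{-\kappa'}),
\]
uniformly over $n$, $(a_i)$ and $\cA_m$, because only the domain bounds of $\cT_m(C_0,\kappa)$ were used.

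The main obstacle is that the third derivative is not globally bounded in projective operator norm: $\|B\|_{\op}$ is unconstrained on $\cT_m$. So Lemma~\ref{lem:interpolation} cannot be quoted verbatim. The remainder must instead be estimated in expectation, using the Schatten-$3$ trace bound together with the cavity decoupling of the $z_i$-dependence of the Gibbs weights. I would follow \cite[Section~4]{bandeira2025exact} for this step, tracking that their Gibbs-state estimates use only $\|B\|_\F$ and $\Tr|B|^3$, not the operator-norm box.
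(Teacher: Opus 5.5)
Your argument is correct in outline, but it follows a different route from the paper. The paper runs no interpolation of its own here. It invokes the free-entropy universality theorem of \cite{bandeira2025exact} (Theorem~4.6 there), whose proof rests on a Schatten-3 pointwise-normality lemma for $\Tr(W_iB)$ together with domination and finite-projection estimates. It then only checks that this proof is rowwise, so that it tolerates the row-dependent losses $\beta\varphi(a_i+\cdot)$, whose derivatives are bounded uniformly in the shift. It also checks the support condition $\cA_m\subset\cT_m$ and applies the theorem with a test function equal to the identity on $[-C\beta,0]$.

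You instead reuse the exact identity \eqref{eq:interp-derivative} from the satisfiable side. You evaluate the chaos remainder through the rank-one contraction $\tfrac12\sum_{a,b}\Tr(BU_a)\Tr(B'U_b)\Tr(B''V_{ab})=4z^\top BB''B'z$. The Schatten-3 constraint then enters through H\"older, $|\Tr(BB''B')|\le\|B\|_{S_3}\|B''\|_{S_3}\|B'\|_{S_3}\le m^{3/2-\kappa}$, rather than through a CLT. Your route gives a self-contained proof with an explicit rate $O_\beta(\Lambda m^{-\kappa})$, visibly uniform in $n$, $(a_i)$ and $\cA_m$; it does not even use the Frobenius constraint. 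The paper's route is shorter, but it leaves the reader to verify that the cited proof survives row-dependent translations.

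Three details need correcting.

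\emph{Why \eqref{eq:interp-main} is too weak.} $\|B\|_{\op}$ is not unconstrained on $\cT_m$, since $\|B\|_{\op}^3\le\Tr|B|^3\le m^{3/2-\kappa}$. The real reason \eqref{eq:interp-main} is too weak is the factor $\|z_i\|_2^2\approx m$ in \eqref{eq:Ri-bound}, which leaves a total error of order $m^{1-\kappa}$. Also, because $\cA_m$ is finite, the hypotheses of Lemma~\ref{lem:interpolation} do hold. So \eqref{eq:interp-derivative} can be used verbatim, and only \eqref{eq:Ri-bound} needs replacing.

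\emph{The Frobenius bound on $M=BB''B'$.} Your bound $\|M\|_{\F}\le C_0^3m$ is unjustified. Use H\"older with exponents $(6,6,6)$ instead:
$\|M\|_{\F}\le\|B\|_{S_6}\|B''\|_{S_6}\|B'\|_{S_6}\le\|B\|_{S_3}\|B''\|_{S_3}\|B'\|_{S_3}\le m^{3/2-\kappa}$.
Hence $\E_z|z^\top Mz|\le|\Tr M|+\sqrt2\|M\|_{\F}\le(1+\sqrt2)m^{3/2-\kappa}$ whenever $M$ is independent of $z$.

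\emph{The decoupling.} You do not need the cavity machinery of \cite{bandeira2025exact}. Because $0\le\varphi\le1$, the Gibbs measure $\mu_t$ at $Z(t)$ and the measure $\nu$ with row $i$ deleted satisfy $\mu_t\le e^{\beta}\nu$ pointwise. Moreover, $\nu$ does not depend on $(z_i,G_i)$. Pull out the bounded weights $\varphi^{(k)}(\cdot)$ in absolute value; each of the at most three replicas then costs a factor $e^{\beta}$. This gives
$\E|\cR_i(t)|\le C(\beta)e^{3\beta}m^{-2}m^{3/2-\kappa}$.
Summing over $n\le\Lambda m^2$ rows and multiplying by the prefactor $m^{-3/2}$ yields $O(C(\beta)e^{3\beta}\Lambda m^{-\kappa})$, as you claimed.
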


\begin{proof}
The free-entropy universality theorem \cite[Theorem 4.6]{bandeira2025exact} is stated for a common row loss. Its proof is rowwise and remains valid for losses $\ell_i$ with uniform bounds on the norms entering their domination and finite-projection estimates. Take $\ell_i (u) = \beta \varphi (a_i +u)$: then $\| \ell^{(k)}_{i,m} \|_\infty \leq \beta \| \varphi^{(k)} \|_\infty$ uniformly over $i$ and $a_i$. The support condition follows from $\cA_m \subseteq \cT_m$, while their Schatten-3 pointwise-normality lemma applies uniformly on $\cT_m$. Finally $\scF_m \in [-C\beta,0]$, so their theorem may be applied with a bounded smooth test function equal to the identity on this interval. This gives the conclusion uniformly in the offsets and finite prior.
\end{proof}

Through a Frobenius $\delta$-net of $\cC_m \subseteq \cB_m (C_0,\kappa)$, the free-entropy comparison \eqref{eq:FE-univ} and the sandwich \eqref{eq:finite-prior-soft-min} yield a comparison of the ground states $\scE_{\cC_m} ( \{W_i\})$ and $\scE_{\cC_m} ( \{G_i\})$.

\begin{lemma}[Ground-state transfer]\label{lem:ground-state-transfer}
Under the assumptions of Proposition~\ref{prop:translated-univ}, there exists a constant $C>0$ depending only on $\varepsilon$, $C_0$, and $\kappa$, such that for
every fixed $\delta\in(0,1)$ and $\beta>0$,
\begin{equation}\label{eq:ground-state-transfer}
\limsup_{m\to\infty}\ \sup_{(a_i),\,\cC_m}
\bigl|\E \scE_{\cC_m} ( \{W_i\}) -\E \scE_{\cC_m} ( \{G_i\})\bigr|
\le C\delta+\frac{C}{\beta}
\log\Bigl(1+\frac{2C_0}{\delta}\Bigr),
\end{equation}
where the supremum is over all $(a_i)_{i\le n}\in\R^n$ and all nonempty closed subsets $\cC_m\subseteq\cB_m(C_0,\kappa)$.
\end{lemma}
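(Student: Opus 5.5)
We discretize $\cC_m$, apply the free-entropy sandwich \eqref{eq:finite-prior-soft-min} on the finite net, invoke Proposition~\ref{prop:finite-prior-free-entropy}, and then return from the net to $\cC_m$ using the feature edge.

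\textbf{Net construction.} Fix $\delta\in(0,1)$. Let $\cA_m^\circ\subseteq\cC_m$ be a maximal $\delta$-separated subset in Frobenius norm. It is a $\delta$-net of $\cC_m$ contained in $\cC_m\subseteq\cB_m(C_0,\kappa)$, so its elements satisfy both the Frobenius bound and the Schatten-3 bound. By volume comparison in the ball of radius $C_0$ in $\Sym m$, of dimension $D_m\le m^2$,
\[
\log|\cA_m^\circ|\le m^2\log(1+2C_0/\delta).
\]
Set $\cA_m=\sqrt m\,\cA_m^\circ\subset\cT_m(C_0,\kappa)$. This choice converts $\Tr(X_iT)$ in \eqref{eq:free-entropy} into $\sqrt m\,\Tr(X_iB)$.

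\textbf{Sandwich on the net.} For $X\in\{W,G\}$, write $\scE^\circ(X)$ for the minimum over $B\in\cA_m^\circ$. Then \eqref{eq:finite-prior-soft-min} gives
\[
\bigl|\scE^\circ(X)+\tfrac{m^2}{\beta n}\scF_{\cA_m}(X)\bigr|\le\frac{\log|\cA_m^\circ|}{\beta n}\le\frac{1}{\beta\varepsilon}\log(1+2C_0/\delta),
\]
using $n\ge\varepsilon m^2$. Since $m^2/(\beta n)\le 1/(\beta\varepsilon)$ and $n\le m^2/2$, Proposition~\ref{prop:finite-prior-free-entropy} with $\Lambda=1/2$ applies uniformly in $(a_i)$ and $\cA_m$. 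It yields
\[
\sup\bigl|\E\scE^\circ(W)-\E\scE^\circ(G)\bigr|\le\frac{2}{\beta\varepsilon}\log(1+2C_0/\delta)+o(1).
\]

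\textbf{Net-to-set error.} This is the main obstacle, and it must be uniform in the offsets. Since $\scE_{\cC_m}\le\scE^\circ$, it suffices to bound $\E[\scE^\circ-\scE_{\cC_m}]$ from above. For $B\in\cC_m$, pick $B'\in\cA_m^\circ$ with $\|B-B'\|_\F\le\delta$. The map $\varphi$ is $L_\varphi$-Lipschitz with $L_\varphi=\sqrt{2/e}$, and by Cauchy--Schwarz
\[
\Bigl|\frac1n\sum_i\bigl[\varphi(a_i+\sqrt m\Tr X_iB)-\varphi(a_i+\sqrt m\Tr X_iB')\bigr]\Bigr|\le \frac{L_\varphi}{\sqrt n}\,\|\cX\|_{\F\to2}\,\delta,
\]
where $\cX(T)=(\sqrt m\Tr X_iT)_i$. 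The offsets cancel, so the bound is uniform in $(a_i)$.

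We bound the operator norm separately for each ensemble. For $X=W$, $\cX=\cQ_z$, and Proposition~\ref{prop:feature-edge} with $p=m$ and $K=[\varepsilon,1/2-\varepsilon]$ gives $\|\cQ_z\|_{\F\to2}\le C_K'\sqrt n$ with probability $1-o(1)$. For $X=G$, the Gaussian part of the same proposition gives the bound with probability $1-2e^{-n/2}$.

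On the exceptional events we cannot use this bound. There we use the trivial estimate $0\le\varphi\le1$, so the difference is at most $1$ times the failure probability, which is $o(1)$ uniformly. Hence
\[
\E|\scE^\circ(X)-\scE_{\cC_m}(X)|\le C\delta+o(1)
\]
for both ensembles.

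\textbf{Conclusion.} Combining the net-to-set bound with the comparison on the net, and taking $\limsup_{m\to\infty}$, gives \eqref{eq:ground-state-transfer} with a constant $C$ depending only on $\varepsilon$, $C_0$, $\kappa$.

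Two points need care. First, the feature-edge probabilities must be uniform along $n/m^2\in K$; the proposition asserts exactly this. Second, the choice of net should be measurable. This is harmless because the net is deterministic once $\cC_m$ is fixed.
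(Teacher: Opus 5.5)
Your proposal is correct and follows essentially the same route as the paper. Both arguments use the same three steps: a maximal $\delta$-separated net of $\cC_m$ rescaled by $\sqrt m$ into $\cT_m(C_0,\kappa)$; the log-sum-exp sandwich \eqref{eq:finite-prior-soft-min} combined with Proposition~\ref{prop:finite-prior-free-entropy} on that net; and a net-to-set comparison via the Lipschitz bound on the energy on the feature-edge and Gaussian operator-norm events of Proposition~\ref{prop:feature-edge}, using the trivial bound $0\le\varphi\le1$ on the exceptional events, followed by the triangle inequality.
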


\begin{proof}[Proof of Lemma~\ref{lem:ground-state-transfer}]
Fix $\delta \in (0,1)$ and let $\cA_m$ be a maximal $\delta$-separated subset of $\cC_m$; maximality and compactness imply that $\cA_m$ is a finite $\delta$-net.  The disjoint Frobenius balls of radius $\delta/2$ centered at its points lie in the Frobenius ball of radius $C_0 + \delta/2$, so with $D_m = m(m+1)/2$,
\begin{equation}\label{eq:net-count}
\log |\cA_m| \;\le\; D_m \log\Bigl(1 + \frac{2 C_0}{\delta}\Bigr) .
\end{equation}
For either ensemble write
\[
E(\{ X_i\};B) = \frac1n \sum_{i=1}^n \varphi\bigl(a_i + \sqrt m \Tr(X_i \,
B)\bigr),
\qquad
\scE_{\cA_m}( \{ X_i\}) = \min_{B \in \cA_m} E( \{ X_i\};B) .
\]
Denoting $\scS_{\cA_m} (\{X_i\})= -\frac{m^2}{\beta n} \scF_{\sqrt m \cA_m} (\{X_i\})$, \eqref{eq:finite-prior-soft-min} gives
\begin{equation}\label{eq:soft-min-sandwich}
\scE_{\cA_m}( \{ X_i\}) \le \scS_{\cA_m} (\{X_i\}) \le \scE_{\cA_m}( \{ X_i\}) + 
\frac{\log|\cA_m|}{\beta n} .
\end{equation}
Putting together equations \eqref{eq:FE-univ} in Proposition \ref{prop:finite-prior-free-entropy}, \eqref{eq:net-count}, and
\eqref{eq:soft-min-sandwich},
\begin{equation}\label{eq:net-univ}
\limsup_{m \to \infty}\ \sup_{(a_i),\,\cC_m}
\bigl| \E \scE_{\cA_m}( \{ W_i\}) - \E \scE_{\cA_m}( \{ G_i\}) \bigr|
\;\le\; \frac{C}{\beta} \log\Bigl(1 + \frac{2C_0}{\delta}\Bigr) .
\end{equation}

It remains to compare the minimum $\scE_{\cA_m}$ over the $\delta$-net $\cA_m$ with the infimum $\scE_{\cC_m}$ over $\cC_m$.  Let $\cE_{\rm ell}$ be the feature-edge event \eqref{eq:feature-lip}, and let $\cE_{\rm goe}$ be the Gaussian operator-norm event in Proposition~\ref{prop:feature-edge}, both with $p=m$.  Work on $\cE_{\rm ell}\cap\cE_{\rm goe}$.  Note that $\sqrt m \Tr(X_i (B - B'))$ equals $z_i^\top(B-B')z_i - \Tr(B - B')$ for $X = W$ and is the map $\cG$ of Proposition~\ref{prop:feature-edge} for $X = G$.  On the corresponding event, for $B, B' \in \cC_m$,
\begin{equation}\label{eq:energy-lip}
\begin{aligned}
|E(\{ X_i\};B) - E(\{ X_i\};B')|
&\le \frac{\|\varphi'\|_\infty}{n} \sum_{i=1}^n
\bigl|\sqrt m \Tr(X_i  (B - B'))\bigr|\\
&\le \frac{\|\varphi'\|_\infty}{\sqrt n}
\bigl\|\bigl(\sqrt m \Tr(X_i   (B-B'))\bigr)_i\bigr\|_2
\;\le\; C \|B - B'\|_{\F} .
\end{aligned}
\end{equation}
The probability of the event $\cE_{\rm ell}\cap\cE_{\rm goe}$ tends to one as $m \to \infty$ uniformly over $n/m^2 \in [\varepsilon, 1/2 - \varepsilon]$.  On that event, choosing for each $B \in \cC_m$ a net point within $\delta$ shows the two infima differ by at most $C\delta$; on the complementary event, both values lie in $[0,1]$.  Consequently
\begin{equation}\label{eq:net-to-set}
\sup_{(a_i),\,\cC_m}\ \max_{X\in\{W,G\}}
\E\Bigl| \scE_{\cA_m}( \{ X_i\}) - \inf_{B \in \cC_m} E(\{ X_i\};B) \Bigr|
\;\le\; C \delta + o(1) .
\end{equation}
The triangle inequality, together with
\eqref{eq:net-univ}--\eqref{eq:net-to-set}, proves
\eqref{eq:ground-state-transfer}.
\end{proof}

\begin{proof}[Proof of Proposition~\ref{prop:translated-univ}]
Apply Lemma~\ref{lem:ground-state-transfer}.  In \eqref{eq:ground-state-transfer}, first take $m\to\infty$ with $\beta$ and $\delta$ fixed.  We may then let $\beta\to\infty$ and finally $\delta\downarrow0$.  This proves \eqref{eq:univ-conclusion}; all three passages are uniform in the offsets and in the closed subset $\cC_m$.
\end{proof}

\subsection{Hybrid model risk lower bound}\label{sec:unsat-hybrid}

We now prove Proposition~\ref{prop:hybrid-gap}. Let $u = \sfK (H,b) + \sfR (B)$ denote the residual vector of the hybrid model \eqref{eq:hybrid-map}. The goal is to show that, with high probability, the empirical risk $\scL_n (u)$ is bounded away from zero uniformly over all admissible $(H,B,b)$:
\begin{equation}\label{eq:admissible-hybrid}
  H \succeq 0, \quad B \succeq 0, \quad b \in \R, \quad \frac14 \leq \| H \|_\F^2 + \| B \|_\F^2  \leq 4.
\end{equation}
 
The proof uses the lower-bound on the loss function $\varphi$ in \eqref{eq:phi-sandwich}
 \begin{equation}\label{eq:lower-bound-risk}
  \scL_n(u) = \frac{1}{n} \sum_{i=1}^n \varphi(u_i) \geq \frac{c_\varphi}{n} \sum_{i=1}^n \min(1,u_i^2),
 \end{equation}
 and distinguishes three cases. For any subset of indices $J \subseteq [n]$, write $u_J = (u_i)_{i \in J}$, $\sfK (H,b)_J = ( \sfK (H,b)_{i})_{i \in J}$, and $\sfR (B)_J = ( \sfR (B)_{i})_{i \in J}$. Let $\theta \in (0,1/4)$ and $\delta \in (0,1)$ be constants chosen later. For each admissible parameter, let $I = \{ i \in [n]: |u_i | >1\}$ be the set of indices where the hybrid-model residual is large.  The three cases are:
\begin{enumerate}
  \item $|I| \geq \delta n$. The risk lower bound \eqref{eq:lower-bound-risk} immediately gives $\scL_n(u) \geq c_\varphi |I | / n \geq c_\varphi \delta$.
  
    \item $|I| < \delta n$ and $\| B \|_\F > \theta$.  Put $J = [n]\setminus I$. Lemma \ref{lem:projected-bulk-escape} gives a projection $P_J$ off the head-feature range such that 
  \begin{equation}\label{eq:tail-dominating-bound}
    P_J \sfK (H,b)_J = 0, \qquad \| P_J \sfR (B)_J \|_2 \geq c_{\rm bulk} \sqrt{n} \| B \|_\F.
  \end{equation} 
  Thus, the risk lower bound \eqref{eq:lower-bound-risk} gives in this case
  \begin{equation}
    \scL_n(u) \geq \frac{c_\varphi}{n} \|  P_J u_J \|_2^2 = \frac{c_\varphi}{n} \| P_J \sfR (B)_J \|_2^2 \geq c_\varphi c_{\rm bulk}^2 \theta^2.
  \end{equation}

  \item $|I| < \delta n$ and $\| B \|_\F \leq \theta$.  Again put $J = [n]\setminus I$. Now \eqref{eq:admissible-hybrid} implies $\| H \|_\F^2 \geq 1/4 -\theta^2 \geq 3/16$.  Lemmas \ref{lem:head-smallball} and \ref{lem:projected-bulk-escape}, followed by the reverse triangle inequality, give
  \begin{equation}\label{eq:head-dominating-bound}
    \begin{aligned}
    \| \sfK (H,b)_J + \sfR (B)_J \|_2 \geq &~ \| \sfK (H,b)_J\|_2 - \| \sfR (B)_J \|_2  \\
    \geq&~ a_0 \sqrt{n} (2\| H \|_F^2 + b^2)^{1/2} - C_0 \sqrt{n} \| B \|_F^2 \geq c_{\rm head} \sqrt{n}.
    \end{aligned}
  \end{equation}
Hence \eqref{eq:lower-bound-risk} gives $\scL_n(u) \geq c_\varphi \| u_J \|_2^2/n \geq c_{\varphi} c_{\rm head}^2$.

\end{enumerate}
Importantly, the estimates \eqref{eq:head-dominating-bound} and \eqref{eq:tail-dominating-bound} hold with high probability \emph{simultaneously} over all subsets $J \subseteq [n]$ with $|J| \geq (1-\delta) n$ and all $(H,B,b)$ satisfying \eqref{eq:admissible-hybrid}. The constants are chosen such that the probability of these events is at least $1 - e^{-c_\gamma d^2}$.

The remainder of this section makes this argument precise. We prove Lemma \ref{lem:head-smallball} and Lemma \ref{lem:projected-bulk-escape} and then assemble them into the complete proof of Proposition~\ref{prop:hybrid-gap}.

\begin{lemma}[Uniform head small ball]\label{lem:head-smallball}
There are universal constants $a_0,\delta_0,c_0 >0$ such that, for all sufficiently large $d$ and every $1 \le r \le k_d$, with probability at least $1-e^{-c_0n}$, 
\begin{equation}\label{eq:smallball}
  \bigl\| \sfK (H,b)_J \bigr\|_2
\;\ge\; a_0 \sqrt n\, \bigl(2\|H\|_{\F}^2 + b^2\bigr)^{1/2} , 
\end{equation}
simultaneously for all $H\in\Sym{r}$, $b\in\R$, and $J\subseteq[n]$ with $|J|\ge(1-\delta_0)n$. 
\end{lemma}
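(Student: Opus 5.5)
The plan is a standard small-ball plus VC (or net) argument on the finite-dimensional linear class $\sfK(H,b)_i = \ip{g_ig_i^\top - \Id_r}{H} - b$, indexed by $(H,b)\in\Sym r\times\R$. This class has dimension $D_r+1 = O(d)$, which is much smaller than $n\asymp d^2$. By homogeneity, it suffices to treat $(H,b)$ on the unit sphere of the norm $N(H,b) := (2\|H\|_\F^2+b^2)^{1/2}$. Note that $\E\,\sfK(H,b)_i^2 = 2\|H\|_\F^2 + b^2 = N(H,b)^2$ by Wick's formula, since the cross term vanishes.

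\textbf{Step 1 (uniform small ball).} We need a single-row small-ball estimate: there are universal $c_1,p_1>0$ with
\[
\Prob\{|\sfK(H,b)_1|\ge c_1 N(H,b)\}\ge p_1
\]
for all $(H,b)$, uniformly in $r$. Paley--Zygmund gives this once we have a fourth-moment bound $\E\,\sfK_1^4\le C\,(\E\,\sfK_1^2)^2$. That bound holds because $\sfK_1$ is a degree-two Gaussian polynomial, so hypercontractivity applies with a universal constant. No dependence on $r$ enters.

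\textbf{Step 2 (counting good rows uniformly).} We show that with probability at least $1-e^{-c n}$, simultaneously for all unit $(H,b)$,
\[
\#\{i:|\sfK(H,b)_i|\ge c_1/2\}\ge p_1 n/2 .
\]
I would follow Mendelson's small-ball method. Consider the indicator class $\{\mathbf 1\{|\ip{v}{\phi_i}|\ge t\}\}$, where $\phi_i=(g_ig_i^\top-\Id_r,-1)$ lives in an $O(r^2)$-dimensional space. Events of the form $\{|\text{linear form}|\ge t\}$ have VC dimension $O(r^2)=O(d)$. The VC/bounded-differences uniform deviation then gives the count with deviation
\[
O\bigl(\sqrt{r^2/n}\bigr)+O\bigl(\sqrt{c}\bigr)=o(1)+O\bigl(\sqrt c\bigr),
\]
and McDiarmid turns this into probability $1-e^{-c' n}$. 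Alternatively, one could use a $\tau$-net of size $e^{O(r^2\log(1/\tau))}=e^{o(n)}$ combined with Chernoff per point. However, the net-point perturbation requires controlling $\max_i\|g_ig_i^\top-\Id\|$ uniformly, which is weaker than exponential in $n$. So I prefer the VC route, which needs no continuity.

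\textbf{Step 3 (restricting to $J$).} Choose $\delta_0=p_1/4$. For any $J$ with $|J|\ge(1-\delta_0)n$, at least $p_1n/4$ of the good rows lie in $J$. Hence
\[
\|\sfK(H,b)_J\|_2^2\ge (p_1 n/4)(c_1/2)^2 ,
\]
which gives \eqref{eq:smallball} with $a_0=\tfrac{c_1}{4}\sqrt{p_1}$ after undoing the normalization. This step is deterministic given Step 2, so uniformity over $J$ is free.

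The main obstacle is Step 2: obtaining a failure probability of $e^{-c_0 n}$, rather than merely $o(1)$, uniformly over a class of dimension about $d$. The VC/McDiarmid route handles this because $n\gg r^2$. The case $r=0$ (or small) reduces to the trivial bound $|b|$ and is already covered.
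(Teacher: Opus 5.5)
Your proposal is correct and follows the paper's proof essentially step by step. The single-row small ball comes from Paley--Zygmund with degree-two hypercontractivity. A VC argument over unions of two affine half-spaces in the $O(r^2)=o(n)$-dimensional feature space then counts good rows uniformly with failure probability $e^{-cn}$, and the choice $\delta_0=p_1/4$ makes the restriction to $J$ deterministic. The only cosmetic difference is the concentration step: the paper applies the classical shatter-coefficient VC inequality directly, whereas you bound the expected supremum and then invoke McDiarmid. Both yield the same exponential bound.
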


\begin{proof}
Let $q = r(r+1)/2 + 1$ and let $v \in \R^q$ be the vector consisting of the coordinates of $H$ followed by $b$; set $Y_v (g) = g^\top H g - \Tr H - b$ with $g \sim \cN(0, I_r)$. Let $f$ denote the $q$-dimensional feature vector consisting of the coordinates of $g g^\top - I_r$ followed by the constant $-1$.  Then $Y_v = v^\top f$ and $\sfK(H,b)_i = Y_v (g_i) = v^\top f_i$ with $f_i$ the feature vector corresponding to $g_i$. 

Basic Gaussian properties give the variance identity
\begin{equation}
\E Y_v^2 = 2 \|H\|_{\F}^2 + b^2 ,
\end{equation}
and $\|Y_v\|_{L^4} \le 3 \|Y_v\|_{L^2}$ by Gaussian hypercontractivity for polynomials of degree at most two. For every $v \neq 0$, Paley--Zygmund inequality then gives
\begin{equation}\label{eq:PZ}
 \Prob\Bigl\{ |Y_v|^2 \geq \frac{1}{2} \bigl(2\|H\|_{\F}^2 +
b^2\bigr) \Bigr\} \;\ge\; \frac{1}{4}\frac{(\E|Y_v|^2)^2}{\E|Y_v|^4} = \frac{1}{324} =: p_0.
\end{equation}
The sets $\{g : |Y_v(g)|^2 \ge \tfrac12 (2\|H\|_{\F}^2 + b^2)\}$ are inverse images of unions of two affine half-spaces of $\R^q$.  If $\Pi_{\mathrm{hs}}(N)$ denotes the shatter coefficient of affine half-spaces in $\R^q$ (VC dimension $q+1$), Sauer's lemma gives, for $N \ge q + 1$, $\Pi_{\mathrm{hs}}(N) \le (eN/(q+1))^{q+1}$, and a union of two half-spaces is specified by a pair, so
\begin{equation}\label{eq:shatter}
\Pi_{\mathrm{union}}(N) \le \Pi_{\mathrm{hs}}(N)^2
\le \Bigl(\frac{eN}{q+1}\Bigr)^{2(q+1)} .
\end{equation}
The Vapnik--Chervonenkis uniform deviation inequality \cite[Theorem~12.5]{devroye2013probabilistic} states that for any class $\cC$ of measurable sets and empirical measure $\Prob_n$ of $n$ i.i.d.\ samples, $\Prob\{\sup_{C \in \cC} |\Prob_n(C) - \Prob(C)| > t\} \le 8\, \Pi_{\cC}(2n)\, e^{-n t^2/32}$.  Applying this with $t = p_0/2$, using \eqref{eq:PZ} and \eqref{eq:shatter}, we get
\begin{equation}\label{eq:VC-bound}
\Prob\left\{
\inf_{v \neq 0} \frac1n \#\Bigl\{i :\, |Y_v(g_i)| \ge 2^{-1/2}
\bigl(2\|H\|_{\F}^2 + b^2\bigr)^{1/2}\Bigr\} < \frac{p_0}{2}
\right\}
\le 8 \Bigl(\frac{2en}{q+1}\Bigr)^{2(q+1)} e^{-p_0^2 n/128}
\le e^{-c_0 n},
\end{equation}
where we used $q = o(n)$ in the last inequality.  Choose $\delta_0 = p_0/4$.  On the complementary event, deleting at most $\delta_0 n$ rows leaves at least $p_0 n/4$ of the counted coordinates; summing their squares proves \eqref{eq:smallball} with $a_0 = \sqrt{p_0/8}$ (the case $v = 0$ being trivial). 
\end{proof}

We write $H_2(t):=-t\log t-(1-t)\log(1-t)$ for binary entropy.  Set $\Delta=\sqrt\gamma-\tfrac12>0$ and choose $0<\delta<\delta_0$ small enough that
\begin{equation}\label{eq:delta-choice}
\sqrt{(1-\delta)\gamma}-\frac12\ge\frac{3\Delta}{4},
\qquad
\gamma H_2(\delta)<\frac{\Delta^2}{128},
\qquad
\delta<\frac12.
\end{equation}

Fix $\{ g_i \}_{i \leq n}$ for which \eqref{eq:smallball} holds for every $H,b$ and every $J\subseteq[n]$ with $|J|\ge(1-\delta)n$. Note that $\sfR(B)$ is independent of this realization. Let $F = [f_1,\ldots,f_n]^\top \in \R^{n \times q_r}$ where $f_i$ is the feature vector associated to $g_i$ as defined in the proof of Lemma \ref{lem:head-smallball}, with $q_r=r(r+1)/2+1$.  For every $J\subseteq[n]$ with $|J|\ge(1-\delta)n$, the small-ball bound \eqref{eq:smallball} ensures $F_J := (f_j)_{j \in J} \in \R^{|J| \times q_r}$ has full column rank, so the orthogonal projection off the head range is
\begin{equation}\label{eq:PJ-def}
P_J=I_{|J|}-F_J(F_J^\top F_J)^{-1}F_J^\top.
\end{equation}
Thus $P_J\sfK(H,b)_J=0$ for every $(H,b)$: the projection removes the entire
head contribution and leaves only the Gaussian bulk.

\begin{lemma}[Uniform projected Gaussian bulk]
\label{lem:projected-bulk-escape}
Condition on $\{ g_i \}_{i \leq n}$ satisfying the small-ball condition \eqref{eq:smallball}.  There are a universal constant $C_0>0$ and a constant $c_{\mathrm{bulk}}>0$ depending only on $\gamma$, such that, for all sufficiently large $d$ and every $0\le r\le k_d$, with conditional probability at least $1-e^{-\Delta^2d^2/64}-e^{-n/4}$,
\begin{align}
\|P_J\sfR(B)_J\|_2 \ge c_{\mathrm{bulk}}\sqrt n\,\|B\|_{\F}, \qquad \| \sfR (B)_J \|_2 \leq C_0 \sqrt n \| B \|_\F,
\label{eq:bulk-lower}
\end{align}
simultaneously for all $B \succeq 0$, and $J\subseteq[n]$ with $|J|\ge(1-\delta)n$.
\end{lemma}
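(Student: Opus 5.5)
The upper bound in \eqref{eq:bulk-lower} is immediate. In Frobenius coordinates, $\sfR(B) = \sqrt2\, A\,\mathrm{vec}(B)$ with $A$ an $n\times D_m$ standard Gaussian matrix. Lemma~\ref{lem:gauss-rect} with $t=\sqrt n$ gives $\|A\|_{\op}\le 2\sqrt n+\sqrt{D_m}\le C\sqrt n$ with probability at least $1-2e^{-n/2}$. Restricting to rows in $J$ only decreases the norm, and this is uniform in $J$ and $B$. Any residual constant-factor slack in the failure probability is absorbed into $e^{-n/4}$.

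For the lower bound, I would first fix $J$ and then take a union bound over $J$. For a fixed $J$ with $|J|\ge(1-\delta)n$, conditionally on $\{g_i\}$ the projection $P_J$ is a deterministic orthogonal projection of rank $s:=|J|-q_r\ge |J|-k_d^2\ge (1-\delta)n - O(d)$. By rotation invariance of Gaussian matrices, $P_J A_J$ has the law of $O^\top\tilde A$ with $\tilde A$ an $s\times D_m$ standard Gaussian matrix and $O$ an isometry. Hence
\[
\inf_{B\in \psdsphere{m}} \|P_J\sfR(B)_J\|_2 \overset{d}{=} \sqrt2\inf_{T\in\psdsphere m}\|\tilde A T\|_2.
\]
Escape through a mesh (Lemma~\ref{lem:escape}) with $T=\psdsphere m$ gives, with probability $1-e^{-t^2/2}$,
\[
\inf\|\tilde A T\|_2\ge \E\|g_s\|_2 - w(\psdsphere m) - t \ge \sqrt{s-1}-\sqrt{D_m/2}-t .
\]
Here Lemma~\ref{lem:psd-width} gives $w(\psdsphere m)\le \sqrt{D_m/2}=m/2+O(1)\le d/2+O(1)$. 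Also $\sqrt{s}\ge \sqrt{(1-\delta)\gamma}\,d-o(d)\ge(\tfrac12+\tfrac{3\Delta}4)d-o(d)$ by \eqref{eq:delta-choice}. Taking $t=\Delta d/4$ leaves a margin of at least $\Delta d/2-o(d)\ge \Delta d/4$. Since $\sqrt n\asymp\sqrt\gamma\, d$, this gives $c_{\rm bulk}=c\Delta/\sqrt\gamma$. The failure probability for this $J$ is $e^{-\Delta^2d^2/32}$. Homogeneity in $\|B\|_\F$ then extends the bound from the unit sphere to all $B\succeq0$.

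The step I expect to be the main obstacle is the union bound over $J$, since $J$ is not chosen in advance. The number of admissible $J$ is at most $\sum_{k\le\delta n}\binom nk\le e^{nH_2(\delta)}\le e^{\gamma H_2(\delta)d^2}$. By \eqref{eq:delta-choice} this is at most $e^{\Delta^2 d^2/128}$. Multiplying by $e^{-\Delta^2d^2/32}$ gives a total failure probability of at most $e^{-3\Delta^2d^2/128}\le e^{-\Delta^2d^2/64}$. The choice $t=\Delta d/4$ was made precisely so that this entropy budget closes. Uniformity in $r\le k_d$ holds because $m=d-r\le d$ and $q_r=O(d)$ only shift $s$ by $o(d^2)$. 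One point to check carefully is that $P_J$ depends on $\{g_i\}$ only and not on the GOE matrices, which holds by the independence built into the hybrid model. This is what makes the rotation-invariance reduction legitimate conditionally on $\{g_i\}$.
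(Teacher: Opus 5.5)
Your proposal is correct and follows the paper's proof essentially step for step. The shared steps are: the upper bound from Lemma~\ref{lem:gauss-rect} with $t=\sqrt n$; rotation invariance, which turns $P_J\sfR(B)_J$ into an $s_J\times D_m$ Gaussian matrix with $s_J=|J|-q_r$; escape through a mesh against $w(\psdsphere m)\le\sqrt{D_m/2}$ at deviation $\Delta d/4$; and the union bound over at most $e^{nH_2(\delta)}\le e^{\Delta^2 d^2/128}$ deletion sets, giving total failure $e^{-3\Delta^2 d^2/128}+2e^{-n/2}$, which is within the stated bound.
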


\begin{proof} 
  Recall that $\sfR(B)_i = \sqrt{m} \Tr (G_i B)$ with $G_i \stackrel{\rm iid}{\sim} \GOE m$. Let $v$ denote the $D_m$-dimensional coordinate vector of $B$, and $\gamma_i$ that of $G_i$. Then we can write $\sfR(B) =  \Gamma v$, where $\Gamma$ is an $n\times D_m$ matrix with independent $\cN(0,2)$ entries. Let $\Gamma_J$ be the submatrix of $\Gamma$ with rows indexed by $J$. Let $U_J\in\R^{|J|\times s_J}$ have orthonormal columns spanning $\operatorname{range}(P_J)$. Hence $U_J^\top\Gamma_J$ has independent $\cN(0,2)$ entries and
\begin{equation}\label{eq:sJ}
s_J=|J|-q_r\ge(1-\delta)n-q_r.
\end{equation}

Lemma~\ref{lem:psd-width} and \eqref{eq:chi-mean} give
\[
w(\psdsphere m)\le\sqrt{\frac{D_m}{2}},
\qquad
\E\|g_{s_J}\|_2\ge\sqrt{s_J-1}.
\]
Uniformly for $r\le k_d=o(d)$, we have $m/d\to1$ and
$q_r/d^2\to0$.  Therefore, by \eqref{eq:delta-choice},
\begin{equation}\label{eq:gap-mean}
\begin{aligned}
\E\|g_{s_J}\|_2-w(\psdsphere m)
&\ge\sqrt{(1-\delta)n-q_r-1}-\sqrt{\frac{D_m}{2}} =d \Big(\sqrt{(1-\delta)\gamma}-\frac12 + o(1) \Big)
\ge\frac{\Delta d}{2}.
\end{aligned}
\end{equation}
Apply Lemma~\ref{lem:escape} to
$2^{-1/2}U_J^\top\Gamma_J$ and $\psdsphere m$, with deviation
$\Delta d/4$.  For each fixed $J$, conditionally on the head,
\begin{equation}\label{eq:bulk-escape}
\Prob\left\{
\inf_{B\in\psdsphere m}\|P_J\sfR (B)_J\|_2<c_1d
\ \middle|\ (g_i)\right\}
\le e^{-\Delta^2d^2/32},
\end{equation}
where $c_1>0$ depends only on $\gamma$ and we used
$\|P_J\sfR (B)_J\|_2=\|U_J^\top\Gamma_J v \|_2$.

There are at most
\begin{equation}\label{eq:deletion-count}
\sum_{j\le\delta n}\binom nj\le e^{nH_2(\delta)}
\end{equation}
admissible $J$ sets.  Since
$nH_2(\delta)\le\Delta^2d^2/128$ for all large $d$, the conditional union
bound in \eqref{eq:bulk-escape} is at most
\begin{equation}\label{eq:deletion-union}
\exp\left\{nH_2(\delta)-\frac{\Delta^2d^2}{32}\right\}
\le\exp\left\{-\frac{3\Delta^2d^2}{128}\right\}.
\end{equation}
Homogeneity and $n/d^2\to\gamma$ now give \eqref{eq:bulk-lower} with
$c_{\mathrm{bulk}}=c_1/(\sqrt\gamma+1)$.

For the second bound in \eqref{eq:bulk-lower}, Lemma~\ref{lem:gauss-rect}, with $t=\sqrt n$, gives
\[
\|\Gamma\|_{\op} \le\sqrt2(2\sqrt n+\sqrt{D_m})\le C_0 \sqrt n
\]
outside an event of probability $2e^{-n/2}$.  On the complementary event, and using that $m^2 \leq 4n$ for large enough $n$, $\| \sfR (B)_J \|_2 \leq \| \sfR (B) \|_2 \leq C_0 \sqrt n \| B \|_\F$ for all $B \in \sfS^m$. 
\end{proof}

\begin{proof}[Proof of Proposition~\ref{prop:hybrid-gap}]
We now implement the argument described at the beginning of the section. Fix $0\le r\le k_d$, put $m=d-r$.  Let $\cE_r$ be the event on
which  
\begin{equation}\label{eq:head-good-prob}
    \bigl\| \sfK (H,b)_J \bigr\|_2
\;\ge\; a_0 \sqrt n\, \bigl(2\|H\|_{\F}^2 + b^2\bigr)^{1/2} 
\end{equation}
holds simultaneously for every $H\in\Sym r$, $b\in\R$, and every $J\subseteq[n]$ with $|J|\ge(1-\delta)n$.  Since $\delta<\delta_0$, Lemma~\ref{lem:head-smallball} gives $\Prob(\cE_r^c)\le e^{-c_0n}$.
For $r=0$, the corresponding head estimate holds deterministically, after decreasing $a_0$ if necessary, because $\sfK(0,b)=-b\one$.

Condition on a realization $(g_i)_{i\le n}\in\cE_r$.  For every $J\subseteq[n]$ with $|J|\ge(1-\delta)n$, let $P_J$ be the orthogonal projection defined in \eqref{eq:PJ-def}.  In particular, $P_J \sfK(H,b)_J=0$  for every $H\in\Sym r$ and $b\in\R$. By Lemma~\ref{lem:projected-bulk-escape}, conditionally on the head vectors, outside an event of probability at most $\rho_d := e^{-\Delta^2d^2/64}+e^{-n/4}$, the estimates
\begin{equation}\label{eq:bulk-estimates-used}
\|P_J\sfR(B)_J\|_2
\ge c_{\mathrm{bulk}}\sqrt n\,\|B\|_{\F},
\qquad
\|\sfR(B)_J\|_2
\le \|\sfR(B)\|_2
\le C_0\sqrt n\,\|B\|_{\F}
\end{equation}
hold simultaneously for every $B\succeq0$ and every admissible $J$.
Since $n/d^2\to\gamma$, there is a constant $c_\gamma'>0$, independent
of $r$, such that $\rho_d\le e^{-c_\gamma'd^2}$ for all sufficiently large $d$.

Work on the event in \eqref{eq:bulk-estimates-used}.  Define
\begin{equation}\label{eq:bulk-threshold}
\theta :=
\min\left\{\frac14,\frac{a_0}{8C_0}\right\}.
\end{equation}
Fix $H\succeq0$, $B\succeq0$, and $b\in\R$ satisfying \eqref{eq:admissible-hybrid}  and fix $J\subseteq[n]$ with $|J|\ge(1-\delta)n$.  Put $u = \sfK (H,b) + \sfR (B)$. If $\|B\|_{\F}\ge\theta$, then by definition of $P_J$ and the first estimate in \eqref{eq:bulk-estimates-used}, we have
\begin{equation}\label{eq:case-bulk}
\begin{aligned}
\|u_J\|_2
&\ge \|P_Ju_J\|_2 =\|P_J \sfR(B)_J\|_2 \ge c_{\mathrm{bulk}}\sqrt n\,\|B\|_{\F} \ge c_{\mathrm{bulk}}\theta\sqrt n.
\end{aligned}
\end{equation}
If instead $\|B\|_{\F}<\theta$, then $\|H\|_{\F}^2 \ge\frac14-\theta^2 \ge\frac{3}{16}$. The head small-ball estimate \eqref{eq:head-good-prob}, the second estimate in \eqref{eq:bulk-estimates-used}, and the reverse triangle inequality imply
\begin{align}
\|u_J\|_2 \ge
\|\sfK (H,b)_J\|_2-\|\sfR(B)_J\|_2 \ge&~ a_0\sqrt n\, \bigl(2\|H\|_{\F}^2+b^2\bigr)^{1/2} -C_0\sqrt n\,\|B\|_{\F} \notag\\
\ge&~  a_0\sqrt n\sqrt{\frac38} -C_0\theta\sqrt n \ge a_0\left(\frac{\sqrt6}{4}-\frac18\right)\sqrt n =:c_{\mathrm{head}}\sqrt n.
\label{eq:case-head}
\end{align}
Consequently, with $c_4:= \min\left\{ c_{\mathrm{bulk}}\theta,\, c_{\mathrm{head}} \right\}>0$, we have
\begin{equation}\label{eq:joint-lower}
\| \bigl(\sfK(H,b)+\sfR(B)\bigr)_J\|_2 \ge c_4\sqrt n
\end{equation}
simultaneously for every admissible $(H,B,b)$ and every $J\subseteq[n]$ with $|J|\ge(1-\delta)n$.

It remains to deduce from \eqref{eq:joint-lower} that the risk is bounded away from $0$. Fix an admissible $(H,B,b)$ and define $I=\{i\in[n]:|u_i|>1\}$. If $|I|\ge\delta n$, then the lower bound in \eqref{eq:phi-sandwich} gives $\Loss_n(u)\ge c_\varphi|I|/n\ge c_\varphi\delta$.  If $|I|<\delta n$, set $J = [n]\setminus I$. Then $|J|\ge(1-\delta)n$ and \eqref{eq:joint-lower} gives $\Loss_n(u)\ \geq c_\varphi \|u_J\|_2^2/n\ge c_\varphi  c_4^2$.  Thus, setting $e_\gamma := c_\varphi\min\{\delta,c_4^2\}>0$, we conclude that the infimum in \eqref{eq:hybrid-gap} is at least $e_\gamma$ on the joint event where \eqref{eq:head-good-prob} and \eqref{eq:bulk-estimates-used} hold.  In particular, for every head realization $(g_i)_{i\le n} \in \cE_r$, we have the conditional estimate
\begin{equation}\label{eq:hybrid-conditional}
\begin{aligned}
&\Prob_G\left\{
\inf_{(H,B,b) \text{ admissible}}
\Loss_n\bigl(\sfK(H,b)+\sfR(B)\bigr)
<e_\gamma
\ \middle|\ (g_i)_{i\le n}
\right\} \le \rho_d \le e^{-c_\gamma'd^2}.
\end{aligned}
\end{equation}
 Integrating \eqref{eq:hybrid-conditional} over the head vectors, we obtain \eqref{eq:hybrid-gap} with probability at least $1 - e^{-c_0n}-e^{-c_\gamma'd^2}$. Since $n/d^2\to\gamma$, there is a constant $c_\gamma>0$ such that $e^{-c_0n}+e^{-c_\gamma'd^2}\le e^{-c_\gamma d^2}$ for all sufficiently large $d$.  This completes the proof.
\end{proof}

\subsection{Deterministic nets and transfer}\label{sec:unsat-nets}

We now prove Proposition~\ref{prop:true-net-gap} for the deterministic net $\cI_d$ defined in Section~\ref{sec:unsat-setup}.  For each fixed net subspace we condition on its head coordinates and use Proposition~\ref{prop:translated-univ} to transfer the hybrid-model risk gap proved in Proposition \ref{prop:hybrid-gap} to the ellipsoid model. Combining bounded differences with the entropy bound \eqref{eq:total-entropy} yields the uniform risk gap over the net $\cI_d$. 

For a deterministic $U\in\fU_r$, decompose each sample into its independent head and bulk components,
\begin{equation}\label{eq:decompose}
x_i=g_i+z_i\in U\oplus U^\perp,
\qquad g_i\sim \cN(0,I_U),\quad z_i\sim \cN(0,I_{U^\perp}).
\end{equation}
For $H\in\fH_U$ and $b\in\frB_\tau$, set
\begin{equation}\label{eq:offsets-def}
a_i=g_i^\top H g_i-\Tr H-b
\end{equation}
and define the conditional ellipsoid-model value
\begin{equation}\label{eq:Eell-g}
\scE_{U,H,b} ( \{ W_i\} )
:=\inf_{B\in\cC_U(H)}\frac1n\sum_{i=1}^n
\varphi\bigl(a_i+ \sqrt{m} \Tr ( BW_i )\bigr), \qquad W_i := \frac{z_i z_i^\top - I_m}{\sqrt m}.
\end{equation}
Similarly, we consider the infimum $\scE_{U,H,b} ( \{ G_i\})$ with the bulk term $z_i^\top Bz_i-\Tr B$ replaced by $\sqrt m\,\Tr(G_i B)$ with $G_i \sim_{\rm iid} \GOE m$.

We condition on the small-ball event $\cE_U$ in Lemma \ref{lem:head-smallball}, that is, $(g_i)_{i\le n} \in \cE_U$ if
\begin{equation}\label{eq:head-event}
\bigl\| \sfK(H, b)_J \bigr\|_2 \ \ge\ a_0 \sqrt n (2\|H\|_{\F}^2 + b^2)^{1/2},
\end{equation} 
for all $H \in \Sym r$, $b \in \R$, and $J \subseteq [n]$ with $|J| \ge (1-\delta) n$, where $\sfK(H,b)_i = g_i^\top H g_i - \Tr H - b$ is the head contribution.

To prove Proposition~\ref{prop:true-net-gap}, we first transfer the hybrid-model risk gap to a conditional risk gap for each fixed head subspace.  McDiarmid's inequality then upgrades that risk gap to high probability, and the $e^{o(d^2)}$ net entropy permits a union bound.

\begin{lemma}[Conditional risk gap for a fixed head subspace]
\label{lem:conditional-true-mean-gap}
Fix $0\le r\le k_d$ and a deterministic $U\in\fU_r$.  Let $\cE_U$ be the
head event in \eqref{eq:head-event}.  Uniformly in $r$ and $U$,
\[
\Prob(\cE_U^c)\le e^{-c_\gamma d^2}.
\]
Moreover, for all sufficiently large $d$, on $\cE_U$, simultaneously for
every $H\in\fH_U$ and $b\in\frB_\tau$ for which
$\cC_U(H)\ne\varnothing$,
\begin{equation}\label{eq:Eell-mean}
\E \left[ \scE_{U,H,b} ( \{ W_i\} )  \mid (g_i)_{i\le n} \right] \ge\frac{e_\gamma}{2}.
\end{equation}
\end{lemma}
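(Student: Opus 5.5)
The proof has two parts: the probability bound on $\cE_U^c$, and the conditional mean bound \eqref{eq:Eell-mean}. For the first, note that when $U$ is deterministic the head components $g_i$ (in coordinates of $U$) are i.i.d.\ $\cN(0,I_r)$. The event $\cE_U$ is then exactly the small-ball event of Lemma~\ref{lem:head-smallball}, taken with $\delta<\delta_0$ and restricted to $|J|\ge(1-\delta)n$. That lemma gives $\Prob(\cE_U^c)\le e^{-c_0n}$. Since $n/d^2\to\gamma$, this is at most $e^{-c_\gamma d^2}$ after shrinking $c_\gamma$. The bound is uniform in $r\le k_d$ and $U$ because the lemma's constants are universal. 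For $r=0$ the event holds deterministically, as noted in the proof of Proposition~\ref{prop:hybrid-gap}.

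For the mean bound, fix a realization $(g_i)\in\cE_U$ and fix $H\in\fH_U$, $b\in\frB_\tau$ with $\cC_U(H)\neq\varnothing$. The offsets $a_i$ in \eqref{eq:offsets-def} are then deterministic, and $z_i$ remains i.i.d.\ $\cN(0,I_m)$ independent of them. By \eqref{eq:B-schatten} and \eqref{eq:uniform-aspect}, $\cC_U(H)$ is a nonempty closed subset of $\cB_m(2,1/5)$ with $n/m^2\in[\varepsilon,1/2-\varepsilon]$ for some fixed $\varepsilon$. Proposition~\ref{prop:translated-univ} therefore applies, and since it is uniform over offsets and over closed subsets,
\[
\E\bigl[\scE_{U,H,b}(\{W_i\})\mid (g_i)\bigr]\ge \E\bigl[\scE_{U,H,b}(\{G_i\})\mid (g_i)\bigr]-o(1),
\]
with the $o(1)$ uniform in $r,U,H,b$ and in the head realization. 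One point needs care: Proposition~\ref{prop:translated-univ} is stated along a single sequence $m\to\infty$, while here $m=d-r$ varies with $r$. I handle this by taking the supremum over $r\le k_d$ and arguing by contradiction along subsequences, since the proposition's bound depends only on $m$ and on the aspect window.

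It then suffices to bound the Gaussian term below by $e_\gamma-o(1)$ conditionally on $(g_i)\in\cE_U$. Every $B\in\cC_U(H)$ together with $H$ and $b$ forms an admissible triple for \eqref{eq:admissible-hybrid}, and in the hybrid model the residual $a_i+\sqrt m\Tr(G_iB)$ equals $\sfK(H,b)_i+\sfR(B)_i$. Hence $\scE_{U,H,b}(\{G_i\})$ is at least the hybrid infimum in \eqref{eq:hybrid-gap}. The step that needs care is that the hybrid gap must hold conditionally on the head, not merely with unconditional probability. The proof of Proposition~\ref{prop:hybrid-gap} already provides this: \eqref{eq:hybrid-conditional} shows that for every $(g_i)\in\cE_r$ (which coincides with $\cE_U$), the $G$-probability that the infimum falls below $e_\gamma$ is at most $e^{-c'_\gamma d^2}$. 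Because the loss takes values in $[0,1]$, the conditional expectation is at least $e_\gamma(1-e^{-c'_\gamma d^2})$.

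Combining the two displays gives a conditional mean of at least $e_\gamma-o(1)\ge e_\gamma/2$ for large $d$, simultaneously for all $H,b$ in the net, since every bound used is uniform. The main obstacle is the uniformity bookkeeping in the universality step, specifically making sure the $o(1)$ from Proposition~\ref{prop:translated-univ} does not depend on the head realization or on $r$. The conditional-on-head form of the hybrid bound is free from the earlier proof.
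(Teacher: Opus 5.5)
Your proposal is correct and follows essentially the same route as the paper. The paper bounds $\Prob(\cE_U^c)$ via the small-ball lemma with $\delta<\delta_0$, and uses the conditional hybrid bound \eqref{eq:hybrid-conditional} together with $0\le\varphi\le1$ to get $\E[\scE_{U,H,b}(\{G_i\})\mid (g_i)]\ge e_\gamma(1-e^{-c_\gamma' d^2})$. It then applies Proposition~\ref{prop:translated-univ}, which is uniform over offsets and closed subsets, to pass to the chaos rows. Your subsequence argument for uniformity in $m=d-r$ is valid bookkeeping that the paper leaves implicit.
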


\begin{proof}[Proof of Lemma~\ref{lem:conditional-true-mean-gap}]
Fix $U\in\fU_r$.  Since $\delta<\delta_0$,
Lemma~\ref{lem:head-smallball} gives, uniformly for $r\le k_d$,
\begin{equation}\label{eq:head-event-prob}
\Prob(\cE_U^c) \le e^{-c_\gamma d^2} .
\end{equation}
Now condition on a head realization $(g_i)_{i\le n}$ in $\cE_U$. 
For every $H\in\fH_U$ and $b\in\frB_\tau$ with $\cC_U(H)\ne\varnothing$, each $B\in\cC_U(H)$ makes $(H,B,b)$ admissible in \eqref{eq:hybrid-conditional}, and with probability at least $1-e^{-c_\gamma' d^2}$,
\[
\scE_{U,H,b} ( \{ G_i\}) \ge e_\gamma.
\]
Since
$0\le \scE_{U,H,b} ( \{ G_i\}) \le1$,
\begin{equation}\label{eq:Egoe-mean}
\E \big[ \scE_{U,H,b} ( \{ G_i\}) \mid (g_i)_{i\le n} \big]
\ge e_\gamma\Prob \{\scE_{U,H,b} ( \{ G_i\}) \ge e_\gamma  \mid (g_i)_{i\le n}\}
\ge e_\gamma\bigl(1-e^{-c_\gamma ' d^2}\bigr).
\end{equation}

Proposition~\ref{prop:translated-univ}, with the verifications
\eqref{eq:B-schatten}--\eqref{eq:uniform-aspect} and its uniformity over
arbitrary deterministic offsets, implies, conditionally on every fixed
realization of the head,
\begin{equation}\label{eq:univ-applied}
\bigl| \E \big[ \scE_{U,H,b} ( \{ W_i\}) \mid (g_i)_{i\le n} \big]
- \E \big[ \scE_{U,H,b} ( \{ G_i\}) \mid (g_i)_{i\le n} \big] \bigr| =
o(1) ,
\end{equation}
uniformly in $r,U,H,b$ and $(g_i)_{i\le n}$.  For all sufficiently large $d$,
\eqref{eq:Egoe-mean}--\eqref{eq:univ-applied} then give \eqref{eq:Eell-mean} on $\cE_U$.
\end{proof}

\begin{proof}[Proof of Proposition~\ref{prop:true-net-gap}]
For fixed $U,H,b$ and $(g_i)_{i\le n}$, the value $\scE_{U,H,b} $ changes by at most $1/n$ when one bulk row is replaced, because $0 \le \varphi \le 1$.  McDiarmid's inequality
\cite[Theorem~6.2]{boucheron2013concentration} and \eqref{eq:Eell-mean} therefore give
\begin{equation}\label{eq:mcdiarmid}
\Prob_z\bigl\{ \scE_{U,H,b} ( \{ W_i\})  < e_\gamma/4
\ \big|\ (g_i)_{i\le n} \bigr\}
\;\le\; \exp\bigl\{- e_\gamma^2\, n / 8\bigr\} .
\end{equation}
 Write $U(t)$ for the subspace of a tuple $t\in\cI_d$.  For fixed $(r, U)$, let $\cF_U = \sigma\bigl((P_U x_i)_{i \le n}\bigr)$.  Then $\cE_U \in \cF_U$, while conditionally on $\cF_U$ the bulk components $(P_{U^\perp} x_i)_{i \le n}$ are independent standard Gaussians on $U^\perp$.  Consequently, by
\eqref{eq:head-event-prob} and \eqref{eq:mcdiarmid},
\begin{align*}
\Prob\bigl\{\text{\eqref{eq:net-gap} fails for some } t \in \cI_d\bigr\}
&\le \sum_{(r,U) \in \frV_d} \Prob(\cE_U^c)
+ \sum_{t \in \cI_d}
\E\Bigl[\mathbf 1_{\cE_{U(t)}}\,
\Prob\bigl\{\text{failure at } t \mid \cF_{U(t)}\bigr\}\Bigr]\\
&\le |\frV_d|\, e^{-c_\gamma d^2}
+ |\cI_d|\, e^{-e_\gamma^2 n/8} \;=\; o(1) ,
\end{align*}
where the last equality follows from \eqref{eq:total-entropy} and $n/d^2\to\gamma$. 
\end{proof}

\section*{Acknowledgments and use of AI}

TM would like to thank Basil Saeed for suggesting this problem and for helpful discussions. The authors are grateful to Afonso Bandeira for his support and encouragement.

We used modern AI tools in this work. Specifically, we used ChatGPT 5.4 and 5.5 to explore several possible proof strategies. The approach presented in this paper was suggested by the authors and motivated directly by the dual formulation in \cite{bandeira2024fitting,bandeira2025exact} and by the author's previous work on conditional Gaussian equivalence \cite{wen2025does}. Given an earlier draft, GPT 5.6 helped repair and complete several arguments, including the tightened head-tail decomposition in Lemma \ref{lem:regularity} and the decomposition used in the proof of Proposition~\ref{prop:hybrid-gap}, which ultimately led to the completion of the proofs. The authors carefully verified all the results, and they take full responsibility for the content of this paper.

\bibliographystyle{amsalpha}
\bibliography{biblio}

\end{document}

%% file: def.tex
\newcommand{\R}{\mathbb{R}}
\newcommand{\E}{\mathbb{E}}
\renewcommand{\P}{\mathbb{P}}
\newcommand{\Prob}{\mathbb{P}}
\newcommand{\Sym}[1]{\mathsf{S}^{#1}}
\newcommand{\Sd}{\mathsf{S}^d}
\newcommand{\Sdp}{\mathsf{S}^d_+}
\newcommand{\Symp}[1]{\mathsf{S}^{#1}_+}
\newcommand{\Tr}{\operatorname{Tr}}
\newcommand{\Id}{I_d}
\newcommand{\diag}{\operatorname{diag}}
\newcommand{\rank}{\operatorname{rank}}
\newcommand{\sgn}{\operatorname{sgn}}
\newcommand{\ip}[2]{\left\langle #1,#2\right\rangle}
\newcommand{\op}{\mathrm{op}}
\newcommand{\F}{\mathrm{F}}
\newcommand{\sphereF}{\mathbb{S}_{\F}}
\newcommand{\psdsphere}[1]{\mathbb{S}^{+}_{\F}(#1)}
\newcommand{\GOE}[1]{\operatorname{GOE}(#1)}
\newcommand{\Gr}[2]{\operatorname{Gr}(#1,#2)}
\newcommand{\one}{\mathbf{1}}
\newcommand{\cA}{\mathcal{A}}
\newcommand{\cB}{\mathcal{B}}
\newcommand{\cC}{\mathcal{C}}
\newcommand{\cD}{\mathcal{D}}
\newcommand{\cE}{\mathcal{E}}
\newcommand{\cF}{\mathcal{F}}
\newcommand{\cG}{\mathcal{G}}
\newcommand{\cH}{\mathcal{H}}
\newcommand{\cI}{\mathcal{I}}
\newcommand{\cK}{\mathcal{K}}
\newcommand{\Loss}{\mathscr{L}}
\newcommand{\cN}{\mathcal{N}}
\newcommand{\cQ}{\mathcal{Q}}
\newcommand{\cR}{\mathcal{R}}
\newcommand{\cS}{\mathcal{S}}
\newcommand{\cT}{\mathcal{T}}

\newcommand{\cW}{\mathcal{W}}
\newcommand{\cY}{\mathcal{Y}}
\newcommand{\fG}{\mathfrak{G}}
\newcommand{\fU}{\mathfrak{U}}
\newcommand{\fH}{\mathfrak{H}}
\renewcommand{\S}{\mathbb{S}}
\newcommand{\de}{\mathrm{d}}
\newcommand{\eps}{\varepsilon}
\newcommand{\scF}{\mathscr{F}}
\newcommand{\scE}{\mathscr{E}}

\def\sfR{\mathsf{R}}
\def\sfK{\mathsf{K}}
\def\frB{\mathfrak{B}}
\def\frV{\mathfrak{W}}
\def\sfP{\mathsf{P}}